\title
[Fundamental class]
{Orientability and fundamental classes of Alexandrov spaces with applications}
\author{Ayato Mitsuishi}
\date{\today}
\theoremstyle{plain}
\newtheorem{theorem}{Theorem}[section]
\newtheorem{lemma}[theorem]{Lemma}
\newtheorem{corollary}[theorem]{Corollary}
\newtheorem{proposition}[theorem]{Proposition}
\newtheorem{definition}[theorem]{Definition}
\newtheorem{remark}[theorem]{Remark}
\newtheorem{example}[theorem]{Example}
\newtheorem{problem}[theorem]{Problem}
\newtheorem{conjecture}[theorem]{Conjecture}
\newtheorem{sublemma}[theorem]{Sublemma}
\newtheorem{claim}[theorem]{Claim}
\newtheorem{nonexample}[theorem]{Non-example}
\newtheorem*{conclusion}{Conclusion}
\newcommand{\fillrad}[1]{\mathrm{Fill.rad}(#1)}
\newcommand{\supp}{\mathrm{supp}}
\newcommand{\N}{\mathbf N}
\newcommand{\M}{\mathbf M}
\newcommand{\I}{\mathbf I}
\newcommand{\D}{\mathcal D}
\newcommand{\jump}[1]{\ensuremath{[\![#1]\!]}}
\renewcommand{\top}{\mathrm{top}}
\newcommand{\Lip}{\mathrm{Lip}}
\newcommand{\cpt}{\mathrm{c}}
\newcommand{\mass}{\mathrm{mass}}
\newcommand{\Hau}{\mathcal H}
\newcommand{\ori}{\mathrm{ori}}
\def\XXint#1#2#3{{\setbox0=\hbox{$#1{#2#3}{\int}$}
\vcenter{\hbox{$#2#3$}}\kern-.5\wd0}}
\begin{document}
\maketitle

\begin{abstract}
In the present paper, we consider several valid notions of orientability of Alexandov spaces and prove that all such conditions are equivalent. 
Further, we give topological and geometric applications of the orientability. 
In particular, a Poincar\'e-type duality theorem is proved.
As a corollary to the duality theorem, we also prove that if a closed Alexandrov space admits a positive 
curvature bound in a synthetic sense, then its codimension one homology vanishes. 
Further, we obtain a filling radius inequality for closed orientable Alexandrov spaces. 
\end{abstract}

\section{Introduction} \label{sec:intro}
Alexandrov spaces naturally appear in the collapsing theory of Riemannian manifolds, 
which are by the definition, metric spaces satisfying a condition of having a lower sectional curvature bound in a synthetic sense. 
In the point of view of Riemannian geometry, Alexandrov spaces are considered as natural generalized objects of Riemannian manifolds.
It has been important to study their geometry and topology thoroughly. 
The definition of Alexandrov spaces will be recalled in Section \ref{sec:Alex}.

Topological manifolds are by the definition, Hausdorff and locally Euclidean spaces, and the orientability is a fundamental global topological property for manifolds. 
As generalizations of manifolds, more locally complicated spaces have been researched. 
For instance, homology manifolds or psuedo-manifolds are famous generalized spaces, and for those spaces, the notion of orientability is well-posed.
Such spaces are assumed to be triangulable, in almost cases.
On the other hands, it is not known whether Alexandrov spaces are triangulable or not.
Further, in general, Alexandrov spaces are not homology manifolds. 
For instance, the topological cones over $\mathbb RP^2$ and $\mathbb CP^2$ admit metrics of Alexandrov spaces, but are not homology manifolds (for given homology theory with suitable coefficients).
Let us list typical examples of Alexandrov spaces, here.

\begin{example}[\cite{BGP}] \upshape \label{ex:Alex}
\mbox{}
\begin{itemize}
\item Any connected complete Riemannian manifold (moreover, any connected complete Riemannian orbifold) is an Alexandrov space. 
\item For a connected complete Riemannian manifold $M$ of sectional curvature $\ge \kappa$ with an isometric action of a group $G$, the completion of the orbit space $M/G$ is an Alexandrov space of curvature $\ge \kappa$, where $\kappa \in \mathbb R$.
\item If $M$ and $N$ are connected complete Riemannian manifolds of sectional curvature $\ge 1$, then the join $M \ast N$ of them is an Alexandrov space of curvature $\ge 1$, the suspension $S^0 \ast M$ of $M$ is an Alexandrov space of curvature $\ge 1$ and the cone $c(M)$ over $M$ is an Alexandrov space of curvature $\ge 0$. 
\item The product of two Alexandrov spaces is an Alexandrov space.
\item The Gromov-Hausdorff limit of connected complete Riemannian manifolds of sectional curvature $\ge \kappa$ is an Alexandrov space of curvature $\ge \kappa$, where $\kappa \in \mathbb R$.
\end{itemize}
Here, the join, the suspension, the cone and the product are equipped with natural distance functions. 
Further, the above statements are true if we replace ``Riemannian manifold(s)'' with ``Alexandrov space(s)''.
\end{example}
By the above examples, Alexandrov spaces admit complicated topologies compared to smooth manifolds.
As a complemental remark, 
it is not known whether any topological manifold admit a metric of an Alexandrov space (cf. \cite{GZ}).

Under these circumstances, 
the notions of orientability for Alexandrov spaces were considered in several ways.
Let us recall a brief history of notions of orientations on Alexandrov spaces.
The notions of orientability were first formulated in different ways by Yamaguchi (\cite{Y}) and Petrunin (\cite{HS}), independently, that were topological properties. 
However, in \cite{Pet}, an explicit formulation was not given. 
Recently, Harvey and Searle (\cite{HS}) gave the notion of orientability which is regarded as a regorus definition of the orientability considered in \cite{Pet}.
Further, they proved that the notions of orientability given in the manuscripts \cite{Y}, \cite{Pet} and \cite{HS} are equivalent. 
On the other hands, it was known that Alexandrov spaces have the canonical smooth atlas in a weak sense (\cite{OS}). 
So, one can consider the notion of orientability in an analytic sense, that is, in the terminologies of coordiante transformations with respect to the weak smooth atlas. 
Such a condition was implicitely considered by Kuwae, Machigashira and Shioya (\cite{KMS}). 
We will discuss about orientations in this sense, in Appendix \ref{sec:C1-ori}.
In the present paper, we will consider other valid (topological and geometric) notions of orientability for Alexandrov spaces and prove that they are equivalent (Theorem \ref{thm:main thm}).
After establishing the meaning of the orientability by proving Theorem \ref{thm:main thm}, we give several topological and geometric applications of the orientability to Alexandrov spaces.

In the present paper, $H^\ast(-;G)$ and $H_\ast(-;G)$ denote the singular (co)homology with coefficients in an abelian group $G$. 
Further, $H_\cpt^\ast(-;G)$ denotes the singular cohomology of compact support. 
In the following statements, it is important what the coefficients of the (co)homologies are.

Main topological results are the following.
\begin{theorem}[Poincar\'e-type duality theorem] \label{thm:PD1}
Let $M$ be a non-empty open subset of an Alexandrov $n$-space having no boundary points. 
If it is orientable, then there is a natural morphism 
\[
D_M : H_\cpt^k(M;\mathbb Q) \to H_{n-k}(M;\mathbb Q)
\] 
for $0 \le k \le n$ such that it is an isomorphism when $k=n$ and $k=n-1$.
Here, the naturality means that if $N$ is another non-empty open subset of an Alexandrov $n$-space having no boundary points which is orientable and if $f : M \to N$ is a continuous 
embedding preserving orientations, then we have $D_N \circ f_\ast = f_\ast \circ D_M$.
\end{theorem}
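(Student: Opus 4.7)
The plan is to realise $D_M$ as cap product with a locally finite fundamental class and then deduce the isomorphism in degrees $k = n$ and $k = n-1$ via a local-to-global argument, closed off by an induction on $n = \dim M$ interlocked with the positive-curvature codimension-one vanishing announced in the abstract.

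First, using the orientability assumption and Theorem \ref{thm:main thm}, I would produce a locally finite (Borel--Moore) fundamental class $[M] \in H^{\mathrm{lf}}_n(M;\mathbb Q)$, compatible with restriction to open subsets. Set $D_M(\alpha) := \alpha \frown [M]$; this is the standard cap product pairing $H_\cpt^k$ with $H^{\mathrm{lf}}_n$ into singular $H_{n-k}$. Naturality under an orientation-preserving open embedding $f : M \to N$ follows from $f^\ast[N] = [M]$ (restriction of the locally finite fundamental class compatible with the restricted orientation) together with the projection formula for cap product.

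Next, I set up the local-to-global comparison, for instance via the Mayer--Vietoris sequences for $H_\cpt^\ast$ and $H_\ast$. The collection of open $U \subset M$ for which $D_U$ is an isomorphism in the top two degrees is closed under unions (by the five lemma) and under directed limits (exact on $\mathbb Q$-vector spaces), so it suffices to verify duality on a base of the topology. By the conical neighborhood theorem for Alexandrov spaces, small neighborhoods of an interior point $p$ are homeomorphic to the open cone $c(\Sigma_p M)$ over the space of directions. Contractibility of $c(\Sigma_p M)$ gives $H_0 = \mathbb Q$ and $H_1 = 0$, while the long exact sequence of the pair $(\bar c(\Sigma_p M), \Sigma_p M)$ identifies $H_\cpt^k(c(\Sigma_p M);\mathbb Q)$ with $\tilde H^{k-1}(\Sigma_p M;\mathbb Q)$. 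Local duality in the top two degrees therefore reduces to the identities $H^{n-1}(\Sigma_p M;\mathbb Q) \cong \mathbb Q$ and $H^{n-2}(\Sigma_p M;\mathbb Q) = 0$, plus a diagram chase identifying these with $\cdot \frown [M]|_U$.

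The main obstacle is closing the inductive loop. The space $\Sigma_p M$ is a closed orientable $(n-1)$-dimensional Alexandrov space of curvature $\ge 1$, so by the inductive hypothesis both Poincar\'e duality in its top two degrees and the codimension-one vanishing apply to it. These yield $H^{n-1}(\Sigma_p M;\mathbb Q) \cong H_0(\Sigma_p M;\mathbb Q) = \mathbb Q$ (using connectedness of positively curved spaces of directions for $n \ge 2$) and, via $\mathbb Q$-universal coefficients, $H^{n-2}(\Sigma_p M;\mathbb Q) \cong H_{n-2}(\Sigma_p M;\mathbb Q) = 0$. Poincar\'e duality and the codimension-one vanishing must therefore be proved in tandem by induction on $n$, with the low-dimensional cases $n = 1, 2$ handled directly. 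A subsidiary delicate point is that the naive Mayer--Vietoris five-lemma in degree $k = n-1$ also wants control over the degree $k = n-2$ connecting map (surjectivity at worst), which is not asserted in the theorem; one either strengthens the local claim at cone points or reformulates duality via the non-degeneracy of the pairing $H_\cpt^k(M) \otimes H^{n-k}(M) \to \mathbb Q$ and verifies this directly in the relevant degrees.
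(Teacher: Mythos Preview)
Your approach is essentially the paper's: construct $D_M$ by cap product with fundamental classes at compact sets (equivalently, your Borel--Moore class), then run a Mayer--Vietoris/five-lemma argument with the local case handled on cones, and close the loop by an interlocked induction on $n$ between the duality statement $(\mathrm{D})_n$ and the positive-curvature vanishing $H^{n-2}(\Sigma_p;\mathbb Q)=0$, i.e.\ $(\mathrm{E})_{n-1}$. You have correctly identified the central idea.

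Two points where the paper is sharper than your sketch. First, your worry about needing degree $k=n-2$ in the five-lemma is unfounded: the paper places $H_\cpt^{n-1}(U\cup V)\to H_1(U\cup V)$ in the middle of the five-term window
\[
H_\cpt^{n-1}(U\cap V)\to H_\cpt^{n-1}(U)\oplus H_\cpt^{n-1}(V)\to H_\cpt^{n-1}(U\cup V)\to H_\cpt^{n}(U\cap V)\to H_\cpt^{n}(U)\oplus H_\cpt^{n}(V),
\]
so the outer four verticals are isomorphisms by the inductive hypothesis on the smaller sets (degree $n-1$) and by the already-established top-degree duality $H_\cpt^n\cong H_0$ (degree $n$); no $n-2$ information is required. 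Second, ``verify on a base'' is not quite enough: to feed $U\cap V$ into the five-lemma you need intersections of your basic opens to be conical as well. The paper secures this via the good-covering theorem for Alexandrov spaces (Theorem~\ref{thm:good}), which produces a locally finite cover whose finite intersections are all conical; this is the geometric input that makes the induction over the cover go through.
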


An Alexandrov space is said to be closed if it is compact and without boundary.
\begin{corollary}
Let $M$ be a closed orientable Alexandrov $n$-space.
Then, $D_M : H^k(M;\mathbb Q) \to H_{n-k}(M;\mathbb Q)$ is isomorphic when $k=0,1,n-1,n$.
\end{corollary}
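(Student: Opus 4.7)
The plan is to reduce the four cases to Theorem \ref{thm:PD1} together with an elementary duality argument over $\mathbb Q$. Since $M$ is compact, $H_\cpt^k(M;\mathbb Q)$ coincides with $H^k(M;\mathbb Q)$; viewing $M$ as a non-empty open subset of itself, Theorem \ref{thm:PD1} immediately yields that $D_M$ is an isomorphism for $k = n$ and $k = n-1$.

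For the remaining cases $k = 0$ and $k = 1$, the idea is to bootstrap from the two cases above via the cup-product form of Poincar\'e duality. Assume $D_M$ is realised as cap product with a fundamental class $[M] \in H_n(M;\mathbb Q)$ furnished by orientability (one expects this to be immediate from the construction used in the proof of Theorem \ref{thm:PD1}). The standard cap-cup identity then expresses the cup-product pairing
\[
\mu : H^k(M;\mathbb Q) \otimes H^{n-k}(M;\mathbb Q) \to \mathbb Q, \qquad \alpha \otimes \beta \mapsto \langle \alpha \cup \beta,\, [M]\rangle
\]
as $\langle \alpha,\, D_M(\beta)\rangle$, and, by graded commutativity of the cup product, also as $\pm\, \langle \beta,\, D_M(\alpha)\rangle$.

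Specializing to $k = 1$: the already-established isomorphism $D_M$ in degree $n-1$, combined with perfectness of the Kronecker pairing $H^1(M;\mathbb Q) \times H_1(M;\mathbb Q) \to \mathbb Q$, renders $\mu$ non-degenerate in its second argument. The universal coefficient theorem together with the degree $n-1$ isomorphism gives $\dim H^1 = \dim H^{n-1}$, so $\mu$ is a bilinear form on finite-dimensional spaces of equal dimension, hence a perfect pairing; it is therefore non-degenerate in its first argument as well. Unwinding through the second expression for $\mu$ then shows that $D_M : H^1(M;\mathbb Q) \to H_{n-1}(M;\mathbb Q)$ has trivial kernel, and the matching dimensions promote injectivity to bijectivity. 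The case $k = 0$ is formally identical, with degree $n$ replacing degree $n-1$.

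The main obstacle I anticipate is confirming that the natural morphism $D_M$ of Theorem \ref{thm:PD1} is indeed given by cap product against a well-defined fundamental class, and invoking the finite-dimensionality of $H_\ast(M;\mathbb Q)$ for closed Alexandrov spaces (a consequence of the known finiteness of their topological type, e.g.\ via Perelman's stability theorem). Once these are in hand, the remainder of the argument is purely formal duality over a field.
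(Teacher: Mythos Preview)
Your argument is correct. The paper states this corollary immediately after Theorem \ref{thm:PD1} without supplying a proof, so there is no ``paper's proof'' to compare against; your write-up supplies exactly the kind of formal duality argument one expects the reader to fill in.

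A few confirmations regarding the obstacles you flag. First, the natural morphism $D_M$ of Theorem \ref{thm:PD1} is indeed defined in the paper (Section \ref{sec:fundam class cpt}) as the cap product with the fundamental class at each compact set, and for $M$ closed this is just $[M]\cap -$; so your cap--cup identity is legitimate here. Second, finite-dimensionality of $H_\ast(M;\mathbb Q)$ is available: a closed Alexandrov space is a compact MCS-space, hence a compact finite-dimensional ANR, and thus has finitely generated homology.

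One small simplification: for $k=0$ you do not need the pairing argument. Since $M$ is connected, $H^0(M;\mathbb Q)\cong\mathbb Q$ is generated by $1$, and $D_M(1)=[M]\cap 1=[M]$, which generates $H_n(M;\mathbb Q)\cong\mathbb Q$ by Theorem \ref{thm:main thm}(A). The cup-product bootstrap is really only needed for $k=1$, where your argument is the natural one.
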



The suspension $X = S^0 \ast \mathbb CP^\ell$ over the complex projective space $\mathbb CP^\ell$ is an Alexandrov spaces (Example \ref{ex:Alex}). 
Then, $X$ is orientable in our sense. 
Remark that 
$H^k(X;\mathbb Q)$ and $H_{2 \ell -k+1}(X;\mathbb Q)$ never be isomorphic for middle degree $k$, that is, for $k$ with $2 \le k \le 2 \ell -1$.
See also Remark \ref{rem:middle degree}.

\begin{corollary} \label{cor:positive}
If $\Sigma$ is an $n$-dimensional Alexandrov space of curvature $\ge \kappa$, where $n \ge 2$ and $\kappa > 0$, then the codimension one rational homology $H_{n-1}(\Sigma;\mathbb Q)$ vanishes. 
In addition, if $\Sigma$ is orientable, then $H_{n-1}(\Sigma;\mathbb Z) = 0$.
\end{corollary}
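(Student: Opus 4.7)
The plan is to reduce to the case where $\Sigma$ is closed, orientable, and without boundary, and then to combine the Poincar\'e duality corollary with an Alexandrov version of Myers' theorem. First, since $\kappa > 0$, Toponogov comparison yields the Myers-type estimate $\mathrm{diam}(\Sigma) \le \pi/\sqrt{\kappa}$, so $\Sigma$ is automatically compact. If $\Sigma$ has non-empty boundary, I replace it by Perelman's double $D\Sigma = \Sigma \cup_{\partial\Sigma} \Sigma$, which is a closed boundaryless Alexandrov $n$-space of curvature $\ge \kappa$; a Mayer--Vietoris argument, together with the corollary applied inductively to the closed positively curved Alexandrov $(n-1)$-space $\partial\Sigma$, reduces the codimension-one vanishing on $\Sigma$ to that on $D\Sigma$. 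If the resulting boundaryless space is still non-orientable, I pass to its orientation double cover, which by the equivalence of orientability notions established in the main theorem of the paper is again a closed orientable Alexandrov $n$-space of curvature $\ge \kappa$; the transfer map $t$ satisfying $p_\ast \circ t = 2 \cdot \mathrm{id}$ on rational homology shows that rational vanishing on the cover descends to the base.

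Once reduced to closed orientable $\Sigma$ without boundary, I apply the corollary of Theorem \ref{thm:PD1} to obtain
\[
H_{n-1}(\Sigma;\mathbb{Q}) \cong H^1(\Sigma;\mathbb{Q}) \cong \mathrm{Hom}(H_1(\Sigma;\mathbb{Q}),\mathbb{Q}),
\]
where the second identification is the universal coefficient theorem over $\mathbb{Q}$. So it is enough to show $\pi_1(\Sigma)$ is finite. This is the Alexandrov analog of Myers' theorem: the universal cover $\tilde\Sigma$ inherits the lower curvature bound locally and is itself an Alexandrov space of curvature $\ge \kappa$, hence has diameter $\le \pi/\sqrt\kappa$ and is compact, forcing the covering degree (and therefore $\pi_1(\Sigma)$) to be finite. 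Hence $H_1(\Sigma;\mathbb{Q}) = 0$, giving the rational statement. For the integral refinement under orientability, the same finiteness of $\pi_1$ makes $H_1(\Sigma;\mathbb{Z})$ a finite abelian group, and combining an integral Poincar\'e duality with the universal coefficient theorem gives $H_{n-1}(\Sigma;\mathbb{Z}) \cong H^1(\Sigma;\mathbb{Z}) \cong \mathrm{Hom}(H_1(\Sigma;\mathbb{Z}),\mathbb{Z}) = 0$.

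The hard step in this plan is the boundary reduction: verifying that Perelman's doubling is compatible with singular homology well enough to run Mayer--Vietoris, and setting up the induction on dimension for $\partial\Sigma$ cleanly. A second subtlety is the integer case, since the paper's duality statement is explicitly over $\mathbb{Q}$; if only rational duality is available one must separately exclude torsion in $H_{n-1}(\Sigma;\mathbb{Z})$, for instance by running the same argument with $\mathbb{Z}/p$ coefficients for each prime $p$ (which requires a $\mathbb{Z}/p$ duality, delicate at $p = 2$ because of the orientation cover) and then invoking universal coefficients. The Myers-type finiteness of $\pi_1$ is standard once Toponogov comparison is in place and should cause no difficulty.
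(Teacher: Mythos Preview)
Your argument for the orientable rational case is essentially the paper's: Bonnet--Myers gives compactness and finiteness of $\pi_1$, hence $H_1(\Sigma;\mathbb Q)=0$, and the Poincar\'e-type duality of Theorem~\ref{thm:PD1} transports this to $H_{n-1}(\Sigma;\mathbb Q)=0$. This part is fine.

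The genuine gap is in the integer refinement. You invoke an ``integral Poincar\'e duality'' to get $H_{n-1}(\Sigma;\mathbb Z)\cong H^1(\Sigma;\mathbb Z)$, but no such isomorphism is available here: the duality in Theorem~\ref{thm:PD1} is over $\mathbb Q$, and the examples in Remark~\ref{rem:middle degree} show integral duality fails for NB-spaces in general. Your proposed workaround via $\mathbb Z/p$-duality is not supplied by the paper either, and at $p=2$ you would have to control the ramification of the orientation cover. The paper avoids all of this with a one-line appeal to Theorem~\ref{thm:main thm}\,(B): for a compact connected orientable NB-space, $H_{n-1}(\Sigma;\mathbb Z)$ is torsion-free. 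Combined with the rational vanishing you already have, this immediately gives $H_{n-1}(\Sigma;\mathbb Z)=0$.

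Two smaller points. First, the boundary reduction via doubling is unnecessary: if $\partial\Sigma\neq\emptyset$ and $\kappa>0$, Theorem~\ref{thm:positive boundary} says $\Sigma\approx\bar c(\partial\Sigma)$ is contractible, so all reduced homology vanishes trivially. Second, in the non-orientable case the ``orientation double cover'' of an NB-space is in general only a \emph{ramified} double cover (Theorem~\ref{thm:ram}), with branch locus $\Sigma\setminus\Sigma_{\mathrm{ori}}$; the usual transfer identity $p_\ast\circ t=2\cdot\mathrm{id}$ is not immediate. The paper handles this by passing to compactly supported cohomology on the manifold part $\Sigma_{\mathrm{top}}$ (where the cover is honest) and using that $\dim(\Sigma\setminus\Sigma_{\mathrm{top}})\le n-3$ to identify $H^{n-1}(\Sigma;\mathbb Q)\cong H^{n-1}_{\mathrm c}(\Sigma_{\mathrm{top}};\mathbb Q)\cong H^{n-1}(\hat\Sigma;\mathbb Q)^{\mathbb Z_2}$. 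Your transfer heuristic is morally the same, but the ramification should be acknowledged and dealt with.
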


We obtain an obstruction to the topology of Alexandrov spaces:
\begin{theorem} \label{thm:Alex top}
Let $M$ be an $n$-dimensional Alexandrov space without boundary, where $n \ge 2$. 
Then, for each $x \in M$, we have, either, 
\[
H_n(M, M \setminus \{x\}; \mathbb Z) \cong \mathbb Z \text{ and } H_{n-1}(M,M \setminus \{x\}; \mathbb Z) \cong 0,
\]
or 
\[
H_n(M, M \setminus \{x\}; \mathbb Z) = 0 \text{ and } H_{n-1}(M,M \setminus \{x\}; \mathbb Z) \cong \mathbb Z_2.
\]

Further, when $M$ is the boundary of an $(n+1)$-dimensional Alexandrov space, the same statement as above holds.
\end{theorem}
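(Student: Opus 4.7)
The plan is to localize the question via the conical neighborhood theorem, reducing the local relative homology at $x$ to the reduced homology of the space of directions $Y:=\Sigma_xM$, and then to extract the dichotomy from the orientability theorem (Theorem \ref{thm:main thm}) together with Corollary \ref{cor:positive}. Perelman's conical neighborhood theorem produces a homeomorphism between a small metric ball about $x$ and the open cone $c(Y)$ sending $x$ to the cone vertex $v$. Excision together with the cone formula (from the long exact sequence of the pair $(cY,cY\setminus\{v\})$ and the deformation retraction of the punctured cone onto its base) yields
\[
H_k(M,M\setminus\{x\};\mathbb{Z})\;\cong\;\tilde H_{k-1}(Y;\mathbb{Z}),
\]
so the task reduces to showing that $\bigl(\tilde H_{n-1}(Y;\mathbb{Z}),\tilde H_{n-2}(Y;\mathbb{Z})\bigr)$ equals either $(\mathbb{Z},0)$ or $(0,\mathbb{Z}_2)$.

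Because $M$ is boundaryless, $Y$ is a closed $(n-1)$-dimensional Alexandrov space of curvature $\geq 1$ without boundary (and connected for $n\geq 3$). When $n=2$, $Y$ is a metric circle and the first alternative is immediate. For $n\geq 3$, Theorem \ref{thm:main thm} applied to $Y$ splits into two cases. If $Y$ is orientable, the main theorem supplies the integral top class $H_{n-1}(Y;\mathbb{Z})\cong\mathbb{Z}$, while the integral refinement in Corollary \ref{cor:positive} (available because $Y$ is orientable with positive curvature bound) gives $H_{n-2}(Y;\mathbb{Z})=0$, producing the first alternative.

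The non-orientable case is the main obstacle. Here Theorem \ref{thm:main thm} forces $H_{n-1}(Y;\mathbb{Z})=0$ and Corollary \ref{cor:positive} forces $H_{n-2}(Y;\mathbb{Q})=0$, so $H_{n-2}(Y;\mathbb{Z})$ is torsion; the work is to identify it with $\mathbb{Z}_2$. I would pass to the orientation double cover $\pi\colon\tilde Y\to Y$; the space $\tilde Y$ is a closed \emph{orientable} Alexandrov $(n-1)$-space of curvature $\geq 1$, so by the orientable case $H_{n-2}(\tilde Y;\mathbb{Z})=0$, and the transfer relation $\pi_*\circ\tau=2\cdot\mathrm{id}$ annihilates $H_{n-2}(Y;\mathbb{Z})$ by $2$. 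Feeding this into the universal coefficient sequence
\[
0\longrightarrow H_{n-1}(Y;\mathbb{Z})\otimes\mathbb{Z}_2\longrightarrow H_{n-1}(Y;\mathbb{Z}_2)\longrightarrow\mathrm{Tor}\bigl(H_{n-2}(Y;\mathbb{Z}),\mathbb{Z}_2\bigr)\longrightarrow 0,
\]
whose left term vanishes, and using that $H_{n-1}(Y;\mathbb{Z}_2)\cong\mathbb{Z}_2$ (the mod-$2$ fundamental class, which is unconditional since the $\mathbb{Z}_2$-version of Theorem \ref{thm:main thm} dispenses with the orientability hypothesis), pins down the $\mathbb{F}_2$-dimension of $H_{n-2}(Y;\mathbb{Z})$ to exactly one.

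Finally, the addendum about $M=\partial N$ reduces to the previous case by invoking the fact that the boundary of an Alexandrov space is itself an Alexandrov space without boundary (the ``boundary has no boundary'' phenomenon), so the earlier argument applies verbatim; alternatively one observes directly that the conical neighborhood of $x\in\partial N$ in $M$ is homeomorphic to $c(\partial\Sigma_xN)$, which feeds the same cone formula into the analysis of the closed curvature-$\geq 1$ Alexandrov space $\partial\Sigma_xN$.
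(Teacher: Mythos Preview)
Your overall strategy matches the paper's: reduce to $\tilde H_{*-1}(\Sigma_x;\mathbb Z)$ via Perelman's stability theorem (Theorem~\ref{thm:stab}), then analyze the closed positively curved space $\Sigma_x$ using Theorem~\ref{thm:main thm} and Corollary~\ref{cor:positive}. The orientable branch is handled exactly as the paper does.

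There is, however, a genuine gap in your non-orientable branch. The ``orientation double cover'' $\pi:\tilde Y\to Y$ that exists for NB-spaces is the Harvey--Searle \emph{ramified} cover of Theorem~\ref{thm:ram}: the involution has fixed points over $Y\setminus Y_{\mathrm{ori}}$, so $\pi$ is not a covering map and the chain-level transfer identity $\pi_*\circ\tau=2\cdot\mathrm{id}$ is not available. Without it, your subsequent universal-coefficient computation only yields $\mathrm{Tor}(H_{n-2}(Y;\mathbb Z),\mathbb Z_2)\cong\mathbb Z_2$, which by itself does not exclude a $\mathbb Z_4$-summand or odd torsion. The paper closes this gap by invoking Corollary~\ref{cor:non-ori}: for a compact connected non-orientable NB-space one has $H_{\dim Y}(Y;G)\cong\{g\in G:2g=0\}$, proved via an orientation-reversing loop in $Y_{\top}$ together with the vanishing theorem (Theorem~\ref{thm:main thm}(C)); feeding in $G=\mathbb Z_a$ for all $a$ and applying universal coefficients then forces the torsion of $H_{n-2}(Y;\mathbb Z)$ to be exactly $\mathbb Z_2$. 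Combined with $H_{n-2}(Y;\mathbb Q)=0$ from Corollary~\ref{cor:positive}, this gives the second alternative.

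A smaller point: your first reduction for the addendum (``the boundary of an Alexandrov space is itself an Alexandrov space'') is not known; the paper explicitly flags this after Corollary~\ref{cor:bdry is NB}. Your alternative is the correct route and is what the paper uses: $\partial N$ is an NB-space (Corollary~\ref{cor:bdry is NB}) with conical neighborhoods $c(\partial\Sigma_xN)$ by Theorem~\ref{thm:positive boundary}, and $\partial\Sigma_xN$ is a genuine closed Alexandrov space of curvature $\ge 1$, so the same dichotomy applies.
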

\begin{proof}
Combine Corollaries \ref{cor:positive} and \ref{cor:non-ori}, and Theorems \ref{thm:main thm} (A), (B), \ref{thm:stab} and \ref{thm:positive boundary}.
\end{proof}

\subsection{Non-branching MCS-spaces without boundary and the orientability}
Before stating our results about orientations, we should recall Perelman's stability theorem (Theorem \ref{thm:stab}) about the underlying topologies of Alexandrov spaces. 
Perelman proved 
that any Alexandrov space is a locally cone-like space, that is, any point in the space has a neighborhood which is homeomorphic to the cone over a compact space (\cite{Per:Alex2}, \cite{Per:Morse}). 
Moreover, the compact space as a generator of the cone can be taken to be an Alexandrov space. 
The last assertion is called the stability theorem. 
See Thereom \ref{thm:stab}, for details.
His proof was based on a deep study of the geometric topology due to Siebenmann (\cite{Sie}). 
So, to use the theory of \cite{Sie}, he prepared a class of locally cone-like spaces, called {\it MCS-spaces} (Defnition \ref{def:MCS}). 
However, the class of MCS-spaces is too large to describe the topologies of Alexandrov spaces. 
For instance, a generic graph is an MCS-space, but does not admit a metric of an Alexandrov space.
Actually, a graph with a vertex $v$ of degree at least three equipped with a natual length metric has curvature $-\infty$ at $v$ in the sense of Alexandrov.

Taking into account the stability theorem, Harvey and Searle (\cite{HS}) pointed out that Alexandrov spaces actually belong to a more restricted class of certain MCS-spaces that were called {\it non-branching MCS-spaces}.
The graph appeared just before is not an non-branching MCS-space.
We will deal with a little bit more restricted class of non-branching MCS-spaces, as a generalization of the class of Alexandrov spaces. 
A space belonging to our class will be called an {\it NB-space with or without boundary} (Definitions \ref{def:NB} and \ref{def:NB with boundary}). 
For sake of simplicity, NB-spaces without boundary are called NB-spaces.
Here, our NB-spaces are nothing but non-branching MCS-spaces without boundary in the sense of \cite{HS}. 
On the other hands, the class of NB-spaces with boundary is strictly smaller than the class of non-branching MCS-spaces with boundary.
Since the definition of NB-spaces with boundary is somewhat complicated, we put off the discussion about those spaces to Appendix \ref{sec:NB with boundary}.

As will be mentioned later, at least two notions of orientability which are naturally inherited to NB-spaces from Alexandrov spaces (without boundary) were proved to be equivalent (\cite{HS}, see Lemma \ref{lem:HS}).
In that sense, the class of NB-spaces was known to be suitable to discuss their orientability.
One of purposes of the present paper is to show that the class of NB-spaces (with or without boundary) is well suited to define the notions of orientability. 
Until the end of the subsection, we are going to state results on NB-spaces and their orientability. 
Let us recall the definition of NB-spaces: 

\begin{definition}[\cite{HS}] \upshape \label{def:NB}
We say that a separable metrizable space $X$ is an $n$-dimensional {\it non-branching MCS-space without boundary} (in short, {\it NB-space}) if it is a topological one-manifold without boundary when $n=1$; or it satisfies that for any $x \in X$, there exist an open neighborhood $U$ of $x$ and a compact connected NB-space $\Sigma$ of dimension $n-1$ such that $(U,x)$ is homeomorphic to $(c(\Sigma), o)$ as pointed spaces when $n \ge 2$, where $c(A)$ denotes the topological open cone $c(A) = A \times [0, \infty) / A \times \{0\}$ over a space $A$ and $o$ is the apex of it. 
In this case, $U$ is called a {\it conical neighborhood at $x$} and $\Sigma$ is a {\it generator} of the conical neighborhood $U$. 
\end{definition}

Typical examples considered here are Alexandrov spaces without boundary. 
The cone over the union of two disjoint circles is not an NB-space, but it is an MCS-space. 
Manifolds with boundary are not NB-spaces. 
Other examples and non-examples of NB-spaces are written in Section \ref{sec:prelim}. 

Any NB-space $X$ of dimension $n$ is known to have an open dense subset $X_\top$ such that $X_\top$ itself is an $n$-manifold without boundary (Lemma \ref{lem:open dense mfd}). 
One of valid definitions of orientability for $X$ is the following: 
we temporarily say that $X$ is ``orientable'' if $X_\top$ is orientable as an $n$-manifold. 
As an argument written in \cite{Y} (cf.\! \cite{GP}), we immediately know that if $X$ is compact connected and ``orientable'', then $H^n(X;\mathbb Z) \cong \mathbb Z$. 
Here, the following natual and naive problem arises.
\begin{problem} \label{problem}
Is it true that if a compact connected $n$-dimensional NB-space $X$ is ``orientable'', then $H_n(X;\mathbb Z)$ is isomorphic to $\mathbb Z$? Is the converse statement true?
\end{problem}
In other words, the problem ask us whether an ``orientation'' determines and is determined by the existence of fundamental class.
A starting point of the present paper is to answer to the problem in affirmatively. 
More generally, we will prove Theorem \ref{thm:main thm} as follows. 
The precise meaning of phrases in each statement of Theorem \ref{thm:main thm}, will be defined in Section \ref{sec:prelim}. 
\begin{theorem} \label{thm:main thm}
Let $X$ be an $n$-dimensional connected NB-space. 
Then, we have 
\begin{itemize}
\item[(A)] The following conditions are equivalent: 
\begin{itemize}
\item[(a)] the manifold-part $X_\top$ of $X$ is orientable as an $n$-manifold; 
\item[(b)] $H^n_\cpt(X;\mathbb Z) \cong \mathbb Z$ and the canonical morphism $H_\cpt^n(X;\mathbb Z) \leftarrow H^n(X,X \setminus \{x\};\mathbb Z)$ is isomorphic for any $x \in X$; 
\item[(c)] $X$ is cohomologically orientable; 
\item[(d)] $X$ is homologically orientable.
\end{itemize}
Further, if $X$ is compact, then the following conditions $(\mathrm{e})$--$(\mathrm{h})$ are also equivalent to the above conditions $(\mathrm{a})$--$(\mathrm{d})$: 
\begin{itemize}
\item[(e)] $H_n(X;\mathbb Z) \cong \mathbb Z$; 
\item[(f)] $H_n(X;\mathbb Z) \neq 0$;
\item[(g)] $H^n(X;\mathbb Z) \cong \mathbb Z$.
\item[(h)] the rank of $H^n(X;\mathbb Z)$ is positive.
\end{itemize}
Moreover, if $H_n(X;\mathbb Z) \neq 0$, then $X$ is compact and the canonical morphism $H_n(X;\mathbb Z) \to H_n(X,X \setminus \{x\};\mathbb Z)$ is isomorphic for any $x \in X$.
\item[(B)]
If $X$ is compact and orientable in the sense that it satisfies one of the conditions in $(\mathrm{A})$, then $H_{n-1}(X;\mathbb Z)$ has no torsion. 
\item[(C)]
If $X$ is non-compact (which is possibly orientable or not), then $H_n(X;G) = 0$ for any abelian group $G$.
\end{itemize}
\end{theorem}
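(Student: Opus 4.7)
The plan is to prove parts (A), (B), (C) simultaneously by induction on $n = \dim X$. The base case $n = 1$ is classical since $X$ is then a $1$-manifold without boundary, so the main effort is the inductive step $n \ge 2$, assuming all three parts hold for NB-spaces of dimension less than $n$. As a first step I would carry out a local homology computation: at each $x \in X$, choose a conical neighborhood $U \cong c(\Sigma_x)$ with $\Sigma_x$ a compact connected $(n-1)$-dimensional NB-space (as in Definition \ref{def:NB}), and use excision together with the cone formula to obtain
\[
H_k(X, X \setminus \{x\};\mathbb Z) \cong \tilde H_{k-1}(\Sigma_x;\mathbb Z).
\]
Part (A) of the inductive hypothesis, applied to $\Sigma_x$, then forces $H_n(X, X \setminus \{x\};\mathbb Z)$ to equal $\mathbb Z$ when $\Sigma_x$ is orientable and $0$ otherwise, while part (B) gives that $\tilde H_{n-2}(\Sigma_x;\mathbb Z)$ is torsion-free in the orientable case.

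Second, I would mimic the classical manifold construction of the orientation double cover $\tilde X \to X$ by gluing the local generators of $H_n(X, X \setminus \{x\};\mathbb Z) \cong \mathbb Z$ identified in the first step. The locus where this local homology vanishes is a proper closed subset contained in the singular set $X \setminus X_\top$, and on its complement $\tilde X$ is a genuine double cover. The equivalences (a) $\Leftrightarrow$ (c) $\Leftrightarrow$ (d) would then follow by standard sheaf-theoretic manipulations once the local package is controlled, and (a) $\Leftrightarrow$ (b) would reduce, via the cone formula, to showing that an orientation on the open dense manifold stratum of $\Sigma_x$ (which embeds into $X_\top$) determines a generator of $H_{n-1}(\Sigma_x;\mathbb Z)$; this is itself an instance of the inductive hypothesis.

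For the compact equivalences (e)--(h) I would adapt the classical local-to-global fundamental class construction: the natural map $H_n(X;\mathbb Z) \to \prod_{x \in X} H_n(X, X \setminus \{x\};\mathbb Z)$ is injective, and a class exists globally precisely when the collection of local images forms a compatible section of the orientation sheaf. Part (B) then follows by running the same argument with $\mathbb Z_p$ coefficients for each prime $p$ to produce $H_n(X;\mathbb Z_p) \cong \mathbb Z_p$ (valid because $\mathbb Z$-orientability yields $\mathbb Z_p$-orientability), and reading off the universal coefficient sequence
\[
0 \to H_n(X;\mathbb Z) \otimes \mathbb Z_p \to H_n(X;\mathbb Z_p) \to \mathrm{Tor}(H_{n-1}(X;\mathbb Z),\mathbb Z_p) \to 0
\]
to conclude that $\mathrm{Tor}(H_{n-1}(X;\mathbb Z),\mathbb Z_p) = 0$ for every $p$, hence $H_{n-1}(X;\mathbb Z)$ is torsion-free. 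Part (C) follows from the same local-to-global injectivity: an $n$-cycle corresponds to a compatible family of local $G$-generators of compact support, but on a non-compact connected NB-space no nonzero such family can exist.

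The main obstacle I expect is the orientation sheaf step: I must verify that $H_n(X, X \setminus \{x\};\mathbb Z)$ varies continuously in $x$ across singular strata of arbitrary codimension, so that the proposed $\tilde X$ is genuinely a topological covering on the relevant open set. The recursive structure of NB-spaces makes this tractable since an orientation of $\Sigma_x$ near the apex of its cone is determined by its restriction to the open dense manifold part of $\Sigma_x$, which sits inside $X_\top$. Pushing this through forces (A) and (B) of the inductive hypothesis to be used simultaneously—the latter because torsion-freeness of $H_{n-2}(\Sigma_x;\mathbb Z)$ is needed to pin down the degree $(n-1)$ local homology that appears in Theorem \ref{thm:Alex top}—which is the structural reason to bundle (A), (B), (C) into one induction.
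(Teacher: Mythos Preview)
Your inductive scheme and the use of the cone formula $H_k(X|x)\cong\tilde H_{k-1}(\Sigma_x)$ match the paper's architecture, and your argument for (B) via the universal coefficient sequence is essentially the paper's. For (a)--(d) the paper works through explicit implications (notably $(\mathrm c)\Rightarrow(\mathrm d)$ using torsion-freeness of $\tilde H_{n-2}(\Sigma_x;\mathbb Z)$, and $(\mathrm a)\Rightarrow(\mathrm b)$ via the cohomological Harvey--Searle argument based on $\dim(X\setminus X_\top)\le n-2$) rather than building an orientation bundle, but your sheaf-flavored route is a reasonable alternative.

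The genuine gap is part (C). You assert that $H_n(X;G)\to\prod_x H_n(X|x;G)$ is injective and then say (C) follows from ``the same local-to-global injectivity'', but neither claim is justified, and they are mutually dependent: injectivity for compact $X$ is most cleanly obtained \emph{from} (C) applied to the non-compact space $X\setminus\{x\}$, so invoking one to prove the other is circular unless you specify an independent argument. Moreover, the standard manifold proof (the set $\{x:s_x\neq 0\}$ is open and closed) fails outright here: at a point $x$ whose generator $\Sigma_x$ is non-orientable the stalk $H_n(X|x;\mathbb Z)$ vanishes, so $s_x=0$ carries no information about nearby $s_y$, and the ``compatible family'' picture breaks down exactly at the locus you flagged as the main obstacle. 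The paper devotes all of Section~\ref{sec:proof of C} to this: it first shows $H_n(U;G)=0$ for any open $U$ in a cone (Lemma~\ref{lem:cone03}), proves a local injectivity statement for cones (Lemma~\ref{lem:cone04}), then shows $H_n(X)\to H_n(X|x)$ is the zero map for non-compact $X$ by routing through the \emph{connected} manifold part $X_\top$ to a point outside the cycle's support (Lemma~\ref{lem:cone05}), and finally assembles global vanishing by a Mayer--Vietoris induction over finitely many conical domains covering that support (Lemma~\ref{lem:cone06}). Your one-sentence sketch supplies none of these steps; in particular it does not explain how a section-type argument survives across the locus where the orientation stalk degenerates, nor how to get from ``all local images vanish'' to ``the class itself vanishes'' without already knowing (C).
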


Theorem \ref{thm:main thm} (A) and (C) gives an affirmative answer to Problem \ref{problem} as follows.
\begin{corollary} \label{cor:answer}
Let $X$ be a connected NB-space. Then, the condition that $X$ is compact and $X_\top$ is orientable, is equivalent to 
$H_n(X;\mathbb Z) \neq 0$.
Moreover, if $X$ satisfies one of the equivalent conditions, then $H_n(X;\mathbb Z) \cong \mathbb Z$ and the canonical morphism $H_n(X;\mathbb Z) \to H_n(X,X \setminus \{x\}; \mathbb Z)$ is an isomorphism for each $x \in X$.
\end{corollary}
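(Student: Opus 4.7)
The plan is to derive the corollary as a direct packaging of Theorem \ref{thm:main thm}, isolating its three constituent claims and matching each to the appropriate clause of the theorem. No new geometric or topological input is needed beyond what the theorem already provides; the content of the corollary is precisely that parts (A) and (C) of Theorem \ref{thm:main thm} combine cleanly to answer Problem \ref{problem}.

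First I would establish the stated equivalence. For the forward direction, assume $X$ is compact and $X_\top$ is orientable. Then condition (a) of Theorem \ref{thm:main thm} (A) holds; since $X$ is compact, the conditions (a)--(h) are all equivalent, so in particular (f) gives $H_n(X;\mathbb Z) \neq 0$. For the converse, suppose $H_n(X;\mathbb Z) \neq 0$. If $X$ were non-compact, Theorem \ref{thm:main thm} (C) (applied with $G = \mathbb Z$) would force $H_n(X;\mathbb Z) = 0$, a contradiction; hence $X$ must be compact. Applying Theorem \ref{thm:main thm} (A) once more, the equivalence (f) $\Leftrightarrow$ (a) yields that $X_\top$ is orientable.

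For the ``moreover'' assertion, once compactness of $X$ has been secured, the last clause of Theorem \ref{thm:main thm} (A) directly provides that the canonical morphism $H_n(X;\mathbb Z) \to H_n(X, X \setminus \{x\};\mathbb Z)$ is an isomorphism for every $x \in X$. The isomorphism $H_n(X;\mathbb Z) \cong \mathbb Z$ is exactly condition (e) of part (A), which follows again from (f) by the same equivalence.

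Since every step is a direct invocation of Theorem \ref{thm:main thm}, I do not anticipate any substantive obstacle; the only care required is in correctly applying the contrapositive of part (C) to obtain compactness before passing back to part (A) to recover orientability.
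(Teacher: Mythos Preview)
Your proposal is correct and matches the paper's approach: the paper does not give a separate proof but states the corollary as a direct consequence of parts (A) and (C) of Theorem \ref{thm:main thm}, exactly as you have unpacked it. One minor remark: the last clause of (A) already takes $H_n(X;\mathbb Z)\neq 0$ (not compactness) as its hypothesis and yields both compactness and the isomorphism $H_n(X;\mathbb Z)\to H_n(X|x;\mathbb Z)$, so your separate appeal to (C) for compactness is redundant though not wrong.
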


Due to Theorem \ref{thm:main thm} (A), we fix the notion of orientability for NB-spaces: 
\begin{definition} \upshape \label{def:ori}
We say that a connected NB-space is {\it orientable} if it satisfies one of the conditions (a)--(d) listed in Theorem \ref{thm:main thm} (A).
For a general NB-space, it is {\it orientable} if each component is orientable. 
\end{definition}

Let us explain what the conditions had been considered in order to define the orientability of Alexandrov spaces, 
by using the statement of Theorem \ref{thm:main thm}.
The condition (a) was employed as the orientability in \cite{Y}. 
Although an explicit formulation was not given, in \cite{Pet}, 
it might be considered that an Alexandrov space without boundary  
is orientable if it satisfies (b) (or (c)). 
Here, the conditions (b) and (c) are easily known to be equivalent. 
Harvey and Searle (\cite{HS}) considered the notion of orienability for connected NB-spaces as the condition (b) 
and proved that (a) and (b) are equivalent (Lemma \ref{lem:HS}). 
An essentially new condition to define the orientability is the condition (d). 
However, (d) is the same as the usual condition to define the orientability for topological manifolds. 
So, the form of the condition (d) itself is not new. 
A point of this paper is to clear mutual relations among the conditions.
We should notice that when the conditions (a) and (b) (and (c)) are admitted to be equivalent, (d) is stronger than them (See Section \ref{sec:proof of AB}).
To prove that (d) and (b) are equivalent, we need the statement (B) and (C). 

Further, the statements (A), (B) and (C) themselves in Theorem \ref{thm:main thm} are mutually related. 
Indeed, denoting by (A)$_n$, (B)$_n$ and (C)$_n$ the statements (A), (B) and (C) for $n$-dimensional connected NB-spaces, respectively, we will prove the following implications 
\begin{align*}
&(\mathrm{A})_{n-1}, (\mathrm{B})_{n-1} \text{ and } (\mathrm{C})_n \implies (\mathrm{A})_n; \\
&(\mathrm{A})_n \text{ and } (\mathrm{C})_n \implies (\mathrm{B})_n; \\
&(\mathrm{A})_{n-1} \text{ and } (\mathrm{B})_{n-1} \implies (\mathrm{C})_n. 
\end{align*}


The following are immediate consequences of Theorem \ref{thm:main thm}.
\begin{corollary}
If an NB-space $X$ is orientable, then any generator at any point is orientable.
\end{corollary}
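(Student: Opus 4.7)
The plan is to use condition (a) of Theorem \ref{thm:main thm} (A), which is the most transparently local characterization of orientability: a connected NB-space $Y$ is orientable precisely when its manifold-part $Y_\top$ is orientable in the usual sense. Since a generator $\Sigma$ at $x$ is by definition compact connected, Theorem \ref{thm:main thm} (A) applies to it, so it will be enough to show $\Sigma_\top$ is an orientable $(n-1)$-manifold.

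First I would unpack the setup. Pick a point $x \in X$ and a generator $\Sigma$ at $x$, so there is an open neighborhood $U \ni x$ in $X$ and a homeomorphism $(U, x) \cong (c(\Sigma), o)$. Because orientability of $X$ is equivalent to orientability of the open manifold $X_\top$, and orientability of manifolds passes to open submanifolds, the open subset $U_\top = U \cap X_\top$ is orientable. Transporting through the homeomorphism, the manifold-part $c(\Sigma)_\top$ of the cone is an orientable manifold.

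Next I would identify the relevant manifold points of the cone. Under the canonical homeomorphism $c(\Sigma) \setminus \{o\} \cong \Sigma \times (0,\infty)$, being a topological manifold point is a local property, so a point $(p, t)$ with $t > 0$ is a manifold point of $c(\Sigma)$ if and only if $p$ is a manifold point of $\Sigma$. Consequently $\Sigma_\top \times (0,\infty)$ is an open submanifold of $c(\Sigma)_\top$, hence is itself orientable. Since orientability of a product with an open interval is equivalent to orientability of the other factor (restrict to any slice $\Sigma_\top \times \{t_0\}$), we conclude that $\Sigma_\top$ is an orientable $(n-1)$-manifold. Applying Theorem \ref{thm:main thm} (A) to the connected compact NB-space $\Sigma$ yields the desired orientability of $\Sigma$.

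There is no serious obstacle here; the only point requiring care is that the manifold-part behaves well under the cone construction off the apex, which is immediate from the fact that $\Sigma \times (0,\infty)$ is locally the product of $\Sigma$ with an open interval and that manifold points are a local notion. Everything else is just transporting the condition (a) of Theorem \ref{thm:main thm} (A) through a homeomorphism and through the open inclusion $\Sigma_\top \times (0,\infty) \hookrightarrow c(\Sigma)_\top$.
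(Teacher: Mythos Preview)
Your argument is correct. One small overstatement: you assert that $(p,t)$ with $t>0$ is a manifold point of $c(\Sigma)$ \emph{if and only if} $p$ is a manifold point of $\Sigma$, but the ``only if'' direction is not obvious in general (taking a product with $\mathbb R$ can resolve non-manifold points, as in the double-suspension phenomenon mentioned in the paper). Fortunately you only use the trivial ``if'' direction, which gives the inclusion $\Sigma_\top \times (0,\infty) \subset c(\Sigma)_\top$ you actually need; this is also what the paper records in the proof of Lemma~\ref{lem:X_top conn}.

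The paper does not spell out a proof, calling the corollary an immediate consequence of Theorem~\ref{thm:main thm}. The most natural ``immediate'' reading is homological rather than via condition~(a): orientability of $X$ gives $H_n(X|x;\mathbb Z)\cong\mathbb Z$ (local orientability), and since $H_n(X|x;\mathbb Z)\cong \bar H_{n-1}(\Sigma;\mathbb Z) = H_{n-1}(\Sigma;\mathbb Z)$ for the compact connected generator $\Sigma$, condition~(e) of Theorem~\ref{thm:main thm}~(A) applied in dimension $n-1$ shows $\Sigma$ is orientable. Your route through condition~(a) is equally valid and more geometric, staying entirely at the level of manifold parts; it uses only the easiest equivalence in Theorem~\ref{thm:main thm}~(A). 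The homological route, by contrast, is a one-line computation but leans on the harder implication (f)/(e) $\Rightarrow$ orientable.
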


\begin{theorem} \label{thm:compact}
For an $n$-dimensional compact connected orientable NB-space $X$, we have that $H_n(X;G) \cong G$ and $H_n(X;G) \to H_n(X, X \setminus \{x\};G)$ is isomorphic for every $x \in X$ and for every abelian group $G$.
\end{theorem}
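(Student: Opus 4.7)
The plan is to derive both assertions from Theorem \ref{thm:main thm} using the universal coefficient theorem (UCT) for singular homology, which supplies, for any topological pair $(Y,A)$, a short exact sequence
\[
0 \to H_n(Y,A;\mathbb{Z}) \otimes G \to H_n(Y,A;G) \to \mathrm{Tor}(H_{n-1}(Y,A;\mathbb{Z}), G) \to 0
\]
that is natural in $(Y,A)$.

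Applying UCT to $X$ itself yields the first assertion: by Theorem \ref{thm:main thm} (A) (condition (e)) we have $H_n(X;\mathbb{Z}) \cong \mathbb{Z}$, and by Theorem \ref{thm:main thm} (B) the group $H_{n-1}(X;\mathbb{Z})$ is torsion-free, so the Tor term vanishes and $H_n(X;G) \cong \mathbb{Z} \otimes G \cong G$.

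For the local map, I would apply the naturality of UCT to the inclusion of pairs $(X,\emptyset) \hookrightarrow (X, X \setminus \{x\})$, producing a commutative ladder to which the five lemma will apply. The leftmost vertical map $H_n(X;\mathbb{Z}) \otimes G \to H_n(X, X \setminus \{x\};\mathbb{Z}) \otimes G$ is an isomorphism because Theorem \ref{thm:main thm} (A) ensures $H_n(X;\mathbb{Z}) \to H_n(X, X \setminus \{x\};\mathbb{Z})$ is an isomorphism $\mathbb{Z} \to \mathbb{Z}$, and the rightmost Tor source is already $0$; so it remains only to show that the target $\mathrm{Tor}(H_{n-1}(X, X \setminus \{x\};\mathbb{Z}), G)$ vanishes, i.e., that $H_{n-1}(X, X \setminus \{x\};\mathbb{Z})$ is torsion-free.

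To verify this last point I would compute the local homology from the cone structure. Picking a conical neighborhood $U \cong c(\Sigma)$ at $x$ with generator $\Sigma$ a compact connected NB-space of dimension $n-1$, excision together with the long exact sequence of $(c(\Sigma), c(\Sigma) \setminus \{o\})$ — using that $c(\Sigma)$ is contractible and $c(\Sigma) \setminus \{o\}$ deformation retracts onto $\Sigma$ — yields
\[
H_{n-1}(X, X \setminus \{x\};\mathbb{Z}) \cong \tilde{H}_{n-2}(\Sigma;\mathbb{Z}).
\]
By the corollary immediately preceding this theorem, $\Sigma$ is orientable, and so Theorem \ref{thm:main thm} (B) applied to $\Sigma$ makes $H_{n-2}(\Sigma;\mathbb{Z})$ torsion-free; since $\tilde{H}_{n-2}$ differs from $H_{n-2}$ at most by a free summand, the same holds for the reduced group. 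The low-dimensional cases $n = 1, 2$ are handled directly since then $X$ or $\Sigma$ is homeomorphic to $S^1$. The main subtlety is precisely this invocation of statement (B) one dimension lower via the orientability of generators; once that is in hand, the rest of the argument is formal UCT and five-lemma bookkeeping.
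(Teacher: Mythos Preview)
Your proposal is correct and follows essentially the same route as the paper: both arguments feed the universal coefficient theorem with the torsion-freeness supplied by $(\mathrm{B})_n$ for $H_{n-1}(X;\mathbb Z)$ and by $(\mathrm{B})_{n-1}$ (via orientability of the generator $\Sigma$) for $H_{n-1}(X|x;\mathbb Z)$, then invoke the last clause of $(\mathrm{A})_n$ to identify the integral local map as an isomorphism. The paper merely condenses your five-lemma ladder into a single commutative square.
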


\begin{corollary} \label{cor:dual}
If $X$ is an $n$-dimensional compact connected orientable NB-space, then the cap product to a generator $[X]$ of $H_n(X;\mathbb Z)$ gives an isomorphism $[X] \cap - : H^n(X;G) \to H_0(X;G)$, where $G$ is an abelian group.
\end{corollary}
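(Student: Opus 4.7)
My approach is to reduce the assertion to the universal coefficient theorem combined with the standard compatibility between the cap product and the Kronecker pairing. The proof should be short given the preceding theorems.

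First, applying Theorem \ref{thm:compact} with $G = \mathbb{Z}$ shows $H_n(X;\mathbb{Z}) \cong \mathbb{Z}$, so a generator $[X]$ is well-defined. By Theorem \ref{thm:main thm}(B), the group $H_{n-1}(X;\mathbb{Z})$ is torsion-free, so for every abelian group $G$ the Ext-term in the universal coefficient theorem for $H^n(X;G)$ vanishes. Consequently, the Kronecker evaluation
\[
\kappa : H^n(X;G) \longrightarrow \mathrm{Hom}(H_n(X;\mathbb{Z}),G), \quad \phi \mapsto (\alpha \mapsto \langle \phi, \alpha \rangle),
\]
is an isomorphism; and since $[X]$ generates $H_n(X;\mathbb{Z}) \cong \mathbb{Z}$, evaluation at $[X]$ defines an isomorphism $\phi \mapsto \langle \phi, [X] \rangle$ from $H^n(X;G)$ onto $G$.

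Next, I would invoke the standard identity $\varepsilon_\ast([X] \cap \phi) = \langle \phi, [X] \rangle$, where $\varepsilon_\ast : H_0(X;G) \to G$ is the augmentation. Because $X$ is connected, $\varepsilon_\ast$ is an isomorphism. Combining this with the previous step, the composite
\[
H^n(X;G) \xrightarrow{[X] \cap -} H_0(X;G) \xrightarrow{\varepsilon_\ast} G
\]
is an isomorphism, and hence so is the cap product $[X] \cap -$, as desired.

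The argument is formally routine; the only inputs specific to the NB-setting are the existence of the fundamental class supplied by Theorem \ref{thm:compact} and the torsion-freeness of $H_{n-1}(X;\mathbb{Z})$ supplied by Theorem \ref{thm:main thm}(B). Consequently, the genuine difficulty has already been absorbed into those earlier results, and I do not anticipate any further obstacle in writing out the plan above in full detail.
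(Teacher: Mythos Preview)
Your proof is correct and follows essentially the same approach as the paper: the paper's one-line proof of Theorem~\ref{thm:dual} simply invokes ``the compatibility of operators: taking the cap product to the fundamental class, the augmentation and the evaluation,'' which is precisely the identity $\varepsilon_\ast([X]\cap\phi)=\langle\phi,[X]\rangle$ you spell out, together with the fact that both $\varepsilon_\ast$ and the Kronecker evaluation are isomorphisms. Your explicit appeal to Theorem~\ref{thm:main thm}(B) to kill the $\mathrm{Ext}$-term (using that $H_{n-1}(X;\mathbb Z)$, being finitely generated and torsion-free, is free) makes transparent what the paper leaves implicit.
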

In general, $[X] \cap - : H^{n-1}(X;G) \to H_1(X;G)$ is not an isomorphism (see Remark \ref{rem:middle degree}).

\subsection{Further applications to Alexandrov spaces}
Applications of the orientability to Alexandrov spaces were mentioned in the start of the introduction. 
Except them, we also exhibit topological and geometric applications to Alexandrov spaces. 

An {\it Alexandrov domain} denotes a connected open subset of an Alexandrov space. 
Note that this term is different from the one used in \cite{Pet} and \cite{HS}.
When a non-empty Alexandrov domain is compact, it is automatically the whole Alexandrov space.

\begin{corollary} \label{cor:bdry}
If an $n$-dimensional Alexandrov domain $X$ meets the boundary of the whole space, then $H_n(X;G)=0$ for any abelian group $G$.
\end{corollary}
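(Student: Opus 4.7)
The plan is to reduce the corollary to a statement about an NB-space where Theorem \ref{thm:main thm}(C) directly applies, via a classical doubling trick, and to handle the single residual case by induction on $n$.

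Let $M$ denote the connected component of the ambient Alexandrov $n$-space containing $X$, so that $X$ is a connected open subset of $M$ with $X \cap \partial M \neq \emptyset$. Form the topological double $DM$ of $M$ along $\partial M$. By a standard result in Alexandrov geometry, $DM$ carries a natural metric making it a closed Alexandrov $n$-space, hence a closed connected NB-space of dimension $n$. Let $i_1, i_2 : M \hookrightarrow DM$ denote the two canonical inclusions and set $Y = i_1(X) \cup i_2(X)$. I would first verify three structural facts: (i) $Y$ is open in $DM$, since points of $X \setminus \partial M$ lie in the open sets $i_j(M \setminus \partial M)$, while any $p \in X \cap \partial M$ admits a neighborhood $U \subseteq X$ in $M$ that glues in $DM$ to an open neighborhood $i_1(U) \cup i_2(U) \subseteq Y$ of $i_1(p) = i_2(p)$; (ii) $Y$ is connected, since $i_1(X) \cap i_2(X) = i_1(X \cap \partial M) \neq \emptyset$; (iii) the folding map $r : Y \to X$, $r(i_j(x)) = x$, is a well-defined continuous retraction along $i_1 : X \hookrightarrow Y$.

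If $X$ is non-compact, then its retract $Y$ is non-compact as well, and being an open subset of the NB-space $DM$, $Y$ is itself an $n$-dimensional NB-space. Theorem \ref{thm:main thm}(C) yields $H_n(Y;G) = 0$, and the retraction forces $H_n(X;G) = 0$. If instead $X$ is compact, then $X$ is clopen in $M$, hence $X = M$ by connectedness, so $M$ is a compact Alexandrov $n$-space with $\partial M \neq \emptyset$. I would handle this case by induction on $n$, the case $n = 1$ being immediate from the classification of one-dimensional Alexandrov spaces. Choose $p \in \partial M$, let $U$ be a conical (hence contractible) open neighborhood of $p$, and put $V = M \setminus \{p\}$. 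For $n \geq 2$ the set $V$ is connected, open, non-compact (as a proper open subset of the compact connected $M$), and meets $\partial M$, so the non-compact case just proved gives $H_n(V;G) = 0$. The intersection $U \cap V$ deformation retracts onto the space of directions $\Sigma_p$, which by the very definition of boundary points is an $(n-1)$-dimensional Alexandrov space with non-empty boundary; the inductive hypothesis applied to $\Sigma_p$ then gives $H_{n-1}(\Sigma_p;G) = 0$. The Mayer--Vietoris sequence for the open cover $\{U, V\}$ of $M$ sandwiches $H_n(M;G)$ between vanishing groups, yielding $H_n(M;G) = 0$.

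The main obstacle is the appeal to the structural facts borrowed from Alexandrov geometry, namely that the double of an Alexandrov space along its boundary is again an Alexandrov space of the same dimension without boundary, and that the space of directions at a boundary point is itself an Alexandrov space with non-empty boundary. Both are needed to place the auxiliary spaces $Y$ and $\Sigma_p$ genuinely in the NB-space framework to which Theorem \ref{thm:main thm} applies; the remainder of the argument is a comparatively routine combination of retractions, Theorem \ref{thm:main thm}(C), and Mayer--Vietoris.
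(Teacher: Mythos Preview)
Your argument is correct. The doubling construction in the non-compact case together with the Mayer--Vietoris and induction in the compact case both go through as written; the structural inputs you flag (Perelman's doubling theorem and the fact that $\Sigma_p$ is an Alexandrov space with nonempty boundary when $p\in\partial M$) are exactly the standard facts the paper itself invokes elsewhere.

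The paper's route is different and shorter. It observes that an Alexandrov domain $X$ meeting $\partial M$ is, by Theorems~\ref{thm:stab} and~\ref{thm:positive boundary}, an NB-space with boundary in the sense of Definition~\ref{def:NB with boundary}, and then applies the collar neighbourhood (Corollary~\ref{cor:collar}/Proposition~\ref{prop:collar}) to conclude that $X$ is homotopy equivalent to its interior $X\setminus\partial X$. The interior is an NB-space, and it is automatically non-compact whenever $\partial X\neq\emptyset$ (it is open, dense, and proper in the connected space $X$), so Theorem~\ref{thm:main thm}(C) finishes in one stroke. This handles the compact and non-compact cases uniformly, with no induction and no doubling. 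Your approach, by contrast, trades the collar lemma for the doubling theorem plus a retraction argument, and then needs a separate inductive treatment of the compact case; it is more hands-on but arrives at the same place. Neither route is essentially more elementary, since the collar and the doubling results for Alexandrov spaces are of comparable depth.
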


\begin{theorem} \label{thm:double}
Let $M$ be an $n$-dimensional Alexandrov space with boundary. 
Then, the interior of $M$ is orientable if and only if the double $D(M)$ of $M$ is orientable.
\end{theorem}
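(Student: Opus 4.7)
My plan is to use Theorem~\ref{thm:main thm}(A) to reduce the equivalence to the orientability of the underlying manifold parts, and then to exploit the canonical $\mathbb{Z}/2$-action on the double. I will use that the double $D(M)$ of an Alexandrov space with boundary is itself an Alexandrov space without boundary, hence an NB-space, and that $\mathrm{int}(M)=M\setminus\partial M$ is an open NB-space without boundary. Theorem~\ref{thm:main thm}(A)(a) then reduces each side of the equivalence to orientability of the manifold part as a topological $n$-manifold. Writing $\tau:D(M)\to D(M)$ for the involution exchanging the two copies $M_\pm$ of $M$ glued along $\partial M$, and $X_\top^\pm\subset D(M)_\top$ for the two copies of $\mathrm{int}(M)_\top$ sitting inside $D(M)_\top$ via $M_\pm$, I have $\tau(X_\top^+)=X_\top^-$ and $D(M)_\top\setminus(X_\top^+\sqcup X_\top^-)\subset\partial M$ (the fixed set of $\tau$).

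The direction $(\Leftarrow)$ is then immediate: an orientation of $D(M)_\top$ restricts to one on the open submanifold $X_\top^+\cong\mathrm{int}(M)_\top$. For $(\Rightarrow)$ I plan to start with an orientation $\omega_+$ on $X_\top^+$ coming from the assumed orientation of $\mathrm{int}(M)$, set $\omega_-:=-\tau_*\omega_+$ on $X_\top^-$, and then extend $\omega_+\sqcup\omega_-$ to a global orientation on $D(M)_\top$. The only points where a genuine extension argument is needed are those in $D(M)_\top\cap\partial M$.

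For such a $p$, I will choose a small connected Euclidean neighborhood $V\subset D(M)_\top$ of $p$ and, by replacing $V$ with $V\cap\tau V$ if necessary, arrange $V$ to be $\tau$-invariant. Then $\tau|_V$ is a topological involution of $V\cong\mathbb{R}^n$ fixing $V\cap\partial M$ and swapping the open sets $V\cap X_\top^\pm$. The crux will be that $\tau|_V$ must be orientation-reversing; granting this, the unique orientation $\mu$ on $V$ extending $\omega_+|_{V\cap X_\top^+}$ automatically satisfies $\mu|_{V\cap X_\top^-}=-\tau_*\omega_+=\omega_-$, so the local orientations patch into a global orientation of $D(M)_\top$. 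I expect the main obstacle to be precisely this orientation-reversing claim: when $p$ is a topological manifold-with-boundary point of $M$ the local model is a literal reflection and the claim is clear, but for the remaining $p\in D(M)_\top\cap\partial M$ I will need a purely topological argument, e.g.\ comparing the local homology of $M\cong D(M)/\tau$ at the boundary point $p$ with that of a Euclidean interior point via the Smith sequence of $\tau$, so as to rule out the possibility that $\tau|_V$ is orientation-preserving.
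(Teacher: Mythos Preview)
Your overall strategy is sound and close in spirit to the paper's, but it diverges in one important organizational choice and leaves the key step unfinished.

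\textbf{Comparison with the paper.} The paper (Proposition~\ref{prop:ori double}) does \emph{not} pass to the manifold part via condition~(a) of Theorem~\ref{thm:main thm}(A). Instead it works directly with homological orientations (condition~(d)) on all of $D(M)$. For each boundary point $x$ the paper uses the local model coming from the collar (Corollary~\ref{cor:collar}/Proposition~\ref{prop:collar}): a conical neighbourhood of $x$ in $M$ is $c(\Lambda)\times[0,1)$ for some $\Lambda\in\mathcal S^{n-2}$, so the corresponding neighbourhood in $D(M)$ is $c(\Lambda)\times(-1,1)$, with the involution acting as reflection of the interval factor. Lemma~\ref{lem:local ori} (which rests on Corollary~\ref{cor:ori product}) shows $\Lambda$ is orientable, hence $c(\Lambda)\times(-1,1)$ carries an orientation extending the given one on the interior; patching these local orientations with the ones on the two copies of the interior finishes the argument. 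Nothing special is done at points of $D(M)_\top\cap\partial M$; they are absorbed into this uniform local picture.

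\textbf{Your remaining gap.} Your crux --- that $\tau$ is orientation-reversing on a $\tau$-invariant Euclidean neighbourhood $V$ of $p\in D(M)_\top\cap\partial M$ --- is true, but Smith theory is heavier than necessary and you have not actually carried it out. The direct route is again the collar: a neighbourhood of $p$ in $D(M)$ is $N\times(-1,1)$ with $N$ a conical neighbourhood of $p$ in $\partial M$, and $\tau(z,t)=(z,-t)$. Writing $\Lambda$ for the link of $p$ in $\partial M$, the link of $p$ in $D(M)$ is the suspension $S^0\ast\Lambda$, and $\tau$ acts by swapping the suspension points while fixing $\Lambda$. Under the suspension isomorphism $\bar H_{n-1}(S^0\ast\Lambda)\cong\bar H_{n-2}(\Lambda)$ this reflection acts by $-1$, hence $\tau_\ast=-1$ on $H_n(D(M)\,|\,p)$. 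This immediately gives your orientation-reversing claim without any case distinction between manifold-with-boundary points and the rest, and without Smith sequences. Once you have this, your sign convention $\omega_-=-\tau_\ast\omega_+$ is the correct one and the patching works exactly as you describe. (Incidentally, this also shows that in the paper's proof one should read $o_{\sigma(y)}=-\sigma_\ast o_y$ for compatibility; the sign is immaterial to the argument.)

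In short: your reduction to $D(M)_\top$ is legitimate but forces you to confront the behaviour of $\tau$ at the possibly exotic manifold points lying on $\partial M$; the paper sidesteps this by never leaving the homological-orientation framework and exploiting the product model $c(\Lambda)\times(-1,1)$ from the collar. Either approach works, but both ultimately rest on that same collar structure.
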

Here, the double of an Alexandrov space with boundary $M$ is a space obtained by gluing two copies of $M$ along their boundaries in a natural way, that is known to become an Alexandrov space without boundary (\cite{Per:Alex2}).

\begin{theorem} \label{thm:ori to bdry}
If an Alexandrov domain $M$ with boundary is orientable, then its boundary $\partial M$ is orientable as an NB-space.
\end{theorem}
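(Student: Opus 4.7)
The plan is to pass to the double and then transfer orientability across the fold locus. Since $M$ is orientable, Theorem~\ref{thm:double} guarantees that the double $D(M)$ is an orientable NB-space without boundary of dimension $n$, and the canonical involution $\tau\colon D(M)\to D(M)$ exchanging the two copies of $M$ has fixed-point set equal to $\partial M$. By Theorem~\ref{thm:main thm}(A)(a), it suffices to show that the manifold-part $(\partial M)_\top$ is orientable as an $(n-1)$-manifold.

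The central step will be a local product description of $D(M)$ near $\partial M$. For each $x \in (\partial M)_\top$, I would exhibit a neighborhood $V$ of $x$ in $D(M)$ together with a homeomorphism $V \cong \mathbb R^{n-1} \times \mathbb R$ sending $V\cap\partial M$ to $\mathbb R^{n-1}\times\{0\}$ and conjugating $\tau|_V$ to the reflection $(y,t)\mapsto(y,-t)$. This should be extracted from Perelman's stability theorem: a conical neighborhood of $x$ in $M$ is the cone over some NB-with-boundary generator $\Sigma_x$, the corresponding conical neighborhood in $D(M)$ is the cone over the double $D(\Sigma_x)$, and the hypothesis that $x\in(\partial M)_\top$ (so the generator of $\partial M$ at $x$ is an $(n{-}2)$-sphere) should force $D(\Sigma_x)$ to be an $(n{-}1)$-sphere with $\tau$ realised as a reflection. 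Consequently $(\partial M)_\top$ embeds in $(D(M))_\top$ as a closed codimension-one submanifold carrying a globally coherent transverse normal direction --- namely the one pointing from one fixed copy of $M$ into the other.

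Granting this local structure, I would fix a coherent orientation of $(D(M))_\top$ provided by the orientability of $D(M)$. Contracting this ambient orientation against the canonical transverse direction produces an orientation of $(\partial M)_\top$; compatibility on overlaps is automatic because the transverse direction is a genuine global choice rather than a local one. Theorem~\ref{thm:main thm}(A)(a) then yields orientability of $\partial M$ as an NB-space.

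The main obstacle is the local-structure step above. For topological manifolds with boundary it is classical via Brown's collar theorem, but for NB-spaces with boundary it requires an analogous collaring argument together with the fact that the double of a codimension-one NB-generator with spherical boundary is itself a sphere; I would draw these from the NB-with-boundary framework developed in Appendix~\ref{sec:NB with boundary}. A possible alternative, should the collar prove elusive in full generality, would be to replace (a) by the homological-orientability condition (d) of Theorem~\ref{thm:main thm}(A) and construct the local generators $o_x \in H_{n-1}(\partial M,\partial M\setminus\{x\};\mathbb Z)$ directly from generators in $H_n(D(M),D(M)\setminus\{x\};\mathbb Z)$ via the long exact sequence of the pair $(D(M),\partial M)$ twisted by $\tau$, trading the collar for a purely homological transfer.
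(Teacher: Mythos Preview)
Your proposal is correct, and both routes you sketch lead to a valid proof. The paper's own argument (Proposition~\ref{prop:ori bdry} in Appendix~\ref{sec:NB with boundary}) is essentially your \emph{alternative} rather than your primary approach: it works with condition~(d) of Theorem~\ref{thm:main thm}(A), not~(a). Concretely, the paper passes to the double $D(X)$ via Proposition~\ref{prop:ori double}, then at \emph{every} $x\in\partial X$ (not just manifold points) uses the local product homeomorphism $(U,\partial U,x)\approx(c(\Lambda)\times[0,1),\,c(\Lambda)\times\{0\},\,(o,0))$ from Lemma~\ref{lem:local ori} to produce an isomorphism $H_n(D(X)\,|\,x)\to H_{n-1}(\partial X\,|\,x)$, and pushes the homological orientation of $D(X)$ through it. No long exact sequence is invoked; the transfer is just this local K\"unneth-type isomorphism.

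Your primary route via condition~(a) is slightly more geometric but also slightly more roundabout: you need the extra observation that $x\in(\partial M)_\top$ forces the generator $\Sigma_x\approx\bar c(S^{n-2})=D^{n-1}$ and hence $D(\Sigma_x)\approx S^{n-1}$, so that $(\partial M)_\top\subset D(M)_\top$. This is fine (it follows from the definition of $\mathcal S_b$, or for Alexandrov spaces from Theorem~\ref{thm:positive boundary}), but the paper's approach sidesteps it entirely by working homologically at every point, where the local product structure $c(\Lambda)\times(-1,1)$ already delivers the needed isomorphism without any sphere recognition. The trade-off: your approach stays inside the familiar category of topological manifolds once you restrict to the top stratum, whereas the paper's approach is more uniform across all boundary points and needs no auxiliary manifold argument.
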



To state geometric applications to the orientability of Alexandrov spaces, we need to know the notion of metric currents introduced by Ambrosio and Kirchheim (\cite{AK}). 
The definition of currents will be recalled in Section \ref{sec:current}.

\begin{theorem} \label{thm:current}
Let $n \ge 1$ and $M$ an Alexandrov space of dimension $n$.
Then the following holds. 
\begin{itemize}
\item 
If $M$ is closed and orientable, then the group
\[
\{ T \in \I_n(M) \mid \partial T = 0 \}
\]
is isomorphic to $\mathbb Z$. 
Here, $\I_n(M)$ denotes the group of all integral $n$-currents in $M$. 
In particular, there is a non-trivial integral $n$-current in $M$. 
\item If $M$ is closed and non-orientable, then the group 
\[
\{ T \in \N_n(M) \mid \partial T = 0 \}
\]
is trivial.
Here, $\N_n(M)$ denotes the group of all normal currents in $M$.
\item If $X$ is a non-compact open coonected subset of $M$,
then the group 
\[
\{ T \in \N_n^\cpt(M) \mid \partial T=0 \}
\]
is trivial, where $\N_n^\cpt(M)$ is the subgroup of $\N_n(M)$ consisting of all currents with compact support.
\end{itemize}
\end{theorem}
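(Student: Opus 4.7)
The plan is to reduce each case to a single constancy principle: on an $n$-dimensional connected NB-space, any top-dimensional $\partial$-closed metric current is determined by one global multiplicity, which lies in $\mathbb Z$ (respectively $\mathbb R$) only when the space is orientable, and which is forced to vanish on a non-compact space. With this principle in hand, the three statements of the theorem follow directly from the orientability dichotomy established in Theorem \ref{thm:main thm}.

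For the closed orientable case I would first construct a distinguished cycle $\jump{M} \in \I_n(M)$ by integrating Lipschitz test forms against the oriented manifold part $M_\top$. This is well-defined because $M_\top$ is an oriented Lipschitz $n$-manifold whose complement $M \setminus M_\top$ has Hausdorff dimension at most $n-1$, and $\Hau^n(M) < \infty$ by Burago-Gromov-Perelman. Verifying the Ambrosio-Kirchheim axioms is routine. That $\jump{M}$ is integral with $\partial \jump{M}=0$ is checked inside conical neighborhoods and reduces, via the cone formula for currents, to the $(n-1)$-dimensional instance of the theorem applied to each generator -- suggesting a natural induction on $n$ in parallel with the one behind Theorem \ref{thm:main thm}. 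For uniqueness, given any $T \in \I_n(M)$ with $\partial T=0$, I would restrict $T$ to $M_\top$ and invoke the classical constancy theorem on the connected oriented $n$-manifold $M_\top$ to obtain $T = c \, \jump{M}$ on $M_\top$ for some $c \in \mathbb Z$. Since $M \setminus M_\top$ has Hausdorff dimension at most $n-1$, it carries no nonzero $n$-dimensional metric current of finite mass, so the equality extends to all of $M$.

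For the closed non-orientable case I would use the orientation double cover $\pi : \widetilde M \to M$, a connected orientable NB-space built from the orientation homomorphism on the manifold part and glued across conical neighborhoods using the inductive orientability of generators. A normal current $T$ on $M$ with $\partial T=0$ admits a lift $\widetilde T \in \N_n(\widetilde M)$ with $\partial \widetilde T = 0$ and $\tau_\# \widetilde T = \widetilde T$ for the deck involution $\tau$. The previous paragraph, in its $\mathbb R$-coefficient version, yields $\widetilde T = c \, \jump{\widetilde M}$; but $\tau$ reverses orientation, so $\tau_\# \jump{\widetilde M} = -\jump{\widetilde M}$, forcing $c=0$ and hence $T=0$. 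In the non-compact case, any $T \in \N_n^\cpt(X)$ with $\partial T = 0$ restricts to a compactly supported $n$-cycle on the connected, non-compact, oriented manifold $X_\top$; the classical constancy theorem on $X_\top$ kills this restriction, since a nonzero constant-coefficient top current on a non-compact oriented manifold must have global (not compact) support, and the dimension argument above extends the vanishing to the singular stratum.

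The main technical obstacle lies in the non-orientable case: the orientation covering $\pi$ is a local isometry but is not globally Lipschitz, so the lift $\widetilde T$ must be defined by local pullbacks and glued together, with mass bounds that remain stable as one approaches the singular locus. A lesser but recurring obstacle is making rigorous the ``restriction to $M_\top$'' step, which rests on the vanishing of $n$-dimensional metric currents of finite mass on sets of Hausdorff dimension at most $n-1$ -- a standard but indispensable consequence of the mass-density theory of Ambrosio-Kirchheim.
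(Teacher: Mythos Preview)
Your approach is genuinely different from the paper's, and it is worth contrasting the two.

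The paper's proof is a one-liner: it invokes Theorem~\ref{thm:int}, which identifies $H_\ast(\I_\bullet^\cpt(M))$ with $H_\ast(M;\mathbb Z)$ and $H_\ast(\N_\bullet^\cpt(M))$ with $H_\ast(M;\mathbb R)$ for locally Lipschitz contractible spaces (such as Alexandrov domains). Since $\Hau^{n+1}(M)=0$, Ambrosio--Kirchheim gives $\N_{n+1}^\cpt(M)=0$, so the groups of $\partial$-closed $n$-currents in the statement are exactly $H_n(\I_\bullet^\cpt(M))$ and $H_n(\N_\bullet^\cpt(M))$. These are then read off from Theorem~\ref{thm:main thm} and Corollaries~\ref{cor:non-ori} and~\ref{cor:Z=R}. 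In particular, the paper never touches a constancy theorem, never constructs $\jump{M}$ by direct integration (it is defined instead as the image of the singular Lipschitz fundamental class under the chain map $[\,\cdot\,]$), and never lifts anything to a double cover.

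Your constancy-based route is in principle viable but carries two real hazards you should be aware of. First, the constancy theorem for metric currents requires bi-Lipschitz charts, and the topological manifold part $M_\top$ of an NB-space is only a \emph{topological} manifold; for Alexandrov spaces you must retreat to the $\delta$-regular set $M_\delta$ of Otsu--Shioya (which does carry bi-Lipschitz charts, is connected by Lemma~\ref{lem:connected}, and still has complement of Hausdorff dimension $\le n-2$). Second, and more seriously, in the non-orientable case the Harvey--Searle cover of Theorem~\ref{thm:ram} is \emph{ramified}: it is $1$-to-$1$ over $M\setminus M_{\mathrm{ori}}$ and fails to be a local isometry there, so ``lifting'' a normal current through it is not a standard operation and would require genuine work. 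A cleaner variant of your own strategy would be to run the local constancy argument directly on the non-orientable $M_\delta$ (an orientation-reversing loop forces the multiplicity to satisfy $c=-c$), avoiding the lift altogether. Either way, the paper's route via Theorem~\ref{thm:int} is dramatically shorter, at the cost of importing a substantial comparison theorem; your route is more self-contained but demands more care than the sketch suggests.
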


\begin{theorem} \label{thm:fill rad Alex} 
Let $M$ be an $n$-dimensional closed orientable Alexandrov space, with $n \ge 1$. 
Then, denoting by $\fillrad{M}$ the filling radius of $M$ introduced in Section \ref{sec:current}, we have the followng universal inequality
\[
\fillrad{M} \le C(n) \Hau^n(M)^{1/n}. 
\]
Here, $\Hau^n$ denotes the $n$-dimensional Hausdorff measure and $C(n)$ is a positive explicit constant depending on $n$.
\end{theorem}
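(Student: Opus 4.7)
The plan is to adapt Gromov's classical filling radius argument to the Alexandrov setting, with the smooth fundamental class replaced by the integral fundamental current supplied by Theorem \ref{thm:current}.

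First, embed $M$ isometrically into the Banach space $L^\infty(M)$ via the Kuratowski embedding $\iota : M \to L^\infty(M)$. By Theorem \ref{thm:current}, since $M$ is closed and orientable, the group $\{T \in \I_n(M) \mid \partial T = 0\}$ is isomorphic to $\mathbb Z$; let $\jump{M}$ denote a generator. Then $\iota_\# \jump{M} \in \I_n(L^\infty(M))$ is a non-trivial integral cycle in the ambient Banach space.

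Second, I would prove a mass bound $\mass(\jump{M}) \le C_0(n)\, \Hau^n(M)$. The manifold-part $M_\top \subset M$ is open and dense, with complement of Hausdorff dimension strictly less than $n$. Since an integral $n$-current is concentrated on its $\Hau^n$-rectifiable set, the mass of $\jump{M}$ is computed from its restriction to $M_\top$, where it must agree (up to sign) with the canonical local integration determined by the orientation. A standard comparison between the mass of an integration current and the Riemannian volume on the smooth part then yields the estimate, using $\Hau^n(M)=\Hau^n(M_\top)$.

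Third, apply the isoperimetric inequality for integral currents in Banach spaces (Ambrosio--Kirchheim, Wenger): any $n$-dimensional integral cycle $T$ in $L^\infty(M)$ admits an integral filling $S$ with
\[
\partial S = T, \qquad \mass(S) \le \gamma(n)\, \mass(T)^{(n+1)/n},
\]
and with $\supp(S)$ contained in the $\gamma'(n)\,\mass(T)^{1/n}$-neighborhood of $\supp T$. Taking $T = \iota_\# \jump{M}$ produces an integral $(n+1)$-current $S$ with $\partial S = \iota_\# \jump{M}$ whose support lies in the $r$-neighborhood of $\iota(M)$ for some $r \le \gamma'(n)\, \mass(\jump{M})^{1/n}$. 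Combining this with the mass bound from the second step, and the definition of $\fillrad{M}$ given in Section \ref{sec:current}, yields $\fillrad{M} \le C(n)\, \Hau^n(M)^{1/n}$ with $C(n):=\gamma'(n)\, C_0(n)^{1/n}$.

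The main obstacle, I expect, will be the mass comparison in the second step. While the Riemannian metric on $M_\top$ is only $C^0$ a priori, the fundamental current $\jump{M}$ should locally coincide with the integration current associated to a positively-oriented chart near any point of $M_\top$; identifying $\jump{M}$ with this canonical object uses both the uniqueness (up to sign) of the generator from Theorem \ref{thm:current} and the description of integral currents as rectifiable currents with integer multiplicity. The remaining ingredients --- the Kuratowski embedding and the isoperimetric inequality in Banach spaces --- are by now standard tools in metric geometry, so once the fundamental current is put on the same footing as the classical smooth fundamental class, Gromov's original argument transfers essentially verbatim.
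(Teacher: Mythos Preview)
Your overall architecture matches the paper's: pass to the fundamental integral current $\jump{M}$, bound its mass in terms of $\Hau^n(M)$, then invoke the filling inequality for integral cycles in a Banach space. The paper quotes the last step directly as the Ambrosio--Katz inequality (Theorem~\ref{thm:fill rad}), which is exactly the isoperimetric/filling statement you describe.

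The real difference is in your second step. You propose to bound $\M(\jump{M})$ by identifying $\jump{M}$, on $M_\top$, with the multiplicity-one integration current and then comparing its mass to $\Hau^n$. The paper takes a different route: it never identifies $\jump{M}$ as an integration current. Instead it introduces a mass on singular Lipschitz chains via Kirchheim's metric derivative, proves that the chain map $[\,\cdot\,]:C^\Lip_n\to\I_n^\cpt$ is $\sqrt{n}^{\,n}$-Lipschitz for these masses (Lemma~\ref{lem:mass-Lip}), and then shows $\mass([M]_\Lip)=\Hau^n(M)$ by a hands-on approximation argument on Lipschitz cycles (Theorem~\ref{thm:mass}, proved in Appendix~\ref{sec:mass}). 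This yields $\M(\jump{M})\le\sqrt{n}^{\,n}\Hau^n(M)$ without ever asserting what $\jump{M}$ looks like pointwise.

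Your route is conceptually cleaner but the sketch hides real work: to say $\jump{M}$ has multiplicity one $\Hau^n$-a.e.\ you need a constancy theorem for top-dimensional integral cycles in an Alexandrov space (or at least on $M_\top$ with its merely $C^0$ Riemannian structure), plus the fact that the resulting multiplicity-one current is itself an integral cycle so that uniqueness in Theorem~\ref{thm:current} applies. These are believable, but not free, and you should be explicit that the argument reduces to a local constancy statement on charts in $M_\top$ together with $\Hau^n(M\setminus M_\top)=0$. The paper's Lipschitz-chain detour buys exactly the avoidance of this identification; your approach, if completed, would buy a sharper constant and a more transparent picture of $\jump{M}$.
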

Theorem \ref{thm:fill rad Alex} is new even for Riemannian oribifolds. 
The inequality in the theorem is called a filling radius inequality. 
Originally, Gromov proved the filling radius inequality for Riemanian manifolds in \cite{Gr}, where he used the original filling radius defined by himself. 
Our filling radius is a counterpart of Gromov's filling radius in terms of metric currents, defined by Ambrosio and Katz (\cite{AKt}). 
Remark that the inequality in Theorem \ref{thm:fill rad Alex} is scale-invariant and curvature-free.
As a related topic, Yokota gave an estimate of the filling radius for Alexandrov spaces of positive curvature depending on the lower curvature bound and investigated its rigidity (\cite{Yo}).
There, the filling radius was used in the original sense (of real or $\mathbb Z_2$-coefficients). 

As generalizations of complete Riemanian manifolds, metric measure spaces satisfying Riemannian curvature-dimension condition (abbreviated by RCD-spaces) are introduced by Ambrosio, Gigli and Savar\'e (\cite{AGS}) (See also \cite{EKS}).
Due to \cite{Pet:AmLVS}, \cite{ZZ} and \cite{GKO}, it is known that Alexandrov spaces are RCD-spaces. 
The measured Gromov-Hausdorff limits of complete Riemannian manifolds with a uniform Ricci lower curvature bound, called Ricci-limit spaces, are typical examples of RCD-spaces, and are important in the point of view of the collapsing theory of Riemannian manifolds. 
Geometric-analytic properties of RCD or Ricci-limit spaces have been actively investigated.
On the other hands, it is hard to understand and investigate topological properties of such spaces.
Recently, Honda (\cite{Honda:ori}) also successed to introduce the notion of orientability on Ricci-limit spaces 
by using the terminologies of rectifiable atlases.
That is, his definition of orientations is based on an analytic property of Ricci-limit spaces. 
As related topics, Gigli (\cite{Gigli}) introduced the spaces of formal differentiable forms on RCD-spaces. 
By the construction, the space of forms become a cochain complex. 
However, its cohomology is not known to be a topological invariant, for Alexandrov spaces. 
It should be remarked that the results obtained in \cite{Honda:ori} are not related to our results yet, because they are stated in terms of Gigli's cohomology.
It should be researched a relationship between them in a near future.

In the present paper, we will consider the topology of Alexandrov spaces, directly.
Compared with general synthetic Riemannian spaces, Alexandrov spaces are very tame topologically and geometrically.
This is a point of the study of Alexandrov spaces. 


\vspace{1em}
\noindent 
{\bf Organization}.
The contents of this paper are the following. 
Sections \ref{sec:prelim}--\ref{sec:top-app} are devoted to study general NB-spaces and their orientability. 
In Section \ref{sec:prelim}, we recall fundamental properties of NB-spaces and prove a few of them. 
We give the precise definitions of orientability for NB-spaces, mentioned in (A) of Theorem \ref{thm:main thm}. 
In Sections \ref{sec:proof of AB} and \ref{sec:proof of C}, we prove Theorem \ref{thm:main thm}. 
In Section \ref{sec:top-app}, we give several topological applications of orientability. 
From Section \ref{sec:Alex}, we begin to consider applications of the orientability to Alexandrov spaces. 
In Section \ref{sec:Alex}, we review the definition of Alexandrov spaces and prove Corollary \ref{cor:bdry}. 
In Section \ref{sec:fundam class cpt}, we discuss about the existence of fundamental classes at each compact set in orientable Alexandrov spaces, using a geometric property of Alexandrov spaces. 
In particular, the duality map from the cohomology with compact support is defined. 
In Section \ref{sec:PD1}, we prove Theorem \ref{thm:PD1}.
In Section \ref{sec:current}, we recall the definition of metric currents introdued by Ambrosio and Kirchheim and give the notion of mass of singluar Lipschitz chains, to prove Theorem \ref{thm:fill rad Alex}.  
Then, we prove Theorem \ref{thm:fill rad Alex}.
Finally, we give three appendices. 
In Appendix \ref{sec:mass}, we prove Theorem \ref{thm:mass} which is used in the proof of Theorem \ref{thm:fill rad Alex}. 
In Appendix \ref{sec:C1-ori}, we give a geometric interpolation of orientability of Alexandrov spaces. 
In Appendix \ref{sec:NB with boundary}, we introduce NB-spaces with boundary and state fundamental properties about their orientability.

\vspace{1em}
\noindent
{\bf Acknowledgement}. 
The author would like to thank Takao Yamaguchi, Shin-ichi Ohta, and Takumi Yokota for discussions and comments. 
He also would like to thank John Harvey for variable comments. 
This work was also supported by JSPS KAKENHI Grant Number 15K17529.



\section{Preliminaries} \label{sec:prelim}
Let us first recall the definiton of arbitrary MCS-spaces. 
\begin{definition}[\cite{Per:Alex2}, \cite{Per:Morse}] \upshape \label{def:MCS}
Let $Y$ be a separable metrizable space. 
We say that $Y$ is a $0$-dimensional MCS-space if it is a discrete space; and say that $Y$ is $n$-dimensional MCS-space when $n \ge 1$ if for any $x \in Y$, there exist an open neighborhood $U$ and a compact $(n-1)$-dimensional MCS-space $\Sigma$ such that $(U,x)$ is homeomorphic to $(c(\Sigma),o)$ as a pointed space.
Such a $U$ is called a conical neighborhood of $x$ and $\Sigma$ is called a generator of the conical neighborhood $U$.
\end{definition}
From the definition, a space is a one-dimensional MCS-space if and only if it is a graph with countable verticies having finite degree at each vertex.
Note that an MCS-space is connected if and only if it is path-connected. 

Let $Y$ be an arbitrary $n$-dimensional MCS-space. 
For every $y \in Y$ and an abelian group $G$, we set 
\begin{align*}
H^i(Y|y;G) &:= H^i(Y, Y \setminus \{y\};G) \text{ and } \\
H_i(Y|y;G) &:= H_i(Y,Y \setminus \{y\};G)
\end{align*}
which are called the $i$-th local cohomology and local homology of $Y$ at $y$, respectively. 
For a subset $A \subset Y$, we use similar teminologies: $H^\ast(Y|A;G)=H^\ast(Y,Y\setminus A;G)$ and $H_\ast(Y|A;G) = H_\ast(Y,Y \setminus A;G)$.
If $\Sigma$ is a generator of a conical neighborhood at $y$, then by the excision, we have 
\[
H^\ast(Y|y) \cong \bar H^{\ast-1}(\Sigma) \text{ and } H_\ast(Y|y) \cong \bar H_{\ast-1}(\Sigma)
\]
for any coefficient group, where the bar symbol means the reduced (co)homology.

Due to Kwun \cite{Kwun}, conical neighborhoods are unique in the sense that if $U$ and $V$ are two conical neighborhoods at the same point $y$, then $(U,y)$ and $(V,y)$ are homeomorphic. 
From this, two generators $\Sigma$ and $\Lambda$ of conical neighborhoods at the same point have the same homotopy type. 
In particular, if $\Sigma$ is connected, so is $\Lambda$.
Remark that, generators are not homeomorphic, in general. 
For instance, let $A$ be the suspension over a Poincar\'e homology $3$-sphere, then due to Cannon-Edwards's double suspension theorem (\cite{C}, \cite{E}) and the result in \cite{Kwun} mentioned just before, $c(A)$ is homeomorphic to $\mathbb R^5 \approx c(S^4)$.
However, $A$ and $S^4$ are not homeomorphic.


From the definition, $Y$ has a canonical stratification into topological manifolds as follows. 
For $0 \le i \le n$, we define $Y[i]$ a subset of $Y$ by 
$Y[i] \ni x$ if and only if there is a conical neighborhood $U$ of $x$ which is homeomorphic to $(\mathbb R^i \times c(\Lambda), o)$, where $\Lambda$ is a compact $(n-i-1)$-dimensional MCS-space. 
Here, $(-1)$-dimensional MCS-space is considered as an empty-set and its cone is a one-point space.
The complement of $Y[i]$ is denoted by $Y_{i-1}$ which gives a filtration 
\[
Y = Y_n \supset Y_{n-1} \supset \dots \supset Y_1 \supset Y_0 \supset Y_{-1} = \emptyset
\]
consisting of closed sets. 
Here, note on how to set the indices. 
From the definition, $Y_i \setminus Y_{i-1}$ is a topological $i$-manifold.
For each $i$, the set $Y^{(i)} := Y_i \setminus Y_{i-1}$ is called the $i$-th {\it stratum} of the filtration $\{Y_j\}$.
The family $\{Y^{(i)}\}$ is called a canonical {\it stratification} of $Y$. 
The $n$-th stratum $Y^{(n)}$ is called the top stratum. 

Let us define $Y_\top \subset Y$ by the condition that $Y_\top \ni x$ if and only if there is an open neighborhood of it which is homeomorphic to $\mathbb R^n$. 
We say that $Y_\top$ is the {\it manifold-part} of $Y$.
From the definition, $Y_\top$ is nothing but $Y^{(n)}$.
Since the following statement is trivial, we omit a proof: 
\begin{lemma} \label{lem:open dense mfd}
$Y_\top$ is dense in $Y$.
\end{lemma}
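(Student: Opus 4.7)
The plan is to induct on the dimension $n$ of the MCS-space $Y$.

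For the base cases, when $n=0$ the space $Y$ is discrete and $Y_\top = Y$, so density is immediate; when $n=1$, $Y$ is a graph with countably many vertices of finite degree, and $Y_\top$ is the complement of the (discrete, closed) vertex set, which is obviously dense in $Y$.

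For the inductive step, assume the lemma for all MCS-spaces of dimension at most $n-1$, and let $Y$ be an $n$-dimensional MCS-space with $n \ge 2$. Fix $y \in Y$ and an arbitrary open neighborhood $W$ of $y$; I want to produce a point of $Y_\top$ inside $W$. By the definition of MCS-space, $y$ has a conical neighborhood $U \subset W$ (by shrinking) with a pointed homeomorphism $\varphi : (c(\Sigma), o) \to (U, y)$ for some compact $(n-1)$-dimensional MCS-space $\Sigma$. By the inductive hypothesis, the manifold-part $\Sigma_\top$ is dense in $\Sigma$, so it is non-empty, and the image $\varphi(\Sigma_\top \times (0, \infty)) \subset U$ is a non-empty subset of $Y$. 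It therefore suffices to show that every point of this image lies in $Y_\top$.

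For this, fix $z = \varphi([\sigma, t])$ with $\sigma \in \Sigma_\top$ and $t > 0$. Choose a chart $\psi : V \to \mathbb R^{n-1}$ with $\sigma \in V \subset \Sigma_\top$, and an open interval $I \subset (0, \infty)$ containing $t$. Then $\varphi(V \times I)$ is open in $U$, hence open in $Y$ (because $U$ is open in $Y$), and the product map $\psi \times \mathrm{id}_I$ gives a homeomorphism from $V \times I$ onto an open subset of $\mathbb R^n$. Thus $z$ has an open neighborhood in $Y$ homeomorphic to an open subset of $\mathbb R^n$, so $z \in Y_\top$. Since $W$ was arbitrary, this proves density.

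The only potentially subtle step is passing from ``open in the conical neighborhood $U$'' to ``open in $Y$'' when verifying that the constructed manifold chart at $z$ is really a chart in $Y$; but this is immediate because conical neighborhoods are by definition open subsets of $Y$. Everything else is a direct unwinding of the inductive hypothesis and of the product structure $c(\Sigma) \setminus \{o\} \cong \Sigma \times (0, \infty)$, which is why the author calls the statement trivial.
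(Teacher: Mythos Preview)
Your proof is correct and is the natural inductive argument; the paper itself omits the proof entirely, calling the statement trivial, so there is nothing to compare against beyond noting that what you wrote is presumably what the author had in mind. One minor imprecision: in the $n=1$ case you say $Y_\top$ is the complement of the vertex set, but vertices of degree $2$ also lie in $Y_\top$; this does not affect the argument, since the complement of the vertex set is already contained in $Y_\top$ and is dense.
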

From this lemma, if $Y_\top$ is connected, then so is $Y$. 
However, the converse is not true in general. 
Indeed, for the cone over a discrete space consisting of three points, its manifold-part is disconnected.

From now on, we are going to study fundamental topological property of NB-spaces. 
Before doing it, we remark about what examples and non-examples of NB-spaces are. 
\begin{example} \label{ex:NB} \upshape
The following are typical examples of NB-spaces.
Let $X$ and $Y$ be compact connected NB-spaces.
\begin{itemize}
\item The cone $c(X)$ is an NB-space with $\dim (c(X)) = \dim X+1$. 
For instance, the cone over a two-torus $T^2$ is an NB-space, but does not admit a metric of Alexandrov space.
\item The join $X \ast Y := X \times Y \times [0,1]/\! \sim$ is a conncted NB-space with $\dim (X \ast Y)=\dim X+\dim Y+1$, where the equivalent relation $\sim$ on $X \times Y \times [0,1]$ is generated by $(x,y,0) \sim (x',y,0)$ and $(x,y,1) \sim (x,y',1)$ for $x,x' \in X$ and $y,y' \in Y$.
\item The suspension $S^0 \ast X$ is a connected NB-space of dimension $\dim X+1$, where $S^0$ denotes a discrete space consisting of only two points.
\item Any non-empty open subset of an NB-space is an NB-space. 
\item The product of two NB-spaces is an NB-space.
\item From the definition, any two-dimensional NB-space is a two-manifold without boundary.
\item Topological manifolds without boundary are NB-spaces. 
\item An Alexandrov $n$-space without boundary is an $n$-dimensional NB-space (Theorem \ref{thm:stab}). 
\item The boundary of an Alexandrov space is an NB-space (Corollary \ref{cor:bdry is NB}). 
\item Regular fibers of semiconcave mapping defined on an open set in an Alexandrov space are NB-spaces (\cite{MY:fiber}).
\end{itemize}
\end{example}

\begin{nonexample} \label{nonex:NB} \upshape
Let us list non-examples of NB-spaces. 
\begin{itemize}
\item Graphs with a vertice of degree at least two are not NB-spaces. For instance, the bouquet of two circles and the cone over it are not NB-spaces. 
\item The cone over the disjoint union of two circles is not an NB-space. 
\item Manifolds with boundary and Alexandrov spaces with boundary are not NB-spaces. 
\item The cone over $S^1 \times [0,1]$ is a non-branching MCS-space with boundary, but is not an NB-space (See also Appendix \ref{sec:NB with boundary}). 
\end{itemize}
\end{nonexample}

From now on, $X$ denotes an $n$-dimensional NB-space.

\begin{lemma} \label{lem:X_top conn}
If $X$ is connected, then $X_\top$ is connected.
\end{lemma}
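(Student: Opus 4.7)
The plan is to argue by induction on the dimension $n$. For $n=1$, an NB-space is by definition a topological $1$-manifold without boundary, so $X_\top = X$ and there is nothing to prove. (The case $n=2$ is similarly automatic, since a $2$-dimensional NB-space is a $2$-manifold, but the induction runs uniformly.)

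The central observation I would establish first is the following local statement: \emph{for every $x \in X$, if $U$ is a conical neighborhood at $x$ with generator $\Sigma$, then $U_\top$ is connected.} By definition of NB-space, $\Sigma$ is a \emph{connected} compact $(n-1)$-dimensional NB-space, so the inductive hypothesis applies to give that $\Sigma_\top$ is connected. Under the homeomorphism $U \setminus \{x\} \cong \Sigma \times (0, \infty)$, the manifold part of $U \setminus \{x\}$ corresponds to $\Sigma_\top \times (0, \infty)$, which is connected. Two cases remain: either $\Sigma$ is a sphere and $U \approx \mathbb R^n$ so $U_\top = U$, or $\Sigma$ is not a sphere and $x \notin U_\top$, in which case $U_\top = \Sigma_\top \times (0,\infty)$ directly. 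In both cases $U_\top$ is connected.

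With this local connectedness in hand, I would complete the inductive step by a standard closure-plus-density argument. Suppose, for contradiction, that $X_\top = A \sqcup B$ with $A, B$ disjoint non-empty and open in $X_\top$; since $X_\top$ is open in $X$ (from Lemma \ref{lem:open dense mfd} and the definition of $X_\top$), the sets $A, B$ are also open in $X$. Density of $X_\top$ in $X$ gives $\overline{A} \cup \overline{B} = X$, so by connectedness of $X$ we may pick a point $x \in \overline{A} \cap \overline{B}$. Any conical neighborhood $U$ of $x$ then meets both $A$ and $B$; since $A, B \subset X_\top$, this yields $U_\top \cap A$ and $U_\top \cap B$ as disjoint non-empty open subsets of $U_\top$, contradicting the connectedness of $U_\top$ established above.

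The only step that requires genuine care is the local claim that $U_\top$ is connected; this is precisely where the NB (non-branching) hypothesis is used, via connectedness of the generator $\Sigma$, and it is what fails for general MCS-spaces such as the cone over a disjoint union of two circles. Once the local claim is in place, the global argument is purely point-set topology, and I would not expect any further obstacle.
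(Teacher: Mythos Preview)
Your approach matches the paper's proof exactly: induction on dimension, the local fact that each conical neighborhood has connected manifold-part, and a covering/density argument to globalize. The paper compresses all this into one sentence, citing precisely the inclusion $c(\Sigma)_\top \supset c(\Sigma_\top)\setminus\{o\}$.

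One inaccuracy to flag: your claim that the manifold part of $U\setminus\{x\}\cong\Sigma\times(0,\infty)$ \emph{equals} $\Sigma_\top\times(0,\infty)$ is not true in general---only the inclusion $\Sigma_\top\times(0,\infty)\subset U_\top$ holds, which is exactly what the paper states. The reverse inclusion can fail: if $\sigma\in\Sigma$ has generator $\Lambda$ with $c(\Lambda)\not\approx\mathbb R^{n-1}$ but $c(\Lambda)\times\mathbb R\approx\mathbb R^n$ (e.g.\ $\Lambda$ a Poincar\'e homology sphere, via the double suspension theorem), then $(\sigma,t)\in U_\top$ while $\sigma\notin\Sigma_\top$. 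Relatedly, the dichotomy ``$\Sigma$ is a sphere / is not a sphere'' is not the right split; it should be ``$x\in U_\top$ / $x\notin U_\top$''. None of this damages your argument: since $\Sigma_\top\times(0,\infty)$ is connected (by induction) and dense in $U$, and $U_\top$ sits between this set and its closure, $U_\top$ is connected regardless---so the case analysis is actually unnecessary.
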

\begin{proof}
This follows from the induction on the dimension of NB-spaces, a covering argument and the fact that $c(\Sigma)_\top \supset c(\Sigma_\top) \setminus o$ for a compact connected MCS-space $\Sigma$.
\end{proof}

Let us recall that the definition of $H^\ast_\cpt(Z)$ the cohomology of compact support of a space $Z$. 
The set of all compact sets $\{K\}$ in $Z$ turns out a directed set by the inclusion. 
Then, we set $H_\cpt^\ast(Z) := \varinjlim_K H^\ast(Z|K)$.
From the definition, when $Z$ is compact, $H^\ast(Z) = H^\ast_\cpt(Z)$.

\begin{lemma} \label{lem:codim}
The following holds. 
\begin{itemize}
\item[(1)] If $X$ is connected, then $H_\cpt^n(X;\mathbb Z_2) \cong \mathbb Z_2$. 
\item[(2)] Let $\{X_i\}_{i=0}^n$ be a canonical stratification of $X$. 
Then, $X_{n-1}=X_{n-2}=X_{n-3}$. 
In particular, $\dim(S) \le n-3$, where $S = X \setminus X_\top$.
\end{itemize}
\end{lemma}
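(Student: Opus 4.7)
My plan is to prove parts (1) and (2) together by a joint induction on the dimension $n$: at dimension $n$, (2) will be deduced from (1) at dimension $n-1$, and then (1) at dimension $n$ from (2) at dimension $n$. The base cases $n = 1, 2$ are immediate, since any NB-space of dimension $\le 2$ is already a topological manifold without boundary (so $S = \emptyset$) and the $\mathbb Z_2$-Poincar\'e duality for connected manifolds is classical.

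For the inductive step, the content of (2) at dimension $n \ge 3$ is the inclusion $X[n-2] \subset X[n]$ (the reverse chain $X[n] \subset X[n-1] \subset X[n-2]$ is automatic from $\mathbb R^n \cong \mathbb R^{n-1} \times c(S^0) \cong \mathbb R^{n-2} \times c(S^1)$). Fix $x \in X[n-2]$ with conical model $(\mathbb R^{n-2} \times c(\Lambda), (0, o))$, where $\Lambda$ is a compact 1-dimensional MCS-space, i.e.\ a finite graph in which every vertex has positive finite valence. For each vertex $v \in \Lambda$ of valence $k$ and each $p \in \mathbb R^{n-2}$, the neighborhood of $(p, v)$ in $X$ is homeomorphic to $\mathbb R^{n-1} \times c(\Lambda_v)$ with $|\Lambda_v| = k$; this is itself a cone $c(S^{n-2} \ast \Lambda_v)$ with apex corresponding to $(p, v)$, so by Kwun's uniqueness theorem the NB-link $\Sigma_{(p,v)}$ at $(p, v)$ is a compact connected NB-space of dimension $n-1$ homotopy equivalent to $S^{n-2} \ast \Lambda_v$. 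The join K\"unneth formula gives $\bar H_{n-1}(S^{n-2} \ast \Lambda_v; \mathbb Z_2) \cong \mathbb Z_2^{k-1}$, while the inductive hypothesis (1) at dimension $n-1$ applied to $\Sigma_{(p,v)}$, combined with universal coefficients over the field $\mathbb Z_2$, yields $\bar H_{n-1}(\Sigma_{(p,v)}; \mathbb Z_2) \cong \mathbb Z_2$. Rank comparison forces $k = 2$ in one stroke, ruling out both $k = 1$ and $k \ge 3$. Hence every vertex of $\Lambda$ has valence $2$, so $\Lambda$ is a closed 1-manifold $\bigsqcup_{i=1}^{m} S^1$. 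An identical argument at the apex $(0, o)$, whose NB-link is homotopy equivalent to $S^{n-3} \ast \Lambda$ with $\bar H_{n-1}(\cdot; \mathbb Z_2) \cong \mathbb Z_2^{m}$, forces $m = 1$; thus $\Lambda = S^1$, and $\mathbb R^{n-2} \times c(S^1) \cong \mathbb R^n$ gives $x \in X[n]$.

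With (2) at dimension $n$ in hand, (1) at dimension $n$ follows quickly. Set $S := X \setminus X_\top$; part (2) gives $\dim S \le n - 3$, and since $S$ is a locally compact, locally contractible space, one has $H^k_\cpt(S; \mathbb Z_2) = 0$ for every $k \ge n - 2$. The long exact sequence of compactly supported cohomology for the open--closed decomposition $X_\top \hookrightarrow X \hookleftarrow S$,
\[
H^{n-1}_\cpt(S; \mathbb Z_2) \to H^n_\cpt(X_\top; \mathbb Z_2) \to H^n_\cpt(X; \mathbb Z_2) \to H^n_\cpt(S; \mathbb Z_2),
\]
has its outer terms zero, so $H^n_\cpt(X; \mathbb Z_2) \cong H^n_\cpt(X_\top; \mathbb Z_2) \cong \mathbb Z_2$, the last isomorphism being $\mathbb Z_2$-Poincar\'e duality for the connected open $n$-manifold $X_\top$ (connectedness from Lemma \ref{lem:X_top conn}). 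The crux is the local analysis in (2), where Kwun's theorem is used to identify the abstract NB-link with a concrete join, after which the rank bound provided by (1) at the lower dimension eliminates all valence and multiplicity patterns except $k = 2$ and $m = 1$.
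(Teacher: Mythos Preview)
Your proof is correct and follows essentially the same route as the paper's: a joint induction in which $(2)_n$ is obtained from $(1)_{n-1}$ by computing the local $\mathbb Z_2$-(co)homology at a vertex point and then at the apex to force the graph $\Lambda$ to be a single circle, and $(1)_n$ is then obtained from $(2)_n$ via the compactly supported cohomology sequence of the pair $(X_\top, S)$. The only cosmetic differences are that the paper works entirely in local cohomology $H^n(X|y)$ rather than passing through Kwun's theorem and link homology, and uses the weaker bound $\dim S \le n-2$ (which already suffices) in the exact sequence.
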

\begin{proof}
The statements are ture if $\dim X \le 2$, because in such a case, $X$ is a manifold. 
We may assume that $n = \dim X \ge 3$.
First, we prove the statement (2). 
It suffices to prove that $X[n-2] \subset X[n]$. 
Let us take $x \in X[n-2]$. 
Then, there is a $1$-dimensional compact MCS-space $\Gamma$ such that $x$ has a conical neighborhood $U$ homeomorphic to $\mathbb R^{n-2} \times c(\Gamma)$.
We fix a homeomorphism $f : (U,x) \to (\mathbb R^{n-2} \times c(\Gamma), (0,o))$.
Recall that $\Gamma$ is a graph. 
Let us take a vertex $v \in \Gamma$ and set $y = f^{-1}(0,v)$, where $v \in c(\Gamma)$ is regared as a point $(v,1) \in c(\Gamma) = \Gamma \times [0,\infty)/\!\!\sim$.
Then, $y$ has a conical neighborhood $V$ homeomorphic to $\mathbb R^{n-1} \times c(D)$, where $D$ is a discrete space consisting of finitely many points whose cardinarily is the degree of $\Gamma$ at $v$.
So, we have 
\begin{align*}
H^n(X|y) &\cong H^n(V|y) \cong H^n(\mathbb R^{n-1} \times c(D)|(0,o)) \\
&\cong \bar H^0(D) \cong \mathbb Z_2^{\oplus (\sharp D-1)}.
\end{align*}
Here, the cohomology groups are with coefficients in $\mathbb Z_2$.
On the other hands, by the statement (1) of the codimension one case, we have $H^n(X|y) \cong \mathbb Z_2$. 
Hence, the degree at $v$ is two.
This implies that $\Gamma$ is the disjoint union of finitely many circles.
By a similar argment as above, we have 
\[
H^{n}(X|x) \cong H^1(\Gamma).
\] 
Hence, $\Gamma$ is actually a circle, and hence $x \in X[n]$.
The last statement in (2) follows from $S = X_{n-3}$.

A proof of (1) is done along the same line as a proof of a lemma in \cite{GP}. 
By using the fact that the Alexander-Spanier cohomology is isomorphic to the singular cohomology for paracompact Hausdorff spaces (\cite{Sp}), we have an exact sequence 
\[
H^{n-1}(S) \to H_\cpt^n(X_\top) \to H_\cpt^n(X) \to H^n(S).
\]
Here, since $\dim S \le n-2$, we have $H_\cpt^n(X;\mathbb Z_2) \cong H_\cpt^n(X_\top;\mathbb Z_2)$. 
Becasue $X_\top$ is a connected $n$-manifold from Lemma \ref{lem:X_top conn}, we have $H_\cpt^n(X_\top;\mathbb Z_2) \cong \mathbb Z_2$. 
This completes the proof.
\end{proof}

\subsection{Several valid notions of orientability for NB-spaces}

In this subsection, the (co)homologies are with coefficients in $\mathbb Z$, when they are not indicated.
First, we give an alternative proof of Harvey and Searle's result as follows. 
\begin{lemma}[\cite{HS}] \label{lem:HS}
The conditions $(\mathrm{a})$ and $(\mathrm{b})$ in Theorem \ref{thm:main thm} $(\mathrm{A})$ are equivalent for an $n$-dimensional connected NB-space $X$. 
\end{lemma}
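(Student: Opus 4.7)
The plan is to induct on $n = \dim X$. When $n = 1$ or $n = 2$, Lemma \ref{lem:codim}(2) forces $S := X \setminus X_\top$ to be empty, so $X$ is already a connected topological $n$-manifold without boundary, and the equivalence of (a) and (b) reduces to classical orientation theory for manifolds.

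For the inductive step $n \ge 3$, my first move is to globalize the $H_\cpt^n$ computation. Repeating the Alexander--Spanier argument from the proof of Lemma \ref{lem:codim}(1) with $\mathbb Z$ coefficients, and using $\dim S \le n-3$ from Lemma \ref{lem:codim}(2), the outer terms of
\[
H^{n-1}(S;\mathbb Z) \to H_\cpt^n(X_\top;\mathbb Z) \to H_\cpt^n(X;\mathbb Z) \to H^n(S;\mathbb Z)
\]
vanish, yielding a canonical isomorphism $H_\cpt^n(X_\top;\mathbb Z) \cong H_\cpt^n(X;\mathbb Z)$. Since $X_\top$ is a connected $n$-manifold (Lemma \ref{lem:X_top conn}), the classical equivalence ``orientable iff $H_\cpt^n \cong \mathbb Z$'' matches (a) with the first clause of (b). This alone settles (b) $\Rightarrow$ (a); the direction (a) $\Rightarrow$ (b) then reduces to showing that the canonical morphism $H^n(X|x;\mathbb Z) \to H_\cpt^n(X;\mathbb Z)$ is an isomorphism at every $x \in X$.

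At a regular point $x \in X_\top$, excising $S$ identifies this morphism, via the obvious commutative square, with the classical local orientation map of the manifold $X_\top$, which is an iso by orientability. At a singular point $x \in S$, I would pick a closed conical neighborhood $K \cong c(\Sigma)$. The open submanifold $\Sigma_\top \times (0,\infty)$ of $X_\top$ inherits orientability, so $\Sigma$ satisfies (a) in dimension $n-1$; by the inductive hypothesis $\Sigma$ also satisfies (b), and in particular $H^{n-1}(\Sigma;\mathbb Z) \cong \mathbb Z$. Combined with the excision $H^n(X|x;\mathbb Z) \cong \bar H^{n-1}(\Sigma;\mathbb Z) \cong H^{n-1}(\Sigma;\mathbb Z)$, this yields the abstract identification $H^n(X|x;\mathbb Z) \cong \mathbb Z$.

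The hard part will be promoting this abstract $\mathbb Z \cong \mathbb Z$ identification into the statement that the canonical map is itself an isomorphism. My plan is to route through the intermediate group $H^n(X|K;\mathbb Z)$: the arrow $H^n(X|x;\mathbb Z) \to H^n(X|K;\mathbb Z)$ should become an iso under the deformation retraction $K \setminus \{x\} \simeq \partial K \cong \Sigma$, while the arrow $H^n(X|K;\mathbb Z) \to H_\cpt^n(X;\mathbb Z)$ can be compared via a regular reference point $x' \in K \cap X_\top$ to the already-known regular-case iso, using the commutative square produced by $\{x'\} \subset K$. The main obstacle is rigorously executing this diagram chase while controlling the singular stratum inside $K$; the inductive hypothesis applied at every $\sigma \in \Sigma$ is essential here, because it supplies compatible local generators on the link that force the chosen generator of $\bar H^{n-1}(\Sigma;\mathbb Z)$ to map to an actual generator of $H_\cpt^n(X;\mathbb Z)$ rather than to some nontrivial multiple of it.
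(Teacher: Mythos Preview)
Your global step (the Alexander--Spanier isomorphism $H_\cpt^n(X_\top;\mathbb Z)\cong H_\cpt^n(X;\mathbb Z)$ and the reduction of (b)$\Rightarrow$(a) to classical manifold orientation theory) matches the paper. The local step, however, is where your route diverges and becomes unnecessarily heavy.

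The paper's proof uses \emph{no} induction on $n$. Instead of analyzing the link $\Sigma$ via the inductive hypothesis, it simply observes that the open conical neighborhood $U$ of $x$ is itself a connected NB-space, and reruns the very same codimension argument on $U$: since $\dim(U\setminus U_\top)\le n-2$, one gets $H_\cpt^n(U_\top;\mathbb Z)\cong H_\cpt^n(U;\mathbb Z)$. Because $U$ is a cone with apex $x$, the canonical map $H^n(U|x)\to H_\cpt^n(U)$ is automatically an isomorphism (every compact subset of $U$ can be coned down to $x$). The commutative square
\[
\begin{CD}
H_\cpt^n(U_\top) @>{\cong}>> H_\cpt^n(U) @<{\cong}<< H^n(U|x)\\
@V{\cong}VV @VVV @VV{\cong}V\\
H_\cpt^n(X_\top) @>{\cong}>> H_\cpt^n(X) @<<< H^n(X|x)
\end{CD}
\]
then forces the bottom-right arrow to be an isomorphism as well, with no further work.

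Compared to this, your plan introduces an induction solely to compute $H^{n-1}(\Sigma;\mathbb Z)\cong\mathbb Z$, and then faces the self-described ``hard part'' of ruling out that the canonical map lands on a nontrivial multiple. That obstacle is real in your setup---the square you sketch with the reference point $x'\in K\cap X_\top$ only tells you that two $\mathbb Z\to\mathbb Z$ maps factor compatibly, not that either is surjective---and resolving it via ``compatible local generators on the link'' would essentially amount to redoing the paper's argument inside $\Sigma$. The paper's trick of localizing the $H_\cpt^n$ computation to $U$ sidesteps this entirely: you never need to know anything about $\Sigma$ beyond the fact that $U=c(\Sigma)$ is again an NB-space.
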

\begin{proof}
Since the implication (b) $\Rightarrow$ (a) is trivial, we prove the converse direction. 
Suppose that $X_\top$ is orientable. 
Then, every connected open subset $V$ of $X_\top$ is an orientable manifold. 
In particular, $H_\cpt^n(V) \cong \mathbb Z$ and $H^n(V|y) \to H_\cpt^n(V)$ is isomorphic for every $y \in V$. 
By the excision axiom, the canonical morphism $H_\cpt^n(V) \to H_\cpt^n(X_\top)$ is an isomorphism.
Recall that 
\begin{equation} \label{eq:codim}
H_\cpt^n(X_\top) \to H_\cpt^n(X)
\end{equation}
is an isomorphism, because $\dim S \le n-2$. 
Let $x \in S$ and $U$ a conical neighborhood at $x$. 
Then, $U$ itself is a connected NB-space. 
Therefore, replacing $X$ with $U$ in \eqref{eq:codim}, 
we obtain an isomorphism $H_\cpt^n(U_\top) \to H_\cpt^n(U)$.
Now, we consider the following commutative diagram consisting of canonical morphisms
\[
\xymatrix{
H^n_\cpt(X_\top) \ar[r]^\cong &H_\cpt^n(X) &H^n(X|x) \ar[l] \ar[d]^\cong \\
H_\cpt^n(U_\top) \ar[r]_\cong \ar[u]^\cong &H_\cpt^n(U) \ar[u] &H^n(U|x) \ar[l]^\cong
}
\]
Here, the most right downward arrow is an isomorphism due to the excision axiom. 
Further, the arrow $H_\cpt^n(U) \leftarrow H^n(U|x)$ is isomorphic by the conical structure of $(U,x)$.
Other morphisms indicated as ``$\cong$'' are also isomorphic, by the above observation.
From this diagram, we conclude that $H_\cpt^n(X) \cong \mathbb Z$ and $H_\cpt^n(X) \leftarrow H^n(X|x)$ is isomorphic, for every $x \in S$. 
Hence, $X$ satisifes (b). 
This completes the proof.
\end{proof}

Remark that a torus with a smashed meridian is an MCS-space, but is not an NB-space. 
Such space is orientable in the sense of (a). 
However, it does not satisfies (b), because the generator at the smashed point is the union of disjoint circles.

The following concept is nothing but the condition (c) in Theorem \ref{thm:main thm} (A) which is similar to the condition (b): 
\begin{definition}[cf.\!\!\! \cite{Pet}] \upshape
Let $X$ be an $n$-dimensional NB-space. 
We say that $X$ is {\it locally orientable at} $x \in X$ if $H^n(X|x;\mathbb Z) \cong \mathbb Z$ holds. 
We say that $X$ is {\it cohomologically orientable} if it is locally orientable at every point, and there is a family $\{o^x \in H^n(X|x)\}_{x \in X}$ of generators of local cohomologies such that for any $x \in X$, there exist an open neighborhood $U$ of it and an element $o^U \in H^n(X|U)$ satisfying that $H^n(X|y) \to H^n(X|U)$ is an isomorphism and maps $o^y$ to $o^U$ for every $y \in U$.
Such a family $\{o^x\}_{x \in X}$ is called a {\it cohomological orientation} on $X$.
\end{definition}

\begin{remark} \upshape
Note that, if it is admitted that (e) and (g) are equivalent in the statement (A)$_{n-1}$, then 
$n$-dimensional NB-space $X$ is locally orientable at $x$ if and only if $H_n(X|x;\mathbb Z) \cong \mathbb Z$. 
Due to Theorem \ref{thm:main thm}, it is eventually verified.
\end{remark}

As same as for the case of topological manifolds, we shall employ the following concept for defining orientability of NB-spaces. 
This condition is (d) in Theorem \ref{thm:main thm} (A).
\begin{definition} \upshape
Let $X$ be an $n$-dimensional NB-space. 
We say that $X$ is {\it homologically orientable} if 
$H_n(X|x;\mathbb Z) \cong \mathbb Z$ for any $x \in X$ and 
it admits a family of elements $o_x \in H_n(X|x;\mathbb Z)$ such that $o_x$ is a generator of $H_n(X|x)$ and that for every $x \in X$, there exist an open neighborhood $U$ and an element $o_U \in H_n(X|U)$ such that 
$H_n(X|U) \to H_n(X|y)$ maps $o_U$ to $o_y$ for every $y \in U$.
In this case, the family $\{o_x\}_{x \in X}$ is called a {\it homological orientation} on $X$.
\end{definition}

\begin{remark} \upshape \label{rem:R-ori}
For any commutative unital ring $R$, we can give the notion of $R$-orientability for NB-spaces, corresponding to the conditions (a)--(d) in Theorem \ref{thm:main thm}. 
For instance, if $X$ is orientable, then it is $R$-orientable in any sense, by Theorem \ref{thm:main thm}(B). 
In this terminology, the $\mathbb Z$-orientability is the usual orientability and every NB-space is always $\mathbb Z_2$-orientable.
One can also prove that the statement of Theorem \ref{thm:main thm} replaced with the corresponding statements of $R$-orientability, whenever $R$ is a {\it principal ideal domain}. 
For the reason why $R$ is a PID, see the proof of Theorem \ref{thm:main thm} given in Sections \ref{sec:proof of AB} and \ref{sec:proof of C}.
In particular, if $X$ is $R$-orientable and if $R$ is a principal ideal domain, then one can prove that it is $R/p$-orientable for any prime ideal $p$ of $R$, by using ``$R$-version'' of Theorem \ref{thm:main thm}.
\end{remark}

Finally, let us give a technical remark. 
\begin{remark} \label{rem:a to d is hard} \upshape 
The reason why the proof of (a) $\Rightarrow$ (b) in Lemma \ref{lem:HS} as above goes well is that the condition (b) is written in the terminologies of cohomologies.
That is, we could use the isomorphism \eqref{eq:codim}.
On the other hands, 
the condition (d) is given in the terminologies of homologies.
As after seen, we do not prove the implication (a) $\Rightarrow$ (d) directly.
\end{remark}

\section{Proof of Theorems \ref{thm:main thm} (A) and (B) and \ref{thm:compact}} \label{sec:proof of AB}

This section is devoted to prove (A)$_n$ and (B)$_n$ of Theorem \ref{thm:main thm} and Theorem \ref{thm:compact}. 
Arguments from here are done along the same line as the usual proof of the case of manifolds (See e.g. \cite{May}).

The symbols (a)--(h) denote conditions in Theorem \ref{thm:main thm} (A).
First, we discuss about the conditions (a)--(d).
By the definitions, the implications 
\[
\xymatrix{
(\mathrm{a}) &(\mathrm{b}) \ar@{=>}[d] \ar@{=>}[l] \\
(\mathrm{d}) \ar@{=>}[u] \ar@{=>}[r] &(\mathrm{c}) 
}
\]
are trivial. 
Here, due to Lemma \ref{lem:HS}, we already know that (a) $\Rightarrow$ (b) is true. 
Further, the implication (c) $\Rightarrow$ (b) also holds, by a similar argument to the proof of Lemma \ref{lem:HS}.
(However, we do not need this implication). 
Thus, to prove a half of (A)$_n$, it suffices to show that (c) $\Rightarrow$ (d).
In this section, the (co)homologies are asssumed to be with coefficients in $\mathbb Z$, if they are not indicated. 

\begin{lemma}
Supposing $(\mathrm{B})_{n-1}$, we have the implication $(\mathrm{c}) \Rightarrow (\mathrm{d})$.
\end{lemma}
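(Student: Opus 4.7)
The plan is to upgrade the cohomological orientation $\{o^x\}_{x \in X}$ given by (c) to a homological one by exploiting universal coefficient duality at each point and each small cone, then using naturality of the evaluation pairing to verify local consistency. The inductive hypothesis (B)$_{n-1}$ enters exactly to guarantee vanishing of the relevant Ext term, so that UCT gives a perfect pairing. Fixing $x \in X$ and a generator $\Sigma$ of a conical neighborhood at $x$, the identifications $H^\ast(X|x) \cong \bar H^{\ast-1}(\Sigma)$ and $H_\ast(X|x) \cong \bar H_{\ast-1}(\Sigma)$, combined with hypothesis (c), show that $\Sigma$ satisfies condition (g) of (A)$_{n-1}$; hence $\Sigma$ is an orientable compact connected $(n-1)$-dimensional NB-space with $H_{n-1}(\Sigma;\mathbb Z) \cong \mathbb Z$. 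Applying (B)$_{n-1}$ to $\Sigma$ makes $H_{n-2}(\Sigma;\mathbb Z)$ torsion-free, so UCT forces the evaluation pairing
\[
H^n(X|x;\mathbb Z) \otimes H_n(X|x;\mathbb Z) \to \mathbb Z
\]
to be a perfect pairing of infinite cyclic groups. I define $o_x$ as the unique generator of $H_n(X|x;\mathbb Z)$ dual to $o^x$.

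For local consistency, condition (c) supplies around each $x$ a neighborhood $U_0$ and $o^{U_0} \in H^n(X|U_0)$ to which $o^y$ pulls back for every $y \in U_0$. Shrinking $U_0$ to a conical neighborhood $U$ at $x$ with generator $\Sigma$, a standard excision and cone-deformation argument produces, for every $k$, isomorphisms $H_k(X|U) \cong \bar H_{k-1}(\Sigma)$ and $H^k(X|U) \cong \bar H^{k-1}(\Sigma)$, natural with respect to restriction to $\{x\}$. In particular $H_n(X|U) \cong \mathbb Z$ and $H_{n-1}(X|U) \cong H_{n-2}(\Sigma)$ is torsion-free by (B)$_{n-1}$, so the UCT pairing $H^n(X|U) \otimes H_n(X|U) \to \mathbb Z$ is again perfect; let $o_U$ be the unique generator dual to $o^U$. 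For $y \in U$ let $f : (X, X \setminus U) \to (X, X \setminus \{y\})$ be the natural pair inclusion, so $f^\ast o^y = o^U$. Naturality of the evaluation pairing then yields
\[
\langle o^y, f_\ast o_U \rangle = \langle f^\ast o^y, o_U \rangle = \langle o^U, o_U \rangle = 1 = \langle o^y, o_y \rangle,
\]
and perfection of the pairing at $y$ forces $f_\ast o_U = o_y$. Hence $\{o_x\}$ is a homological orientation and $X$ satisfies (d).

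The main obstacle I anticipate is the cone computation $H_k(X|U) \cong \bar H_{k-1}(\Sigma)$ for a conical neighborhood $U$: one must excise an appropriate shrinkage of $U$ and organize a cone-radial deformation retract so that the resulting isomorphisms intertwine restriction to the apex $x$ (so that the pointwise $o_x$ and the $o_U$ constructed via UCT are compatible in the evident way). All remaining steps are formal consequences of UCT, the inductive hypotheses (A)$_{n-1}$ and (B)$_{n-1}$, and naturality of the evaluation pairing.
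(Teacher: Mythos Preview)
Your proof is correct and follows essentially the same route as the paper: dualize the cohomological orientation pointwise via the universal coefficient pairing, using $(\mathrm{B})_{n-1}$ to kill the $\mathrm{Ext}$ obstruction, and then check local consistency by naturality. You are in fact more explicit than the paper, both in spelling out the consistency step over a conical $U$ and in invoking $(\mathrm{A})_{n-1}$ to secure $H_n(X|x;\mathbb Z)\cong\mathbb Z$; the paper's identification of $G=H_n(X|x)$ with $G''$ tacitly needs this too, and in the overall induction $(\mathrm{A})_{n-1}$ is available.
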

\begin{proof}
Let $X$ be an $n$-dimensional connected NB-space which is cohomologically orientable. 
Let $\{o^x\}_{x \in X}$ be a cohomological orientation. 
For each $x \in X$, we denote by $\Sigma_x$ a generator at $x$. 
By (B)$_{n-1}$, $\bar H_{n-2}(\Sigma_x)$ has no torsion. 
Hence, the canonical morphism 
\[
H^n(X|x) \to H_n(X|x)'
\]
is an isomorphism. 
Here, $G'$ denotes the group $\mathrm{Hom}_{\mathbb Z}(G,\mathbb Z)$ for an abelian group $G$.
Let us set $(o^x)' \in H_n(X|x)'$ the image of $o^x$ under the map $H^n(X|x) \to H_n(X|x)'$. 
Further, $(o^x)'' \in H_n(X|x)''$ denotes the dual element of $(o^x)' \in H_n(X|x)'$.
By identifying $G$ and $G''$ for $G$ with no torsion, 
we obtain an element $o_x \in H_n(X|x)$ corresponding to $(o^x)''$. 
This gives a homological orientation $\{o_x\}_{x \in X}$ from the construction. 
This completes the proof.
\end{proof}

Let us discuss about the relation between the conditions (a)--(d) and the conditions (e)--(h).
Obviously, $(\mathrm{g}) \Rightarrow (\mathrm{h})$ holds.
By the universal coefficient theorem, $(\mathrm{h}) \Rightarrow (\mathrm{f})$ and $(\mathrm{e}) \Rightarrow (\mathrm{g})$ hold. 
By using the compactness of a space, we have $(\mathrm{b}) \Rightarrow (\mathrm{g})$ for compact connected NB-space. 
As a summary, we have a diagram consisting of implications between the conditions: 
\[
\xymatrix{
(\mathrm{b}) \ar@{=>}[r] &(\mathrm{g}) \ar@{=>}[r] &(\mathrm{h}) \ar@{=>}[d] \\
(\mathrm{d}) \ar@{=>}[u]  &(\mathrm{e}) \ar@{=>}[u] \ar@{-->}[l] &(\mathrm{f}) \ar@{-->}[l]
}
\]
Here, the thick arrows denote implications known already.
The broken arrows denote implications which will be proved from now.

\begin{lemma}
Supposing $(\mathrm{C})_{n}$, we have the implication $(\mathrm{f}) \Rightarrow (\mathrm{e})$.
\end{lemma}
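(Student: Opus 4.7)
The plan is to mimic the classical manifold argument, using $(\mathrm C)_n$ to kill the absolute $n$-homology of the punctured space and $(\mathrm A)_{n-1}$ to control the local homology.

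First I would dispose of the trivial case $n=1$: a compact connected $1$-dimensional NB-space is just a circle, and the implication is obvious. So assume $n\ge 2$. Fix a compact connected $n$-dimensional NB-space $X$ with $H_n(X;\mathbb Z)\neq 0$, pick any $x\in X$, and choose a conical neighborhood $V$ of $x$ with generator $\Sigma_x$. Since $\Sigma_x$ is a compact connected $(n-1)$-dimensional NB-space (its connectedness is part of the inductive definition of NB-space), the set $V\smallsetminus\{x\}$ deformation retracts onto $\Sigma_x$ and is therefore connected; combining this with the path-connectedness of $X$ and a routing argument around $x$ inside $V$, the punctured space $Y:=X\smallsetminus\{x\}$ is connected. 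Being an open subset of $X$, it is an NB-space by Example \ref{ex:NB}, and it is non-compact because $X$ is Hausdorff.

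Next I would invoke $(\mathrm C)_n$ on $Y$ to conclude $H_n(Y;\mathbb Z)=0$. The long exact sequence of the pair $(X,Y)$ then yields an injection
\[
H_n(X;\mathbb Z)\hookrightarrow H_n(X\mid x;\mathbb Z).
\]
By excision and the cone structure of $V$, the right-hand side is isomorphic to $\bar H_{n-1}(\Sigma_x;\mathbb Z)=H_{n-1}(\Sigma_x;\mathbb Z)$ (reduced equals unreduced since $n-1\ge 1$).

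Now I would apply the inductive hypothesis $(\mathrm A)_{n-1}$ to $\Sigma_x$. Because $\Sigma_x$ is a compact connected $(n-1)$-dimensional NB-space, the equivalence of conditions (e) and (f) for it says that $H_{n-1}(\Sigma_x;\mathbb Z)$ is either $0$ or $\mathbb Z$. Hence $H_n(X;\mathbb Z)$ embeds in a group that is either trivial or infinite cyclic. Since $H_n(X;\mathbb Z)\neq 0$ by hypothesis, the target is $\mathbb Z$ and $H_n(X;\mathbb Z)$ is a nonzero subgroup of $\mathbb Z$, hence isomorphic to $\mathbb Z$. This establishes $(\mathrm f)\Rightarrow(\mathrm e)$.

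The only genuinely delicate point is the connectedness of $Y=X\smallsetminus\{x\}$; everything else is bookkeeping with the long exact sequence and the excision identification of the local homology. I would expect the paper to handle the connectedness by the inductive fact that links $\Sigma_x$ of NB-spaces are themselves connected (so the punctured cone $V\smallsetminus\{x\}$ is connected), combined with the path-connectedness of $X$. Once that is in hand, the rest of the argument is essentially one diagram and one application of each of $(\mathrm C)_n$ and $(\mathrm A)_{n-1}$.
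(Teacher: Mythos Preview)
Your argument is structurally the same as the paper's---use $(\mathrm C)_n$ on the punctured space to get injectivity of $H_n(X)\to H_n(X\mid x)$, then identify the target---but there is one real discrepancy. The lemma's hypothesis is \emph{only} $(\mathrm C)_n$, yet you invoke $(\mathrm A)_{n-1}$ to analyze $H_{n-1}(\Sigma_x;\mathbb Z)$. Strictly speaking you have proved ``$(\mathrm C)_n$ and $(\mathrm A)_{n-1}$ imply $(\mathrm f)\Rightarrow(\mathrm e)$'', which is weaker than what is claimed.

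The paper avoids this by a simple device: rather than taking an arbitrary $x\in X$, it chooses $x\in X_{\mathrm{top}}$, the manifold part of $X$ (which is non-empty and dense by Lemma~\ref{lem:open dense mfd}). For such $x$ one has $H_n(X\mid x;\mathbb Z)\cong\mathbb Z$ immediately, with no appeal to the structure of generators or to $(\mathrm A)_{n-1}$. The rest of the argument is then identical to yours: $X\setminus\{x\}$ is a non-compact connected NB-space (connectedness is easy here since a punctured Euclidean neighborhood is connected for $n\ge 2$, and the $n=1$ case is a circle), $(\mathrm C)_n$ kills its top homology, and $H_n(X)$ injects into $\mathbb Z$ as a nonzero subgroup.

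So your connectedness discussion and your use of the long exact sequence are fine; the only fix needed is to pick $x$ in $X_{\mathrm{top}}$ and drop the appeal to $(\mathrm A)_{n-1}$.
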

\begin{proof}
Let $X$ be an $n$-dimensional connected NB-space satsfying $H_n(X) \neq 0$. 
For $x \in X_\top$, we have an exact sequence 
\[
H_n(X \setminus \{x\}) \to H_n(X) \to H_n(X|x). 
\]
Since $x \in X_\top$, we have $H_n(X|x) \cong \mathbb Z$. 
We remark that $X \setminus \{x\}$ is a connected non-compact NB-space. 
Due to the vanishing theorem (C)$_n$, we conclude that the map $H_n(X) \to H_n(X|x)$ is injective. 
From the asumption, $H_n(X)$ is regarded as a non-trivial ideal in $\mathbb Z$.
Therefore, we obtain the conclusion: $H_n(X) \cong \mathbb Z$.
\end{proof}


\begin{lemma} \label{lem:fundam}
Supposing $(\mathrm{A})_{n-1}$ and $(\mathrm{C})_n$, 
we obtain $(\mathrm{e}) \Rightarrow (\mathrm{d})$.
Moreover, if $H_n(X;\mathbb Z) \cong \mathbb Z$, then the canonical map $H_n(X;\mathbb Z) \to H_n(X|x;\mathbb Z)$ is an isomorphism for each $x \in X$.
\end{lemma}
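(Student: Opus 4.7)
The plan is to adapt the classical proof for closed connected $n$-manifolds (cf.\ the reference \cite{May} cited in the paper) to NB-spaces, using $(\mathrm{C})_n$ where the manifold argument invokes vanishing of top homology of a punctured manifold, and using $(\mathrm{A})_{n-1}$ to supply the local homology at singular points.

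First I would prove that $H_n(X|x;\mathbb Z)\cong\mathbb Z$ for every $x\in X$. Since the generator $\Sigma_x$ is connected, $X\setminus\{x\}$ is a connected non-compact $n$-dimensional NB-space, so $(\mathrm{C})_n$ gives $H_n(X\setminus\{x\};\mathbb Z)=0$; the long exact sequence of the pair forces $H_n(X;\mathbb Z)\to H_n(X|x;\mathbb Z)$ to be injective, hence $H_n(X|x;\mathbb Z)\neq 0$. Excision together with the contractibility of the cone identifies $H_n(X|x;\mathbb Z)$ with $H_{n-1}(\Sigma_x;\mathbb Z)$, and the implication $(\mathrm{f})\Rightarrow(\mathrm{e})$ of $(\mathrm{A})_{n-1}$, applied to the compact connected $(n-1)$-dimensional NB-space $\Sigma_x$, upgrades the non-vanishing to $H_{n-1}(\Sigma_x;\mathbb Z)\cong\mathbb Z$.

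Fix a generator $[X]$ of $H_n(X;\mathbb Z)$ and set $o_x:=[X]|_x$; injectivity makes every $o_x$ nonzero, and taking $o_U:=[X]|_U\in H_n(X|U)$ for open $U\ni x$ provides the coherence required in the definition of a homological orientation by naturality. The central task is to show that each $o_x$ in fact generates $H_n(X|x;\mathbb Z)\cong\mathbb Z$. For this I would establish the Hatcher-style auxiliary lemma: for every compact $A\subset X$ one has $H_i(X|A;\mathbb Z)=0$ for $i>n$, and every locally coherent family $(\alpha_x)_{x\in A}$ with $\alpha_x\in H_n(X|x;\mathbb Z)$ is the restriction of a unique class in $H_n(X|A;\mathbb Z)$. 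The proof is a Mayer--Vietoris induction on $A$, the base case being when $A$ lies inside a single closed conical neighborhood $\bar V$ at some point, where Step 1, excision, and the deformation retract $X\setminus\{x\}\simeq X\setminus\mathrm{int}\,\bar V$ furnish all the required calculations.

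Granted this lemma, the presheaf $U\mapsto H_n(X|U;\mathbb Z)$ sheafifies to an orientation sheaf $\mathcal O$ on $X$ with stalk $\mathbb Z$ everywhere, locally constant by the cone model above, and one obtains $H_n(X;\mathbb Z)\cong\Gamma(X;\mathcal O)$ for compact $X$. Since $X$ is connected and $\Gamma(X;\mathcal O)\cong\mathbb Z\neq 0$, the $\pi_1$-monodromy on the stalk (which lies in $\mathrm{Aut}(\mathbb Z)=\{\pm 1\}$) must be trivial, so $\mathcal O$ is the constant sheaf $\underline{\mathbb Z}$. A generator of $\Gamma(X;\mathcal O)$ is then a global section taking a generator of the stalk at every point, and by the lemma this section is the restriction of some $\beta\in H_n(X;\mathbb Z)$ with $\beta|_x$ a generator of $H_n(X|x;\mathbb Z)$ for every $x$. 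Since $\beta$ and $[X]$ both generate $H_n(X;\mathbb Z)\cong\mathbb Z$, we have $\beta=\pm[X]$, and hence $o_x=[X]|_x$ is a generator of $H_n(X|x;\mathbb Z)$ for every $x$. This is precisely the homological orientability $(\mathrm{d})$, and it simultaneously gives that the injective canonical map $H_n(X;\mathbb Z)\to H_n(X|x;\mathbb Z)$ is an isomorphism at every $x\in X$. The main obstacle is the base case of the auxiliary lemma at singular points: showing that for a small closed conical neighborhood $\bar V$ at $x\in S$, the restriction $H_n(X|\bar V;\mathbb Z)\to H_n(X|y;\mathbb Z)$ is an isomorphism for each $y\in\bar V$, so that $\mathcal O$ is locally constant across $S$. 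This is exactly where $(\mathrm{A})_{n-1}$ applied to $\Sigma_x$ enters essentially, because it furnishes the orientability of the link and so trivializes the local monodromy of $\mathcal O$.
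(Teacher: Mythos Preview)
Your outline is viable but substantially heavier than the paper's argument, and one step is sketched too quickly.

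The paper bypasses the orientation-sheaf and auxiliary-lemma machinery entirely with a short change-of-coefficients trick. Once injectivity of $H_n(X;\mathbb Z)\to H_n(X|x;\mathbb Z)\cong\mathbb Z$ is in hand (exactly your first paragraph), the map is multiplication by some nonzero integer $a$; if $a$ is not a unit, choose a prime $p\mid a$, set $F=\mathbb Z/(p)$, and chase the square
\[
\begin{CD}
H_n(X)\otimes F @>>> H_n(X|x)\otimes F\\
@VVV @VVV\\
H_n(X;F) @>>> H_n(X|x;F)
\end{CD}
\]
in which the vertical arrows are injective by universal coefficients, the bottom arrow is injective by $(\mathrm C)_n$ with $F$-coefficients, yet the top arrow is zero by the choice of $p$. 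Dimension counting gives a contradiction, so $a=\pm 1$ and the map is an isomorphism for every $x$. This is a handful of lines and uses nothing beyond the stated hypotheses.

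Your route, by contrast, needs the existence half of the Hatcher-style auxiliary lemma (surjectivity of $H_n(X|A)\to\Gamma(A;\mathcal O)$) to produce the class $\beta$. The base case you sketch---``$A$ inside one closed conical neighborhood, handled by excision and the single deformation retract $X\setminus\{x\}\simeq X\setminus\mathrm{int}\,\bar V$''---is too quick: for an arbitrary compact $A\subset c(\Sigma)$ the group $H_n(X|A)$ is not computed by a single retract to the apex (already for $\Sigma=S^1$ and $A$ two points one gets $\mathbb Z^2$), and the standard cure in Hatcher's argument (cover $A$ by convex sets whose pairwise intersections remain convex) has no direct analogue in a general NB cone. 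One can rescue it by a further induction on dimension through the product structure $c(\Sigma)\setminus\{o\}\cong\Sigma\times(0,\infty)$ and $(\mathrm A)_{n-1}$ for $\Sigma$, but this is genuine extra work you have not indicated; the paper itself carries out the analogous construction only for Alexandrov spaces, where good coverings supply the missing ingredient. What your approach does buy is that it manufactures the orientation bundle and the fundamental-class-at-compact-sets machinery along the way, which the paper postpones and treats separately.
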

\begin{proof}
Let $X$ be an $n$-dimensional connected NB-space satisfying $H_n(X) \cong \mathbb Z$. 
By (C)$_n$, the canonical map 
\begin{equation} \label{eq:inj}
H_n(X) \to H_n(X|x)
\end{equation}
is injective for any $x \in X$. 
Therefore, by (A)$_{n-1}$, we have $H_n(X|x) \cong \mathbb Z$.
Hence, we can choose an element $a \in \mathbb Z \setminus \{0\}$ such that the map \eqref{eq:inj} is identified with $\mathbb Z \ni b \mapsto ab \in \mathbb Z$.
If $a$ is not a unit, then there is a prime divisor $p$ of $a$. 
Let us consider the quotient field $F:= \mathbb Z/(p)$. 
Now, let us consider the following commutative diagram consisting of canonical maps: 
\[
\xymatrix{
H_n(X) \otimes F \ar[r] \ar[d] &H_n(X|x) \otimes F \ar[d] \\
H_n(X;F) \ar[r] &H_n(X|x;F)
}
\]
Because the vanishing theorem (C)$_n$ holds for any coefficients, the bottom rightward arrow is injective. 
The downward arrows are also injective, by a purely algebraic reason. 
However, the top rightward arrow is trivial from the definition of $F$. 
Counting the dimension of the image of $H_n(X) \otimes F$ in $H_n(X|x;F)$, we obtain a contradiction. 
Therefore, the map \eqref{eq:inj} is known to be isomorphic. 
\end{proof}

Summarizing the above lemmas, we have one of implications mentioned in the introduction: 
\begin{conclusion} 
Supposing $(\mathrm{A})_{n-1}$, $(\mathrm{B})_{n-1}$ and $(\mathrm{C})_n$, we obtain $(\mathrm{A})_n$.
\end{conclusion}

Now, we prove another implication mentioned in the introduction: 
\begin{proposition}
Supposing $(\mathrm{A})_n$ and $(\mathrm{C})_n$, we obtain $(\mathrm{B})_n$. 
\end{proposition}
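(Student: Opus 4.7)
The plan is to argue by contradiction using the Bockstein exact sequence to amplify a hypothetical torsion element of $H_{n-1}(X;\mathbb{Z})$ into extra rank in $H_n(X;\mathbb{Z}/p)$, and then squash $H_n(X;\mathbb{Z}/p)$ from above using $(\mathrm{C})_n$ with mod-$p$ coefficients.

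More precisely, let $X$ be an $n$-dimensional compact connected orientable NB-space, so by $(\mathrm{A})_n$ we have $H_n(X;\mathbb{Z}) \cong \mathbb{Z}$ and the canonical restriction $H_n(X;\mathbb{Z}) \to H_n(X|x;\mathbb{Z})$ is an isomorphism for every $x$. The case $n=1$ is immediate since a compact connected $1$-dimensional NB-space is homeomorphic to $S^1$. So assume $n \geq 2$, and suppose for contradiction that $H_{n-1}(X;\mathbb{Z})$ has a nontrivial $p$-torsion element for some prime $p$. The short exact sequence of coefficients $0 \to \mathbb{Z} \xrightarrow{p} \mathbb{Z} \to \mathbb{Z}/p \to 0$ induces the long exact sequence in homology, from which I extract the short exact sequence
\[
0 \to H_n(X;\mathbb{Z})/p \to H_n(X;\mathbb{Z}/p) \to {}_p H_{n-1}(X;\mathbb{Z}) \to 0,
\]
where ${}_p G$ denotes the $p$-torsion subgroup of $G$. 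The left term is $\mathbb{Z}/p$ and the right term is nontrivial by hypothesis, so $\dim_{\mathbb{F}_p} H_n(X;\mathbb{Z}/p) \geq 2$.

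On the other hand, pick any $x \in X_\top$ (possible since $X_\top$ is dense by Lemma \ref{lem:open dense mfd}). Each connected component of $X \setminus \{x\}$ is a non-compact connected $n$-dimensional NB-space, so $(\mathrm{C})_n$ applied with $G = \mathbb{Z}/p$ to each component gives $H_n(X \setminus \{x\};\mathbb{Z}/p) = 0$. The long exact sequence of the pair $(X, X \setminus \{x\})$ then yields an injection
\[
H_n(X;\mathbb{Z}/p) \hookrightarrow H_n(X|x;\mathbb{Z}/p).
\]
Because $x$ is a manifold point, $H_n(X|x;\mathbb{Z}/p) \cong \mathbb{Z}/p$, so $\dim_{\mathbb{F}_p} H_n(X;\mathbb{Z}/p) \leq 1$, contradicting the lower bound from the Bockstein step.

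The one place where I expect to need care is the step asserting $H_n(X \setminus \{x\};\mathbb{Z}/p) = 0$: the hypothesis $(\mathrm{C})_n$ is stated for connected NB-spaces, whereas $X \setminus \{x\}$ need not be connected a priori. This is handled either by applying $(\mathrm{C})_n$ component by component, or by observing that removing the manifold point $x$ from the connected NB-space $X$ of dimension $n \geq 2$ does leave a connected space (since locally the complement looks like $\mathbb{R}^n \setminus \{0\}$, which is connected). Beyond that caveat, the only inputs are the $\mathbb{Z}$-statement $(\mathrm{A})_n$ (to pin down $H_n(X;\mathbb{Z}) \cong \mathbb{Z}$) and $(\mathrm{C})_n$ for the single coefficient group $\mathbb{Z}/p$, exactly as the implication $(\mathrm{A})_n,(\mathrm{C})_n \Rightarrow (\mathrm{B})_n$ advertises.
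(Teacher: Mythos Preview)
Your proof is correct and follows essentially the same approach as the paper. The paper phrases the key step via the universal coefficient theorem rather than the Bockstein sequence, but these yield the identical short exact sequence here (since $H_n(X;\mathbb Z)\otimes\mathbb Z/p = H_n(X;\mathbb Z)/p$ and $\mathrm{Tor}(H_{n-1}(X;\mathbb Z),\mathbb Z/p) = {}_p H_{n-1}(X;\mathbb Z)$), and the contradiction via $(\mathrm C)_n$ and a point $x\in X_\top$ is exactly the paper's argument; your extra care about the connectivity of $X\setminus\{x\}$ is a point the paper leaves implicit.
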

\begin{proof}
Let $X$ be a compact connected $n$-dimensional NB-space which is orientable in the sense that it satisfies one of the conditions of (A). 
By (e) of (A)$_n$, we have $H_n(X;\mathbb Z) \cong \mathbb Z$.
Since $X$ is compact, $H_{n-1}(X;\mathbb Z)$ is a finitely generated abelian group. 
Suppose that there is a non-trivial torsion part of $H_{n-1}(X;\mathbb Z)$. 
Then, we have a direct summand of it, which is isomorphic to $\mathbb Z/(p^e)$, where $p$ is a prime of $\mathbb Z$ and $e \ge 1$.
Let $F=\mathbb Z/(p)$. 
Then, by the universal coeffcient theorem, we have 
\[
H_n(X; F) \cong [H_n(X) \otimes F] \oplus \mathrm{Tor}(H_{n-1}(X), F).
\]
We remark that $\mathrm{Tor}(\mathbb Z/(p^e), F) \cong F$ and recall that $\mathrm{Tor}(A \oplus B, -) \cong \mathrm{Tor}(A,-) \oplus \mathrm{Tor}(B,-)$ holds for any abelian groups $A$ and $B$. 
Therefore, $H_n(X;F)$ has $F$-dimension at least two. 
On the other hands, by (C)$_n$, $H_n(X;F)$ is injectively embedded in $H_n(X|x;F)$. 
However, if $x \in X_\top$, then $H_n(X|x;F) \cong F$. 
This is a contradiction. 
Therefore, we conclude that $H_{n-1}(X;\mathbb Z)$ has no torsion.
This completes the proof.
\end{proof}

Let us give a proof of Theorem \ref{thm:compact} using (B)$_n$, (A)$_n$ and (B)$_{n-1}$.
\begin{proof}[Proof of Theorem \ref{thm:compact}]
Let $X$ be an $n$-dimensional compact connected NB-space which is orientable. 
Let $G$ be an abelian group. 
For each $x \in X$, we consider the following commuttive diagram consisting of canonical maps: 
\[
\xymatrix{
H_n(X;G) \ar[r] \ar[d] &H_n(X) \otimes G \ar[d] \\ 
H_n(X|x;G) \ar[r] &H_n(X|x) \otimes G
}
\]
By (B)$_n$ and (B)$_{n-1}$, the rightward arrows are isomorphic. 
By the last statement of (A)$_n$, the right downward arrow $H_n(X) \otimes G \to H_n(X|x) \otimes G$ is isomorphic. 
Therefore, the rest map $H_n(X;G) \to H_n(X|x;G)$ is isomorphic. 
This completes the proof.
\end{proof}

\begin{remark} \upshape
By a similar argument to the proof of Theorem \ref{thm:compact}, one can obtain the followng: 
if a compact connected NB-space $X$ of dimension $n$ is $R$-orientable for a principal ideal doamin $R$, then $H_n(X;G) \cong G$ and $H_n(X;G) \to H_n(X|x;G)$ is an isomorphism for every $x \in X$, where $G$ is an arbitrary $R$-module.
\end{remark}

\section{Proof of the vanishing theorem} \label{sec:proof of C}
Let us prove the statement (C)$_n$ of Theorem \ref{thm:main thm} 
which is called the vanishing theorem, here. 
We may assume that $n \ge 3$. 
The proof of the vanishing theorem is done by a similar way to the proof of usual vanishing theorem for topological manifolds given in \cite{May}. 
Suppose that (A)$_{n-1}$ and (B)$_{n-1}$ are true.

\begin{lemma} \label{lem:cone01}
Let $\Sigma$ be an $(n-1)$-dimensional compact connected NB-space, which is orientable in the sense of $(\mathrm{A})_{n-1}$.
Then, its cone $C=c(\Sigma)$ is homologically orientable.
Moreover, for any $x \in C$, there is an open set $U$containing $x$ and the apex $o$ such that $H_n(C|U;G) \to H_n(C|y;G)$ is isomorphic for any $y \in U$, where $G$ is an abelian group.
\end{lemma}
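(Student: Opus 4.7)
The plan is to construct an explicit class in $H_n(C|U_T;G)$ via the fundamental class of $\Sigma$, and then show that its restriction to any point of $U_T$ is an isomorphism. For $T>0$ set
\[
U_T = \{(p,s) \in c(\Sigma) : s < T\} \cup \{o\},
\]
the open truncated cone; this is an open neighborhood of the apex $o$ and, for $T$ large enough, of any prescribed $x \in C$. Its complement $C \setminus U_T = \Sigma \times [T,\infty)$ deformation retracts onto $\Sigma \times \{T\}$.

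Since $C$ is contractible, the long exact sequence of the pair $(C, C \setminus U_T)$ yields
\[
H_n(C|U_T;G) \xrightarrow{\cong} H_{n-1}(C \setminus U_T;G) \cong H_{n-1}(\Sigma;G) \cong G,
\]
where the last isomorphism uses Theorem \ref{thm:compact} applied to $\Sigma$, available under the inductive assumptions $(\mathrm{A})_{n-1}$ and $(\mathrm{B})_{n-1}$. Let $\mu_T \in H_n(C|U_T;\mathbb{Z})$ correspond to a fundamental class of $\Sigma$. For $y \in U_T$, the morphism of long exact sequences induced by $C \setminus U_T \subset C \setminus \{y\}$, combined with $H_n(C;G)=H_{n-1}(C;G)=0$, reduces the assertion that the restriction $H_n(C|U_T;G) \to H_n(C|y;G)$ is an isomorphism to the claim that the inclusion-induced map
\[
H_{n-1}(C \setminus U_T;G) \to H_{n-1}(C \setminus \{y\};G)
\]
is an isomorphism. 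I would deduce this by showing that $C \setminus U_T \hookrightarrow C \setminus \{y\}$ is a homotopy equivalence. When $y = o$ this is immediate, since $C \setminus \{o\} = \Sigma \times (0,\infty) \simeq \Sigma$. When $y \neq o$, take a small closed conical neighborhood $\bar V \cong c(\Lambda_y)$ of $y$ with $\bar V \subset U_T$ and $o \notin \bar V$; the piece $\bar V \setminus \{y\}$ deformation retracts onto $\partial \bar V \cong \Lambda_y$ along the cone coordinate, while $C \setminus \mathrm{int}\,\bar V$ retracts onto $C \setminus U_T$ via a modified radial homotopy in $C$, and gluing these along $\partial \bar V$ gives the desired homotopy equivalence.

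Setting $o_y \in H_n(C|y;\mathbb{Z})$ to be the image of $\mu_T$ under the above restriction (for any $T$ with $y \in U_T$) then defines a homological orientation $\{o_y\}$ on $C$; the sets $U_T$ simultaneously serve as the neighborhoods required in the definition of homological orientability and in the ``moreover'' part of the lemma, with $o_{U_T} = \mu_T$. The main obstacle is the gluing step producing the homotopy equivalence: one must arrange the conical retraction near $y$ to mesh with a global radial retraction on $C$ and verify continuity across the possibly nontrivial singular stratum $S \subset C$, where one exploits the codimension bound $\dim S \le n-3$ from Lemma \ref{lem:codim}(2). Here one also needs the inductive fact that the generator $\Lambda_y$ of the conical neighborhood at $y$ is orientable, which follows from the orientability of $\Sigma$: at the apex $\Lambda_o = \Sigma$ directly, while at $y \neq o$ the generator identifies with the suspension $S^0 * \Lambda_p$ of the generator of $\Sigma$ at $p$ (using $c(A)\times\mathbb{R}\cong c(S^0 \ast A)$), and the latter is orientable via $(\mathrm{c})$ applied to $\Sigma$ together with $(\mathrm{A})_{n-2}$.
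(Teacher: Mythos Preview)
Your setup with the truncated cones $U_T$ and the reduction to showing that
\[
H_{n-1}(C\setminus U_T;G)\longrightarrow H_{n-1}(C\setminus\{y\};G)
\]
is an isomorphism is correct and is essentially how the paper begins. The gap is in the next step: the inclusion $C\setminus U_T\hookrightarrow C\setminus\{y\}$ is \emph{not} a homotopy equivalence in general, so your gluing/retraction argument cannot work. Take $\Sigma=T^2$ (a perfectly good compact connected orientable NB-space), so $n=3$, and let $y$ be any point in the manifold part of $C=c(T^2)$. Then $C\setminus U_T\simeq T^2$ has $H_1=\mathbb Z^2$, while $C\setminus\{y\}$ has $H_1\cong H_2(C|y)=0$ since $y$ has a Euclidean neighborhood. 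In particular $C\setminus\mathrm{int}\,\bar V$ (which still contains the apex $o$) cannot deformation retract onto $\Sigma\times[T,\infty)$: the apex has no canonical image in $\Sigma$, and the homology already obstructs it. The codimension bound from Lemma~\ref{lem:codim} and the orientability of $\Lambda_y$ do not help here.

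What the paper does instead is avoid any global statement about $C\setminus\{y\}$ and use only the \emph{local} product structure. Write $y=s\eta$ with $s>0$, $\eta\in\Sigma$. Since $C\setminus\{o\}\cong(0,\infty)\times\Sigma$, excision and the K\"unneth-type suspension isomorphism give a natural identification $H_n(C|s\eta;G)\cong H_{n-1}(\Sigma|\eta;G)$, and likewise $H_n(C|U;G)\cong H_{n-1}(\Sigma;G)$. Under these identifications the restriction map $H_n(C|U;G)\to H_n(C|s\eta;G)$ becomes the restriction $H_{n-1}(\Sigma;G)\to H_{n-1}(\Sigma|\eta;G)$, which is an isomorphism by Theorem~\ref{thm:compact} applied to $\Sigma$ under the inductive hypotheses $(\mathrm A)_{n-1}$, $(\mathrm B)_{n-1}$. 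This is the entire content of the diagram in the paper's proof; the case $y=o$ is immediate, as you noted.
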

\begin{proof}
Let us represent $C$ as $C=[0,\infty) \times \Sigma /\{0\} \times \Sigma$ and an element in $C$ as $t \xi$, for $t \ge 0$ and $\xi \in \Sigma$. 
In this terminology, $0\xi$ denotes the apex $o$ of the cone.
Let us take $t \xi \in C$ with $t > 0$ and $\xi \in \Sigma$ and set $U := \{s \eta \mid \eta \in \Sigma, s < t+1\}$. 
Then, $U$ is open in $C$ and contains $o$ and $t \xi$. 
Let us take $s \eta \in U$. 
If $s > 0$, then we have a commutative diagram 
\[
\xymatrix{
H_n(C|U;G) \ar[r] \ar[d] &H_n(C|s\eta; G) \ar[d]\\
H_{n-1}(\Sigma;G) \ar[r] &H_{n-1}(\Sigma|\eta; G)
}
\]
Here, the downward arrows are taken to be isomorphisms, by using the cone structure. 
By Theorem \ref{thm:compact}, the bottom rightward arrow is an isomorphism. 
Therefore, the top rightward arrow is an isomorphism. 
When $s = 0$, it is trivial that $H_n(C|U;G) \to H_n(C|o;G)$ is an isomorphism, by the cone structure. 
This completes the proof. 
\end{proof}

From now on and in this section, $X$ denotes an $n$-dimensional connected NB-space with $n \ge 3$. 
We denote by $X_\mathrm{ori}$ the set of points at which locally orientable, that is, 
\[
X_\mathrm{ori} := \{x \in X \mid H^n(X|x; \mathbb Z) \cong \mathbb Z \}.
\]
Although we do not use in the proof, we remark that, by the inductive hypothesis, 
\[
X_\mathrm{ori} = \{x \in X \mid H_n(X|x;\mathbb Z) \cong \mathbb Z\} = \{x \in X \mid H_n(X|x;\mathbb Z) \neq 0\}.
\]
\begin{lemma} \label{lem:cone02}
$X_\top \subset X_\mathrm{ori}$ holds. 
In particular, $X_\mathrm{ori}$ has codimension at least three and is open connected.
\end{lemma}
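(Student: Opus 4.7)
The plan is to prove $X_\top \subset X_{\ori}$ first; the codimension assertion then follows immediately because Lemma \ref{lem:codim} (2) already bounds $\dim(X \setminus X_\top) \le n-3$. The inclusion itself is trivial: a Euclidean neighborhood of any $x \in X_\top$ gives $H^n(X|x;\mathbb Z) \cong \mathbb Z$ by excision.

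The bulk of the work lies in proving openness of $X_{\ori}$. Given $x \in X_{\ori}$, I would fix a conical neighborhood $(U,x) \approx (c(\Sigma),o)$, with $\Sigma$ a compact connected $(n-1)$-dimensional NB-space. The cone-excision identity $H^n(X|x) \cong \bar H^{n-1}(\Sigma)$ combined with $x \in X_{\ori}$ yields $H^{n-1}(\Sigma;\mathbb Z) \cong \mathbb Z$, which is condition (g) of $(\mathrm{A})_{n-1}$ applied to $\Sigma$. The inductive hypothesis then implies that $\Sigma$ is cohomologically orientable, so $H^{n-1}(\Sigma|\xi;\mathbb Z) \cong \mathbb Z$ for every $\xi \in \Sigma$. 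To transfer this data back to $U$, the key observation is that a generator at any $y = (t,\xi) \in U$ with $t > 0$ is the suspension $S^0 * \Lambda$ of a generator $\Lambda$ at $\xi$ in $\Sigma$: a neighborhood of $y$ in $U$ is homeomorphic to $\mathbb R \times c(\Lambda)$, and $c(S^0) \times c(\Lambda) \cong c(S^0 * \Lambda)$. The suspension isomorphism then gives
\[
H^n(X|y;\mathbb Z) \cong \bar H^{n-1}(S^0 * \Lambda;\mathbb Z) \cong \bar H^{n-2}(\Lambda;\mathbb Z) \cong H^{n-1}(\Sigma|\xi;\mathbb Z) \cong \mathbb Z,
\]
so $U \subset X_{\ori}$.

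For connectedness I would combine Lemmas \ref{lem:open dense mfd} and \ref{lem:X_top conn}: $X_\top$ is open, dense, and connected, hence dense in $X_{\ori}$, and every $y \in X_{\ori}$ lies in a conical neighborhood $U \subset X_{\ori}$ whose manifold-part $U_\top = U \cap X_\top$ is non-empty and connected (applying those lemmas to the connected NB-space $U$), so $y$ joins to $X_\top$ inside $X_{\ori}$. The step I anticipate as the main obstacle is the local cohomology computation at non-apex points of $U$, in particular justifying that a generator there is the suspension $S^0 * \Lambda$; this requires the cone-product homeomorphism $c(S^0) \times c(\Lambda) \cong c(S^0 * \Lambda)$ together with care in handling reduced cohomology across the suspension.
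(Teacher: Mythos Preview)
Your proof is correct and follows the same skeleton as the paper: the inclusion $X_\top \subset X_{\ori}$ is immediate, the codimension bound comes from Lemma~\ref{lem:codim}, and connectedness follows from $X_\top$ being dense and path-connected inside the locally path-connected open set $X_{\ori}$. The only difference is in the openness step: the paper simply cites Lemma~\ref{lem:cone01} (which, via the inductive hypothesis $(\mathrm{A})_{n-1}$ and Theorem~\ref{thm:compact}, shows the cone over an orientable $\Sigma$ is homologically orientable, hence every point of the conical neighborhood lies in $X_{\ori}$), whereas you unpack the same computation directly in cohomology via the suspension identification $S^0 * \Lambda$ of the generator at a non-apex point. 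Your direct argument is slightly leaner in that it only invokes $(\mathrm{A})_{n-1}$, not $(\mathrm{B})_{n-1}$, but since both are standing hypotheses in this section the distinction is immaterial. The suspension step you flagged as a potential obstacle is standard and poses no real difficulty.
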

\begin{proof}
The first statement follows from the definition. 
Hence, by Lemma \ref{lem:codim}, $\dim (X \setminus X_{\mathrm{ori}}) \le n-3$.
From Lemma \ref{lem:cone01}, $X_\mathrm{ori}$ is open in $X$. 
Since $X_\top$ is dense and path-connected, $X_\mathrm{ori}$ is path-connected. 
\end{proof}

The vanishing theorem holds for open sets in a cone:
\begin{lemma} \label{lem:cone03}
Let $\Sigma$ be an $(n-1)$-dimensional compact connected NB-space.
Then, for every open set $U$ in the cone $C=c(\Sigma)$, we have $H_n(U;G) = 0$, where $G$ is an abelian group.
\end{lemma}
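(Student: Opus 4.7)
My plan is to split the proof according to whether the apex $o$ of the cone $C=c(\Sigma)$ belongs to $U$, and then to reduce the second case to the first via a Mayer--Vietoris decomposition.

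First I handle the case $o\notin U$, so that $U$ is open in $C\setminus\{o\}$, which is canonically homeomorphic to $\Sigma\times(0,\infty)$. Since any class in $H_n(U;G)$ is represented by a cycle with compact support, it lies in the image of $H_n(V;G)\to H_n(U;G)$ for some precompact open $V\subset U$, and such a $V$ sits inside a slab $\Sigma\times(a,b)$ with $0<a<b$. The task therefore reduces to the vanishing $H_n(V;G)=0$ for every open $V$ in $\Sigma\times(a,b)\simeq \Sigma$. When $V=\Sigma\times(a,b)$ itself, this vanishing follows from the homotopy equivalence together with the fact that the compact $(n-1)$-dimensional NB-space $\Sigma$, being separable metrizable and locally contractible of finite covering dimension, has singular (co)homology vanishing above $n-1$. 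For a general open $V$ I would induct via Mayer--Vietoris on a finite cover of $V$ by open sub-slabs $V\cap(\Sigma\times(a_i,b_i))$, using the same injectivity template that appears below to control the connecting homomorphisms.

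Next I handle the case $o\in U$. Choose $\epsilon>0$ so that $V_\epsilon:=\{t<\epsilon\}\subset U$, and write $U=A\cup B$ with $A:=V_{2\epsilon/3}$ and $B:=U\cap\{t>\epsilon/3\}$. Then $A$ is contractible to the apex and $B$ avoids the apex, so $H_n(A;G)=H_n(B;G)=0$, the former by contractibility and the latter by the previous paragraph. Further, $A\cap B=\{\epsilon/3<t<2\epsilon/3\}$ is the full annular collar, homotopy equivalent to $\Sigma$. The Mayer--Vietoris sequence
\begin{equation*}
H_n(A;G)\oplus H_n(B;G)\to H_n(U;G)\to H_{n-1}(A\cap B;G)\to H_{n-1}(A;G)\oplus H_{n-1}(B;G)
\end{equation*}
thus forces $H_n(U;G)=0$ once the last arrow is shown to be injective. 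As $H_{n-1}(A;G)=0$ by contractibility, this reduces to injectivity of $H_{n-1}(A\cap B;G)\to H_{n-1}(B;G)$, which is secured by factoring the inclusion through $\Sigma\times(\epsilon/3,\infty)$: the composite $A\cap B\hookrightarrow B\hookrightarrow \Sigma\times(\epsilon/3,\infty)$ is the inclusion of $\Sigma\times(\epsilon/3,2\epsilon/3)$ into $\Sigma\times(\epsilon/3,\infty)$, a homotopy equivalence (both sides are equivalent to $\Sigma$), hence an isomorphism on $H_{n-1}$.

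The main obstacle I foresee is the reduction in the first case from a full slab $\Sigma\times(a,b)$ to an arbitrary open subset $V$ of it: while $V=\Sigma\times(a,b)$ itself vanishes trivially by the homotopy equivalence with $\Sigma$ and the dimension bound for NB-spaces, handling general $V$ demands setting up the Mayer--Vietoris induction so that at each step the connecting maps are injective on $H_{n-1}$. The injectivity template from the apex case (factoring through a larger ambient slab) is the right one to import; verifying that it keeps applying under repeated subdivision is where the real work lies, and the inductive hypotheses $(\mathrm{A})_{n-1}$ and $(\mathrm{B})_{n-1}$ enter here through the description of $H_{n-1}(\Sigma;G)$ and ensure that the relevant sequences have the required torsion-freeness properties.
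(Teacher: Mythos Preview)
Your Case~2 is fine, but Case~1 has a genuine gap. The Mayer--Vietoris induction you propose on sub-slabs $V\cap(\Sigma\times(a_i,b_i))$ does not terminate: each piece is again an \emph{arbitrary} open subset of a (thinner) slab, so the inductive step never reaches a base case simpler than the original problem. There is no reason the sub-slab pieces should be full slabs, and no mechanism in your sketch that makes them so. The ``injectivity template'' you import from Case~2 worked there only because $A\cap B$ happened to be the \emph{full} annulus $\Sigma\times(\epsilon/3,2\epsilon/3)$; for a general $V$ the intersections $V_i\cap V_j$ are arbitrary open sets and the factoring-through-a-slab trick no longer yields an isomorphism. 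Your appeal to $(\mathrm A)_{n-1}$ and $(\mathrm B)_{n-1}$ at the end does not rescue this: those hypotheses say nothing about $H_{n-1}$ of arbitrary open subsets of $\Sigma\times(a,b)$.

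The paper's proof sidesteps the whole difficulty with a one-line argument that you in fact already have the ingredients for. You invoke the dimension bound to get $H_n(\Sigma;G)=0$; the same bound gives $H_{n+1}(C,U;G)=0$ for every open $U\subset C$, since $\dim C=n$. Now simply use the long exact sequence of the pair $(C,U)$: because $C$ is contractible, $\bar H_*(C;G)=0$, so
\[
0=H_{n+1}(C;G)\to H_{n+1}(C,U;G)\to H_n(U;G)\to H_n(C;G)=0
\]
forces $H_n(U;G)\cong H_{n+1}(C,U;G)=0$. No case split on the apex, no Mayer--Vietoris, no induction.
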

\begin{proof}
Since $C$ is contractible, we have $\bar H_\ast(C;G) = 0$. 
Hence, $H_{n+1}(C,U;G)$ and $H_n(U;G)$ are isomorphic. 
Here, since $\dim C = n$, we have $H_{n+1}(C, U;G) = 0$. 
This completes the proof.
\end{proof}

From now on, we abbriviate $G$ and write $H_\ast(-;G)=H_\ast(-)$ (in the proofs of statements). 
\begin{lemma} \label{lem:cone04}
Let $U$ and $C$ be as in Lemma \ref{lem:cone03}.
Let $t \in H_n(C,U;G)$. 
If $t$ maps to zero in $H_n(C|x;G)$ for any $x \not\in U$, then $t=0$.
\end{lemma}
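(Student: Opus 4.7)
The plan is to lift $t \in H_n(C, U; G)$ to a class $\tilde t$ supported on a compact subset of $C \setminus U$, and then show that $\tilde t$ vanishes by a Mayer-Vietoris induction on a finite conical cover, using the inductive hypotheses $(\mathrm{A})_{n-1}$, $(\mathrm{B})_{n-1}$ and the previously established Lemma \ref{lem:cone01}.

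Concretely, represent $t$ by a relative cycle $z$ --- a finite singular $n$-chain in $C$ with $\partial z$ supported in $U$. Its support $L$ is compact, so $K := L \cap (C \setminus U)$ is a compact subset of $C \setminus U$, and the inclusion $\partial z \subset U \subset C \setminus K$ shows that $z$ is a relative cycle of the pair $(C, C \setminus K)$. This yields $\tilde t \in H_n(C|K; G)$ mapping to $t$ under the natural morphism $H_n(C|K; G) \to H_n(C, U; G)$; by hypothesis $\tilde t$ restricts to $0$ in $H_n(C|x; G)$ for every $x \in K$. The proof therefore reduces to the following claim $(\ast)$: for every compact $K' \subset C$, if $\tilde t \in H_n(C|K'; G)$ maps to $0$ in $H_n(C|x; G)$ for every $x \in K'$, then $\tilde t = 0$.

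We prove $(\ast)$ together with the auxiliary vanishing $H_{n+1}(C|K'; G) = 0$ by induction on the size of a cover of $K'$ by open conical neighborhoods. The Mayer-Vietoris sequence for local homology at a pair of compact sets,
\[
\cdots \to H_{n+1}(C|K_1 \cap K_2; G) \to H_n(C|K_1 \cup K_2; G) \to H_n(C|K_1; G) \oplus H_n(C|K_2; G) \to H_n(C|K_1 \cap K_2; G) \to \cdots,
\]
propagates the two assertions from $\{K_1, K_2, K_1 \cap K_2\}$ to $K_1 \cup K_2$: the restrictions of $\tilde t$ to $H_n(C|K_i; G)$ satisfy the same pointwise vanishing and hence vanish by induction, so $\tilde t$ is in the image of $H_{n+1}(C|K_1 \cap K_2; G) = 0$. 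It thus suffices to treat the base case $K' \subset V$ where $V \cong c(\Lambda)$ is a single open conical neighborhood, with $\Lambda$ an $(n-1)$-dimensional compact NB-space. By excision, $H_\ast(C|K'; G) \cong H_\ast(V|K'; G)$; using the contractibility of $V$ and the long exact sequence of $(V, V \setminus K')$, the computation reduces to $H_\ast(V \setminus K'; G)$ in the top two degrees. The inductive hypotheses $(\mathrm{A})_{n-1}$ and $(\mathrm{B})_{n-1}$, applied to $\Lambda$ and to its conical generators, together with Lemma \ref{lem:cone01} in the orientable case, yield both $(\ast)$ and the auxiliary vanishing on $K'$.

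The main obstacle is the base case: verifying $(\ast)$ and $H_{n+1}(V|K'; G) = 0$ for a general compact $K' \subset V \cong c(\Lambda)$ and for an arbitrary abelian coefficient group $G$, uniformly for both orientable and non-orientable generators $\Lambda$. The non-orientable case is delicate because with coefficients such as $\mathbb{Z}_2$ the apex can contribute nontrivially to $\bar H_{n-1}(\Lambda; G)$, so one must leverage $(\mathrm{B})_{n-1}$ and the universal coefficient theorem to keep these contributions under control and to close the induction.
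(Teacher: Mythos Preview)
The reduction to a compact $K$ and the overall Mayer--Vietoris scheme are fine, and your auxiliary vanishing $H_{n+1}(C|K';G)=0$ is correct---but note that it comes for free from Lemma~\ref{lem:cone03}: the long exact sequence of $(C,C\setminus K')$ together with $H_{n+1}(C;G)=0$ and $H_n(C\setminus K';G)=0$ already gives it, so there is no need to carry it along simultaneously with $(\ast)$.

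The genuine gap is your base case. After the Mayer--Vietoris reduction you are left with a compact $K'$ sitting inside a single open conical neighborhood $V\cong c(\Lambda)$ with $\Lambda$ an $(n-1)$-dimensional compact connected NB-space. But this is \emph{exactly} the situation of the lemma itself with $C$ replaced by $V$: the dimension has not dropped, so the argument is circular. Concretely, if the apex $o$ of $C$ lies in $K'$, then any conical neighborhood of $o$ you choose is again a cone over an $(n-1)$-dimensional NB-space, and you are back where you started. Your appeal to $(\mathrm{A})_{n-1}$, $(\mathrm{B})_{n-1}$ and Lemma~\ref{lem:cone01} does not close this loop: those statements control $H_{n-1}(\Lambda)$ and $H_{n-2}(\Lambda)$, whereas what you actually need (after the long exact sequence of $(V,V\setminus K')$) is control of $H_{n-1}(V\setminus K';G)$ for an \emph{arbitrary} compact $K'$, and for general $K'$ there is no useful relation between that group and the homology of $\Lambda$. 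Lemma~\ref{lem:cone01}, in addition, assumes orientability of the generator, so it cannot be invoked uniformly in the way you suggest.

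The paper's route avoids this circularity by following May's argument for $\mathbb{R}^n$ verbatim: once Lemma~\ref{lem:cone03} supplies $H_n(W;G)=0$ for every open $W\subset C$ (and hence $H_{n+1}(C|A;G)=0$ for every compact $A$), one runs May's induction with the role of convex sets played by subsets adapted to the \emph{global} cone structure of $C$, not by passing to smaller conical charts. The point is that $C=c(\Sigma)$ already carries a radial ``linear'' structure analogous to that of $\mathbb{R}^n$, and the base case is resolved using that structure directly.
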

\begin{proof}
This follows from the argument of the proof of a lemma at p.158 in \cite{May} and Lemma \ref{lem:cone03}. 
\end{proof}

\begin{lemma} \label{lem:cone05}
Let $X$ be an $n$-dimensional non-compact connected NB-space. 
Then, the canonical map $H_n(X;G) \to H_n(X|x;G)$ is identically zero, for every $x \in X$.
\end{lemma}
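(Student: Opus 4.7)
Let $\alpha \in H_n(X;G)$ be represented by a singular cycle $c$ with compact support $|c|$, and fix $x \in X$; the goal is to show $\alpha$ maps to $0$ in $H_n(X|x;G)$. The easy case is $x \notin |c|$: then $c$ is a cycle in $X \setminus \{x\}$, so $\alpha$ lies in the image of $H_n(X \setminus \{x\};G) \to H_n(X;G)$, and exactness of the long exact sequence of the pair $(X, X \setminus \{x\})$ yields $\alpha|_x = 0$.

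For general $x$, my plan is to exhibit a compact set $L \ni x$ such that the image $J_L(\alpha) \in H_n(X, X \setminus L;G)$ vanishes; once this is achieved, the long exact sequence of $(X, X \setminus L)$ provides a representative of $\alpha$ supported in $X \setminus L \subset X \setminus \{x\}$, reducing to the easy case. I will take $L = \bar V$ to be a compact subcone of a conical neighborhood $W \cong c(\Sigma)$ at $x$. By excision, $H_n(X, X \setminus \bar V;G) \cong H_n(W, W \setminus \bar V;G)$, and Lemma \ref{lem:cone04} (with $C = W$ and $U = W \setminus \bar V$) says precisely that the image $\tilde \alpha$ of $\alpha$ here vanishes if and only if $\alpha|_y = 0$ in $H_n(X|y;G) \cong H_n(W|y;G)$ for every $y \in \bar V$.

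The main obstacle is verifying this hypothesis without circularity, since the condition covers all of $\bar V$ including $y = x$. The resolution exploits non-compactness of $X$: because $|c|$ is compact and $X$ is not, and because by Lemma \ref{lem:X_top conn} the manifold part $X_\top$ is a connected, dense, non-compact $n$-submanifold, one can pick $y_0 \in X_\top \setminus |c|$, for which the easy case already gives $\alpha|_{y_0} = 0$. I then propagate this vanishing along a path from $y_0$ to $x$ by covering it with finitely many conical neighborhoods $W_1, \ldots, W_m$ with $W_i \cap W_{i+1} \neq \emptyset$ and iterating the construction of the preceding paragraph in each $W_i$. The key ingredients for the propagation are the vanishing $H_n(W_i;G) = 0$ supplied by Lemma \ref{lem:cone03}, which makes the Mayer--Vietoris bookkeeping tractable, and the inductive hypotheses $(\mathrm{A})_{n-1}$ and $(\mathrm{B})_{n-1}$, which control the local homology $H_n(X|y;G)$ at singular points $y$ in terms of the corresponding generators. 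The hardest step will be organizing the iteration cleanly when $|c|$ crosses the singular strata of $X$, so that the chain of implications $\alpha|_{y_0}=0 \Rightarrow \alpha|_{y_1}=0 \Rightarrow \cdots \Rightarrow \alpha|_x=0$ can be traced through each $W_i$ without interference from the support of $c$; this is exactly where the inductive $(\mathrm{A})_{n-1}, (\mathrm{B})_{n-1}$ become essential.
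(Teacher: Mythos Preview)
Your core strategy---pick a point $z$ outside the support of the representing cycle, observe $\alpha|_z=0$ there, and propagate this vanishing to $x$---is the same as the paper's. But two parts of your plan are more complicated than necessary.

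First, the detour through $L=\bar V$ and Lemma~\ref{lem:cone04} adds nothing: as you yourself note, the hypothesis of Lemma~\ref{lem:cone04} already asks for $\alpha|_y=0$ at every $y\in\bar V$, including $y=x$, so you cannot bootstrap from it. This step can be dropped entirely.

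Second, and more importantly, you propose to propagate along a general path covered by conical neighborhoods $W_1,\dots,W_m$, worrying about singular strata and the location of $|c|$ along the way. The paper avoids all of this by routing the path through $X_\top$. Concretely: pick a conical neighborhood $D$ of $x$ and a point $y\in D\cap X_\top$ (possible since $X_\top$ is dense); pick $z\in X_\top\setminus K$, where $K$ carries the cycle. Since $X_\top$ is a connected $n$-manifold (Lemma~\ref{lem:X_top conn}), there is a simple arc $\gamma\subset X_\top$ from $y$ to $z$ with a tubular neighborhood $W\approx\mathbb R^n$, and the standard manifold diagram
\[
H_n(X|y)\xleftarrow{\ \cong\ } H_n(X|\gamma)\xrightarrow{\ \cong\ } H_n(X|z)
\]
transfers $\alpha|_z=0$ to $\alpha|_y=0$ in one stroke---no iteration, no interaction with singular points, and the position of $|c|$ is irrelevant once you have left it. Only the final jump from $y$ to $x$ uses anything nontrivial: here the paper applies Lemma~\ref{lem:cone01} inside $D$ (taking $x\in X_{\mathrm{ori}}$) to obtain isomorphisms $H_n(X|x)\leftarrow H_n(X|U)\to H_n(X|y)$, and this is the single place where the inductive hypotheses $(\mathrm A)_{n-1}$, $(\mathrm B)_{n-1}$ enter, via Theorem~\ref{thm:compact}.

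So your instincts are right, but the argument is both shorter and cleaner than you expect: one manifold step along $X_\top$, then one cone step at $x$.
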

\begin{proof}
From the definition, we may assume that $x \in X_\mathrm{ori}$.
Let $D$ be a conical neighborhood of $x$. 
Since $X_\mathrm{top}$ is dense in $X$, there is a point $y \in D \cap X_{\mathrm{top}}$.

Let us take an element $s \in H_n(X)$. 
Then, there exist a compact set $K \subset X$ and an element $s_K \in H_n(K)$ mapping to $s$ under the map $H_n(K) \to H_n(X)$ induced from the inclusion. 
We choose a point $z \in X_\mathrm{top} \setminus K$. 
Since $X_\mathrm{top}$ is connected, there is a simple curve $\gamma$ connecting $y$ and $z$ in $X_\mathrm{top}$. 
Let us take a neighborhood $W$ of $\gamma$ homeomorphic to $\mathbb R^n$. 
Then, in the following diagram 
\[
\begin{CD}
& H_n(W|y) @<<< & H_n(W|\gamma) @>>> & H_n(W|z) \\
& @VVV & @VVV &@VVV \\
& H_n(X|y) @<<< & H_n(X|\gamma) @>>> & H_n(X|z) 
\end{CD}
\]
the two horizontal arrows from $H_n(W|\gamma)$ are isomorphisms. 
The downward three arrows are isomorphic by exicision. 
Therefore, the two horizontal arrows from $H_n(X|\gamma)$ are isomorphic. 
Here, we remenber that $x \in X_{\mathrm{ori}}$. 
By Lemma \ref{lem:cone01}, the canonical maps 
\begin{equation*} 
H_n(X|x) \leftarrow H_n(X|U) \to H_n(X|y)
\end{equation*}
are isomorphic, where $U$ is an open set containing $x$ and $y$ in $D$. 

Since the inclusion $K \to (X,X \setminus z)$ factors $(X \setminus z, X \setminus z)$, the element $s_K$ maps to zero in $H_n(X|z)$. 
Hence, $s$ maps to zero in $H_n(X|z)$. 
Passing the isomorphisms in the above diagrams, 
we know that $s$ maps to zero in $H_n(X|x)$. 
The following diagram might help to understand this argument: 
\[
{\tiny 
\xymatrix{
& H_n(X \setminus z, X \setminus z) = 0 \ar[rr] && H_n(X|z) \\
&& H_n(X|\gamma) \ar[ru]^{\cong} \ar[rd]_{\cong} & \\
H_n(K) \ar[ruu] \ar[r] & H_n(X) \ar[ru] \ar[rd] && H_n(X|y) \\
&& H_n(X|U) \ar[ru] ^{\cong} \ar[rd]_{\cong} & \\
&&& H_n(X|x) 
}
}
\]
Therefore, we conclude that the map $H_n(X) \to H_n(X|x)$ is a zero map. 
This completes the proof. 
\end{proof}

\begin{lemma} \label{lem:cone06}
Let $X$ be as in Lemma \ref{lem:cone05} and $U$ and $V$ open subsets in $X$. 
Suppose that $U$ is a conical domain and $V$ satisifies $H_n(V;G)=0$. 
Then, we have $H_n(U \cup V;G)=0$.
\end{lemma}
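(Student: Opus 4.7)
The plan is to combine the long exact sequence of a pair, excision, and Lemma \ref{lem:cone04}, following the classical manifold-style argument.

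First, given $s \in H_n(U \cup V;G)$, the long exact sequence of the pair $(U \cup V, V)$
\[
H_n(V;G) \to H_n(U \cup V;G) \to H_n(U \cup V, V;G) \to H_{n-1}(V;G)
\]
together with the hypothesis $H_n(V;G)=0$ reduces the claim to showing that the image $t \in H_n(U \cup V, V;G)$ of $s$ vanishes. Since $U$ is open in $U \cup V$ and $(U \cup V) \setminus U \subset V$, excision gives a natural isomorphism $H_n(U \cup V, V;G) \cong H_n(U, U \cap V;G)$; view $t$ as an element of the latter.

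Next, because $U$ is a conical domain (with, say, generator $\Sigma$) and $U \cap V$ is open in $U$, Lemma \ref{lem:cone04} applies. It therefore suffices to verify that $t$ maps to $0$ in $H_n(U|x;G)$ for every $x \in U \setminus (U \cap V) = U \setminus V$. For such a point $x$, since $x$ lies in the open set $U$ and $x \notin V$, a further excision yields $H_n(U|x;G) \cong H_n((U \cup V)|x;G)$, and the resulting commutative square
\[
\xymatrix{
H_n(U \cup V;G) \ar[r] \ar[d] & H_n(U \cup V,V;G) \ar[d]^{\cong} \\
H_n((U \cup V)|x;G) & H_n(U,U \cap V;G) \ar[l]_-{\cong}
}
\]
identifies the image of $t$ in $H_n(U|x;G)$ with the image of $s$ in $H_n((U \cup V)|x;G)$. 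Thus the problem is reduced to showing that $s$ vanishes in each local group $H_n((U \cup V)|x;G)$ for $x \in U \setminus V$.

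Finally, I will use Lemma \ref{lem:cone05}. The open set $U \cup V$ is locally connected (NB-spaces are locally cone-like, hence locally connected), so each of its connected components $W$ is open in $X$. If some $W$ were compact, then $W$ would be clopen in the connected Hausdorff space $X$, forcing $W=X$ and contradicting the non-compactness of $X$; hence every component of $U \cup V$ is a non-compact connected NB-space. Lemma \ref{lem:cone05} then ensures that the natural map $H_n(W;G) \to H_n(W|y;G)$ vanishes for every $y \in W$, and assembling these across components shows that $H_n(U \cup V;G) \to H_n((U \cup V)|x;G)$ is the zero map for every $x \in U \cup V$. Combined with Step 2, this gives $t=0$, hence $s=0$.

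The main bookkeeping step is the excision $H_n(U|x;G) \cong H_n((U \cup V)|x;G)$ for $x \in U \setminus V$ and the verification that the diagram above commutes, since this is where the local cone structure of $U$ interacts with the global vanishing provided by Lemma \ref{lem:cone05}. Once these identifications are in place, the argument is a short concatenation of the two cone lemmas.
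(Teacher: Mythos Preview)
Your proof is correct and is precisely the argument the paper defers to (``the same argument as written at p.~159--160 in \cite{May} and Lemma \ref{lem:cone05}''): the exact sequence of the pair $(U\cup V,V)$, the excision $H_n(U\cup V,V)\cong H_n(U,U\cap V)$, Lemma \ref{lem:cone04} to reduce to local vanishing, and Lemma \ref{lem:cone05} for that local vanishing. Your care in passing to connected components of $U\cup V$ and checking their non-compactness (so that Lemma \ref{lem:cone05} applies) is exactly the bookkeeping one needs here.
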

\begin{proof}
This follows from the same argument as written at p.159--160 in \cite{May} and Lemma \ref{lem:cone05}.
\end{proof}

\begin{proof}[Proof of $(\mathrm{C})_n$ of Theorem \ref{thm:main thm}]
Let $X$ be an $n$-dimensional non-compact connected NB-space. 
Let $s \in H_n(X)$ and $K$ denote a compact subset of $X$ such that $s$ is in the image of $H_n(K)$. 
Let us take a finite family $\{U_1, \dots, U_q\}$ of conical domains such that $\bigcup_{i=1}^q U_i \supset K$. 
Then, by induction and Lemma \ref{lem:cone06}, we have that $H_n(U_1 \cup \dots \cup U_q) = 0$. 
Therefore, we have $s = 0$. 
This completes the proof.
\end{proof}


\section{Topological applications}
\label{sec:top-app}

\subsection{A Poincar\'e-type duality theorem in the highest degree} 

Due to Theorem \ref{thm:main thm} (A), if $X$ is compact connected $n$-dimensional NB-space and is $R$-orientable for a principal ideal domain $R$, then it has a homological $R$-fundamental class $[X]_R \in H_n(X;R)$, that is a generator of $H_n(X;R)$.
The cap product to $[X]_R$ gives a duality map 
\begin{equation} \label{eq:dual}
[X]_R \cap - : H^k(X;G) \to H_{n-k}(X;G)
\end{equation}
for $0 \le k \le n$, where $G$ is an $R$-module. 

We prove Corollary \ref{cor:dual} generalizing to the following statement:
\begin{theorem} \label{thm:dual}
Let $X, R$ and $G$ be as above. 
Then, the duality map \eqref{eq:dual} is an isomorphism for $k=n$.
\end{theorem}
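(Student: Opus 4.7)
The plan is to reduce the top-degree cap product to Kronecker evaluation, which under our orientation hypothesis becomes transparent. The first step is to invoke the $R$-version of Theorem \ref{thm:main thm} recorded in Remark \ref{rem:R-ori}: from the analogue of (A)$_n$ we obtain $H_n(X;R) \cong R$ with $[X]_R$ as generator, and from the analogue of (B)$_n$ that $H_{n-1}(X;R)$ is $R$-torsion-free. Combined with finite generation of $H_\ast(X;R)$ for a compact NB-space (the same fact that was already used in the proof of (B)$_n$, which follows inductively from the stratification by conical charts and Mayer--Vietoris), this forces $H_{n-1}(X;R)$ to be a finitely generated torsion-free module over a PID, hence free.

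The second step is the universal coefficient theorem for cohomology over the PID $R$. Freeness of $H_{n-1}(X;R)$ kills $\mathrm{Ext}^1_R(H_{n-1}(X;R),G)$, so the Kronecker homomorphism
\[
\kappa : H^n(X;G) \to \mathrm{Hom}_R(H_n(X;R),G)
\]
is an isomorphism. Since $[X]_R$ is an $R$-generator of $H_n(X;R) \cong R$, evaluation at $[X]_R$ identifies $\mathrm{Hom}_R(H_n(X;R),G)$ with $G$.

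The third step is to compare with the cap product. At the chain level, for a singular $n$-simplex $\sigma$ and a cochain $\alpha \in C^n(X;G)$ one has $\alpha \cap \sigma = \alpha(\sigma)\cdot \sigma(v_0)$, where $v_0$ is the initial vertex, so the augmentation $\varepsilon : C_0(X;G) \to G$ satisfies $\varepsilon(\alpha \cap \sigma) = \alpha(\sigma)$. Since $X$ is connected, $\varepsilon$ descends to an isomorphism $H_0(X;G) \cong G$. Passing to (co)homology, the composition
\[
H^n(X;G) \xrightarrow{[X]_R \cap -} H_0(X;G) \xrightarrow{\varepsilon} G
\]
agrees with $\alpha \mapsto \kappa(\alpha)([X]_R)$, which is an isomorphism by the previous step. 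Hence $[X]_R \cap -$ is itself an isomorphism.

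The main obstacle is not deep but rather is bookkeeping: running the universal coefficient theorem and the chain-level formula for cap product over an arbitrary PID $R$ with an arbitrary $R$-module $G$, and explicitly recording finite generation of $H_\ast(X;R)$ for compact NB-spaces. Once those ingredients are in place, the identification of the top-degree cap product with Kronecker evaluation is essentially formal and the theorem follows.
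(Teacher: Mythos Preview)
Your proof is correct and follows essentially the same route as the paper: the paper's one-line proof (``compatibility of operators: taking the cap product to the fundamental class, the augmentation and the evaluation'') is exactly the identity $\varepsilon([X]_R \cap \alpha) = \langle \alpha, [X]_R \rangle$ together with the fact that evaluation at the generator $[X]_R$ is an isomorphism, which you justify via the universal coefficient theorem and the $R$-torsion-freeness of $H_{n-1}(X;R)$ from (B)$_n$. You have simply written out in full what the paper leaves implicit.
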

\begin{proof}
This follows from the compatibility of operators: taking the cap product to the fundamental class, the augumentation and the evaluation. 
\end{proof}

\begin{remark} \upshape \label{rem:middle degree}
When $k$ is a middle degree, the map \eqref{eq:dual} is not an isomorphism, in general.
That is, the usual Poincar\'e duality {\it isomorphism} theorem fails for NB-spaces.
Let us consider the following examples of compact connected NB-spaces. 
\begin{itemize}
\item $X = S^0 \ast T^2$; 
\item $Y= S^0 \ast \mathbb RP^3$;
\item $Z = S^0 \ast \mathbb CP^\ell$, where $\ell \ge 2$;
\item $W = S^0 \ast \mathbb RP^2$.
\end{itemize}
Note that $X$ does not admit a metric of an Alexandrov space. 
The last three spaces $Y$, $Z$ and $W$ admit metrics of Alexandrov spaces, and moreover, 
they are obtained as the Gromov-Hausdorff limits of Riemannian manifolds keeping lower curvature bounds (\cite{Yam:CP}). 
Obviously, $Y$ and $W$ regared as a Riemannian orbifold.
The first three spaces $X, Y$ and $Z$ are orientable, and $W$ is $\mathbb Z_2$-orientable.
These spaces do not satisfy the duality isomorphism theorem at middle degrees as follows. 

\begin{itemize}
\item $\dim X= 3$, $H^2(X;\mathbb Q) \cong \mathbb Q^2$ but $H_1(X;\mathbb Q) = 0$; 
\item $\dim Y = 4$, $H^3(Y;\mathbb Z) \cong \mathbb Z_2 \cong H^3(Y;\mathbb Z_2)$ but $H_1(Y;\mathbb Z) = 0 = H_1(Y;\mathbb Z_2)$.
\item $\dim Z = 2 \ell +1$ but $D_Z : H^k(Z;\mathbb Q) \to H_{2\ell+1-k}(Z;\mathbb Q)$ is not isomorphic for $2 \le k \le 2 \ell -1$.
\item $\dim W = 3$, $H^2(W;\mathbb Z_2) \cong \mathbb Z_2$ but $H_1(W;\mathbb Z_2) = 0$.
\end{itemize}

\end{remark}



\subsection{Orientations and geometric constructions} 
Due to Lemma \ref{lem:cone01}, if an NB-space $X$ is compact connected and orientable, then $c(X)$ is orientable. 
Further, we have 
\begin{corollary} \label{cor:ori product}
Let $X$ and $Y$ denote connected NB-spaces. 
Let $R$ be a principal ideal domain. 
Then, the following holds. 
\begin{enumerate}
\item When $X$ is compact, $X$ is $R$-orientable if and only if $c(X)$ is $R$-orientable. 
\item When $X$ is compact, $X$ is $R$-orientable if and only if the suspension $S^0 \ast X$ of $X$ is $R$-orientable. 
\item When both $X$ and $Y$ are compact, both $X$ and $Y$ are $R$-orientable if and only if the join $X \ast Y$ is $R$-orientable. 
\item Both $X$ and $Y$ are $R$-orientable if and only if the product $X \times Y$ is $R$-orientable. 
\end{enumerate}
\end{corollary}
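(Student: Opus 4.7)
The plan is to apply Theorem \ref{thm:main thm}(A), under which $R$-orientability of a connected NB-space is detected either by local top-dimensional (co)homology, or, in the compact case, by nonvanishing of the global top homology. For each of the four constructions I will reduce orientability to such a computation via standard tools. For $(1)$ the forward direction is exactly Lemma \ref{lem:cone01}; for the converse, excision and the cone structure give $H_n(c(X)|o;R)\cong\bar H_{n-1}(X;R)$ with $o$ the apex and $n=\dim X+1$, so that $R$-orientability of $c(X)$ yields $H_{n-1}(X;R)\cong R$ and then $R$-orientability of $X$ by the compact case of Theorem \ref{thm:main thm}(A). For $(2)$ I write the suspension as the union of its two contractible cone-halves with intersection $X$; the Mayer--Vietoris sequence collapses to $\tilde H_n(S^0\ast X;R)\cong\tilde H_{n-1}(X;R)$, and the iff follows from condition (f) of Theorem \ref{thm:main thm}(A) applied to both sides.

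For $(3)$ I will cover $X\ast Y=A\cup B$ with $A=\{t<1\}$ and $B=\{t>0\}$ in the join parameterization; these deformation retract respectively onto $Y$ and $X$, and $A\cap B\simeq X\times Y$. Writing $p=\dim X$, $q=\dim Y$, $n=p+q+1$, dimensional vanishings collapse Mayer--Vietoris to $\tilde H_n(X\ast Y;R)\cong\tilde H_{p+q}(X\times Y;R)$, and the K\"unneth formula identifies the right-hand side as
\[
H_p(X;R)\otimes_R H_q(Y;R)\;\oplus\;\bigoplus_{i+j=p+q-1}\mathrm{Tor}^R_1\bigl(H_i(X;R),H_j(Y;R)\bigr).
\]
If $X\ast Y$ is $R$-orientable then this sum equals $R$; the Tor summand is $R$-torsion while $R$ is torsion-free, so it vanishes, leaving $H_p(X;R)\otimes_R H_q(Y;R)\cong R$, which by the $R$-version of Theorem \ref{thm:main thm}(A)(g) applied to each factor forces both $X$ and $Y$ to be $R$-orientable. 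Conversely, the cross product $[X]\times[Y]$ yields a non-zero class in $\tilde H_n(X\ast Y;R)$, so the join is orientable by condition (f).

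For $(4)$ I will use the classical homeomorphism $c(A)\times c(B)\cong c(A\ast B)$ applied to conical neighborhoods at $x\in X$, $y\in Y$ with generators $\Sigma_x$, $\Sigma_y$, exhibiting $\Sigma_x\ast\Sigma_y$ as a generator of a conical neighborhood of $(x,y)$ in $X\times Y$. Hence $H_n(X\times Y|(x,y);R)\cong\bar H_{n-1}(\Sigma_x\ast\Sigma_y;R)$ with $n=p+q$, and by $(3)$ this is $R$ iff both $\Sigma_x$ and $\Sigma_y$ are $R$-orientable, equivalently, iff $X$ is locally $R$-orientable at $x$ and $Y$ at $y$; compatibility of the cross-product orientation $o_x\otimes o_y\mapsto o_{(x,y)}$ across overlapping neighborhoods is automatic by naturality, giving condition (d). The main technical hurdle is the K\"unneth analysis in $(3)$, where the clean separation of free from torsion parts of $\tilde H_n(X\ast Y;R)$ is exactly what forces the assumption that $R$ is a PID; beyond this, $(4)$ depends on the classical identification $c(A)\times c(B)\cong c(A\ast B)$, which I will cite rather than reprove.
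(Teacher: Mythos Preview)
Your approach is exactly the paper's: use Lemma~\ref{lem:cone01} for (1) and Theorem~\ref{thm:main thm}(A) for (2)--(4). Parts (1)--(3) are correct and supply far more detail than the paper's one-line ``proved directly by using Theorem~\ref{thm:main thm}(A)''; the Mayer--Vietoris/K\"unneth reduction in (3) is carried out cleanly, and your observation that the $\mathrm{Tor}$-part is $R$-torsion (over a PID) while $R$ is torsion-free is precisely what makes the argument work. One cosmetic point: in (3) you cite condition~(g) but are working with homology, so (e) or (f) is what you mean.

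There is, however, a genuine gap in the \emph{converse} of (4). Your local computation shows only that $X$ and $Y$ are locally $R$-orientable at every point; this does not yield global orientability (a connected manifold is always locally orientable but need not be orientable). Your sentence on cross-product compatibility handles only the forward implication. To close the gap, the quickest route is through condition~(a): if $X\times Y$ is orientable then $(X\times Y)_\top$ is an orientable manifold, hence so is its open connected submanifold $X_\top\times Y_\top$; for connected manifolds the product is orientable iff each factor is, so $X_\top$ and $Y_\top$ are orientable, and (a) gives the conclusion. Alternatively, you can stay within condition~(d): fix $y_0\in Y$ and a generator $o_{y_0}\in H_q(Y|y_0;R)$, and use the K\"unneth isomorphism $H_p(X|x;R)\otimes_R H_q(Y|y_0;R)\cong H_n(X\times Y|(x,y_0);R)$ to define $o_x$ by $o_x\otimes o_{y_0}=\omega_{(x,y_0)}$; naturality of the cross product then shows $\{o_x\}_{x\in X}$ is a homological orientation. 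Either fix is short, but one of them is needed.
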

\begin{proof}
The proof of Lemma \ref{lem:cone01} works for proving (1). 
The statements (2), (3) and (4) are proved directly by using Theorem \ref{thm:main thm} (A). 
\end{proof}

Note that $S^k \ast \mathbb RP^2$ is simply-connected, but is not orinetable for $k \ge 0$.
Further, it admits a metric of an Alexandrov space.

\subsection{Another topological interpolation of orientability}
One can consider an ``orientation bundle'' over an NB-space, in a usual way, with a little bit modification.
Let $X$ be a connected NB-space of dimension $n$. 
Let $X_\mathrm{ori}$ denote the subset of $X$ consisting of points at which $X$ is locally orientable. 
Let us set 
\[
\Theta_x := H_n(X|x;\mathbb Z)
\]
for $x \in X$ and consider the disjoint union 
\[
\Theta_X := \bigsqcup_{x \in X} \Theta_x. 
\]
Let us give a topology on $\Theta_X$ as follows. 
Let 
\[
\pi : \Theta_X \to X
\]
be a projection defined by $\pi(\Theta_x) = x$.
We set $\Theta_{X_\ori} := \pi^{-1}(X_\ori)$. 
We equip the topology $\mathcal U_\ori$ on $\Theta_{X_\ori}$ so that $\pi : \Theta_{X_\ori} \to X_\ori$ is a covering map with fiber $\mathbb Z$, using Lemma \ref{lem:cone01}, in a natrual way. 
For $x \in X \setminus X_\ori$, we set 
\[
\mathcal U_x := \{\pi^{-1}(U) \mid U \text{ is a conical neighborhood of }x \}.
\]
Then, the topology on $\Theta_X$ is defined as the topology generated by $\mathcal U_\ori \cup \bigcup_{x \in X \setminus X_\ori} \mathcal U_x$.

From the construction, $\pi : \Theta_{X_\top} \to X_\top$ is the standard orientation bundle of the topological manifold $X_\top$.
Using this generlized orientation bundle $\Theta_X$, we can give another characterization of orientability as follows. 
\begin{corollary}
Let $X$ be a connected NB-space. 
Then, $X$ is orientable if and only if the projection $\pi : \Theta_X \to X$ is a covering map with fiber $\mathbb Z$ and there is a continuous section $s : X \to \Theta_X$ such that $s(x)$ is a generator at each $x \in X$.
Further, if $X$ is orientable, then $\pi$ is a trivial bundle with fiber $\mathbb Z$.
\end{corollary}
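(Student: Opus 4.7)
The plan is to reduce the corollary to Theorem \ref{thm:main thm} (A) by recognising that a continuous generator section of $\pi$ is essentially the same datum as a homological orientation in the sense of condition (d). Since the fibre of $\pi$ over $x$ is $H_n(X|x;\mathbb{Z})$, requiring it to be $\mathbb{Z}$ at every point forces $X_\ori = X$; combined with the coherence encoded in a section this should recover homological orientability, and conversely.

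For the forward direction, I would argue that if $X$ is orientable then Theorem \ref{thm:main thm} (A)(c) gives $X_\ori = X$, so by construction $\pi : \Theta_X \to X$ is the covering with fibre $\mathbb{Z}$ whose sheets over each neighbourhood $U$ supplied by Lemma \ref{lem:cone01} are labelled by the elements of $H_n(X|U;\mathbb{Z}) \cong H_n(X|y;\mathbb{Z})$. The homological orientation $\{o_x\}$ given by condition (d) then provides a section $s(x) := o_x$; continuity at $x$ follows by shrinking $U$ until it both trivialises the covering and carries a local class $o_U$ with $o_U \mapsto o_y$ for all $y \in U$, since $s|_U$ is then the canonical continuous section determined by the single sheet labelled by $o_U$. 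For the converse, the hypothesis that $\pi$ has fibre $\mathbb{Z}$ gives $X_\ori = X$; around each $x$, continuity of $s$ forces $s|_U$ to lie in a single sheet, labelled by some $o_U \in H_n(X|U;\mathbb{Z})$, and $o_U$ must be a generator because $s(y)$ is. The family $\{s(x)\}$ therefore satisfies condition (d), so $X$ is orientable.

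For the triviality statement, with $s$ in hand I would exploit the $\mathbb{Z}$-module structure on each fibre to define
\[
\phi : X \times \mathbb{Z} \to \Theta_X, \qquad \phi(x,n) := n \cdot s(x).
\]
Since $s(x)$ generates $\Theta_x$, $\phi$ is a fibrewise bijection; on a trivialising neighbourhood $U$ of the covering, if the chosen chart identifies $s|_U$ with $y \mapsto (y,\varepsilon)$ for some $\varepsilon \in \{\pm 1\}$, then $\phi|_{U \times \mathbb{Z}}$ becomes $(y,n) \mapsto (y, n\varepsilon)$, a self-homeomorphism of $U \times \mathbb{Z}$. Hence $\phi$ is locally, and therefore globally, a bundle isomorphism.

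The only delicate point is matching the two different topologies on $\Theta_X$: the one imposed over $X_\ori$ via Lemma \ref{lem:cone01}, and the one imposed over $X \setminus X_\ori$ through full $\pi$-preimages of conical neighbourhoods. In both directions of the main equivalence the hypotheses force $X \setminus X_\ori$ to be empty, so the exotic part of the definition never actually enters, and the reasoning reduces to the classical orientation-bundle formalism applied to the Lemma \ref{lem:cone01} covering.
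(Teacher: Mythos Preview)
Your argument is correct and matches the paper's intended approach: the paper does not spell out a proof but simply remarks that ``the family $\{s(x)\}_{x \in X}$ is nothing but an orientation on $X$,'' which is exactly the identification you carry out via condition (d) of Theorem \ref{thm:main thm} (A). Your additional details for the triviality statement (the fibrewise multiplication map $\phi(x,n)=n\cdot s(x)$) are a standard and correct elaboration that the paper omits.
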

In this corollary, the family $\{s(x) \}_{x \in X}$ is nothing but an orientation on $X$.

As a related topic to the concept of our generalized orientation bundles, 
Harvey and Searle (\cite{HS}) gave a concept of ramified orientable double coverings of NB-spaces.
That is, they proved:
\begin{theorem}[{\cite[Theorem 2.4]{HS}}] \label{thm:ram}
For each connected non-orientable NB-space $X$, there is an orientable NB-space $\hat X$ with an involution $\iota$ such that the quotient space $\hat X / \iota$ is homeomorphic to $X$ and that the quotient map $\pi : \hat X \to X$ is a ramified double cover whose ramification locus is $X \setminus X_\mathrm{ori}$.
\end{theorem}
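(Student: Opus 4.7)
The plan is to construct $\hat X$ by induction on $n = \dim X$, extending the classical orientation double cover of $X_\top$ across the non-locally-orientable part $X \setminus X_\ori$. The base cases $n \le 2$ are immediate since any such NB-space is a topological manifold without boundary (Lemma \ref{lem:codim} and the definition), so its usual orientation double cover does the job. For $n \ge 3$, I define $\hat X$ as a set by setting $\pi^{-1}(x) = \{\pm\mu_x\}$, the two generators of $\Theta_x \cong \mathbb Z$, for $x \in X_\ori$, and $\pi^{-1}(x) = \{\hat x\}$, a single point, for each $x \in X \setminus X_\ori$; the involution $\iota$ swaps $\pm\mu_x$ over $X_\ori$ and fixes every $\hat x$. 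Over $X_\ori$, give $\pi^{-1}(X_\ori)$ the subspace topology inherited from $\Theta_{X_\ori}$, so that it is a genuine double cover of $X_\ori$ which, restricted to $X_\top$, coincides with the classical orientation double cover; since $X_\top$ is connected (Lemma \ref{lem:X_top conn}) and non-orientable, this restriction is connected, and by density so is $\pi^{-1}(X_\ori)$.

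The topology near a ramification point $\hat x$ over $x \in X \setminus X_\ori$ is defined using the inductive hypothesis. Choose a conical neighborhood $U$ of $x$ with generator $\Sigma_x$. Because $\bar H_{n-1}(\Sigma_x;\mathbb Z) \cong H_n(X|x;\mathbb Z) \not\cong \mathbb Z$, the generator $\Sigma_x$ is a compact connected $(n-1)$-dimensional non-orientable NB-space, so by induction it admits a ramified orientable double cover $\hat\Sigma_x$. Identify $\pi^{-1}(U) \subset \hat X$ with the cone $c(\hat\Sigma_x)$, sending $\hat x$ to the apex; off the apex, the identification is forced because both $\pi^{-1}(U \setminus \{x\})$ and $c(\hat\Sigma_x) \setminus \{\hat o\}$ are connected double covers of $U \setminus \{x\}$ ramified over the same locus, classified by the orientation monodromy $\pi_1(U_\top) \to \mathbb Z_2$ of $X_\top$ around $x$, which is non-trivial precisely because $\Sigma_x$ is non-orientable. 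A neighborhood basis of $\hat x$ is $\{\pi^{-1}(U')\}$ for conical $U' \subset U$, and Kwun's uniqueness of conical neighborhoods guarantees this is independent of the choice of $U$.

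It remains to verify that $\hat X$ is an orientable NB-space and that $\pi$ realizes it as the required ramified double cover. At points of $\pi^{-1}(X_\ori)$, $\hat X$ is locally a topological copy of $X$, and at each ramification lift $\hat x$ the identification $\pi^{-1}(U) \cong c(\hat\Sigma_x)$ provides a conical neighborhood whose generator is the compact connected NB-space $\hat\Sigma_x$ of dimension $n-1$; hence $\hat X$ is an NB-space. Its manifold part is the classical orientation double cover of $X_\top$, which is orientable as a manifold, so Theorem \ref{thm:main thm}(A) implies $\hat X$ is orientable. The quotient $\hat X/\iota$ is naturally homeomorphic to $X$ by construction, and $\pi$ is $2$-to-$1$ exactly over $X_\ori$ and $1$-to-$1$ over $X \setminus X_\ori$.

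The main technical obstacle is the compatibility claim in the second paragraph: one must show that the double-cover topology inherited from $\Theta_{X_\ori}$ near $x$ and the cone topology of $c(\hat\Sigma_x)$ agree on their common restriction. This reduces to identifying two connected double covers of the manifold part of $U \setminus \{x\}$ as classified by the same non-trivial element of $\mathrm{Hom}(\pi_1,\mathbb Z_2)$. Once this identification is made compatibly with shrinking $U$, the functoriality and gluing assertions follow routinely.
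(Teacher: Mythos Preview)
Your construction is essentially the one the paper recalls from \cite{HS}: $\hat X$ is realized inside the generalized orientation bundle $\Theta_X$ as the union of the two generators of $\Theta_x \cong \mathbb Z$ over $x \in X_\ori$ and the single point $\Theta_x = 0$ over $x \notin X_\ori$, with the involution $u \mapsto -u$ and the topology inherited from $\Theta_X$ (whose neighborhood basis at a non-locally-orientable point is exactly $\{\pi^{-1}(U)\}$ for conical $U$, matching your prescription). The paper does not spell out why $\hat X$ is itself an NB-space; your inductive identification $\pi^{-1}(U) \cong c(\hat\Sigma_x)$ supplies precisely that missing verification, and your reduction of the compatibility issue to the classification of connected double covers of $U_\top$ by the orientation character in $\mathrm{Hom}(\pi_1(U_\top),\mathbb Z_2)$ is the right way to close it.
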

Here, ``$\pi$ is a ramified double cover'' means that each fiber consists of one or two elements, and 
the rafimication locus of $\pi$ is the set of $x \in X$ such that $\pi^{-1}(x)$ is a set of a single point. 
Let us recall the construction of $\hat X$ in our terminology. 
Let $\pi : \Theta_X \to X$ be a generalized orientation bundle obtained in the previous subsection. 
Let us set 
\[
\Theta_x^0 := \{u \in H_n(X|x;\mathbb Z) \mid u \text{ is a generator} \}
\]
for every $x \in X_\mathrm{ori}$ and set $\Theta_x^0 := \Theta_x$ otherwise. 
Then, the restriction of $\pi$ to $\hat X := \bigcup_{x \in X} \Theta_x^0$ satisfies the desired condition.
Here, $\iota$ is defined in a natural way: $\iota(u) = -u$.

The spherical join $\mathbb S^k \ast \mathbb RP^2$ of the unit sphere $\mathbb S^k$ and the real projective plane $\mathbb RP^2$ is a typical non-orientable closed Alexandrov space, and hence, it is an NB-space. 
The ramified orientable double cover of it is a $(k+3)$-sphere.

\subsection{Further topological applications}

\begin{corollary}
If $X$ is a non-compact connected NB-space of dimension $n$, then 
\begin{enumerate}
\item the torsion-part of $H_{n-1}(X;\mathbb Z)$ is trivial; and 
\item $H^n(X;G)=0$ for any abelian group $G$.
\end{enumerate}
\end{corollary}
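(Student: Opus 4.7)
Both parts will follow from the vanishing theorem (Theorem \ref{thm:main thm} (C)$_n$) combined with the universal coefficient theorems, though (2) requires additional topological input beyond what the UCT alone provides.

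For (1), I would apply (C)$_n$ with coefficients in $\mathbb Z$ and in $\mathbb Z/p$ for each prime $p$. The universal coefficient short exact sequence
\[
0 \to H_n(X;\mathbb Z)\otimes\mathbb Z/p \to H_n(X;\mathbb Z/p) \to \mathrm{Tor}(H_{n-1}(X;\mathbb Z),\mathbb Z/p) \to 0
\]
has both outer terms vanishing by (C)$_n$, forcing $\mathrm{Tor}(H_{n-1}(X;\mathbb Z),\mathbb Z/p)=0$. Since this $\mathrm{Tor}$-group is precisely the $p$-primary torsion subgroup of $H_{n-1}(X;\mathbb Z)$, taking the conjunction over all primes $p$ yields torsion-freeness.

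For (2), I would invoke the universal coefficient theorem for cohomology
\[
0 \to \mathrm{Ext}(H_{n-1}(X;\mathbb Z),G) \to H^n(X;G) \to \mathrm{Hom}(H_n(X;\mathbb Z),G) \to 0,
\]
whose right term vanishes because $H_n(X;\mathbb Z)=0$ by (C)$_n$. Thus $H^n(X;G) \cong \mathrm{Ext}(H_{n-1}(X;\mathbb Z),G)$, and the claim reduces to showing this $\mathrm{Ext}$-group vanishes for every abelian group $G$, equivalently that $H_{n-1}(X;\mathbb Z)$ is free. The plan to upgrade torsion-free to free is to mimic the covering argument of Section \ref{sec:proof of C} cohomologically: first verify $H^n(U;G)=0$ for $U$ an open subset of a cone $c(\Sigma)$, which is immediate from contractibility of $c(\Sigma)$ together with $\dim c(\Sigma)=n$ via the long exact sequence of the pair applied to $H^{n+1}(c(\Sigma),U;G)=0$; then patch via Mayer-Vietoris over a locally finite conical cover of $X$.

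The main obstacle will be the patching step. Unlike the homological Lemma \ref{lem:cone06}, the cohomological Mayer-Vietoris carries a correction term $H^{n-1}(U\cap V;G)$ that does not automatically vanish, so the direct cohomological analogue of Section \ref{sec:proof of C} does not go through verbatim. I would try to overcome this either by an exhaustion of $X$ by compact subsets combined with a Milnor $\varprojlim^1$-sequence, or by passing to the orientable ramified double cover furnished by Theorem \ref{thm:ram} to reduce to an orientable case where a Poincar\'e-duality-style argument can exploit the non-compactness directly, in the spirit of the push-to-infinity principle underlying Lemma \ref{lem:cone05}.
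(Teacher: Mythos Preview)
For part (1) your argument is correct and is essentially the paper's proof: the paper uses the single coefficient group $\mathbb Q/\mathbb Z$ (whose $\mathrm{Tor}$ against any abelian group $A$ is exactly the torsion subgroup of $A$) in place of running over all primes, but the content is the same application of the vanishing theorem together with the universal coefficient sequence.

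For part (2), the paper's proof is precisely the step you are uneasy about. After the cohomological UCT gives
\[
H^n(X;G)\cong \mathrm{Ext}_{\mathbb Z}\bigl(H_{n-1}(X;\mathbb Z),G\bigr),
\]
the paper simply writes ``By (1), we obtain the second conclusion.'' That is, the paper passes directly from \emph{torsion-free} to $\mathrm{Ext}(-,G)=0$ for every $G$, which, as you correctly point out, requires $H_{n-1}(X;\mathbb Z)$ to be \emph{free}, not merely torsion-free (e.g.\ $\mathrm{Ext}(\mathbb Q,\mathbb Z)\neq 0$). So the gap you flag is not a divergence from the paper but a gap in the paper's own argument; your proposal is in fact more careful than the published proof on this point. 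Your suggested routes to close it---a direct cohomological patching over a conical cover, a Milnor $\varprojlim{}^1$ exhaustion argument, or passage to the ramified orientable double cover of Theorem~\ref{thm:ram}---are all plausible, but as you acknowledge none of them is carried to completion here, and the Mayer--Vietoris obstacle you identify (the $H^{n-1}(U\cap V;G)$ term) is genuine.
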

\begin{proof}
There is an exact sequence as the following form (for instance, see \cite{Bre}): 
\[
0 \to H_n(X;\mathbb Z) \otimes \mathbb Q/ \mathbb Z \to H_n(X; \mathbb Q / \mathbb Z) \to TH_{n-1}(X;\mathbb Z) \to 0
\]
where $TH_{n-1}(X;\mathbb Z)$ 
denotes the torsion-part of $H_{n-1}(X;\mathbb Z)$. 
By the vanishing theorem (Theorem \ref{thm:main thm} (C)), we obtain the first conclusion. 

Since $H_n(X;\mathbb Z)=0$, we have 
\[
H^n(X;G) \cong 
\mathrm{Ext}_{\mathbb Z}(H_{n-1}(X; \mathbb Z), G).
\]
By (1), we obtain the second conclusion.
\end{proof}

\begin{corollary} \label{cor:non-ori}
Let $X$ be an $n$-dimensional connected compact NB-space. 
If it is non-orientable, then the following holds. 
\begin{enumerate}
\item $H_n(X;G) \cong 
\{g \in G \mid 2 g = 0\}$; 
\item The torsion subgroup of $H_{n-1}(X;\mathbb Z)$ is isomorphic to $\mathbb Z_2$; and 
\item $H^n(X;G) \cong G / 2 G$.
\end{enumerate}
\end{corollary}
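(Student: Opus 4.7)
The plan is to reduce (1) and (3) to (2), and then establish (2) via the universal coefficient theorem and a cohomological transfer argument through the ramified orientation double cover. Since $X$ is compact, connected, and non-orientable, the negation of condition (f) of Theorem \ref{thm:main thm}~(A) forces $H_n(X; \mathbb{Z}) = 0$, so UCT gives
\[
H_n(X;G) \cong \mathrm{Tor}(H_{n-1}(X;\mathbb{Z}), G), \quad H^n(X;G) \cong \mathrm{Ext}(H_{n-1}(X;\mathbb{Z}), G).
\]
Writing $H_{n-1}(X;\mathbb{Z}) = F \oplus T$ with $F$ free and $T$ the finite torsion part, the identifications $\mathrm{Tor}(\mathbb{Z}_2, G) \cong \{g \in G \mid 2g = 0\}$ and $\mathrm{Ext}(\mathbb{Z}_2, G) \cong G/2G$ (together with the vanishing of Tor and Ext against $\mathbb{Z}$ on the free summand $F$) immediately deduce (1) and (3) from (2). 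Thus the task is to prove $T \cong \mathbb{Z}_2$.

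First I would isolate the prime $2$. By Lemma \ref{lem:codim}(1) together with Remark \ref{rem:R-ori}, every NB-space is $\mathbb{Z}_2$-orientable, so the $\mathbb{Z}_2$-variant of Theorem \ref{thm:main thm}~(A) gives $H_n(X;\mathbb{Z}_2) \cong \mathbb{Z}_2$; combined with $H_n(X;\mathbb{Z})=0$, UCT yields $T[2] \cong \mathbb{Z}_2$. For each odd prime $p$, the manifold part $X_\top$ is non-orientable and therefore not $\mathbb{Z}_p$-orientable (the nontrivial orientation character of $\pi_1(X_\top)$ maps to $-1 \neq 1$ in $\mathbb{Z}_p^{\times}$). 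The $\mathbb{Z}_p$-version of Theorem \ref{thm:main thm}~(A) then forces $H_n(X;\mathbb{Z}_p) = 0$, whence UCT yields $T[p] = 0$. Hence $T$ is $2$-primary with $T[2] \cong \mathbb{Z}_2$, so $T \cong \mathbb{Z}_{2^a}$ for some $a \ge 1$.

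To force $a = 1$, I apply the ramified orientable double cover $\pi : \hat X \to X$ of Theorem \ref{thm:ram}. Since $\hat X$ is compact, connected, and orientable, $H_{n-1}(\hat X;\mathbb{Z})$ is torsion-free by Theorem \ref{thm:main thm}~(B), whence $H^n(\hat X;\mathbb{Z}) \cong \mathbb{Z}$ by UCT (or Theorem \ref{thm:dual}). The ramification locus $B = X \setminus X_\mathrm{ori}$ and its preimage $\pi^{-1}(B) \subset \hat X$ both have dimension at most $n-3$ by Lemma \ref{lem:cone02}; the Alexander-Spanier/singular identification used in the proof of Lemma \ref{lem:codim} then upgrades to give $H^k(X;\mathbb{Z}) \cong H^k(X_\mathrm{ori};\mathbb{Z})$ and $H^k(\hat X;\mathbb{Z}) \cong H^k(\hat X \setminus \pi^{-1}(B);\mathbb{Z})$ for $k \in \{n-1,n\}$. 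Over the complement of the ramification, $\pi$ restricts to an honest double cover and carries the classical cochain-level transfer $\tau^*$ satisfying $\tau^* \pi^* = 2 \cdot \mathrm{id}$; transporting this across the above isomorphisms produces $\tau^* : H^n(\hat X;\mathbb{Z}) \to H^n(X;\mathbb{Z})$ with $\tau^* \pi^* = 2 \cdot \mathrm{id}$. Since $H^n(X;\mathbb{Z}) \cong \mathrm{Ext}(\mathbb{Z}_{2^a},\mathbb{Z}) \cong \mathbb{Z}_{2^a}$ is pure torsion while $H^n(\hat X;\mathbb{Z}) \cong \mathbb{Z}$ is torsion-free, $\pi^*$ must be zero, and hence $2 \cdot \mathrm{id} = 0$ on $\mathbb{Z}_{2^a}$, forcing $a = 1$ and $T \cong \mathbb{Z}_2$.

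The main obstacle is the extension of the classical double-cover transfer across the ramification locus. It hinges on the codimension-three bound $\dim(X \setminus X_\mathrm{ori}) \le n-3$ from Lemma \ref{lem:cone02}, which allows the Alexander-Spanier vanishing already exploited for Lemma \ref{lem:codim} to identify the top-degree cohomologies of $X$ and $\hat X$ with those of the honest unramified cover $\pi : \hat X \setminus \pi^{-1}(B) \to X_\mathrm{ori}$. Once that identification is in place, the classical transfer supplies $\tau^*$ directly, and the remainder of the proof is a routine manipulation of UCT with coefficients $\mathbb{Z}$, $\mathbb{Z}_2$, and $\mathbb{Z}_p$ for odd primes $p$.
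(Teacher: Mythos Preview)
Your argument is correct but follows a genuinely different route from the paper. The paper proves (1) \emph{first} and directly, by the classical orientation-reversing loop argument: since $X_\top$ is a non-orientable manifold there is a loop $\gamma$ in $X_\top$ with $\gamma_\ast = -1$ on $H_n(X|x;\mathbb Z)$; combined with the vanishing theorem (Theorem~\ref{thm:main thm}(C)) this forces every class in $H_n(X;G)$ to have image in $H_n(X|x;G)$ equal to its own negative, yielding $H_n(X;G)\cong G[2]$ exactly as for manifolds. Then (2) is read off from (1) by computing $H_n(X;\mathbb Z_a)$ for all $a$ and invoking UCT (in particular $a=4$ already rules out higher $2$-power torsion), and (3) follows from (2). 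You instead reverse the logic, reducing (1) and (3) to (2) and then proving (2) via the $\mathbb Z_p$-versions of Theorem~\ref{thm:main thm} together with a cohomological transfer through the ramified double cover of Theorem~\ref{thm:ram}. The paper's route is shorter and more self-contained: it needs neither the $R$-version of the main theorem (Remark~\ref{rem:R-ori}) nor the ramified cover, only the single monodromy computation on $X_\top$. Your route, on the other hand, illustrates nicely that the transfer identity $\tau^\ast\pi^\ast = 2\cdot\mathrm{id}$ survives the ramification in top degree thanks to the codimension-three bound of Lemma~\ref{lem:cone02}.

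One small correction: when you excise the ramification locus you should write $H^n_\cpt(X_\mathrm{ori};\mathbb Z)$ and $H^n_\cpt(\hat X\setminus\pi^{-1}(B);\mathbb Z)$ rather than ordinary cohomology, since these subsets are open and non-compact; this is exactly the setting of the exact sequence in Lemma~\ref{lem:codim}. The transfer for the honest double cover $\pi^{-1}(X_\mathrm{ori})\to X_\mathrm{ori}$ is then available on $H^\ast_\cpt$ because finite covers are proper, and the rest of your argument goes through unchanged.
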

\begin{proof}
We prove (1). 
Since $X$ is not orientable, due to Lemma \ref{lem:HS}, its manifold-part $X_\top$ is not orientable. 
Therefore, there is a loop $\gamma$ in $X_\top$ which reverses orientations. 
That is, the loop induces a map 
\[
\gamma_\ast : H_n(X|x;\mathbb Z) \to H_n(X|x;\mathbb Z)
\]
so that 
\[
\gamma_\ast = -1
\]
where $x$ is a point in $\gamma$.
Hence, we obtain the conclusion along the same line as the proof of the case of manifolds, 
by using Theorem \ref{thm:main thm} (C).

Due to (1), we have 
\[
H_n(X;\mathbb Z_a) \cong 
\left\{ 
\begin{aligned}
&\mathbb Z_2 &&\text{if } a \text{ is even}\\
&0 &&\text{if } a \text{ is odd}
\end{aligned}
\right.
\]
By the universal coefficient thereom, we obtain the conclusion of (2).

The claim (3) follows from (2).
This completes the proof.
\end{proof}



\begin{corollary} \label{cor:Z=R}
The $\mathbb Z$-orientability and the $\mathbb K$-orientability are equivalent for NB-spaces, where $\mathbb K$ is a field of characteristic $p$ of $p \ge 3$ or $p=0$.
\end{corollary}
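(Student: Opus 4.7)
The plan is to prove the two implications separately, leveraging the $R$-versions of Theorem \ref{thm:main thm} recorded in Remark \ref{rem:R-ori}. The easy direction, that $\mathbb Z$-orientability implies $\mathbb K$-orientability, is obtained by pushing a homological $\mathbb Z$-orientation $\{o_x\}_{x\in X}$ forward under the coefficient change $H_n(X|x;\mathbb Z)\to H_n(X|x;\mathbb K)$ induced by the unital ring map $\mathbb Z\to\mathbb K$; local compatibility is preserved by naturality of the relevant restriction maps, and this argument imposes no condition on $\mathbb K$ at all.

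For the converse, the key observation is that condition (a) of Theorem \ref{thm:main thm}(A) survives a change of coefficients. Inspecting the proof of Lemma \ref{lem:HS}, the only geometric input beyond the excision axiom is the codimension estimate $\dim S\leq n-3$ from Lemma \ref{lem:codim}(2), which is independent of the coefficient ring. Consequently the $R$-analogue of (a)$\Leftrightarrow$(b) holds for any commutative ring $R$, giving: a connected $n$-dimensional NB-space $X$ is $R$-orientable if and only if the manifold part $X_\top$ is $R$-orientable as an $n$-manifold. Applying this for both $R=\mathbb Z$ and $R=\mathbb K$, the problem reduces to the corresponding equivalence for topological manifolds.

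Thus I would finish by invoking the classical manifold fact: when $\mathrm{char}(\mathbb K)\neq 2$, a connected topological $n$-manifold $M$ is $\mathbb K$-orientable if and only if it is $\mathbb Z$-orientable. The reason is that the orientation local system of $M$ is classified by a homomorphism $\pi_1(M)\to\{\pm 1\}\subset\mathbb Z^\times$; tensoring with $\mathbb K$ yields a representation into $\mathbb K^\times$ whose image still lies in $\{\pm 1\}$, and the hypothesis $-1\neq 1$ in $\mathbb K$ forces the composite to be trivial precisely when the original map is. (The exclusion of characteristic $2$ is necessary since every manifold is $\mathbb Z_2$-orientable.) Setting $M=X_\top$, which is connected by Lemma \ref{lem:X_top conn}, completes the argument.

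The main (minor) obstacle is to verify that Lemma \ref{lem:HS} really does extend to arbitrary coefficient rings; this is essentially a bookkeeping check. As a sanity test in the compact case, one can alternatively deduce the contrapositive directly from Corollary \ref{cor:non-ori}(1): if $X$ were compact, connected, $\mathbb K$-orientable but not $\mathbb Z$-orientable, then $H_n(X;\mathbb K)\cong\{k\in\mathbb K\mid 2k=0\}=0$ since $\mathrm{char}(\mathbb K)\neq 2$, contradicting the $\mathbb K$-analogue of condition (f).
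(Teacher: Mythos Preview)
Your proof is correct and takes a somewhat different route from the paper's. The paper splits into cases: for compact connected $X$ it invokes Corollary~\ref{cor:non-ori} to obtain $H_n(X;\mathbb K)\cong H_n(X;\mathbb Z)\otimes\mathbb K$ (using that the torsion of $H_{n-1}(X;\mathbb Z)$ is at worst $\mathbb Z_2$, hence invisible to $\mathbb K$), and then reads off $\mathbb Z$-orientability from condition~(f) of Theorem~\ref{thm:main thm}; for non-compact $X$ it reduces to the compact case by applying the same reasoning to the generators $\Sigma_x$ via the local homology. Your argument instead passes through condition~(a) for both coefficient rings and thereby reduces uniformly to the classical manifold statement for $X_\top$, avoiding the compact/non-compact dichotomy entirely. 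Amusingly, your ``sanity check'' in the compact case via Corollary~\ref{cor:non-ori}(1) is exactly the paper's main argument there.

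What each approach buys: yours is cleaner and handles both cases at once, at the cost of importing the manifold fact about $\pi_1\to\{\pm1\}$ as a black box; the paper's argument stays closer to the homological machinery already developed (Corollary~\ref{cor:non-ori} and the $R$-version of Theorem~\ref{thm:main thm}) and makes the role of the $2$-torsion in $H_{n-1}$ explicit, but the non-compact step is phrased rather tersely. One small remark: in your verification that Lemma~\ref{lem:HS} works over any ring, the proof there only uses $\dim S\le n-2$, not the sharper $n-3$ from Lemma~\ref{lem:codim}(2), though of course the latter implies the former.
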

\begin{proof}
We may assume that spaces are connetced. 
Let $X$ be a connected NB-space. 
If $X$ is ($\mathbb Z$-)orientable, then it is $\mathbb K$-orientable as mentioned in Remark \ref{rem:R-ori}.
Suppose that $X$ is $\mathbb K$-orientable. 
When $X$ is compact, we have $H_n(X;\mathbb K) \cong H_n(X;\mathbb Z) \otimes \mathbb K$, by Corollary \ref{cor:non-ori}. 
Therefore, by Theorem \ref{thm:main thm}, we obtain the conclusion. 
For the case that $X$ is non-compact, applying Theorem \ref{thm:main thm} to the local homology at each point, we also obtain the conclusion.
This completes the proof.
\end{proof}

\section{Reviewing Alexandrov spaces and simple applications} \label{sec:Alex}

\subsection{Alexandrov spaces} \label{subsec:Alex}
We brefly recall the definition of Alexandrov spaces and its fundamental properties. 
An isometric embedding of an interval to a metric space is called a geoedesic in the metric space. 
A metric space is said to be {\it geodesic} if for any two points in the space, there exists a geodesic connecting the two points. 

To define Alexandrov spaces, we recall the notion of comparison angle. 
For $\kappa \in \mathbb R$, let $\mathrm{sn}_\kappa$ be a function of one-parameter defined as the unique solution to the ODE
\[
\mathrm{sn}_\kappa'' + \kappa\, \mathrm{sn}_\kappa=0, \hspace{1em} \mathrm{sn}_\kappa(0)=0, \hspace{1em} \mathrm{sn}'_\kappa(0)=1.
\]
Further, we set 
\[
\mathrm{cs}_\kappa(u) := \mathrm{sn}_\kappa'(u). 
\]
For three points $a,b,c$ in a metric space with $0 < \min\{d(a,b), d(a,c)\}$ (and $d(a,b)+d(b,c)+d(c,a) < 2 \pi / \sqrt{\kappa}$ when $\kappa > 0$), the $\kappa$-comparison angle $\tilde \angle_\kappa bac$ at $a$ is defined by 
\[
\tilde \angle_\kappa bac := \arccos \frac{\mathrm{cs}_\kappa (d(b,c))-\mathrm{cs}_\kappa (d(a,b))\,\mathrm{cs}_\kappa (d(a,c))}{\kappa\,\mathrm{sn}_\kappa (d(a,b))\,\mathrm{sn}_\kappa (d(a,c))}
\]
when $\kappa \neq 0$, and 
\[
\tilde \angle_0 bac := \lim_{\kappa \to 0} \tilde \angle_\kappa bac.
\]
\begin{definition}[\cite{BGP} cf. \cite{BBI}] \label{def:Alex} \upshape
A complete metric space $M$ is called an {\it Alexandrov space} if it is a geodesic space and that for each $x \in M$ there exist $\kappa \in \mathbb R$ and an open neighborhood $U$ of $x$ such that for any $a \in U$ and $b,c,d \in U \setminus \{a\}$, we have 
\[
\tilde \angle_\kappa bac + \tilde \angle_\kappa cad + \tilde \angle_\kappa dab \le 2 \pi. 
\]
\end{definition}
For other equivalent definitions of Alexandrov spaces, we refer \cite{BBI} and \cite{BGP}.
Another reformulation of the defintion is discussed in \cite{Pet:qg}.

From the definition, any Alexandrov space $M$ is connected. 
It is known that the Hausdorff dimension of $M$ equals to the Lebesgure covering dimension of it (\cite{BGP}, \cite{Pla}), which is called the {\it dimension} of $M$.
We only deal with finite-dimensional Alexandrov spaces, in the present paper.

We say that an Alexandrov space $M$ is said to be {\it of curvature $\ge \kappa$} for a real number $\kappa$ if $\dim M \ge 2$ and $M$ has curvatue $\ge \kappa$ at any point. 
As a consequnce of globalization theorem (\cite{BGP}), we know:
\begin{theorem}[Bonnet-type theorem \cite{BGP}] \label{thm:Bonnet}
If $M$ is an Alexandrov space of curvature $\ge \kappa$ for $\kappa > 0$ and of dimension not less than two, then its diameter is not greater than $\pi / \sqrt{ \kappa}$. 
\end{theorem}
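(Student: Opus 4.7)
The plan is to argue by contradiction, invoking the globalization theorem of Burago--Gromov--Perelman \cite{BGP}. The globalization theorem promotes the local quadruple condition in Definition~\ref{def:Alex} to its global counterpart, which in turn yields a global hinge/triangle comparison between configurations in $M$ and those in the model sphere $S^2_\kappa$ of constant curvature $\kappa$. The key geometric input from the model side is the elementary fact that $S^2_\kappa$ itself has diameter $\pi/\sqrt{\kappa}$, and correspondingly every minimizing geodesic in $S^2_\kappa$ has length at most $\pi/\sqrt{\kappa}$.

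Assume for contradiction that there exist $p, q \in M$ with $L := d(p, q) > \pi/\sqrt{\kappa}$, and fix a minimizing geodesic $\gamma \colon [0, L] \to M$ from $p$ to $q$. Since $\dim M \geq 2$, the space of directions $\Sigma_p$ has dimension at least one, so one can pick a direction at $p$ distinct from $\gamma'(0)$ and a short geodesic $\eta \colon [0, \varepsilon] \to M$ in that direction starting at $p$ and ending at $r = \eta(\varepsilon)$. Applying the globalized hinge comparison at $p$ to the legs $\gamma|_{[0, s]}$ and $\eta$ for $s$ strictly less than $\pi/\sqrt{\kappa}$, one obtains an upper bound $d(\gamma(s), r) \leq \tilde d_\kappa(s, \varepsilon, \theta)$, where $\theta$ is the angle at $p$ and $\tilde d_\kappa(s, \varepsilon, \theta)$ denotes the corresponding hinge distance in $S^2_\kappa$. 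Letting $s \nearrow \pi/\sqrt{\kappa}$ and $\varepsilon \to 0$, this estimate forces $\gamma$ to fail to be minimizing past the critical length $\pi/\sqrt{\kappa}$, contradicting that $\gamma$ is a minimizer of total length $L > \pi/\sqrt{\kappa}$.

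The main obstacle is the degenerate behaviour of the comparison \emph{at} the critical length $\pi/\sqrt{\kappa}$, where hinge legs hit the diameter of $S^2_\kappa$ and the standard formulas for the comparison angles and hinge distances break down. The globalization theorem is precisely the tool that controls this passage: it guarantees that the hinge comparison is valid for every sub-hinge of length strictly less than $\pi/\sqrt{\kappa}$, permitting a limiting argument as $s \nearrow \pi/\sqrt{\kappa}$. Once globalization is granted as a black box from \cite{BGP}, the remaining work is an elementary calculation on the model sphere $S^2_\kappa$ and a straightforward use of the hypothesis $\dim M \geq 2$ to produce the auxiliary direction $\eta$.
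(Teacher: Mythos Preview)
The paper does not supply a proof of Theorem~\ref{thm:Bonnet}; it is quoted as a known consequence of the globalization theorem and attributed to \cite{BGP}. So there is nothing in the paper to compare your argument against. What remains is whether your sketch is itself a valid proof.

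It is not, because the hinge comparison is stated with the wrong sign. In a space of curvature $\ge \kappa$ the hinge version of Toponogov says that for a hinge at $p$ with legs of lengths $s$ and $\varepsilon$ and opening angle $\theta$ one has
\[
d(\gamma(s),r)\;\ge\;\tilde d_\kappa(s,\varepsilon,\theta),
\]
not $\le$: distances (and angles) opposite a hinge are \emph{at least} those in the model $S^2_\kappa$. With the correct inequality your limiting step gives only $d(\gamma(s),r)\ge \pi/\sqrt\kappa-\varepsilon$ as $s\nearrow\pi/\sqrt\kappa$, which is perfectly compatible with $\gamma$ remaining minimizing (indeed the triangle inequality already gives $d(\gamma(s),r)\ge d(p,\gamma(s))-d(p,r)=s-\varepsilon$). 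So the passage ``this estimate forces $\gamma$ to fail to be minimizing past the critical length'' has no content once the sign is corrected.

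The standard argument, in the spirit of \cite{BGP}, is carried out at an \emph{interior} point of the long geodesic rather than at an endpoint. Assuming (after shortening) $\pi/\sqrt\kappa<d(p,q)<2\pi/\sqrt\kappa$, let $m$ be the midpoint and use $\dim M\ge2$ to find $x$ near $m$ off the geodesic. The triangles $pmx$ and $qmx$ then have perimeter below $2\pi/\sqrt\kappa$, so their comparison angles are defined; globalization gives $\angle pmx\ge\tilde\angle_\kappa pmx$ and $\angle qmx\ge\tilde\angle_\kappa qmx$, and one checks on the sphere that $\tilde\angle_\kappa pmx+\tilde\angle_\kappa qmx>\pi$ when $d(p,m)+d(m,q)>\pi/\sqrt\kappa$ and $d(m,x)$ is small. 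This contradicts the adjacent-angle relation at $m$. The contradiction comes from combining comparison angles at an interior vertex, not from an endpoint-based minimality argument.
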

Taking into account this theorem (to obtain a compatibility to Theorem \ref{thm:space of directions}), we use the convention that a one-dimensional Alexandrov space is said to be of curvature $\ge \kappa$ for $\kappa > 0$ if and only if its diameter is less than or equal to $\pi / \sqrt{\kappa}$.

For two non-trivial geodesics $c_1, c_2$ emenating the same point $x$ in an Alexandrov space $M$, the notion of the angle $\angle(c_1, c_2)$ between them is well-defined (see \cite{BBI}, \cite{BGP}). 
The angle $\angle$ becomes a psuedo-distance function on the set of all non-trivial geodesics starting at $x$. 
By a standard way, we obtain a metric space as the quotient of this psuedo-distance. 
Let us denote by $\Sigma_x$ its completion with respect to the metric $\angle$, which is called the space of directions at $x$. 

\begin{theorem}[\cite{BGP}] \label{thm:space of directions}
If $M$ is an Alexandrov space of dimension $n$ for $n < \infty$ and $x \in M$, then $\Sigma_x$ is an Alexandrov space of curvature $\ge 1$ when $n \ge 2$; is the metric space consisting one point or two points with distance $\pi$ when $n = 1$; or is the empty-set when $n=0$. 
\end{theorem}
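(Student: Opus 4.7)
The plan is to reduce to the classical BGP construction via tangent cones. First I would dispose of the degenerate cases. If $n=0$, then $M$ is discrete and, being geodesic (hence path-connected), consists of a single point; in particular there are no non-trivial geodesics from $x$ and $\Sigma_x$ is empty by definition. If $n=1$, a connected geodesic metric space of Hausdorff dimension $1$ that satisfies the local four-point Alexandrov condition is, by direct classification, a topological $1$-manifold with or without boundary equipped with a length metric. At each interior point there are exactly two equivalence classes of non-trivial emanating geodesics making angle $\pi$, and at an endpoint there is exactly one; hence $\Sigma_x$ is either one or two points with distance $\pi$, as required.

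For the main case $n\ge 2$, I would build the tangent cone $T_x M$ as the pointed Gromov--Hausdorff limit of the rescalings $(\lambda M, x)$ as $\lambda\to\infty$. Existence follows from Gromov pre-compactness once one notes that on balls of fixed rescaled radius the lower curvature bound $\kappa$ transforms to $\kappa/\lambda^2\to 0$, giving uniform control on the doubling/volume growth needed for pre-compactness. By the stability of the Alexandrov condition under Gromov--Hausdorff convergence (applied to the four-point comparison in Definition~\ref{def:Alex}), $T_x M$ is itself an Alexandrov space of curvature $\ge 0$ and dimension $n$. Moreover $T_x M$ is self-similar under any rescaling $\lambda>0$, and the basepoint is the unique fixed point of this homothety, so $T_x M$ is canonically a metric cone over a compact metric space $S$ with apex at $x$.

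Next I would identify $S$ isometrically with $\Sigma_x$. Given two non-trivial geodesics $c_1, c_2$ from $x$, take sequences $b_i = c_1(t_i)$, $c_i = c_2(t_i)$ with $t_i\to 0$; the monotonicity of comparison angles (a consequence of the four-point comparison) guarantees that $\tilde\angle_0 b_i x c_i$ is monotone in $t_i$ and converges to the intrinsic angle $\angle(c_1,c_2)$. On the other hand, $(b_i, c_i)$ under rescaling by $\lambda_i = 1/t_i$ converge in $T_x M$ to unit-length points $\bar c_1, \bar c_2$ on the cone, and the distance between these points on the unit sphere $S$ is, by the metric cone formula, precisely $\tilde\angle_0 b_i x c_i$ in the limit. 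This identifies the angle psuedo-metric on directions with the restriction of the $T_x M$-metric to its unit sphere, and hence $\Sigma_x$ (the completion) with $S$. Finally, the standard cone correspondence stating that a metric cone $c(S)$ has curvature $\ge 0$ if and only if $S$ has curvature $\ge 1$ and diameter $\le \pi$ (see \cite{BBI} or \cite{BGP}) yields curvature $\ge 1$ on $\Sigma_x$, and the dimension count $\dim S = \dim T_x M -1 = n-1$ is automatic.

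The main obstacle I expect is step three, the identification of the angle metric with the cone metric on the unit sphere. This requires the monotonicity of comparison angles in both variables along the two geodesics, which is itself the hardest consequence of the four-point comparison. Once that monotonicity is in hand the limit-of-comparison-angles description of the angle is immediate, and the identification follows from the elementary $\kappa=0$ law of cosines on the tangent cone. An alternative, more direct route would verify the four-point condition on $\Sigma_x$ by performing a delicate comparison triangle analysis in $M$ without passing to $T_x M$; this avoids Gromov pre-compactness but replaces it with an intricate case analysis that I would avoid in favour of the cleaner tangent-cone approach above.
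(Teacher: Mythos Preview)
The paper does not give its own proof of this theorem; it is stated as a classical result with a citation to \cite{BGP} and then used as input for the topological discussion. So there is no argument in the paper to compare your proposal against.

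For what it is worth, your outline is the standard modern route to this result (tangent cone as pointed Gromov--Hausdorff blow-up, cone structure, identification of the cross-section with $\Sigma_x$, then the cone/link curvature correspondence), and it is essentially correct as a sketch. One caution: you assert uniqueness of the blow-up limit before you have identified it with the Euclidean cone over $\Sigma_x$; in general pointed GH limits along different scales need not coincide a priori, and the usual way to get uniqueness here is precisely to first prove that any such limit is the metric cone over the already-defined $\Sigma_x$. So the logical order should be: angle monotonicity $\Rightarrow$ existence of $\angle$ and of $\Sigma_x$ $\Rightarrow$ the rescalings converge to $c(\Sigma_x)$ (hence uniquely) $\Rightarrow$ curvature $\ge 0$ on $c(\Sigma_x)$ by stability $\Rightarrow$ curvature $\ge 1$ on $\Sigma_x$ by the cone correspondence. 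The original \cite{BGP} argument is closer to the ``alternative route'' you mention at the end, verifying the comparison condition on $\Sigma_x$ directly from comparison triangles in $M$, but both approaches are standard and either would be acceptable here.
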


Now we recall one of important results in the theory of Alexandrov spaces, obtained by Perelman. 
\begin{theorem}[Perelman's stability theorem \cite{Per:Alex2} cf. \cite{Kap}] \label{thm:stab}
For any $n$-dimensional Alexandrov space $X$ and a point $x \in X$, there exists $r > 0$ 
such that 
$(B_r(x),x)$ is homeomorphic to $(c(\Sigma_x), o)$.
Here, $B_s(y)$ denotes the open metric ball of radius $s$ centered at $y$ in a metric space.
\end{theorem}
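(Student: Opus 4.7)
The plan is to apply Perelman's critical-point theory for the distance function $f = d(x, \cdot)$ and then identify small level sets of $f$ with the space of directions $\Sigma_x$ via a gradient-exponential-type construction. This parallels the elementary fact that near a non-degenerate local minimum of a smooth function, the gradient flow conjugates a small ball with the standard cone over a sphere.

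First I would establish that there exists $r_0 > 0$ such that every $y \in B_{r_0}(x) \setminus \{x\}$ is Perelman-regular for $f$, in the sense that there exists $\eta \in \Sigma_y$ with $\angle(\eta, \uparrow_y^x) > \pi/2$ for every direction $\uparrow_y^x$ of a shortest segment from $y$ to $x$. The input is the Gromov-Hausdorff convergence of the pointed rescalings $(X, x, \lambda^{-1} d)$ to the tangent cone $T_x X = c(\Sigma_x)$ as $\lambda \to 0$: inside the model cone, at any point distinct from the apex $o$ the direction pointing directly away from $o$ has angle $\pi$ with $\uparrow^o$, and upper semicontinuity of angles transports this property to small $y$ in $X$ after rescaling.

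Next, apply Perelman's fibration (isotopy) theorem on compact annular regions $f^{-1}([\varepsilon_1, \varepsilon_2])$ for $0 < \varepsilon_1 < \varepsilon_2 < r_0$: absence of critical points yields a homeomorphism $f^{-1}([\varepsilon_1, \varepsilon_2]) \cong f^{-1}(\varepsilon_2) \times [\varepsilon_1, \varepsilon_2]$ commuting with $f$, constructed by integrating Perelman's gradient-like flow attached to an admissible map extending $f$ (auxiliary distance functions chosen from a dense set of points ensure the required transversality). Letting $\varepsilon_1 \to 0$ then identifies $B_{r_0}(x)$ with the topological cone over any small level set $f^{-1}(\varepsilon)$, sending $x$ to the apex.

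The main obstacle is the final step: showing $f^{-1}(\varepsilon)$ is homeomorphic to $\Sigma_x$ for all sufficiently small $\varepsilon > 0$. Gromov-Hausdorff convergence of the rescalings $(1/\varepsilon) f^{-1}(\varepsilon)$ to $\Sigma_x$ is automatic from the definition of the tangent cone, but promoting it to a genuine homeomorphism is delicate. I would construct a gradient-exponential-type map on a neighborhood of the apex in $c(\Sigma_x)$, sending each pair $(\xi, t)$ with $\xi \in \Sigma_x$ and $t > 0$ small to the time-$t$ point of Petrunin's gradient flow of $f$ starting at $x$ along the direction $\xi$ (or equivalently along a quasigeodesic), and show that its restriction to the $\varepsilon$-sphere is a homeomorphism onto $f^{-1}(\varepsilon)$: injectivity uses non-branching of minimal segments emanating from $x$, continuity rests on continuous dependence of the flow on initial data, and surjectivity reduces to a degree/openness argument combined with the Gromov-Hausdorff closeness. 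An alternative route packages the deleted-neighborhood information into Siebenmann's characterization of MCS-structures from \cite{Sie}, but either way the crux is identifying the level-set topology with $\Sigma_x$, which is the heart of Perelman's argument.
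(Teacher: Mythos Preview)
The paper does not prove this theorem at all: Theorem~\ref{thm:stab} is stated in Section~\ref{sec:Alex} as a background result, attributed to Perelman \cite{Per:Alex2} with the reference \cite{Kap} for an exposition, and is used throughout without further justification. So there is no ``paper's own proof'' to compare against; your proposal is an attempt to reconstruct a proof of a deep external theorem that the paper simply quotes.

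As for the sketch itself, the first two steps (regularity of $d(x,\cdot)$ on a punctured ball, and the fibration/isotopy lemma giving the open-cone structure over a small level set) are in the right spirit. The real difficulty, which you correctly flag, is the identification $f^{-1}(\varepsilon) \approx \Sigma_x$. Your proposed ``gradient-exponential'' map via Petrunin's gradient flow or quasigeodesics is not how this is actually done, and it is not clear it can be made to work: gradient curves from $x$ in different directions need not stay separated, continuous dependence on the initial direction $\xi \in \Sigma_x$ is delicate precisely because geodesics need not depend continuously on their initial direction, and a degree argument presupposes the manifold-like structure you are trying to establish. Perelman's actual argument (see \cite{Kap}) proceeds instead through his general stability theorem for noncollapsing Gromov--Hausdorff convergence: one applies it to the sequence of rescalings $(\lambda_i^{-1} X, x)$ converging to $(T_x X, o)$, using the full machinery of admissible maps, liftings, and the MCS framework from \cite{Sie}. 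The level-set identification then falls out of that relative stability statement rather than from a direct exponential-type construction.
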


A one-dimensional Alexandrov space is nothing but a complete Riemannian one-manifold possibly with boundary. 
The notion of boundary of an arbitrary Alexandrov space 
is inductively defined as follows. 
The {\it boundary} of an $n$-dimensional Alexandrov space $X$ with $n \ge 2$ is defined as the subset $\partial X$ of $X$ consisting of all $x \in X$ such that $\Sigma_x$ has non-empty boundary. 
For an open subset $M$ of an Alexandrov space $X$, we set 
\[
\partial M:= M \cap \partial X.
\]
From Theoerm \ref{thm:stab}, if such an $M$ does not meet the boundary, that is, $\partial M = \emptyset$, then
it is an NB-space.
Note that, in our notation, the symbol $\partial M$ does not indicate the topological boundary of $M$ as a subset of $X$.

Further, Perelman gave a result saying about a topological structure of positvely curved Alexandrov spaces with boundary: 
\begin{theorem}[\cite{Per:Alex2}] \label{thm:positive boundary}
Let $\Sigma$ be an $n$-dimensional Alexandrov space of positive curvature with $n \ge 1$. 
Suppose that $\Sigma$ has non-empty boundary.
Then, $(\Sigma, \partial \Sigma)$ is homeomorphic to $(\bar c(\partial \Sigma), \partial \Sigma \times \{1\})$. 
\end{theorem}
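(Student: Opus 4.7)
The plan is to mimic Perelman's soul-type argument: produce a strictly concave function on $\Sigma$ whose unique maximum will be the apex of the cone, then use Perelman's Morse theory for semiconcave functions to flow out to $\partial \Sigma$. First I would observe that, since $\Sigma$ has curvature $\ge \kappa$ for some $\kappa>0$, the Bonnet-type theorem (Theorem \ref{thm:Bonnet}) and the convention adopted for $n=1$ force $\mathrm{diam}(\Sigma) \le \pi/\sqrt{\kappa}$; combined with completeness and local compactness (the local cone structure from Theorem \ref{thm:stab}), this makes $\Sigma$ compact. The target of the argument is the function
\[
f(x) := d(x, \partial \Sigma), \quad x \in \Sigma.
\]

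The key analytic step is to show that $f$ is strictly concave on $\Sigma$. More precisely, I would show that along any unit-speed geodesic $\gamma$ in $\Sigma$, the function $\sin(\sqrt{\kappa}\,f\circ \gamma)$ is concave in the one-dimensional comparison sense; equivalently, $f$ is $\lambda$-concave with $\lambda>0$ on any region where $f$ is uniformly bounded away from $\pi/(2\sqrt{\kappa})$. This uses the first variation formula for Alexandrov spaces, applied to a shortest segment from $\gamma(t)$ to a nearest boundary point, together with Toponogov comparison in the model space of constant curvature $\kappa$, noting that the endpoint in $\partial \Sigma$ provides, after passing to $\Sigma_{\gamma(t)}$, a direction in a hemisphere-type half-space so the comparison goes through. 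From strict concavity plus compactness, $f$ attains its maximum at a unique point $s \in \Sigma \setminus \partial \Sigma$; uniqueness follows because if there were two maxima, the midpoint of a geodesic between them would strictly exceed them.

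With $s$ in hand, the plan is to invoke Perelman's gradient flow for semiconcave functions on Alexandrov spaces. Away from $s$, the function $f$ has a nondegenerate gradient $\nabla f \ne 0$ (this is exactly the Morse-regularity that comes from the strict positive gap between the upper Dini derivative at non-critical points and the vanishing value at critical points, forced by strict concavity). The gradient flow $\Phi_t$ of $f$ is then defined and continuous on $\Sigma \setminus \{s\}$, preserves $\Sigma$, and its trajectories go from $\partial \Sigma$ monotonically towards $s$ as $t$ increases. Normalizing time by the value of $f$, the map
\[
\partial \Sigma \times (0,1] \longrightarrow \Sigma \setminus \{s\}, \qquad (p,r) \mapsto \Phi_{-\log r}(p),
\]
(reparametrized so that $f$ decreases monotonically along each flowline from the maximum value $f(s)$ at $r=0$ to $0$ at $r=1$) is a homeomorphism, and extending by sending $\partial \Sigma \times \{0\}$ to $s$ produces the cone homeomorphism $\bar c(\partial \Sigma) \to \Sigma$ sending $\partial \Sigma \times \{1\}$ onto $\partial \Sigma$.

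The main obstacle is the verification of strict concavity of $d_{\partial \Sigma}$ from the positive curvature bound. The subtlety is that $\partial \Sigma$ is not a smooth submanifold and need not even be convex, so one cannot just appeal to a standard Riemannian Hessian computation; instead one has to work intrinsically with the space of directions at a footpoint on $\partial \Sigma$ and show that the comparison angle with a shortest geodesic into $\partial \Sigma$ behaves like the model case of curvature exactly $\kappa$. A secondary technical point is making the gradient flow argument rigorous in the setting of boundary: one needs to argue that trajectories starting at a boundary point leave $\partial \Sigma$ into the interior (because $\nabla f$ points inward there, by the definition of $f$) and never return, which is a consequence of monotonicity of $f$ along flow lines. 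Both points are standard in Perelman's framework, but they are the places where the positive curvature hypothesis is essentially used.
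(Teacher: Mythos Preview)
The paper does not prove this theorem at all: it is quoted from Perelman's preprint \cite{Per:Alex2} and used as a black box, so there is no ``paper's own proof'' to compare against. Your proposal is, however, a faithful outline of Perelman's original argument---the strict concavity of $d(\cdot,\partial\Sigma)$ under a positive lower curvature bound, the resulting unique soul point, and the gradient-flow/Morse-theoretic contraction to that point---so in spirit you are reproducing exactly the proof the paper is citing.

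A couple of places in your sketch would need tightening before it stands on its own. First, the strict concavity of $f=d(\cdot,\partial\Sigma)$ is usually proved not via first variation at a boundary footpoint but via the doubling trick: pass to the double $D(\Sigma)$, which is an Alexandrov space of the same lower curvature bound without boundary, and observe that $f$ on $\Sigma$ coincides with $\tfrac12 d(\cdot,\sigma(\cdot))$ for the canonical involution $\sigma$; the concavity then follows from Toponogov comparison in $D(\Sigma)$. Your direct approach through the space of directions at a boundary footpoint can be made to work, but it is more delicate than you indicate. Second, the map you write down, $(p,r)\mapsto \Phi_{-\log r}(p)$, is not quite right as stated: the gradient flow of a semiconcave function is only forward-defined, and one does not flow \emph{from} $\partial\Sigma$ toward $s$ along $\nabla f$ but rather uses the fibration theorem (regularity of $f$ on $\Sigma\setminus\{s\}$) to identify level sets and then radially parametrize. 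The content is the same, but the formula needs correction.
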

Here, $\bar c(A)$ denotes a closed cone over $A$ which is defined as $A \times [0,1] / A \times \{0\}$ and $A \times \{t\}$ is regarded as a subset of $\bar c(A)$ in a canonical way.
As a direct corollary to this theorem, we have a topological chracterization of the boundary as follows. 
For an Alexandrov $n$-space $M$, 
\[
\partial M = \{x \in M \mid H_n(M|x;\mathbb Z_2) = 0\}.
\]

Further, from Theorem \ref{thm:positive boundary}, we obtain the following:
\begin{corollary}[\cite{Yam:4-dim}] \label{cor:collar}
The boundary of any Alexandrov space has a collar neighborhood.
\end{corollary}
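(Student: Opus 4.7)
The plan is to produce collars locally near each point of $\partial M$ using the stability theorem together with Theorem \ref{thm:positive boundary}, and then patch them into a global collar.

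For the local step, fix $x \in \partial M$. Since $x$ is a boundary point, the space of directions $\Sigma_x$ is a compact positively curved $(n-1)$-dimensional Alexandrov space with non-empty boundary. Theorem \ref{thm:stab} supplies $r > 0$ with a pointed homeomorphism $(B_r(x), x) \approx (c(\Sigma_x), o)$, and Theorem \ref{thm:positive boundary} applied to $\Sigma_x$ gives $(\Sigma_x, \partial \Sigma_x) \approx (\bar c(\partial \Sigma_x), \partial \Sigma_x \times \{1\})$. Composing, $B_r(x)$ is homeomorphic to the double cone $c(\bar c(\partial \Sigma_x))$. Parametrizing this double cone by $(a, s, t) \in \partial \Sigma_x \times [0,1] \times [0,\infty)$ modulo the two apex identifications and performing the change of variables $(r, u) = (ts, t(1-s))$, I obtain a homeomorphism $c(\bar c(\partial \Sigma_x)) \approx c(\partial \Sigma_x) \times [0,\infty)$ under which the Alexandrov boundary of the left-hand side (i.e.\ the $\{s=1\}$-locus) corresponds exactly to $c(\partial \Sigma_x) \times \{0\}$. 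Restricting to $u \in [0, \epsilon)$ produces a neighborhood $U_x$ of $x$ in $M$ together with a homeomorphism $U_x \approx (U_x \cap \partial M) \times [0, \epsilon)$ restricting to the identity on $U_x \cap \partial M$, i.e., a local collar at $x$.

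For the global step, I would cover $\partial M$ by a countable locally finite family $\{U_{x_i}\}$ of such local collars (possible because $\partial M$ is separable metrizable, being an NB-space by Corollary \ref{cor:bdry is NB}), and patch them inductively in the style of Morton Brown's collar theorem: at each stage one already has a collar on an open set $N_k \subset \partial M$, and one extends it across the next chart via an ambient isotopy of $M$ that reconciles the two local product structures on their overlap, built from a bump function on $\partial M$ and a shrinking in the normal $[0,\epsilon)$-direction.

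I expect the main obstacle to be the patching step. Unlike in the topological manifold case, one must verify that the ambient isotopies used to reconcile local product structures preserve the singular stratification of $M$ transverse to $\partial M$, so that the extended collar remains a genuine product decomposition near singular points of $\partial M$. An elegant route that sidesteps explicit patching — likely the one pursued in \cite{Yam:4-dim} — is to construct a single semiconcave function $f$ on a neighborhood of $\partial M$ with $f^{-1}(0) = \partial M$ and no critical points, and then to apply Perelman's gradient-flow techniques to $f$; the resulting flow produces the global collar $\partial M \times [0, \epsilon) \hookrightarrow M$ in one stroke, with no inductive matching of local charts required.
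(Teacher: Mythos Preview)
Your local step is exactly what the paper does: it identifies $c(\bar c(\Lambda)) \approx [0,\infty)\times c(\Lambda)$ with the boundary sitting at $\{0\}\times c(\Lambda)$, which is precisely your change-of-variables computation. So up to this point you and the paper agree.

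Where you diverge is the global step, and here you are overcomplicating things. The paper simply invokes Brown's theorem \cite{Bro}: a closed subset of a Hausdorff space that is locally collared is globally collared. Brown's argument is purely point-set topological---it uses only a locally finite cover and a sequence of pushes in the collar direction---and nowhere requires the ambient space to be a manifold or to have any controlled stratification. Your worry that ``the ambient isotopies must preserve the singular stratification of $M$ transverse to $\partial M$'' is therefore unfounded: Brown's pushes are defined using only the local product structure you have already built, and they automatically respect whatever lies in the $c(\partial\Sigma_x)$ factor because that factor is never touched. So once you have local collars, citing Brown finishes the proof in one line, exactly as the paper does.

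Your proposed alternative via a semiconcave function with no critical points and Perelman's gradient flow is a legitimate geometric route (and is indeed closer in spirit to what \cite{Yam:4-dim} does), but it is strictly more work than needed here and is not the argument the present paper gives.
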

We will prove Corollary \ref{cor:collar} by generalizing to Proposition \ref{prop:collar} later.

Combinig two Theorems \ref{thm:stab} and \ref{thm:positive boundary}, we obtain: 
\begin{corollary} \label{cor:bdry is NB}
The boundary of any Alexandrov space of dimension $n$, with $n \ge 2$, is an $(n-1)$-dimensional NB-space. 
\end{corollary}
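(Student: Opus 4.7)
My plan is to induct on $n = \dim M$, combining Perelman's stability theorem (Theorem \ref{thm:stab}) with the topological characterization $\partial M = \{y \in M : H_n(M|y;\mathbb Z_2) = 0\}$ recalled just after Theorem \ref{thm:positive boundary}. For each $x \in \partial M$, the stability theorem furnishes a pointed homeomorphism $\varphi : (B_r(x),x) \to (c(\Sigma_x),o)$. My first step is to show, uniformly for $n \ge 2$, that $\varphi$ carries $\partial M \cap B_r(x)$ onto $c(\partial \Sigma_x)$. For a point $y$ corresponding to $(\xi,t)$ with $t > 0$, the product-cone structure around $(\xi, t)$ yields
\[
H_n(M|y;\mathbb Z_2) \cong H_{n-1}(\Sigma_x|\xi;\mathbb Z_2),
\]
and applying the same topological characterization of the boundary to the Alexandrov space $\Sigma_x$ gives $y \in \partial M$ iff $\xi \in \partial \Sigma_x$; at the apex the identification is automatic.

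With this identification in hand, the base case $n = 2$ is immediate: in dimension one the NB-space condition is just being a topological $1$-manifold without boundary, and since $\Sigma_x$ is a $1$-dimensional Alexandrov space of curvature $\ge 1$ with non-empty boundary, it is a closed interval of length at most $\pi$, whence $\partial \Sigma_x$ is a two-point set and $c(\partial \Sigma_x) \cong \mathbb R$. For the inductive step ($n \ge 3$), applying the inductive hypothesis to the $(n-1)$-dimensional Alexandrov space $\Sigma_x$ gives that $\partial \Sigma_x$ is an $(n-2)$-dimensional NB-space; compactness of $\partial \Sigma_x$ follows from the Bonnet-type theorem (Theorem \ref{thm:Bonnet}) applied to $\Sigma_x$. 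The only remaining point is the connectedness of $\partial \Sigma_x$, and this is the main obstacle, since the NB-space definition demands the conical generator to be compact \emph{and connected}.

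For this connectedness I would use Theorem \ref{thm:positive boundary}, which gives $\Sigma_x \cong \bar c(\partial \Sigma_x)$; hence $\Sigma_x$ with the apex removed deformation retracts onto $\partial \Sigma_x$ and has the same number of connected components. It therefore suffices to check that puncturing the connected Alexandrov space $\Sigma_x$ at a single point does not disconnect it. But locally every point of $\Sigma_x$ has a neighborhood homeomorphic to a cone $c(\Lambda)$ over an Alexandrov space $\Lambda$, which is connected; thus the punctured local model $c(\Lambda) \setminus \{o\} \cong \Lambda \times (0,\infty)$ is connected, and so is $\Sigma_x \setminus \{o\}$ globally. This forces $\partial \Sigma_x$ to be connected, completing the verification of Definition \ref{def:NB} at each $x \in \partial M$ and yielding that $\partial M$ is an $(n-1)$-dimensional NB-space.
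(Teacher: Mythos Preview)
Your proof is correct and follows the same route as the paper's one-line justification (``Combining Theorems~\ref{thm:stab} and~\ref{thm:positive boundary}''); you have simply supplied the details the paper omits, in particular the identification of $\partial M$ inside the stability cone via the local-homology criterion and the connectedness of $\partial\Sigma_x$ via the punctured-cone argument.
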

We remark that the boundary of an Alexandrov space with the intrinsic metric is not known to be an Alexandrov space.

Fianlly, we fix notations denoting (almost) regular sets of Alexandrov spaces. 
Let $M$ be an open set of an $n$-dimensional Alexandrov space and $\delta > 0$. 
We set 
\[
M_\delta :=
\{ x \in M \mid \Hau^{n-1}(\Sigma_x) / \Hau^{n-1}(\mathbb S^{n-1}) > 1 - \delta \}
\]
which is called the set of $\delta$-regular points. 
Here, $\mathbb S^k$ denotes the unit $k$-sphere of constant curvature one.
Further, we set 
\[
M_0 = \bigcap_{\delta > 0} M_\delta 
\]
which is called the set of regular points. 
Recall that the Bishop-type theorem for positively curved spaces (\cite{BGP}) state the following absolute bound hold: 
\[
\Hau^{n-1}(\Sigma_x) / \Hau^{n-1}(\mathbb S^{n-1}) \le 1.
\]
The notation $M_\delta$ will be used in Appendices \ref{sec:mass} and \ref{sec:C1-ori}.

\subsection{Group actions and orientations}

\begin{theorem} \label{thm:G-action}
Let $M$ be an orientable closed Alexandrov space.
Let $G$ be a finite group of isometries on $M$. 
Suppose that $g : M \to M$ is orientation-preserving for every $g \in G$.
Then, the quotient space $M / G$ is an orientable closed Alexandrov space.
\end{theorem}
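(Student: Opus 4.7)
The plan is to first place $M/G$ in the class where Theorem \ref{thm:main thm} applies, and then produce a non-trivial top-dimensional homology class.

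\emph{Step 1: $M/G$ is a closed Alexandrov space.} The quotient of an Alexandrov space by an isometric action of a finite group is again an Alexandrov space with the same lower curvature bound (see \cite{BGP}); compactness and connectedness pass from $M$ to $M/G$.

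\emph{Step 2: $M/G$ has no boundary.} This is the key step, proved by induction on $n = \dim M$. For $n = 1$, $M$ is a circle and any finite group of orientation-preserving isometries consists of rotations, so $M/G$ is again a circle. Assume the theorem in dimensions less than $n$. For $\tilde p \in M$ with isotropy $G_{\tilde p} \subset G$ and $p = \pi(\tilde p)$, one has the isometric identification $\Sigma_p(M/G) \cong \Sigma_{\tilde p}(M)/G_{\tilde p}$, a standard feature of isometric finite group actions on Alexandrov spaces. By Theorem \ref{thm:space of directions}, $\Sigma_{\tilde p}(M)$ is a closed $(n-1)$-dimensional Alexandrov space of curvature $\ge 1$, and since $M$ is orientable, the corollary stating that generators of orientable NB-spaces are orientable gives that $\Sigma_{\tilde p}(M)$ is orientable. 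Moreover, for each $g \in G_{\tilde p}$ the naturality of the local homology isomorphism $H_n(M|\tilde p;\mathbb Z) \cong \bar H_{n-1}(\Sigma_{\tilde p}(M);\mathbb Z)$ makes it $G_{\tilde p}$-equivariant; since $g_\ast$ fixes the local orientation at $\tilde p$ by hypothesis, it preserves the fundamental class of $\Sigma_{\tilde p}(M)$, i.e.\ $g$ acts on $\Sigma_{\tilde p}(M)$ as an orientation-preserving isometry. The inductive hypothesis then shows that $\Sigma_{\tilde p}(M)/G_{\tilde p}$ is a closed orientable Alexandrov space without boundary; in particular $p \notin \partial(M/G)$.

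\emph{Step 3: $H_n(M/G;\mathbb Z) \neq 0$.} By Steps 1 and 2, $M/G$ is a closed connected NB-space of dimension $n$, so Theorem \ref{thm:main thm}(A) applies and it suffices to show $H_n(M/G;\mathbb Z) \neq 0$. For a finite group action on a paracompact Hausdorff space, the classical transfer $\tau : H_\ast(M/G;\mathbb Q) \to H_\ast(M;\mathbb Q)$ satisfies $\pi_\ast \circ \tau = |G| \cdot \mathrm{id}$ and $\tau \circ \pi_\ast = \sum_{g \in G} g_\ast$, whence $\pi_\ast$ induces an isomorphism $H_\ast(M;\mathbb Q)^G \cong H_\ast(M/G;\mathbb Q)$. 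Because every $g \in G$ preserves the orientation of $M$, the rational fundamental class $[M]_{\mathbb Q} \in H_n(M;\mathbb Q)$ is $G$-invariant, so $\pi_\ast [M]_{\mathbb Q}$ is a non-zero element of $H_n(M/G;\mathbb Q)$. The universal coefficient theorem then gives that the rank of $H_n(M/G;\mathbb Z)$ is positive, so $H_n(M/G;\mathbb Z) \neq 0$, and Theorem \ref{thm:main thm}(A)(f) yields that $M/G$ is orientable.

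The main obstacle is Step 2: one must simultaneously propagate orientability and the orientation-preserving hypothesis to the spaces of directions inductively. Verifying that the canonical local-homology-to-reduced-homology-of-the-link isomorphism is $G_{\tilde p}$-equivariant, so that the orientation-preserving property descends to $\Sigma_{\tilde p}(M)$, is the technical heart of the argument; once that is in place the induction closes and Step 3 is formal.
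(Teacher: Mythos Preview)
Your argument is correct but organized differently from the paper's.  The paper skips your inductive Step~2 entirely and instead computes $H^n(M/G;\mathbb Q)\cong\mathbb Q$ in one line: it excises the singular locus $F$, uses the covering-space transfer on the free part to obtain $H_\cpt^n((M\setminus F)/G;\mathbb Q)\cong H_\cpt^n(M\setminus F;\mathbb Q)^G$, and then invokes the codimension estimate on $F$ (which itself depends on the orientation-preserving hypothesis) to identify these with $H^n(M/G;\mathbb Q)$ and $H^n(M;\mathbb Q)^G=H^n(M;\mathbb Q)$.  Nonvanishing of the top cohomology then forces $\partial(M/G)=\emptyset$ implicitly, and orientability follows from Corollary~\ref{cor:Z=R} and Theorem~\ref{thm:main thm}.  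Your Step~2 is more explicit and actually yields more---each space of directions of $M/G$ is orientable, so $M/G$ is locally orientable everywhere---but it is not logically required once one has the cohomological computation.  One caution on your Step~3: the transfer identities $\pi_\ast\circ\tau=|G|\cdot\mathrm{id}$ and $\tau\circ\pi_\ast=\sum_g g_\ast$ as you state them are the covering-space formulas and presuppose a \emph{free} action; for a possibly non-free finite action the conclusion $H_\ast(M;\mathbb Q)^G\cong H_\ast(M/G;\mathbb Q)$ is still correct for compact $G$-ANRs, but it needs a reference (e.g.\ Bredon's transfer for compact transformation groups) rather than the naive formula.  The paper sidesteps exactly this issue by restricting to the free locus and using compactly supported cohomology there.
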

\begin{proof}
When $\dim M = 1$, $M / G$ must be a circle. 
So, the statement is trivial in this case. 
We assume that $n= \dim M \ge 2$.
From the definition, the isomorphism on $H^n(M;\mathbb Z)$ induced by every $g \in G$ is the identity map.
Let $F$ denote the set of fixed points in $M$ with respect to the $G$-action.
Then, we obtain 
\begin{align*}
H^n(M/G;\mathbb Q) &\cong H_\cpt^n((M \setminus F)/G;\mathbb Q) \cong H_\cpt^n(M \setminus F;\mathbb Q)^G \\
&\cong H^n(M;\mathbb Q)^G = H^n(M;\mathbb Q) \cong \mathbb Q.
\end{align*}
Here, $A^G$ denotes the $G$-invariant part of an abelian group $A$ with a linear $G$-action.
By Corollary \ref{cor:Z=R} and Theorem \ref{thm:main thm}, we have the conclusion. 
\end{proof}

The isometry group of an Alexandrov space is known to be a Lie group (\cite{FY}).
The orientability gives an obstruction to the dimensions of Lie groups acting on Alexandrov spaces.
\begin{corollary}
Let $M$ be an orientable closed Alexandrov space and $G$ a Lie group acting on $M$ by isometries preserving orientations.
If $M/G$ has the boundary, then $G$ has dimension at least one.
\end{corollary}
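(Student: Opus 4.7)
The plan is to argue by contrapositive: I would assume $\dim G = 0$ and deduce $\partial(M/G) = \emptyset$, contradicting the hypothesis.

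The first step is to reduce to the case that $G$ is finite. Since $M$ is compact, its full isometry group $\mathrm{Isom}(M)$ is a compact Lie group (Arzel\`a--Ascoli, together with \cite{FY}). For $M/G$ to be an Alexandrov space carrying a well-defined notion of boundary---as is implicit in the hypothesis---the orbits of the $G$-action on $M$ must be closed; equivalently, the image of the induced continuous homomorphism $G \to \mathrm{Isom}(M)$ is a closed subgroup. I may therefore replace $G$ by this image and assume $G$ itself is a closed Lie subgroup of $\mathrm{Isom}(M)$, so in particular $G$ is compact. A compact Lie group of dimension zero is finite, so we may assume $G$ is a finite group.

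The second step is then a direct appeal. The finite group $G$ acts on the orientable closed Alexandrov space $M$ by orientation-preserving isometries, so Theorem \ref{thm:G-action} applies and yields that $M/G$ is an orientable \emph{closed} Alexandrov space. By the convention fixed in the paper, ``closed'' means compact \emph{and without boundary}, so $\partial(M/G) = \emptyset$. This contradicts the hypothesis that $M/G$ has boundary, forcing $\dim G \ge 1$.

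The only substantive point is the initial reduction: one must justify treating $G$ as a closed subgroup of $\mathrm{Isom}(M)$ without losing the hypothesis $\dim G = 0$. The crux is that if the image of $G$ were not already closed in $\mathrm{Isom}(M)$, its closure would be a positive-dimensional compact Lie group whose orbits would strictly contain the original orbits, and the quotient $M/G$ would not be Hausdorff, let alone an Alexandrov space with a well-defined boundary. Hence the implicit requirement that $\partial(M/G)$ be meaningful automatically forces $G$ to already be closed of dimension zero, and the corollary is then an immediate consequence of Theorem \ref{thm:G-action}.
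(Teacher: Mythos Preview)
Your argument is correct and matches the paper's intent: the corollary is stated immediately after Theorem~\ref{thm:G-action} with no proof, so it is meant as the direct consequence you describe---if $\dim G=0$ one reduces to a finite group action and Theorem~\ref{thm:G-action} forces $M/G$ to be closed, contradicting $\partial(M/G)\neq\emptyset$. Your additional care in justifying the reduction (closedness of the image in $\mathrm{Isom}(M)$ via \cite{FY} and the Hausdorff requirement on $M/G$) simply fills in details the paper leaves implicit.
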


\section{Fundamental classes at compact subsets} \label{sec:fundam class cpt}
In this section, we prove the following: 
\begin{theorem} \label{thm:fundam class at K}
Let $M$ be an open subset of an Alexandrov space of dimension $n$ which does not meet the boundary and $R$ a principal ideal domain.
If $M$ is $R$-orientable, then for any compact set $K \subset M$, there is an $R$-fundamental class $z_K \in H_n(M|K;R)$ of $M$ at $K$, 
that is, for each $x \in K$, the canonical morphism $H_n(M|K;R) \to H_n(M|x;R)$ maps $z_K$ to the given local orientaion at $x$.
\end{theorem}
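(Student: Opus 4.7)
The plan is to adapt the classical proof of the manifold analogue (compare Hatcher, Theorem 3.26, or Bredon) with the conical neighborhoods provided by Theorem \ref{thm:stab} playing the role of Euclidean charts. Under the orientability hypothesis, each generator $\Sigma$ of a conical neighborhood at a point of $M$ is itself an orientable $(n-1)$-dimensional NB-space, since $H_n(M|x;R) \cong \bar H_{n-1}(\Sigma;R)$ and Theorem \ref{thm:main thm} translates non-vanishing of top-degree local homology into orientability of $\Sigma$. I would prove simultaneously, by induction on the minimum number of conical neighborhoods of $M$ needed to cover $K$, three statements: (i) existence of $z_K \in H_n(M|K;R)$ restricting to the given $o_x$ for every $x \in K$, (ii) its uniqueness, and (iii) the auxiliary vanishing $H_i(M|K;R) = 0$ for $i > n$.

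For the base case $m=1$, the set $K$ lies in a single conical neighborhood $U \cong c(\Sigma)$. The argument of Lemma \ref{lem:cone01}, applied with $V := \{s\eta \in c(\Sigma) : s < R\}$ for $R$ large enough that $K \cup \{o\} \subset V$, together with Theorem \ref{thm:compact} for $\Sigma$, produces a class in $H_n(U|V;R)$ whose image in $H_n(U|y;R)$ is the local orientation at every $y \in V$; restricting to $K$ gives $z_K$ (using $H_n(M|K;R) \cong H_n(U|K;R)$ by excision). The higher vanishing follows from the long exact sequence of $(c(\Sigma), c(\Sigma) \setminus K)$, contractibility of the cone, and $H_i(\Sigma;R) = 0$ for $i \geq n$, the latter holding because $\Sigma$ has topological dimension $n-1$.

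For the inductive step, choose compact sets $K_1, \ldots, K_m$ with $K_i \subset U_i$ a conical neighborhood of $M$ and $K = \bigcup K_i$, then set $A = K_1 \cup \cdots \cup K_{m-1}$ and $B = K_m$; the intersection $A \cap B = \bigcup_{i<m}(K_i \cap K_m)$ is still covered by the $m-1$ conical neighborhoods $U_1, \ldots, U_{m-1}$, so the inductive hypothesis applies to $A$, $B$, and $A \cap B$. The Mayer-Vietoris sequence
\begin{align*}
H_{n+1}(M|A \cap B;R) \to H_n(M|A \cup B;R) &\to H_n(M|A;R) \oplus H_n(M|B;R) \\
&\to H_n(M|A \cap B;R)
\end{align*}
with leftmost term vanishing by (iii) yields $z_{A \cup B}$ by gluing $z_A$ and $z_B$; their agreement in $H_n(M|A \cap B;R)$ is forced by compatibility with local orientations together with the uniqueness for $A \cap B$. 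Uniqueness of $z_{A \cup B}$ and the higher vanishing propagate along the same exact sequence.

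The main obstacle is the simultaneous bookkeeping of existence, uniqueness, and higher vanishing through the Mayer-Vietoris step, since each of the three statements is needed to propagate the others; in particular one must verify that $A \cap B$ is always coverable by strictly fewer conical neighborhoods than $K$, which is arranged by the choice $B \subset U_m$ above. The orientability hypothesis enters the argument only in the base case via Lemma \ref{lem:cone01} (which itself relies on Theorem \ref{thm:compact}); the remainder is a purely topological patching.
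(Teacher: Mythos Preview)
Your overall plan is sound and runs parallel to the paper's argument: both eventually carry out the Mayer--Vietoris induction of May/Hatcher over a finite conical cover of $K$. The paper, however, does not track your triple (i)--(iii) directly. Instead it first invokes the good-covering theorem (Theorem~\ref{thm:good}) to refine any cover to one in which every finite nonempty intersection is itself conical, and then proves Lemma~\ref{lem:ho}: on such a cover the local classes $z_\alpha \in H_n(M|U_\alpha)$ automatically agree on overlaps because $U_\alpha \cap U_\beta$ is conical, so restriction to its apex is an isomorphism. After that, May's argument is quoted verbatim. The point is that good coverings buy compatibility on overlaps \emph{for free}, so one never needs to carry your uniqueness statement (ii) through the induction; you trade that shortcut for a more self-contained induction that works with arbitrary conical charts. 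The paper explicitly remarks that its proof hinges on Theorem~\ref{thm:good}, a genuinely geometric input about Alexandrov spaces---your route would, if completed, avoid that input.

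There is one genuine slip. Your justification of (iii) in the base case reads ``$H_i(\Sigma;R)=0$ for $i\ge n$ because $\Sigma$ has topological dimension $n-1$,'' but what you actually need is $\bar H_{i-1}(c(\Sigma)\setminus K;R)=0$ for $i>n$, and $c(\Sigma)\setminus K$ is not homotopy equivalent to $\Sigma$ for a general compact $K$. For $i=n+1$ this follows from Theorem~\ref{thm:main thm}~(C), since every component of $c(\Sigma)\setminus K$ is a non-compact connected $n$-dimensional NB-space; for $i>n+1$ you need the separate fact that open subsets of $n$-dimensional MCS-spaces have vanishing singular homology above degree $n$ (the paper uses this too, in the proof of Lemma~\ref{lem:cone03}, citing only ``$\dim C=n$''). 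You also left the base-case \emph{uniqueness} implicit; it is not automatic and is exactly the content of Lemma~\ref{lem:cone04}, whose proof again rests on Lemma~\ref{lem:cone03}. Once these two points are patched, your argument goes through.
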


\begin{remark} \upshape 
Since the proof of Theorem \ref{thm:fundam class at K} is based on the following Theorem \ref{thm:good} that was proved by using geometry of Alexandrov spaces, 
the proof does not work for general NB-spaces.
Moreover, it seems that any proof of 
Theorem \ref{thm:fundam class at K}
for the classical case of manifolds rely on geometric properties ``the intersection of two convex subsets in $\mathbb R^n$ is convex'' and ``convex subsets in $\mathbb R^n$ are conical''.
The following Theorem \ref{thm:good} roles a counter-part of such a geometric property in the geometry of Alexandrov spaces.
The author does not care whether our statement in Theorem \ref{thm:fundam class at K} holds or not for general NB-spaces.
\end{remark}
\begin{theorem}[\cite{MY:good}] \label{thm:good}
Let $M$ be an open subset of an Alexandrov space. 
For any open covering $\mathcal V$ of $M$, there is an open covering $\mathcal U$ of $M$ such that $\mathcal U$ is a locally finite refinement of $\mathcal V$ and any finite non-empty intersection of members in $\mathcal U$ is conical.
\end{theorem}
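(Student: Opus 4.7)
The plan is to construct the refinement from superlevel sets of strictly concave distance-type functions, exploiting the fact that this class of sets is closed under finite intersection and that each such set is, by Perelman's gradient-flow theory for semi-concave functions, homeomorphic to a cone.

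For each $p \in M$, take $f_p(y) := -d(p,y)^2$. This function is semi-concave on $M$ (see \cite{Per:Morse}) and strictly concave on a sufficiently small neighborhood of $p$ with unique maximum at $p$, the admissible radius depending on the local Alexandrov geometry at $p$. For $c > 0$ small, define
\[
B_p(c) := \{\, y : f_p(y) > -c \,\}.
\]
By the gradient-flow analysis at a strict maximum of a semi-concave function, $B_p(c)$ deformation-retracts along integral curves of $\nabla f_p$ to $p$, and more strongly is homeomorphic to the open cone over the level set $\{f_p = -c\}$ with apex $p$; in particular it is conical in the sense of Definition \ref{def:MCS}. A finite non-empty intersection $\bigcap_{i=1}^k B_{p_i}(c_i)$ equals the superlevel set $\{\varphi > 0\}$ of $\varphi := \min_{1 \le i \le k}(f_{p_i} + c_i)$. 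Since the minimum of finitely many semi-concave (respectively strictly concave) functions is again semi-concave (respectively strictly concave), and since such a min has a unique maximum on the overlap of the strict-concavity regions, the same gradient-flow argument applied to the unique maximum of $\varphi$ presents the intersection as a cone over its boundary level set.

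Granted this building block, refining an arbitrary open cover $\mathcal V$ of $M$ is a paracompactness exercise. Since $M$ is separable and metrizable it is paracompact; one chooses a countable family of centres $\{p_i\}$ that is dense in $M$ with radii $c_i > 0$ so small that each $B_{p_i}(c_i)$ lies in a member of $\mathcal V$ and so that the family is locally finite. The resulting $\mathcal U := \{B_{p_i}(c_i)\}$ is then a locally finite refinement of $\mathcal V$, and local finiteness passes automatically to finite intersections.

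The main obstacle is quantitative control of the strict-concavity region of $f_p$. The radius on which $-d(p,\cdot)^2$ is strictly concave with $p$ as its unique critical point depends on the geometry of $M$ around $p$ through cut-locus-type phenomena, and for the intersection $\bigcap_i B_{p_i}(c_i)$ to be conical one must know that the strict-concavity regions of all the $f_{p_i}$ overlap in a region containing the maximum of $\min_i(f_{p_i} + c_i)$. This forces a delicate two-scale covering argument in which the $c_i$ are controlled relative to the pairwise distances of the $p_i$, and it is where the Alexandrov-geometric input genuinely enters, via uniform estimates obtained from gradient exponential maps, strainer coordinates, or Petrunin's gradient-curve framework. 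Once these bounds are secured, the remainder of the construction is routine.
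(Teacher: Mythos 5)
The paper itself contains no proof of Theorem \ref{thm:good}: it is quoted from \cite{MY:good}, so your attempt has to be measured against that reference. Your overall strategy --- realize the members of the cover as superlevel sets of strictly concave functions, note that this class of sets is closed under finite intersection because a minimum of finitely many strictly concave functions is again strictly concave, and deduce conicality from the structure theory of such functions --- is in fact the strategy of \cite{MY:good}. However, two of your steps fail as written, and they are not merely quantitative.

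First, $f_p=-d(p,\cdot)^2$ is in general not concave, let alone strictly concave, on any neighborhood of $p$. If $q\neq p$ is a conical singular point near $p$ (and such points may accumulate at $p$, e.g.\ on a convex surface with infinitely many cone points), a short geodesic $\gamma$ passing by $q$ on the side opposite to $p$ crosses the locus where the shortest path to $p$ switches sides; by the first variation formula $d_p\circ\gamma$ acquires a concave corner there, hence $-d_p^2\circ\gamma$ a convex corner, which destroys concavity no matter how small the neighborhood is taken. This is precisely why Perelman's construction of strictly concave functions (\cite{Per:Alex2}, cf.\ \cite{Kap}) does not use $-d_p^2$ but an average $\frac1N\sum_i\phi(d(q_i,\cdot))$ over points $q_i$ spread over a small metric sphere around $p$, with $\phi$ increasing and strongly concave: a single $\phi\circ d_q$ can never be strictly concave in directions nearly perpendicular to the direction of $q$, since the comparison only gives $(\phi\circ d_q\circ\gamma)''\le\phi''\cos^2\theta+\phi'\sin^2\theta/d_q$, whose right-hand side is positive at $\theta=\pi/2$; the averaging is what makes every direction good for a definite proportion of the $q_i$. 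Second, even with the correct functions, the gradient flow of a semiconcave function only yields a (generally non-injective, merging) Lipschitz contraction of the superlevel set onto its maximum point; it does not by itself produce a homeomorphism with the open cone over a level set. That step requires Perelman's fibration/Morse theory for admissible functions --- the machinery behind Theorem \ref{thm:stab} --- applied to $\min_i(f_{p_i}+c_i)$, and it is an essential extra input, not a corollary of the existence of a retraction. Together with the two-scale control of the radii, which you yourself flag as open, these gaps mean the proposal is an outline of the correct approach rather than a proof.
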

A covering $\mathcal U$ obtained in the theorem is said to be {\it good}, in this paper. 
Actually, in \cite{MY:good}, a geometric property stronger than the statement of Theorem \ref{thm:good} was proved.
However, we need only the topological property stated in Theorem \ref{thm:good} to obtain results in this paper. 

\begin{lemma} \label{lem:ho}
Let $M$ be as in Theorem \ref{thm:fundam class at K}.
Then, the following conditions are equivalent: 
\begin{itemize}
\item[(1)] $M$ is (homologically) $R$-orientable, 
\item[(2)] There exist an open covering $\{U_\alpha\}_\alpha$ of $M$ and a family of local fundamental classes $\{z_\alpha \in H_n(M|U_\alpha)\}_\alpha$ such that $z_\alpha$ and $z_\beta$ map to the same element in $H_n(M|U_\alpha \cap U_\beta)$ for every $\alpha$ and $\beta$. 
\end{itemize}
\end{lemma}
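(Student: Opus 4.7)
For the implication (2) $\Rightarrow$ (1) the plan is direct. Given the cover $\{U_\alpha\}$ and the local fundamental classes $\{z_\alpha\}$, for each $x \in M$ pick any $\alpha$ with $x \in U_\alpha$ and define $o_x \in H_n(M|x;R)$ as the image of $z_\alpha$ under the restriction map $H_n(M|U_\alpha;R) \to H_n(M|x;R)$. The compatibility $z_\alpha|_{U_\alpha \cap U_\beta} = z_\beta|_{U_\alpha \cap U_\beta}$ guarantees that $o_x$ does not depend on the chosen $\alpha$, and the assumption that each $z_\alpha$ is a local fundamental class ensures that $o_x$ is a generator of $H_n(M|x;R)$. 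The pair $(U_\alpha, z_\alpha)$ then verifies directly the local coherence in the definition of homological orientability.

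For (1) $\Rightarrow$ (2), the plan is to combine Lemma \ref{lem:cone01} with the good covers of Theorem \ref{thm:good}. The first step is to cover $M$ by open sets $V$ such that $V$ sits as a proper subcone inside a strictly larger conical open set $\tilde V \subset M$; Lemma \ref{lem:cone01} applied to $\tilde V$, together with an excision argument that identifies $H_n(M|V;R)$ with $H_n(\tilde V|V;R)$, will make the restriction $H_n(M|V;R) \to H_n(M|y;R)$ an isomorphism for every $y \in V$. Next I would feed this cover into Theorem \ref{thm:good} to obtain a locally finite refinement $\{U_\alpha\}$ in which each $U_\alpha$ and each non-empty finite intersection is conical; by choosing the initial cover fine enough, one can arrange that the same sub-cone-in-a-larger-cone realization holds for every $U_\alpha$ and every $U_\alpha \cap U_\beta$, so that the iso-to-point property descends to all overlaps. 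With such a cover in hand, I would define $z_\alpha \in H_n(M|U_\alpha;R)$ as the unique class restricting to $o_y$ at some (equivalently, every) single $y \in U_\alpha$; existence is supplied by the distinguished class $o_U$ given by the definition of (1), and uniqueness follows from the isomorphism just established.

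To verify the overlap condition, observe that on $U_\alpha \cap U_\beta$ the two restrictions $z_\alpha|_{U_\alpha \cap U_\beta}$ and $z_\beta|_{U_\alpha \cap U_\beta}$ further restrict, at every $y$ in the intersection, to the same element $o_y \in H_n(M|y;R)$; since $H_n(M|U_\alpha \cap U_\beta;R) \to H_n(M|y;R)$ is an isomorphism by construction, these two classes must coincide in $H_n(M|U_\alpha \cap U_\beta;R)$. The main obstacle I anticipate is the bookkeeping in the converse direction: to make Lemma \ref{lem:cone01} apply uniformly on every overlap of the good cover, one has to compose Theorem \ref{thm:good} with a preliminary refinement that records, for each conical piece, an enlargement inside which it sits as a subcone, and then to check that this data is preserved under taking finite intersections. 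The need to pass between ambient cones, subcones, and their images under the excision isomorphism is the only non-formal ingredient; once that geometric setup is in place, the homological argument is a translation of the classical manifold case through Lemma \ref{lem:cone01}.
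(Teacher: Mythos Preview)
Your overall strategy---refine to a good cover via Theorem \ref{thm:good} and check compatibility on conical intersections---is the same as the paper's, and the direction (2) $\Rightarrow$ (1) is handled identically. The difference lies in how you manufacture the classes $z_\alpha$ and how much you demand of the restriction maps.

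The paper avoids your ``subcone inside a larger cone'' bookkeeping entirely. It starts from the open cover $\{U_i\}$ and classes $o_i \in H_n(M|U_i)$ that are \emph{already supplied} by the definition of homological orientability, refines to a good cover $\{V_j\}$ with $V_j \subset U_j$, and simply \emph{pushes forward} $o_j$ to $\bar o_j \in H_n(M|V_j)$. No isomorphism $H_n(M|V_j) \to H_n(M|y)$ is needed on the pieces themselves. On an overlap $V_j \cap V_k$, the paper uses only that this set is conical with some vertex $z$, so that $H_n(M|V_j \cap V_k) \to H_n(M|z)$ is an isomorphism automatically (via the obvious deformation retraction of $M \setminus \{z\}$ onto $M \setminus (V_j \cap V_k)$); since $\bar o_j$ and $\bar o_k$ both restrict to $o_z$ at $z$, they agree on the overlap. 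Lemma \ref{lem:cone01} is never invoked.

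Your route instead tries to arrange that $H_n(M|U_\alpha) \to H_n(M|y)$ is an isomorphism for \emph{every} $y$, and likewise on all overlaps, by realizing each piece as a proper subcone in a larger cone. This is stronger than necessary and, as you correctly flag, it is not clear that Theorem \ref{thm:good} delivers such an enlargement compatibly across all finite intersections. So the obstacle you anticipate is real for your formulation, but it disappears once you (a) take the $z_\alpha$ by pushforward from the data in condition (1) rather than by pullback from a point, and (b) on overlaps, use only the isomorphism at the cone vertex rather than at every point.
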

\begin{proof}
Since the implication (2) $\Rightarrow$ (1) is trivial, 
we prove (1) $\Rightarrow$ (2).
We assume that $M$ is homologically orientable and take a homological orientation $\{o_x \in H_n(M|x)\}_{x \in M}$.
By the definition, there exist an open covering $\{U_i\}$ of $M$ and a family of elements $\{o_i \in H_n(M|U_i)\}$ such that 
for any $x \in M$, there is $i$ such that $x \in U_i$ and the canonical morphism $H_n(M|U_i) \to H_n(M|y)$ maps $o_i$ to $o_y$ for any $y \in U_i$. 
By Theorem \ref{thm:good}, there is a good covering $\{V_j\}$ which is a refinement of $\{U_i\}$. 
By retaking a set of indicies of a subfamily of the covering $\{U_i\}$, we may assume that $V_j \subset U_j$ holds for any $j$.
Let $\bar o_j \in H_n(M|V_j)$ denote the image of $o_j$ under the map $H_n(M|U_j) \to H_n(M|V_j)$. 
Let us take indicies $j, k$ with $V_j \cap V_k \neq \emptyset$. 
Since $V_j \cap V_k$ is conical, there is a point $z \in V_j \cap V_k$ corresponding to the vertex of a cone. 
Then, the map $H_n(M|V_j \cap V_k) \to H_n(M|z)$ is an isomorphism.
The inverse image of $o_z$ via the above isomorphism is denoted by $\bar o_{jk} \in H_n(M|V_j \cap V_k)$.
By the compatibility of the family $\{o_x\}_{x \in M}$, the morphisms $H_n(M|V_j) \to H_n(M|V_j \cap V_k) \leftarrow H_n(M|V_k)$ map the elements $\bar o_j$ and $\bar o_k$ to the same element $\bar o_{jk}$.
This completes the proof. 
\end{proof}

\begin{proof}[Proof of Theorem \ref{thm:fundam class at K}]
Since the condition (2) in Lemma \ref{lem:ho} is the same condition for defining orientations for manifolds, written in the book \cite[p.156]{May}, a smilar argument of the proof of the second theorem in the same page of \cite{May} works for proving Theorem \ref{thm:fundam class at K} (noticing that our spaces admitting conical neighborhoods which are not homeomorphic to Euclidean spaces). 
This completes the proof. 
\end{proof}

\begin{remark} \upshape
Due to \cite{MY:good}, 
the boundary of an Alexandrov space admits a good covering in the sense of this paper. 
Therefore, Theorem \ref{thm:fundam class at K} holds for the boundary.
\end{remark}

For a space $M$ as in Theorem \ref{thm:fundam class at K}, 
taking the cap product to the fundamental class at each compact set, and taking the direct limit, we obtain a natural duality map 
\begin{equation} \label{eq:dual0}
D_M : H_\cpt^k(M;G) \to H_{n-k}(M;G)
\end{equation}
for any $R$-module $G$ and $k$ with $0 \le k \le n$. 

The cone $M=c(\mathbb RP^2)$ over the real projective plane is an Alexandrov $3$-space, that is $\mathbb Z_2$-orientable from the definition. 
Then, we have $H_\cpt^2(M;\mathbb Z_2) \cong \mathbb Z_2$ and $H_1(M;\mathbb Z_2) = 0$. 
Hence, the duality map \eqref{eq:dual0} is not isomorphic in general.
Compare this fact with Theorem \ref{thm:PD1} (or Theorem \ref{thm:dual1} (D)).

\begin{theorem} \label{thm:dual0}
Let $M$ be as in Theorem \ref{thm:fundam class at K}.
Then, the duality map 
\[
D_M : H_\cpt^n(M;G) \to H_0(M;G)
\]
is an isomorphism, for any $R$-module $G$.
\end{theorem}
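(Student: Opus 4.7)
The plan is to reduce the statement to connected $M$ and then prove a local-to-global claim over a cofinal family of compact connected subsets $K \subset M$. Since $H^n_\cpt(M;G)$ and $H_0(M;G)$ both split over the connected components of $M$, and since fundamental classes respect this splitting (Theorem \ref{thm:fundam class at K}), I may assume $M$ is connected, so $H_0(M;G) \cong G$. Writing $H^n_\cpt(M;G) = \varinjlim_K H^n(M, M \setminus K;G)$ over compact subsets, the duality map $D_M$ is the direct limit of the cap products $z_K \cap - \colon H^n(M, M \setminus K; G) \to H_0(M;G)$. Naturality of the cap product, together with the compatibility $f_\ast(z_L) = z_K$ for $K \subset L$ compact, gives $z_L \cap f^\ast(\alpha) = z_K \cap \alpha$ in $H_0(M;G)$, so it suffices to prove the following claim: for every compact connected $K \subset M$, the map $z_K \cap - \colon H^n(M, M \setminus K;G) \to H_0(M;G)$ is an isomorphism.

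To prove the claim, I would invoke Theorem \ref{thm:good} to choose a finite good covering $\{U_1, \dots, U_r\}$ of $K$ by conical open subsets of $M$, with all finite non-empty intersections again conical. The argument proceeds by induction on $r$. In the base case $r = 1$, the compactum $K$ lies inside a single conical neighborhood $U = c(\Sigma)$. Since $M$ is orientable, restriction of the orientation at the apex identifies $\Sigma$ as an orientable compact $(n-1)$-dimensional NB-space, via the excision isomorphism $H_n(M|o) \cong \bar H_{n-1}(\Sigma)$ and Theorem \ref{thm:main thm}. Excision and the cone structure then give $H^n(M, M \setminus K; G) \cong H^n(U, U \setminus K; G) \cong \bar H^{n-1}(\Sigma; G) \cong G$, where the last isomorphism is Corollary \ref{cor:dual} applied to $\Sigma$; one checks directly on the cone that $z_K \cap -$ carries this $G$ onto a generator of $H_0(U;G) = H_0(M;G)$.

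For the inductive step, set $V = U_1 \cup \dots \cup U_{r-1}$ and $W = U_r$. The intersections $U_i \cap W$ for $i < r$ form a good covering of $V \cap W$ of size at most $r - 1$, and the restrictions of $z_K$ give compatible fundamental classes on $V, W$, and $V \cap W$. The relative Mayer--Vietoris long exact sequence for $H^\ast(-,- \setminus K; G)$, mapped by the cap-product operators into the ordinary Mayer--Vietoris sequence for $H_0(-; G)$, forms a ladder in which the columns for $V$, $W$, and $V \cap W$ are isomorphisms by the inductive hypothesis; the five lemma then yields the claim for $V \cup W$. Applying this to the whole good cover of $K$ and passing to the direct limit over compact $K$ completes the proof.

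The main obstacle I foresee is the base case: identifying the globally defined fundamental class $z_K$ with the local generator of $\bar H^{n-1}(\Sigma; G)$ under the cone isomorphism, and hence verifying that the cap-product computation on a singular cone matches the classical case. This requires tracking orientations at singular points through the equivalences of Theorem \ref{thm:main thm} and Lemma \ref{lem:cone01}, and it is the one step that is not purely formal. The Mayer--Vietoris bookkeeping in the inductive step, while delicate, is modeled on the classical manifold proof in \cite{May} and should go through once the compatibility of restrictions of $z_K$ is in hand.
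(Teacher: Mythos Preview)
Your proposal has the right ingredients---good coverings, a Mayer--Vietoris/five-lemma induction, and the conical base case---and these are exactly what the paper uses (it simply refers to the proof of Theorem~\ref{thm:dual1}~(D), which is this scheme for open sets, together with the compact case already done in Theorem~\ref{thm:dual}). However, the way you state the inductive claim does not match the way you try to prove it.

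Your claim is about a fixed compact connected $K\subset M$: the map $z_K\cap-:H^n(M\mid K;G)\to H_0(M;G)$ is an isomorphism. But in the inductive step you decompose an \emph{open} neighborhood $V\cup W$ of $K$ and want a ladder whose bottom row is the ordinary Mayer--Vietoris sequence for $H_0(V),H_0(W),H_0(V\cap W),H_0(V\cup W)$. The vertical maps you need there are of the form $H^n(V,V\setminus K;G)\to H_0(V;G)$, etc.; these are \emph{not} instances of your inductive hypothesis, which concerns maps $H^n(M\mid K';G)\to H_0(M;G)$ for smaller compact $K'$. So the columns ``for $V$, $W$, and $V\cap W$'' are not known to be isomorphisms from what you have assumed, and the five lemma cannot be applied. (Relatedly, in the base case you write $H^n(U,U\setminus K;G)\cong\bar H^{n-1}(\Sigma;G)$; this uses the cone retraction $U\setminus K\simeq\Sigma$, which is only valid when $K$ contains the apex, not for an arbitrary compact $K\subset U$.)

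The fix is to formulate the induction on the open set rather than on the compactum: for each open $U\subset M$, show that $D_U:H_\cpt^n(U;G)\to H_0(U;G)$ is an isomorphism. The base case $U$ conical follows from the orientability of the generator $\Sigma$ and Theorem~\ref{thm:dual} (or directly from the cap/evaluation compatibility). The Mayer--Vietoris ladder then has both rows varying with $U$, so the five lemma applies cleanly, and Theorem~\ref{thm:good} plus a direct-limit step covers all of $M$. This is exactly the route the paper takes.
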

\begin{proof}
When $M$ is compact, the statement is already proved as Theorem \ref{thm:dual}. 
For the case that $M$ is non-compact, a similar argument to the proof of Theorem \ref{thm:dual1} (D) works for proving the case.
\end{proof}

\section{Proof of Theorem \ref{thm:PD1} and Corollary \ref{cor:positive}} \label{sec:PD1}

To prove Theorem \ref{thm:PD1}, it suffices to prove the following: 

\begin{theorem} \label{thm:dual1}
Let $M$ be an open subset of an orientable Alexandrov $n$-space which does not meet the boundary, where $n \ge 2$. 
Then the following holds: 
\begin{itemize}
\item[(D)$_n$] The canonical duality map 
\[
D_M : H^{n-1}_\cpt(M;\mathbb Q) \to H_1(M;\mathbb Q)
\] 
is isomorphic.
\item[(E)$_n$] If $M$ is an Alexandrov space of curvature $\ge \kappa$ for $\kappa > 0$, then $H^{n-1}(M;\mathbb Q) = 0$.
\end{itemize}
\end{theorem}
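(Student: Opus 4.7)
My plan is to prove $(\mathrm{D})_n$ and $(\mathrm{E})_n$ together by induction on $n \ge 2$. The base case $n=2$ is classical: an Alexandrov $2$-space without boundary is a topological $2$-manifold, so $(\mathrm{D})_2$ is ordinary Poincar\'e duality, and $(\mathrm{E})_2$ follows because a closed orientable positively curved Alexandrov surface is homeomorphic to $S^2$, which has vanishing $H^1(-;\mathbb{Q})$.

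For the inductive step, assume $(\mathrm{D})_{n-1}$ and $(\mathrm{E})_{n-1}$. To prove $(\mathrm{D})_n$ I proceed via Mayer--Vietoris on a good cover. By Theorem \ref{thm:good} (and the geometric refinement in \cite{MY:good}), I choose a locally finite cover $\{U_\alpha\}$ of $M$ such that each $U_\alpha$ and every finite non-empty intersection is homeomorphic to a cone $c(\Lambda)$ whose generator $\Lambda$ is a compact $(n-1)$-dimensional Alexandrov space of curvature $\ge 1$. For a single conical piece $V = c(\Lambda)$, contractibility gives $H_1(V;\mathbb{Q}) = 0$, while $H^{n-1}_\cpt(V;\mathbb{Q}) \cong \tilde H^{n-2}(\Lambda;\mathbb{Q})$. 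Applying $(\mathrm{E})_{n-1}$ to $\Lambda$ (passing to the orientable double cover and using the rational transfer if $\Lambda$ is non-orientable) yields $\tilde H^{n-2}(\Lambda;\mathbb{Q}) = 0$, so $D_V$ at $k = n-1$ is the trivial isomorphism $0 \to 0$, while $D_V$ at $k = n$ is iso by Theorem \ref{thm:dual0}.

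Next I patch cover pieces. The Mayer--Vietoris sequences for $H^\ast_\cpt$ (behaving covariantly under open inclusions) and for $H_\ast$ fit, by naturality of the cap product with the compatible local fundamental classes supplied by Theorem \ref{thm:fundam class at K}, into a commutative ladder whose vertical arrows are the duality maps $D$. Applying the five-lemma at the term $H^{n-1}_\cpt(U \cup V;\mathbb{Q}) \to H_1(U \cup V;\mathbb{Q})$, and using that $D$ is an isomorphism at degrees $k = n-1$ and $k = n$ on $U$, $V$ and $U \cap V$, I conclude that $D_{U \cup V}$ is an isomorphism at $k = n-1$. Induction on the number of cover pieces covers any finite union; then an exhaustion of $M$ by a countable increasing sequence of such finite unions, together with the fact that both $H^\ast_\cpt$ and $H_\ast$ commute with filtered colimits along open inclusions and the naturality of $D$, extends the isomorphism to arbitrary $M$.

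For $(\mathrm{E})_n$, the hypothesis that $M$ itself is an Alexandrov space of positive curvature forces completeness, and by Theorem \ref{thm:Bonnet} the diameter is bounded, so $M$ is compact; since $M$ is open in an orientable ambient and disjoint from its boundary, $M$ is closed orientable. Hence $(\mathrm{D})_n$ gives $H^{n-1}(M;\mathbb{Q}) \cong H_1(M;\mathbb{Q})$. The universal cover $\widetilde M$ inherits the local curvature bound $\ge \kappa$, hence $\widetilde M$ is Alexandrov of curvature $\ge \kappa$ by the globalization theorem and therefore compact by Theorem \ref{thm:Bonnet}. Thus $\pi_1(M)$ is finite, so $H_1(M;\mathbb{Q}) = 0$ and $(\mathrm{E})_n$ follows.

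The main obstacle is the base case of the Mayer--Vietoris induction: one must know that \emph{every} conical piece of the good cover, including every non-empty intersection, has its generator in the Alexandrov category of positive curvature so that $(\mathrm{E})_{n-1}$ applies. The bare topological statement of Theorem \ref{thm:good} only records conicality, so this step rests critically on the geometric enhancement from \cite{MY:good}. A secondary subtlety is handling non-orientable generators via the orientable double cover and a rational transfer argument; this is clean for $\mathbb{Q}$-coefficients but does not extend to $\mathbb{Z}$, which is consistent with the restriction to rational coefficients in the statement.
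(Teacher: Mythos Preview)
Your proof is correct and follows essentially the same route as the paper: induction on $n$, with $(\mathrm{E})_{n-1}$ used to verify that each conical piece of a good cover satisfies $(\mathrm{D})_n$ trivially (both sides vanish), followed by a Mayer--Vietoris/five-lemma patching and a direct-limit argument, and with $(\mathrm{E})_n$ deduced from $(\mathrm{D})_n$ via the Bonnet-type theorem exactly as you do. One simplification you can make: since $M$ is assumed orientable, every generator $\Lambda = \Sigma_x$ of a conical piece is automatically orientable (this is an immediate corollary of Theorem~\ref{thm:main thm}), so your double-cover and rational-transfer detour is unnecessary here---the paper applies $(\mathrm{E})_{n-1}$ to $\Sigma_x$ directly.
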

Note that the statement (E) is similar to 
Corollary \ref{cor:positive}. 
\begin{proof}
The properties (D)$_2$ and (E)$_2$ are known.
Let us assume that $n \ge 3$. 
The proof consists of several steps (Claims \ref{claim:D0}--\ref{claim:D3}). 
\begin{claim} \label{claim:D0}
Supposing $\mathrm{(D)}_n$, we have $\mathrm{(E)}_n$.
\end{claim}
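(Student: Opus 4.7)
The plan is to combine the duality isomorphism $(\mathrm{D})_n$ with the classical fact that closed positively curved Alexandrov spaces have finite fundamental group, reducing $H^{n-1}(M;\mathbb{Q})$ to a rational $H_1$ that must vanish.

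First I would observe that under the hypothesis of $(\mathrm{E})_n$ the space $M$ is, up to harmless rephrasing, a closed orientable Alexandrov $n$-space. Indeed, the Bonnet-type theorem (Theorem~\ref{thm:Bonnet}) forces an Alexandrov space with $\kappa>0$ to have diameter at most $\pi/\sqrt{\kappa}$, so $M$ is compact; being an open subset of an ambient orientable $n$-dimensional Alexandrov space without boundary, and being itself closed in that ambient space, it is a clopen subset, hence (by connectedness) a component, so $M$ inherits orientability and has no boundary. Since $M$ is compact, $H^{n-1}_{\mathrm{c}}(M;\mathbb{Q})=H^{n-1}(M;\mathbb{Q})$, and the duality statement $(\mathrm{D})_n$ yields an isomorphism
\[
H^{n-1}(M;\mathbb{Q}) \cong H_1(M;\mathbb{Q}).
\]
Therefore it suffices to prove $H_1(M;\mathbb{Q})=0$.

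For the last step I would invoke the finiteness of $\pi_1(M)$: the Alexandrov curvature condition is local, so pulling back the length metric to the universal cover $\widetilde{M}$ produces an Alexandrov space of curvature $\ge \kappa$ of the same dimension. By Theorem~\ref{thm:Bonnet} again, $\widetilde{M}$ has diameter at most $\pi/\sqrt{\kappa}$ and is thus compact, whence the covering $\widetilde{M}\to M$ has finite fiber and $\pi_1(M)$ is a finite group. Consequently $H_1(M;\mathbb{Z})$, the abelianization of $\pi_1(M)$, is finite, so $H_1(M;\mathbb{Q})=H_1(M;\mathbb{Z})\otimes \mathbb{Q}=0$, and the displayed isomorphism yields $H^{n-1}(M;\mathbb{Q})=0$, as required.

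The argument is short and presents no real obstacle; the only point one must be careful about is the passage to the universal cover, where I would want to confirm that the lifted length metric on $\widetilde{M}$ remains a geodesic space satisfying the four-point comparison of Definition~\ref{def:Alex} in a neighborhood of each point (which is immediate because the covering is a local isometry), so that Theorem~\ref{thm:Bonnet} applies to $\widetilde{M}$.
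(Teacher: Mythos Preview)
Your proposal is correct and follows essentially the same route as the paper: deduce that $M$ is closed via the Bonnet-type theorem, use finiteness of $\pi_1(M)$ to get $H_1(M;\mathbb Q)=0$, and then apply $(\mathrm{D})_n$. The paper compresses the middle step to a single sentence (``Due to Bonnet-type theorem, $H_1(\Sigma;\mathbb Z)$ is a finite abelian group''), whereas you spell out the universal-cover argument that underlies it; this is the standard justification and your care about the lifted metric being locally isometric (hence Alexandrov of the same lower bound) is exactly what is needed.
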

\begin{proof}
Let $\Sigma$ be a positively curved orientable closed Alexandrov $n$-space. 
Due to Bonnet-type thereom (Theorem \ref{thm:Bonnet}), $H_1(\Sigma;\mathbb Z)$ is a finite abelian group. 
Hence, $H_1(\Sigma;\mathbb Q) = 0$. 
Using (D)$_n$, we obtain the conclusion: $H^{n-1}(\Sigma;\mathbb Q)=0$.
\end{proof}

Now, we suppose (E)$_{n-1}$ and prove (D)$_n$. 
Further, $H_\ast(-;\mathbb Q)$ is always denoted by $H_\ast(-)$ until the end of the proof.

\begin{claim} \label{claim:D1}
If $M$ is conical, $M$ satisfies $\mathrm{(D)}_n$.
\end{claim}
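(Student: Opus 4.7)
The plan is to exploit the conical structure together with the inductive hypothesis (E)$_{n-1}$. Since $M$ is conical, we may write $M \cong c(\Sigma)$ for some compact generator $\Sigma$, where we identify $\Sigma$ with the space of directions $\Sigma_o$ at the apex $o \in M$ via Theorem \ref{thm:stab}. Because $M$ does not meet the boundary of the ambient Alexandrov space, the apex satisfies $o \notin \partial X$, and hence $\Sigma$ has no boundary. By Theorem \ref{thm:space of directions}, $\Sigma$ is an $(n-1)$-dimensional closed Alexandrov space of curvature $\ge 1$; by Corollary \ref{cor:ori product}, the orientability of $M = c(\Sigma)$ forces $\Sigma$ to be orientable as well.

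Next I would show that both sides of $D_M$ vanish. The target is immediate: $M$ deformation retracts onto $o$, so $H_1(M) = 0$. For the source $H_\cpt^{n-1}(M)$, I would compute it using the exhaustion of $c(\Sigma)$ by the closed subcones $\bar c_t(\Sigma) = \Sigma \times [0,t]/\Sigma \times \{0\}$. For each $t > 0$, the complement $c(\Sigma) \setminus \bar c_t(\Sigma)$ deformation retracts onto $\Sigma$, and since $c(\Sigma)$ is contractible, the long exact sequence of the pair $(c(\Sigma), c(\Sigma) \setminus \bar c_t(\Sigma))$ yields
\[
H^k(c(\Sigma), c(\Sigma) \setminus \bar c_t(\Sigma)) \cong \tilde H^{k-1}(\Sigma)
\]
for every $k \ge 1$. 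Passing to the direct limit over $t$, this gives the standard identification $H_\cpt^k(M) \cong \tilde H^{k-1}(\Sigma)$ for $k \ge 1$, and in particular $H_\cpt^{n-1}(M;\mathbb Q) \cong \tilde H^{n-2}(\Sigma;\mathbb Q)$.

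Finally, I would apply (E)$_{n-1}$ directly to $\Sigma$: since $\Sigma$ is an orientable $(n-1)$-dimensional Alexandrov space of curvature $\ge 1$ without boundary, the inductive hypothesis forces $H^{n-2}(\Sigma;\mathbb Q) = 0$. Combining, both $H_\cpt^{n-1}(M;\mathbb Q)$ and $H_1(M;\mathbb Q)$ vanish, so the duality map $D_M$ is trivially an isomorphism.

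The proof is rather short once the ingredients are in place, and the main conceptual point (rather than an obstacle) is to recognize that the generator $\Sigma$ carries the geometric data needed to invoke (E)$_{n-1}$: this is where the hypothesis that $M$ lies in an Alexandrov space — and not merely in an NB-space — enters crucially, since only for Alexandrov cones does Theorem \ref{thm:space of directions} furnish the positive curvature bound on $\Sigma$ required by (E)$_{n-1}$.
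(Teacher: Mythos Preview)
Your proposal is correct and follows exactly the same approach as the paper: show that both $H_\cpt^{n-1}(M;\mathbb Q)$ and $H_1(M;\mathbb Q)$ vanish by identifying the generator $\Sigma_x$ as an orientable closed Alexandrov $(n-1)$-space of curvature $\ge 1$ and invoking (E)$_{n-1}$. The paper's proof is terser (it simply asserts $H_\cpt^{n-1}(M) \cong H^{n-2}(\Sigma_x)$ and that $\Sigma_x$ is orientable ``from the assumption''), while you spell out the justifications via Theorem~\ref{thm:space of directions}, Corollary~\ref{cor:ori product}, and the exhaustion computation---but the underlying argument is identical.
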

\begin{proof}
Since $M$ is conical, there is a point $x \in M$ such that $M$ is homeomorphic to $c(\Sigma_x)$. 
From the assumption, $\Sigma_x$ is orientable. 
By (E)$_{n-1}$, we have $H^{n-1}_\cpt(M) \cong H^{n-2}(\Sigma_x) = 0$.
On the other hands, $H_1(M) = 0$ because $M$ is contractible. 
\end{proof}

\begin{claim} \label{claim:D2}
Let $U$ and $V$ be open subsets of $M$. 
Suppose that $U$, $V$ and $U \cap V$ satisfy $\mathrm{(D)}_n$. 
Then, $U \cup V$ satisfies $\mathrm{(D)}_n$. 
\end{claim}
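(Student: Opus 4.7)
The plan is to apply the five-lemma to the ladder obtained by comparing the Mayer--Vietoris sequence in compactly supported cohomology with the Mayer--Vietoris sequence in singular homology, linked by the duality morphisms coming from cap product with fundamental classes at compact sets.

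First, I would write down the two relevant Mayer--Vietoris sequences (with $\mathbb Q$-coefficients). Since $H^\ast_\cpt$ is covariant for open inclusions, one has the long exact sequence
\[
H^{n-1}_\cpt(U \cap V) \to H^{n-1}_\cpt(U) \oplus H^{n-1}_\cpt(V) \to H^{n-1}_\cpt(U \cup V) \to H^n_\cpt(U \cap V) \to H^n_\cpt(U) \oplus H^n_\cpt(V),
\]
and analogously the standard singular homology Mayer--Vietoris sequence
\[
H_1(U \cap V) \to H_1(U) \oplus H_1(V) \to H_1(U \cup V) \to H_0(U \cap V) \to H_0(U) \oplus H_0(V).
\]

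Next, using Theorem \ref{thm:fundam class at K} to produce, for each of $U$, $V$, $U \cap V$ and $U \cup V$, a compatible family of fundamental classes at compact subsets, I would form the duality maps $D$ in degrees $n-1$ and $n$ and assemble the above two sequences into a ladder with the $D$'s as vertical arrows. Commutativity of the squares not involving connecting morphisms is immediate from the naturality of the cap product with respect to the open inclusions into $U \cup V$. The one square involving the connecting morphism --- matching the MV coboundary $H^{n-1}_\cpt(U \cup V) \to H^n_\cpt(U \cap V)$ with the MV boundary $H_1(U \cup V) \to H_0(U \cap V)$ --- commutes up to a universal sign, by the standard chain-level calculation relating the MV boundary to the cap product with a fundamental class (as used in the classical manifold case; see e.g.\ the argument in \cite{May}). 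This calculation goes through verbatim because the relevant fundamental classes at compact subsets behave functorially when one restricts from $U \cup V$ to the pieces $U, V, U \cap V$.

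Finally, I would invoke the five-lemma. By the hypothesis of the claim, $D_U$, $D_V$, and $D_{U \cap V}$ are isomorphisms in degree $n-1$, while Theorem \ref{thm:dual0} supplies the corresponding isomorphisms in degree $n$ between $H^n_\cpt$ and $H_0$ for each of $U$, $V$, $U \cap V$. The five-lemma then forces $D_{U \cup V} \colon H^{n-1}_\cpt(U \cup V;\mathbb Q) \to H_1(U \cup V;\mathbb Q)$ to be an isomorphism, which is the statement $(\mathrm{D})_n$ for $U \cup V$. The main obstacle is really the commutativity of the connecting square; once that is in hand, the rest is a mechanical application of the five-lemma.
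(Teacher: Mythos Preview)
Your proposal is correct and follows essentially the same route as the paper: both build the Mayer--Vietoris ladder linking $H^\ast_\cpt$ and $H_\ast$ via the duality maps, invoke Theorem~\ref{thm:dual0} for the degree-$n$ vertical arrows, and finish with the five-lemma. You give more detail than the paper on why the connecting square commutes (up to sign), but the overall argument is the same.
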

\begin{proof}
Let us see the following diagram.
\[
\hspace{-6em}
{\tiny
\xymatrix{
&H_\cpt^{n-1}(U \cap V) \ar[r] \ar[d]^{D_{U \cap V}} &H_\cpt^{n-1}(U) \oplus H_\cpt^{n-1}(V) \ar[r] \ar[d]^{D_U \oplus D_V} &H_\cpt^{n-1}(U \cup V) \ar[r] \ar[d]^{D_{U \cup V}} &H_\cpt^{n}(U \cap V) \ar[r] \ar[d]^{D_{U \cap V}} &H_\cpt^{n}(U) \oplus H_\cpt^{n}(V) \ar[d]^{D_U \oplus D_V} \\
&H_1(U \cap V) \ar[r] &H_1(U) \oplus H_1(V) \ar[r] &H_1(U \cup V) \ar[r] &H_0(U \cap V) \ar[r] &H_0(U) \oplus H_0(V) 
}
}
\]
This diagram commutes, because of the naturality of the duality maps.
The left two downward arrows are isomorphisms from the assumption of the sublemma. 
The right two downward arrows are isomorphisms, by Theorem \ref{thm:dual0}.
Since horizontal sequences are exact, by the five lemma, $D_{U \cup V} : H^{n-1}_\cpt(U \cup V) \to H_1(U \cup V)$ is an isomorphism.
\end{proof}

By the definition of the singular homology and an algebraic argument, we have:
\begin{claim} \label{claim:D3} 
If $\{U_i\}_i$ is a totally ordered set of open subsets of $M$ such that $(\mathrm{D})_n$ holds for each $U_i$, then $\bigcup_i U_i$ satisfies $(\mathrm{D})_n$.
\end{claim}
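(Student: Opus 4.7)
The plan is to show both sides of the duality map commute with the directed colimit coming from the totally ordered family $\{U_i\}$, and then conclude using the naturality of $D$ already guaranteed by the statement of Theorem \ref{thm:PD1}, together with the exactness of $\varinjlim$.

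First, I would handle the homology side. Setting $U = \bigcup_i U_i$, any singular $1$-cycle in $U$ is supported on the (compact) union of finitely many simplices; since $\{U_i\}$ covers this compact set and is totally ordered by inclusion, one of the $U_i$ already contains the support. The same applies to $2$-chains witnessing boundaries. This yields a natural isomorphism $\varinjlim_i H_1(U_i;\mathbb{Q}) \xrightarrow{\cong} H_1(U;\mathbb{Q})$.

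Next, I would handle the compactly supported cohomology side, which is the main thing requiring care. Using $H^{n-1}_\cpt(U;\mathbb{Q}) = \varinjlim_K H^{n-1}(U, U \setminus K;\mathbb{Q})$ over compact $K \subset U$, I would argue that for each such $K$ there is an index $i$ with $K \subset U_i$ (again by taking a finite subcover from $\{U_i\}$ and using total ordering). Since $U \setminus U_i$ is closed in $U$ and contained in $U \setminus K$, excision gives $H^{n-1}(U, U \setminus K;\mathbb{Q}) \cong H^{n-1}(U_i, U_i \setminus K;\mathbb{Q})$, and these isomorphisms are compatible with the transition maps of both directed systems. Reindexing the double limit produces the natural isomorphism $\varinjlim_i H^{n-1}_\cpt(U_i;\mathbb{Q}) \xrightarrow{\cong} H^{n-1}_\cpt(U;\mathbb{Q})$, where the transition maps in the source are the extension-by-zero maps attached to open inclusions.

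Finally, by the naturality of the duality maps with respect to open inclusions (part of the statement of Theorem \ref{thm:PD1}; compatibility of cap products with restriction and extension by zero), the family $\{D_{U_i}\}$ is a morphism of directed systems whose colimit is exactly $D_U$ under the two identifications above. Since each $D_{U_i}$ is an isomorphism by hypothesis and directed colimits preserve isomorphisms, $D_U$ is an isomorphism, giving $(\mathrm{D})_n$ for $U$. The only nontrivial step is the excision reduction for $H^{n-1}_\cpt$; once that is in place, the rest is formal.
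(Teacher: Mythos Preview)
Your proposal is correct and is precisely the elaboration the paper has in mind: the paper only sketches this claim as following ``by the definition of the singular homology and an algebraic argument,'' and your three steps (singular homology commutes with the directed colimit, $H^{n-1}_\cpt$ commutes with it via excision over compacts, and naturality of $D$ passes to the colimit) are exactly that argument spelled out. There is nothing to add.
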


By Theorem \ref{thm:good}, $M$ admits a good covering $\{V_i\}_{i \in \mathbb N}$. 
Setting $U_i = \bigcup_{j \le i} V_j$, we obtain a totally ordered family $\{U_i\}_i$.
Since $U_1$ is conical, by Claim \ref{claim:D1}, $U_1$ satisfies (D)$_n$.
By the definition of good coverings and by Claim \ref{claim:D2}, every $U_i$ satisifes (D)$_n$.
Therefore, by Claim \ref{claim:D3}, $M$ itself satisfies (D)$_n$. 
This completes the proof of Theorem \ref{thm:dual1}.
\end{proof}

Let us give a proof of Corollary \ref{cor:positive}. 
\begin{proof}[Proof of Corollary \ref{cor:positive}]
Let $\Sigma$ be a closed Alexandrov space of positve curvature and of dimension $n \ge 2$. 
First, we assume that $\Sigma$ is orientable. 
By Theorem \ref{thm:dual1} (E), we obtain $H_{n-1}(\Sigma;\mathbb Q) = 0$. 
On the other hands, we know that $H_{n-1}(\Sigma; \mathbb Z)$ has no torsion due to Theorem \ref{thm:main thm} (B). 
Hence, we conclude that $H_{n-1}(\Sigma;\mathbb Z) = 0$. 

Next, we suppose that $\Sigma$ is non-orientable. 
By Theorem \ref{thm:ram}, we have a ramified orientable doouble cover $\pi : \hat \Sigma \to \Sigma$ of $\Sigma$. 
Then, the restriction to $\pi^{-1}(\Sigma_\top)$ of $\pi$ is a usual double covering of $\Sigma_\top$. 
Further, $\pi^{-1}(\Sigma_\top)$ is a connected orientable $n$-manifold. 
Therefore, we have 
\begin{align*}
H^{n-1}(\Sigma;\mathbb Q) &\cong H^{n-1}_\cpt(\Sigma_\top;\mathbb Q) \\
&\cong 
H^{n-1}_\cpt(\pi^{-1}(\Sigma_\top);\mathbb Q)^{\mathbb Z_2} \cong H^{n-1}(\hat \Sigma;\mathbb Q)^{\mathbb Z_2}. 
\end{align*}
Here, $A^G$ denotes the $G$-invariant part of an abelian group $A$ with a group $G$ acting $A$, and the involutive action on $\tilde X$ induces an involutive action on its cohomology. 
Here, we remark that, due to \cite{HS}, $\hat \Sigma$ is known to have a lifted metric which is a metric of an Alexandrov space of the same lower curvature bound as $\Sigma$. 
Hence, from the case that the space is orientable, we obtain the conclusion in the case that the space is non-orientable. 
This completes the proof.
\end{proof}

By the proofs of Corollary \ref{cor:positive} and Claim \ref{claim:D0} in the case that spaces are orientable, we immediately obtain: 
\begin{corollary} \label{cor:finite 1st homology}
Let $n \ge 2$ and $X$ be a closed orientable Alexandrov $n$-space such that $\pi_1(X)$ is finite. 
Then, we have $H_{n-1}(X;\mathbb Z)=0$. 
\end{corollary}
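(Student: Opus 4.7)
The plan is to follow exactly the template of the proof of Corollary \ref{cor:positive} (orientable case) combined with Claim \ref{claim:D0}, replacing the positive-curvature input (Bonnet's theorem forcing $H_1$ to be finite) by the hypothesis that $\pi_1(X)$ is finite. First I would observe that since $H_1(X;\mathbb Z)$ is the abelianization of $\pi_1(X)$, it is a finite abelian group, hence $H_1(X;\mathbb Q) = H_1(X;\mathbb Z)\otimes\mathbb Q = 0$.

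Next, because $X$ is a closed orientable Alexandrov $n$-space (so in particular an open subset of an orientable Alexandrov $n$-space not meeting the boundary), Theorem \ref{thm:dual1} (D)$_n$ applies and produces an isomorphism
\[
D_X : H^{n-1}(X;\mathbb Q) = H^{n-1}_\cpt(X;\mathbb Q) \xrightarrow{\ \cong\ } H_1(X;\mathbb Q) = 0.
\]
By the universal coefficient theorem over a field, this forces $H_{n-1}(X;\mathbb Q) = 0$, i.e.\ $H_{n-1}(X;\mathbb Z)$ is a torsion group. Finally, I would invoke Theorem \ref{thm:main thm} (B), which tells us that for a closed orientable NB-space $H_{n-1}(X;\mathbb Z)$ has no torsion. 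Combining the two, we conclude $H_{n-1}(X;\mathbb Z)=0$.

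There is no real obstacle here; the corollary is essentially a direct repackaging of the ingredients already assembled for Corollary \ref{cor:positive}. The only point to check is that Theorem \ref{thm:dual1} (D)$_n$ is in force for dimensions $n \ge 2$, which holds since the case $n=2$ reduces to classical surface theory (and is cited as known at the start of the proof of Theorem \ref{thm:dual1}) and the inductive step has already been carried out. Note also that the non-orientable workaround via the ramified double cover used in the proof of Corollary \ref{cor:positive} is not needed, since orientability of $X$ is assumed outright.
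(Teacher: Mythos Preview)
Your proof is correct and follows exactly the approach the paper intends: the paper states that this corollary is obtained immediately ``by the proofs of Corollary \ref{cor:positive} and Claim \ref{claim:D0} in the case that spaces are orientable,'' and you have faithfully unpacked precisely those arguments, substituting the finiteness of $\pi_1(X)$ for the Bonnet-type theorem as the source of $H_1(X;\mathbb Q)=0$.
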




\subsection{Applications to low-dimensional Alexandrov spaces}

\begin{corollary} \label{cor:4-dim PD}
Any positively curved orientable closed Alexandrov $3$-space $\Sigma$ is a ratinal homology sphere. 
Any $4$-dimensional orientable Alexandrov domain $X$ which does not have boundary points is a rational homology manifold. 
In particular, the duality map of $X$ with coefficinets in rational numbers is isomorphic at every degree.
\end{corollary}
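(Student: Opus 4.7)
The plan is to prove the two assertions in order, using the results already established. For the first statement, let $\Sigma$ be a closed orientable Alexandrov $3$-space of positive curvature. Since $\Sigma$ is orientable and closed, one already has $H_0(\Sigma;\mathbb Q) \cong \mathbb Q$ and $H_3(\Sigma;\mathbb Q) \cong \mathbb Q$ by Theorem \ref{thm:compact}. Corollary \ref{cor:positive} (applied in dimension $n=3$) gives $H_2(\Sigma;\mathbb Q)=0$, and then Theorem \ref{thm:PD1} with $k=n-1=2$ yields $H^2_\cpt(\Sigma;\mathbb Q) \cong H_1(\Sigma;\mathbb Q)$. Since $\Sigma$ is compact, $H^2_\cpt = H^2$, and by the universal coefficient theorem (over $\mathbb Q$) together with the vanishing of $H_2(\Sigma;\mathbb Q)$, we get $H^2(\Sigma;\mathbb Q)=0$, hence $H_1(\Sigma;\mathbb Q)=0$. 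Therefore $\Sigma$ is a rational homology $3$-sphere.

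For the second statement, let $X$ be a $4$-dimensional orientable Alexandrov domain without boundary points and pick $x \in X$. By the stability theorem (Theorem \ref{thm:stab}) a conical neighborhood at $x$ is homeomorphic to $c(\Sigma_x)$, so by excision and the cone formula $H_k(X|x;\mathbb Q) \cong \bar H_{k-1}(\Sigma_x;\mathbb Q)$. Now $\Sigma_x$ is a $3$-dimensional closed Alexandrov space of curvature $\ge 1$, and because $X$ is orientable, the generator $\Sigma_x$ is orientable as an NB-space (this is exactly the corollary to Theorem \ref{thm:main thm} stating that orientability passes to generators). Thus $\Sigma_x$ satisfies the hypotheses of the first part, so it is a rational homology $3$-sphere. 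Consequently $H_k(X|x;\mathbb Q)=\mathbb Q$ for $k=4$ and vanishes otherwise, which is precisely the definition of $X$ being a rational homology $4$-manifold.

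For the final assertion on duality at every degree, the plan is to imitate the proof of Theorem \ref{thm:dual1} degree by degree using the fact that $X$ is a rational homology manifold. On any conical neighborhood $U = c(\Sigma_x)$, both $H_\cpt^k(U;\mathbb Q) \cong \bar H^{k-1}(\Sigma_x;\mathbb Q)$ and $H_{4-k}(U;\mathbb Q)$ vanish except for $k=4$, where both equal $\mathbb Q$; moreover Theorem \ref{thm:dual0} already handles the top degree $k=n$, and $H^0_\cpt$ versus $H_n$ is trivial for non-compact $U$, so the duality map is an isomorphism on conical pieces at every degree. One then takes a good covering $\{V_j\}$ of $X$ guaranteed by Theorem \ref{thm:good}, and runs the same Mayer-Vietoris plus five lemma induction used in Claims \ref{claim:D1}, \ref{claim:D2}, \ref{claim:D3} to propagate the duality isomorphism from the conical pieces to the finite unions $U_i = \bigcup_{j\le i} V_j$, and finally to $X$ by the direct limit argument.

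The main obstacle I anticipate is confirming the duality on each finite intersection of members of the good covering: the intersections are conical but the associated generator is not necessarily a $3$-sphere, only a compact Alexandrov space whose cone lies in the $4$-manifold-like ambient space. However, since $X$ is already shown to be a rational homology manifold and since good conical intersections inherit this property (their local homologies are the same as those of $X$ at interior points), the computation on conical pieces carries through as above, and the Mayer-Vietoris step goes through verbatim.
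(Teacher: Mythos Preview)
Your proof is correct and follows essentially the same route as the paper: establish that a positively curved orientable closed Alexandrov $3$-space is a rational homology sphere via Theorem~\ref{thm:PD1} and Corollary~\ref{cor:positive}, deduce that the cones are rationally Euclidean so the duality map is trivially an isomorphism on conical pieces at every degree, and then propagate to all of $X$ by the good-covering/Mayer--Vietoris/five-lemma argument of Claims~\ref{claim:D1}--\ref{claim:D3}. Your closing paragraph correctly disposes of the only subtlety (that generators of conical intersections in the good covering are again rational homology spheres, since $X$ is already a rational homology manifold at every point), which the paper leaves implicit.
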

\begin{proof}
Let $\Sigma$ be a closed Alexandrov 3-space of positive curvature. 
By Theorem \ref{thm:PD1}, we have 
\[
H_\ast(\Sigma;\mathbb Q) \cong H_\ast(S^3;\mathbb Q).
\]
This is the conclusion of the first statement. 
Hence, if $U$ is the cone over $\Sigma$, we obtain 
\[
H_\cpt^k(U) = H_{4-k}(U) = 0
\]
for $k=1,2$.
So, the Poincar\'e duality isomorphism theorem holds for the case that spaces are cones.
Therefore, Corollary \ref{cor:4-dim PD} follows from an argumet similar to the proof of Theorem \ref{thm:PD1}. 
\end{proof}

\begin{remark} \label{rem:4-dim PD} \upshape 
Corollary \ref{cor:4-dim PD} is already known. 
Indeed, Galaz-Garcia and Guijarro (\cite{GG}) classified the topologies of positively curved Alexandrov 3-spaces.
Due to their work and Theorem \ref{thm:stab}, the space as in the corollary is known to be topologically a 4-dimensional orientable orbifold without boundary. 
It is well-known that orientable orbifolds satisfy the usual Poincar\'e duality isomorphism theorem (\cite{Sa}).
\end{remark}


\section{Metric currents and applications} \label{sec:current}

\subsection{Metric currents} 
Let us recall the notion of ``currents in metric spaces'' introduced by Ambrosio and Kirchheim (\cite{AK}), brefly. 
We also refer to \cite{Lan}.
For metric spaces $X$ and $Y$, we denote by $\Lip(X,Y)$ the set of all Lipschitz maps from $X$ to $Y$. 
Further, we set $\Lip(X) := \Lip(X,\mathbb R)$ and denote by $\Lip_\mathrm{b}(X)$ the subspace of $\Lip(X)$ consisting of all bounded functions.
For a nonnegative integer $k$, we set 
\[
\D^k(X) := \Lip_{\mathrm b}(X) \times \left( \Lip(X) \right)^k.
\]
Note that $\Lip(X)^k = \Lip(X,\mathbb R^k)$ as sets. 
We regard an element of $\D^k(X)$ as a formal $k$-form on $X$.

We say that a multilinear functional $T : \D^k(X) \to \mathbb R$ is a $k$-{\it current} in $X$ if it satisfies the following conditions: 
\begin{itemize}
\item[(Continuity)] 
Fixing $f \in \Lip_{\mathrm{b}}(X)$, if a sequence $g_i \in \Lip(X, \mathbb R^k)$ converges to $g$ as $i \to \infty$ pointwisely with $\sup_i \Lip(g_i) < \infty$, then we have 
\[
T(f,g_i) \to T(f,g)
\]
as $i \to \infty$; 
\item[(Locality)]
For $(f,g_1, \dots g_k) \in \D^k(X)$, if $g_i$ is constant on $\{f \neq 0\}$ for some $i$, then $T(f,g_1, \dots, g_k)=0$; 
\item[(Finite mass)]
There is a tight finite measure $\mu$ on $X$ such that 
\[
T(f,g_1, \dots, g_k) \le \prod_{i=1}^k \Lip(g_i) \int_X |f|\, d \mu
\]
holds, for every $(f,g_1,\dots g_k) \in \D^k(X)$.
\end{itemize}
Here, when $k=0$, the term $\prod_{i=1}^k \Lip(g_i)$ is regarded as $1$.
The minimal measure satisfying the finite mass axiom for $T$ is denoted by $\|T\|$ and called it the {\it mass measure} of $T$. 
The total measure of $X$ with respect to $\|T\|$ is called the {\it mass} of $T$ and denoted by $\M(T)$.

Let us recall fundamental operations to currents. 
Let $X$ and $Y$ be metric spaces. 
Let $T$ be a $k$-current in $X$ and $\varphi \in \Lip(X,Y)$. 
The pull-back of $(f,g) \in \D^k(Y)$ by $\varphi$ is defined as $\varphi^\sharp (f, g) := (f,g) \circ \varphi$. 
Dually, the push-forward of $T$ by $\varphi$ is defined by $\varphi_\sharp T := T \circ \varphi^\sharp$.
Then, $\varphi_\sharp T$ is a $k$-current in $Y$. 
Since $\Lip_{\mathrm{b}}(X)$ is dense in $L^1(X;\|T\|)$, for a bounded Borel function $f$ and $g \in \Lip(X,\mathbb R^k)$, one can define a value $T(f,g)$ in a canonical way. 
Then, we have a multilinear functional $T : \mathcal B^\infty(X) \times (\Lip(X))^k \to \mathbb R$, where $\mathcal B^\infty(X)$ denotes the space of all bounded Borel functions, which is a unique extension of $T$, having propeties corresponding to continuity, locality and finite mass, of extended forms.
In particular, for a Borel subset $A$ of $X$, we obtain a $k$-current $T \lfloor A$ in $X$ defined by 
\[
T \lfloor A(f,g) := T(\chi_A f, g)
\]
for any $(f,g) \in \D^k(X)$.
Here, $\chi_A$ is the characteristic function of $A$.
The formal exterior differential $d : \D^k(X) \to \D^{k+1}(X)$ is defined by $d(f,g) := (1,f,g)$. 
The {\it boundary} of $T$ is defined by $\partial T := T \circ d$.
When $\partial T$ is a $(k-1)$-current, we call $T$ a {\it normal} current. 
Here, we remark that $\partial T$ is not a current, in general. 
From the locality, $\partial \partial T = 0$ holds.
Therefore, denoting by $\N_k(X)$ the set of all normal $k$-currents in $X$, 
\[
\N_\bullet(X) := \bigoplus_{k=0}^\infty \N_k(X)
\] 
becomes a chain complex with respect to $\partial$.
Further, since the compactness of currents is stable by taking the boundary, the space $\N_\bullet^\cpt(X)$ of all normal currents with compact support is a subcomplex of $\N_\bullet(X)$. 

A typical example of a current is here: 
for a Lebesgure integrable function $\theta$ on $\mathbb R^k$, we set 
\[
\jump{\theta}(f,g) := \int_{\mathbb R^k} \theta f \det (D g) \, d \Hau^k
\]
for $(f,g) \in \D^k(\mathbb R^k)$, 
where $D g$ is the differential of $g \in \Lip(\mathbb R^k, \mathbb R^k)$ in the usual sense, which exists almost everywhere due to Rademacher's theorem. 
Then, $\jump{\theta}$ is a $k$-current in $\mathbb R^k$.

To use later, we recall a characterization of the mass of currents. 
\begin{proposition}[{\cite[Proposition 2.7]{AK}}] \label{prop:mass}
Let $T$ be a $k$-current in a metric space $X$. 
Then, we have 
\[
\M(T) = \sup \sum_{i=1}^\infty T(f_i,g_{i1}, \dots, g_{ik})
\]
where the supremum runs over $(f_i,g_{i1},\dots,g_{ik})$, where $f_i \in \mathcal B^\infty(X)$ and $g_{ij} \in \Lip(X)$ satisfying that $\sum_i |f_i| \le 1$ and $\sup_{i,j} \Lip(g_{ij}) \le 1$.
\end{proposition}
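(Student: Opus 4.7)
The plan is to prove the two inequalities separately; the easy one follows directly from the finite mass axiom, and the main work is constructing a finite Borel measure from the supremum formula that can be compared to $\|T\|$ via minimality.

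For the inequality $\M(T) \ge \sup\sum_i T(f_i,g_{i1},\dots,g_{ik})$, I would take an arbitrary admissible sequence $(f_i, g_{i1},\dots,g_{ik})$ with $\sum_i |f_i| \le 1$ and $\sup_{i,j} \Lip(g_{ij}) \le 1$, and apply the extended finite mass bound termwise to get
\[
\sum_{i=1}^\infty T(f_i,g_{i1},\dots,g_{ik}) \le \sum_{i=1}^\infty \int_X |f_i|\,d\|T\| \le \int_X 1\,d\|T\| = \M(T),
\]
using monotone convergence to swap sum and integral.

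For the reverse inequality, I would define a set function $\sigma$ on Borel subsets $A \subset X$ by
\[
\sigma(A) := \sup \sum_{i=1}^\infty T(f_i,g_{i1},\dots,g_{ik}),
\]
where the supremum is over sequences with $f_i \in \mathcal B^\infty(X)$ supported so that $\sum_i |f_i| \le \chi_A$ and $\sup_{i,j} \Lip(g_{ij}) \le 1$. The key steps are: (i) show $\sigma$ is monotone and countably subadditive (immediate by concatenation of sequences), (ii) show $\sigma$ is countably superadditive on disjoint Borel sets (by splitting an admissible sequence for the union through multiplication by $\chi_{A_n}$, using that $T$ remains controlled under multiplication by bounded Borel functions via the extension to $\mathcal B^\infty$), and hence (iii) $\sigma$ extends to a finite Borel measure on $X$. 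Next, (iv) verify that $\sigma$ satisfies the finite mass axiom: given $(f,g_1,\dots,g_k) \in \D^k(X)$, a Lipschitz rescaling of each $g_j$ by $\Lip(g_j)^{-1}$ together with writing $f = \|f\|_\infty \cdot (f/\|f\|_\infty)$ and approximating $|f|$ by Borel sequences whose $\sigma$-integral equals $\int |f|\, d\sigma$ yields
\[
T(f,g_1,\dots,g_k) \le \prod_{j=1}^k \Lip(g_j) \int_X |f|\, d\sigma.
\]

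The main obstacle will be step (iv), where one must carefully pass from the discrete admissible test sequences defining $\sigma$ to a genuine integral against $\sigma$: one approximates the function $|f|$ from below by simple functions $\sum c_i \chi_{A_i}$ with disjoint $A_i$, and for each such simple function one has to realize $\sum c_i \sigma(A_i)$ as a supremum attained (up to $\varepsilon$) by choosing, for each $A_i$, an admissible sequence whose $T$-values nearly saturate $\sigma(A_i)$, then concatenate and rescale. Here the tightness of $\|T\|$ together with countable additivity of $\sigma$ ensures this approximation is clean. Once (iv) is in hand, minimality of $\|T\|$ gives $\|T\| \le \sigma$ as measures, so $\M(T) = \|T\|(X) \le \sigma(X)$, and combined with the first direction we obtain the claimed equality.
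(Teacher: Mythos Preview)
The paper does not give its own proof of this proposition: it is quoted verbatim as \cite[Proposition 2.7]{AK} and used as a black box, so there is no argument in the paper to compare against. Your outline follows the standard route (and, in fact, the route taken in \cite{AK}): the bound $\M(T)\ge\sup(\cdots)$ is immediate from the mass inequality, and for the converse one builds a set function $\sigma$ from the supremum over sequences constrained by $\sum_i|f_i|\le\chi_A$, checks it is a Borel measure satisfying the finite mass inequality, and invokes minimality of $\|T\|$.

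One small point you gloss over: the definition of $\|T\|$ in the paper requires the competing measures to be \emph{tight}, so to apply minimality you need $\sigma$ tight. This follows at once from the easy direction applied setwise, which gives $\sigma(A)\le\|T\|(A)$ for every Borel $A$; since $\|T\|$ is tight, so is $\sigma$. With that observation your sketch is correct.
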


A subset $S$ of $X$ is called countably $\Hau^k$-rectifiable if there exist Borel sets $A_i \subset \mathbb R^k$ and Lipschitz maps $f_i : A_i \to X$ such that 
\[
\Hau^k \left(S \setminus \bigcup_{i=1}^\infty f_i(A_i) \right) = 0
\]
holds. 
We say that a $k$-current in $X$ is {\it rectifiable} if it satisfies the following: 
\begin{itemize}
\item $\|T\|$ is absolutely continuous in $\Hau^k$; 
\item $\|T\|$ is concentrated in a countably $\Hau^k$-rectifiable set.
\end{itemize}
Further, $T$ is {\it integer rectifiable} if it satisfies, in addition, 
\begin{itemize}
\item for any open set $O$ of $X$ and a Lipschitz map $\varphi : X \to \mathbb R^k$, there is an integrable function on $\mathbb R^k$ of integer-valued such that 
\[
\varphi_\sharp (T \lfloor O) = \jump{\theta}
\]
holds. 
\end{itemize}
We say that $T$ is {\it integral} if it is integer rectifiable and is normal.
Let us denote by $\I_k(X)$ the space of all integral $k$-currents in $X$. 

\begin{theorem}[\cite{AK}]
$\I_\bullet(X) := \bigoplus_{k=0}^\infty \I_k(X)$ is a subcomplex of $\N_\bullet(X)$. 
That is, if $T$ is integral, then so is $\partial T$.
\end{theorem}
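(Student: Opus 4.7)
Normality of $\partial T$ is essentially free. Since $T$ is integral it is, by definition, normal, so $\partial T$ is already a $(k-1)$-current with finite mass; and $\partial(\partial T)=0$ holds because $d\circ d=0$ on formal forms (which follows from the locality axiom applied to the tuple $(1,1,f,g_1,\dots,g_{k-1})$). Hence $\partial T\in\N_{k-1}(X)$.

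The nontrivial content is that $\partial T$ is integer rectifiable. I would follow Ambrosio--Kirchheim and attack this via slices. The machinery to set up first is the slicing operation: for $\pi\in\Lip(X)$ and $S\in\N_k(X)$, one constructs slices $\langle S,\pi,t\rangle\in\N_{k-1}(X)$ defined for a.e.\ $t\in\mathbb{R}$, satisfying the mass inequality $\int_{\mathbb{R}}\M(\langle S,\pi,t\rangle)\,dt\le\Lip(\pi)\,\M(S)$ together with the fundamental commutation identity
\[
\partial\langle S,\pi,t\rangle=-\langle \partial S,\pi,t\rangle.
\]
With this in hand, the key lemma is: if $T\in\I_k(X)$, then $\langle T,\pi,t\rangle\in\I_{k-1}(X)$ for a.e.\ $t$.

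Granting the lemma, integer rectifiability of $\partial T$ follows by reducing the pushforward condition to the $0$-dimensional case. Namely, fix a Lipschitz $\varphi=(\pi_1,\dots,\pi_{k-1}):X\to\mathbb{R}^{k-1}$ and an open $O\subset X$, and iterate the slicing identity $(k-1)$ times along the coordinates $\pi_1,\dots,\pi_{k-1}$. This expresses $\varphi_\sharp(\partial T\lfloor O)$ in terms of pushforwards of iterated slices of $T$, which are $0$-dimensional integer rectifiable currents for a.e.\ parameter value. The pushforward of an integer $0$-current is a discrete $\mathbb{Z}$-linear combination of Dirac masses; assembling these by Fubini yields $\varphi_\sharp(\partial T\lfloor O)=\jump{\theta}$ with $\theta\in L^1(\mathbb{R}^{k-1};\mathbb{Z})$. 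Absolute continuity of $\|\partial T\|$ with respect to $\Hau^{k-1}$ and its concentration on a countably $\Hau^{k-1}$-rectifiable set fall out of the same iterated-slicing scheme combined with the mass inequality above.

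The main obstacle is the slicing lemma itself: that slices of integer rectifiable currents remain integer rectifiable. In Euclidean space this is the classical coarea formula of Federer, but in the metric setting there is no direct coarea tool. The argument in \cite{AK} instead invokes their rectifiability representation theorem to write $T$ as a countable sum of pushforwards $\varphi_{i\sharp}\jump{\theta_i}$ for Lipschitz $\varphi_i:A_i\subset\mathbb{R}^k\to X$ and integer-valued $\theta_i\in L^1(\mathbb{R}^k;\mathbb{Z})$; slicing each piece by the Lipschitz function $\pi\circ\varphi_i$ on $\mathbb{R}^k$ returns one to the Euclidean setting, where classical coarea gives integer rectifiability of the slices. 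The remaining subtlety is to transfer this integrality through $\varphi_{i\sharp}$ using the Lipschitz area formula and to check the compatibility of the metric and Euclidean slicing conventions so that the sign in the commutation identity, the mass balance, and the integer multiplicities all line up; this compatibility is what makes the lemma delicate.
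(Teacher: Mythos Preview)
The paper does not supply a proof of this statement at all: it is simply quoted from \cite{AK} as background, with no argument given. So there is nothing in the paper to compare your proposal against.

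That said, your sketch is a faithful outline of the Ambrosio--Kirchheim argument. One refinement: rather than directly assembling $\varphi_\sharp(\partial T\lfloor O)$ by Fubini from iterated slices, the cleanest route in \cite{AK} is to invoke their \emph{rectifiability criterion}, which says that a normal current is integer rectifiable if and only if almost every $0$-dimensional slice (with respect to any Lipschitz map to $\mathbb R^{k-1}$) is integer rectifiable. Once you have the slicing lemma and the commutation identity $\langle \partial T,\pi,t\rangle=-\partial\langle T,\pi,t\rangle$, integer rectifiability of $\partial T$ follows immediately from this criterion, and the absolute continuity and rectifiability of $\|\partial T\|$ are packaged into that criterion rather than argued separately. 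Your plan is correct in spirit; the criterion just streamlines the bookkeeping you describe in the last paragraph.
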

In particular, $\I_\bullet^\cpt(X) := \N_\bullet^\cpt(X) \cap \I_\bullet(X)$ is also a chain complex.
Note that the homologies of normal currents and integral currents (with compact support) depend on metric structure, in general. 

For a Lipschitz $k$-simplex $f : \triangle^k \to X$ in a metric space $X$, we obtain an integral $k$-current $[f] \in \I_k^\cpt(X)$ with compact support as follows: 
\[
[f] := f_\sharp \jump{\chi_{\triangle^k}}.
\]
The $\mathbb Z$-linear extension of $[\,\cdot\,]$ gives a map 
\[
[\,\cdot\,] : C_k^\Lip(X) \to \I_k^\cpt(X), 
\]
where $C_k^\Lip(X)$ is the subgroup of the usual group $C_k(X)$ of singular chains based on singular Lipschitz simplices. 
Further, $[\, \cdot\,]$ is a chain map, that is, $[\partial f] = \partial [f]$ holds. 
To state the following theorem, we set $C_\bullet^\Lip(X;\mathbb R) = C_\bullet^\Lip(X) \otimes \mathbb R$ which is a free vector space based on singular Lipschitz simplices.

The author proved that the homlogies consisting of suitable classes of currents are topological invariants for nice metric spaces: 
\begin{theorem}[cf. \cite{M:int}] \label{thm:int}
Let $X$ be a metric space which is locally Lipschitz contractible in the sense that it satisfies that for any $x \in X$ and $R > 0$, there is an $r > 0$ such that $B_r(x)$ is Lipschitz contractible in $B_R(x)$.
Then, the inclusion $C_\bullet^\Lip(X) \to C_\bullet(X)$ and the map $[\,\cdot\,]: C_\bullet^\Lip(X) \to \I_\bullet^\cpt(X)$ induce isomorphisms among the induced homologies: 
\[
H_\ast(X;\mathbb Z) \xleftarrow{\cong} H_\ast^\Lip(X;\mathbb Z) \xrightarrow{\cong} H_\ast(\I_\bullet^\cpt(X)).
\]

In addition, if $X$ is an ANR, then the canonical morphisms $C_\bullet(X;\mathbb R) \leftarrow C_\bullet^\Lip(X;\mathbb R) \to \N_\bullet^\cpt(X)$ induce isomorphisms amog the induced homologies. 
\end{theorem}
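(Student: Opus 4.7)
The plan is to establish the two comparison isomorphisms by a ``local-to-global'' strategy: first verify that on Lipschitz-contractible balls all three theories are acyclic and the comparison maps are isomorphisms, and then glue via a Mayer--Vietoris / acyclic-cover argument. The hypothesis of local Lipschitz contractibility (resp.\ the ANR condition) is tailored precisely to run this in each of the three complexes simultaneously.

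For the local step, fix a Lipschitz contraction $H : B_r(x) \times [0,1] \to B_R(x)$ and use it as a cone operator. On singular Lipschitz chains, $H$ induces the standard chain null-homotopy of the identity, so $H_k^\Lip(B_r(x);\mathbb Z) = 0$ for $k \ge 1$ in the range where the image lies in $B_R(x)$. For integral currents, the same $H$ yields the Ambrosio--Kirchheim cone: given an integral cycle $T \in \I_k^\cpt(B_r(x))$, the current $H_\sharp(\jump{\chi_{[0,1]}} \times T)$ is integral with compact support, and its boundary equals $T$ minus a current concentrated at a point. This gives acyclicity of $\I_\bullet^\cpt$ on Lipschitz-contractible balls, and the cone is compatible with the map $[\,\cdot\,]$ applied to the cone of a Lipschitz simplex, so the comparison square commutes up to chain homotopy on the ball.

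For the global step, I would iterate to build a locally finite cover $\{U_\alpha\}$ such that every finite non-empty intersection is again Lipschitz-contractible in some larger ball; the hypothesis provides this by repeatedly shrinking radii. Each of the three homology theories satisfies a Mayer--Vietoris sequence for open covers -- for singular and singular Lipschitz this is classical (the subdivision operator works in the Lipschitz setting), while for $\I_\bullet^\cpt$ one obtains Mayer--Vietoris from the restriction operator $T \mapsto T \lfloor A$ combined with a slicing argument to decompose a cycle supported in $U \cup V$. Running the associated cover spectral sequence in all three theories, the local step shows the comparison maps are isomorphisms on the $E^1$-page, hence on the abutments. Alternatively, exhaust $X$ by $\bigcup_{i \le N} U_{\alpha_i}$ and induct using the five lemma.

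For the ANR / normal current part with real coefficients, the argument is parallel: any ANR admits local Lipschitz retractions through its embedding in a Banach space (Dugundji), so locally it is Lipschitz-contractible in a suitable sense, and the cone construction applies verbatim to normal currents because it only uses $\partial$ and push-forward, not integrality. The main obstacle I expect is the Mayer--Vietoris sequence for $\I_\bullet^\cpt$ and $\N_\bullet^\cpt$: one must produce, for any cycle $T$ supported in $U \cup V$, a decomposition $T = T_U + T_V + \partial S$ with $T_U, T_V$ of compact support in $U, V$ respectively, and with $S$ integral (resp.\ normal) of compact support. This is delivered by slicing $T$ with a Lipschitz function $\varphi$ separating $\operatorname{supp}(T) \setminus V$ from $\operatorname{supp}(T) \setminus U$ and applying the coarea-type formula for currents; verifying that the pieces stay in the correct class (integral, with compact support) is the principal technical point and is where the purely metric definitions of $\I_\bullet$ and $\N_\bullet$ must be used carefully.
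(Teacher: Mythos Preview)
The paper does not prove this theorem; it is quoted from the author's separate preprint \cite{M:int} (whose title indicates a cosheaf-theoretic argument), and the surrounding text only sketches alternative proofs \emph{in the special case of Alexandrov domains}: via the good coverings of \cite{MY:good}, or via Mongodi's verification of the Eilenberg--Steenrod axioms for current homology together with the ANR property.

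Your local step is fine, and the overall ``local-to-global via Mayer--Vietoris'' philosophy is the right one, but there is a genuine gap in the global step. You assert that by ``repeatedly shrinking radii'' the hypothesis yields a cover whose \emph{finite intersections} are again Lipschitz contractible. Local Lipschitz contractibility does not give this: it controls small balls, not intersections of balls, and an intersection $B_r(x)\cap B_{r'}(x')$ need not be Lipschitz contractible (nor even connected) in any ambient ball. The existence of such a good cover is exactly the content of the nontrivial theorem \cite{MY:good} for Alexandrov spaces, and the paper explicitly flags that this is what makes the Mayer--Vietoris/\v Cech comparison go through \emph{in that case}. For a general locally Lipschitz contractible metric space you have no good cover, so your spectral-sequence/five-lemma induction does not get started. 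The cosheaf approach of \cite{M:int} is designed precisely to avoid this: one shows that each of the three chain complexes is a flabby (homotopy) cosheaf on $X$ and that the comparison maps are local quasi-isomorphisms on arbitrarily small balls; cosheaf theory then globalizes without any hypothesis on intersections.

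A second gap is in the ANR addendum. Your claim that ``any ANR admits local Lipschitz retractions through its embedding in a Banach space (Dugundji)'' is not correct: the ANR property is purely topological and produces continuous, not Lipschitz, retractions. The paper's route here is different --- it combines the local Lipschitz contractibility hypothesis (still assumed) with Mongodi's result that normal-current homology satisfies the Eilenberg--Steenrod axioms, and then invokes uniqueness of ordinary homology on ANRs. Your cone construction for $\N_\bullet^\cpt$ is valid on Lipschitz-contractible balls, but you still face the same missing good-cover step to globalize.
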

The fact that $H_\ast(X;\mathbb Z) \leftarrow H_\ast^\Lip(X;\mathbb Z)$ is an isomorphism in the situation of the theorem was also proved in \cite{Y}. 
Alexandrov domains are proved to satisfy the property in the assumption of the theorem (\cite{MY:llc}).
However, for Alexandrov domains, Theorem \ref{thm:int} are proved in several ways. 
For instance, using good coverings (having strong Lipschitz contraction property) in the sense of \cite{MY:good}, one can easily compare the homology of currents with the \v Cech homology of Alexandrov domains. 
In another way, Mongodi (\cite{Mo}) proved that the homologies of currents satisfy the axioms of the ordinary homology theory in suitable coefficients. 
Using it and the fact that Alexandrov domains are ANR, we can also give a proof of Theorem \ref{thm:int} for Alexandrov domains. 

\begin{proof}[Proof of Theorem \ref{thm:current}]
This follows from Theorems \ref{thm:main thm}, \ref{thm:int}, Corollary \ref{cor:Z=R} and the fact that if a metric space $Y$ has the $(n+1)$-dimensional Hausdorff measure zero, then $\N_{n+1}^\cpt(Y) = 0$ (\cite{AK}).
\end{proof}

\subsection{Mass on singular Lipschitz homology}
Gromov considered the mass of elements of the singular homology of Riemannian manifolds (\cite{Gr}). 
Yamaguchi generalized it for a metric space (\cite{Y}). 
The author will define a variant of the mass, to use an advantage of a theory of Kirchheim (\cite{Kir}). 

Let $X$ be a metric space. 
Let $A \subset \mathbb R^k$ be a Borel set and $f : A \to X$ a Lipschitz map. 
For $x \in A$ and $u \in \mathbb R^{k}$, we consider the value
\[
\mathrm{md}(f,x)(u) := \lim_{t \to 0+ \text{ s.t. } x+tu \in A} \frac{d(f(x+tu),f(x))}{t}
\]
whenever it exists. 
Kirchheim (\cite{Kir}) proved that, the functional $\mathrm{md}(f,x) : \mathbb R^k \to \mathbb R$ is actually well-defined and becomes a seminorm, for almost every $x \in A$, which is called 
the metric derivative of $f$ at $x$. 

For a general seminorm $s$ on $\mathbb R^k$, its Jacobian was defiend by 
\[
\mathscr J(s) := (\Hau^{k-1}(\mathbb S^{k-1})) \left( \int_{\mathbb S^{k-1}} s(u)^{-k}\, d \Hau^{k-1}(u) \right)^{-1}.
\]
Here, $\mathbb S^{k-1}$ denotes the unit sphere with respect to the Euclidean metric. 

Using them, we define the {\it mass} of $f$ as follows: 
\[
\mass(f) := \mass(f;A) := \int_A \mathscr J(\mathrm{md}(f,x))\, d \Hau^k(x).
\]
By Kirchheim's coarea formula, we obtain 
\begin{equation} \label{eq:coarea}
\mass(f) = \Hau^k(f(A))
\end{equation}
if $f$ is injective. 
So, it is reasonable to call the value $\mass(f)$ the mass of $f$.

For a singular Lipshitz chain $x = \sum_i a_i f_i \in C_k^\Lip(X;\mathbb R)$, we set 
\[
\mass(x) = \sum_i |a_i| \mass(f_i). 
\]
For an element $\alpha \in H_k^\Lip(X;\mathbb R)$, we set 
\[
\mass(\alpha) := \inf_{z \in \alpha} \mass(z).
\]
Then, $\mass(\,\cdot\,)$ becomes a seminorm on $H_k^\Lip(X;\mathbb R)$.

\begin{lemma} \label{lem:mass-Lip}
Let $X$ be a metric space. 
Then, $[\,\cdot\,] : H_k^\Lip(X;\mathbb R) \to H_k(\N_\bullet^\cpt(X))$ is $\sqrt{k}^k$-Lipschitz with respect to the masses.
Here, we use the convension $0^0=1$.
\end{lemma}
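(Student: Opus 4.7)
The plan is to bound $\M([f])$ for a single Lipschitz simplex $f:\triangle^k\to X$ pointwise on $\triangle^k$, extend $\mathbb R$-linearly along chains, and then pass to homology classes. As a first step I would unfold, for $h\in\mathcal B^\infty(X)$ and $g_1,\dots,g_k\in\Lip(X)$ with $\Lip(g_i)\le 1$, the definitions of the push-forward and of $\jump{\chi_{\triangle^k}}$ to obtain
\[
[f](h,g_1,\dots,g_k)=\int_{\triangle^k}(h\circ f)(x)\,\det D(g_1\circ f,\dots,g_k\circ f)(x)\,d\Hau^k(x).
\]

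Next I would fix a point $x\in\triangle^k$ at which $f$ is metrically differentiable (a.e.\ by Kirchheim) and at which each real-valued Lipschitz function $g_i\circ f$ is classically differentiable (a.e.\ by Rademacher). Setting $s:=\mathrm{md}(f,x)$ and $L_i(x):=D(g_i\circ f)(x)\in(\mathbb R^k)^*$, the bound $\Lip(g_i)\le 1$ forces $|L_i(x)(u)|\le s(u)$ for every $u\in\mathbb R^k$. Assembling the rows, the matrix $L(x)=(L_1(x),\dots,L_k(x))$ satisfies $|L(x)u|\le\sqrt{k}\,s(u)$ in the Euclidean norm, and hence $L(x)(B^s)\subset\sqrt{k}\,B^k$ where $B^s:=\{u:s(u)\le 1\}$. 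A Lebesgue volume comparison, together with the identity $\mathscr J(s)=\omega_k/\mathcal L^k(B^s)$ that follows immediately from the definition of $\mathscr J$ and polar coordinates, then gives
\[
|\det L(x)|\le\sqrt{k}^k\,\mathscr J(\mathrm{md}(f,x)).
\]
This elementary John-type volume inequality is the sole source of the constant $\sqrt{k}^k$.

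Substituting the pointwise bound into the mass characterization of Proposition~\ref{prop:mass}, for any admissible family $(h_i,g_{i1},\dots,g_{ik})$ with $\sum_i|h_i|\le 1$ and $\sup_{i,j}\Lip(g_{ij})\le 1$, summing the integral representation and using $\sum_i|h_i\circ f|\le 1$ would give $\sum_i[f](h_i,g_{i1},\dots,g_{ik})\le\sqrt{k}^k\int_{\triangle^k}\mathscr J(\mathrm{md}(f,x))\,d\Hau^k=\sqrt{k}^k\,\mass(f)$, whence $\M([f])\le\sqrt{k}^k\,\mass(f)$. An $\mathbb R$-linear extension along chains, combined with subadditivity of $\M$, yields the same inequality for any $z=\sum a_if_i\in C_k^\Lip(X;\mathbb R)$. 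Since $[\,\cdot\,]$ is a chain map, a Lipschitz cycle $z$ representing $\alpha\in H_k^\Lip(X;\mathbb R)$ pushes to a normal cycle representing $[\alpha]$, and taking the infimum over representatives yields $\M([\alpha])\le\sqrt{k}^k\,\mass(\alpha)$.

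The main obstacle I anticipate is the careful justification of the integral representation of $[f](h,g_\bullet)$ for bounded Borel $h$, which relies on the Borel extension of Ambrosio--Kirchheim currents in the first slot and on identifying $\jump{\chi_{\triangle^k}}$ with integration of $\det D g_\bullet$ against Lebesgue measure, together with the measurability of both Jacobians as functions of $x$. The genuinely geometric content is concentrated in the Euclidean volume comparison above, which is short and pins down the constant $\sqrt{k}^k$ cleanly.
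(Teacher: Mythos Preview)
Your proposal is correct and follows essentially the same route as the paper: reduce to a single simplex, prove the pointwise Jacobian bound $|\det D(g\circ f)(x)|\le\sqrt{k}^k\,\mathscr J(\mathrm{md}(f,x))$, and then apply the mass characterization of Proposition~\ref{prop:mass}. The paper's proof is quite terse and simply asserts the inequality $\det(D(g_i\circ f,x))\le\Lip(g_i)^k\,\mathscr J(\mathrm{md}(f,x))$ (with $\Lip(g_i)$ taken in the Euclidean sense, hence $\le\sqrt{k}$ when each component is $1$-Lipschitz) without further justification; your volume-comparison argument via $L(x)(B^s)\subset\sqrt{k}\,B^k$ and the identity $\mathscr J(s)=\omega_k/\mathcal L^k(B^s)$ supplies exactly the missing step.
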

\begin{proof}
It suffices to prove that 
\begin{equation} \label{eq:desired}
\sqrt{k}^k\, \mass(f) \ge \M([f])
\end{equation}
holds for any singular Lipschitz $k$-simplex $f$ in $X$. 
Let $h_i \in \mathcal B^\infty(X)$ and $g_i \in \Lip(X,\mathbb R^k)$ satisfy $\sum_i |h_i| \le 1$ and $\Lip(g_{ij}) \le 1$. 
Then, we have 
\[
\det(D(g_i \circ f,x)) \le 
\Lip(g_i)^k \mathscr J(\mathrm{md}(f,x))
\] 
for almost all $x \in \triangle^k$.
Therefore, we obtain 
\begin{align*}
\sum_i [f](h_i, g_i) 
&\le \sqrt{k}^k\, \sum_i \int_{\triangle^k} h_i \circ f \mathscr J(\mathrm{md}(f,x))\, d \Hau^k(x) \\
&\le \sqrt{k}^k\, \mass(f).
\end{align*}
From this and Proposition \ref{prop:mass}, we have the desired inequality \eqref{eq:desired}.
\end{proof}

The following is an alternative claim of \cite[Theorem 0.1]{Y} which will be proved in Appendix \ref{sec:mass}. 
\begin{theorem}[cf.\! \cite{Y}] \label{thm:mass}
Let $M$ be a closed orientable Alexandrov $n$-space. 
Then, the mass of a fundamental class $[M]_{\Lip} \in H_n^\Lip(M;\mathbb Z)$ is equal to the Hausdorff $n$-measure of $M$: 
\[
\mass([M]_\Lip) = \Hau^n(M).
\]
\end{theorem}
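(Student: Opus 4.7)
The proof splits naturally into the two opposite inequalities. For the lower bound $\mass([M]_\Lip) \geq \Hau^n(M)$, let $z = \sum_i a_i f_i \in C_n^\Lip(M;\mathbb R)$ be any representative of the image of $[M]_\Lip$ in $H_n^\Lip(M;\mathbb R)$. Kirchheim's area formula applied to each Lipschitz simplex $f_i : \triangle^n \to M$ gives $\mass(f_i) = \int_M N(f_i, y)\, d\Hau^n(y)$, where $N(f_i, y) = \#f_i^{-1}(y)$, so Fubini yields
\[
\mass(z) = \int_M \sum_i |a_i|\, N(f_i, y)\, d\Hau^n(y).
\]
For $\Hau^n$-almost every $y \in M_\top$, each $f_i$ is differentiable at every preimage point (via a local Euclidean chart around $y$ inside the manifold part), so the local degree $\deg_z(y) = \sum_i a_i \deg(f_i, y) \in \mathbb R$ is well-defined; by naturality of the isomorphism in Theorem \ref{thm:int} together with orientability, $|\deg_z(y)| = 1$. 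Since $|\deg_z(y)| \leq \sum_i |a_i|\, N(f_i, y)$, integration gives $\mass(z) \geq \Hau^n(M_\top) = \Hau^n(M)$, the last equality being forced by Lemma \ref{lem:codim}, which makes $\Hau^n(M \setminus M_\top) = 0$.

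For the upper bound $\mass([M]_\Lip) \leq \Hau^n(M)$, the plan is to build, for every $\epsilon > 0$, a Lipschitz cycle $z_\epsilon$ representing $[M]_\Lip$ with $\mass(z_\epsilon) \leq (1+\epsilon)\Hau^n(M)$. The tools are: (i) the set $M_\delta$ of $\delta$-regular points is open and satisfies $\Hau^n(M \setminus M_\delta) = 0$ for every $\delta > 0$; (ii) the Otsu-Shioya $(1+\eta(\delta))$-bi-Lipschitz coordinate charts from small metric balls in $M_\delta$ onto open subsets of $\mathbb R^n$, with $\eta(\delta) \to 0$ as $\delta \to 0$; and (iii) the good covering Theorem \ref{thm:good}. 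Fix $\epsilon > 0$, choose $\delta$ so that $(1+\eta(\delta))^n \leq 1 + \epsilon$, and pick a good covering $\mathcal U$ of $M$ whose members over $M_\delta$ refine an Otsu-Shioya atlas and whose members meeting $M \setminus M_\delta$ are small conical neighborhoods from Theorem \ref{thm:stab}. Pull back a sufficiently fine Euclidean triangulation through each chart, producing Lipschitz $n$-simplices whose total mass is at most $(1+\eta(\delta))^n \Hau^n(M_\delta) \leq (1+\epsilon)\Hau^n(M)$ by \eqref{eq:coarea}; simplices supported in $M \setminus M_\delta$ have mass $0$ since $\Hau^n(M \setminus M_\delta) = 0$. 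Finally, assemble the simplices into a chain and kill the boundary defect over chart overlaps and over $M \setminus M_\delta$ by inductive cone-fillings inside the conical intersections of $\mathcal U$.

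The principal obstacle is to make the assembled chain an honest cycle representing the class $[M]_\Lip$, rather than merely a chain whose boundary has small mass. The natural route is to triangulate the nerve of $\mathcal U$ combinatorially first, transport the triangulation compatibly through the bi-Lipschitz charts, and then use the contractibility of finite conical intersections in $\mathcal U$ to define cone-fillings inductively, parallel to the compatibility argument in the proof of Theorem \ref{thm:fundam class at K}. Global orientability supplies sign-consistent choices so that the resulting $z_\epsilon$ lies in the correct class in $H_n^\Lip(M;\mathbb Z)$; the extra mass introduced by the cone-fillings is controlled by the $\Hau^n$-measures of conical members of $\mathcal U$ meeting the (measure-zero) singular set, which can be made arbitrarily small by refining $\mathcal U$ near that set.
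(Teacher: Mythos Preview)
Your lower bound argument is sound and is essentially a streamlined version of what the paper does: both use that the signed preimage count equals $\pm 1$ almost everywhere, so $\sum_i |a_i| N(f_i,y)\ge 1$ and integration gives $\mass(z)\ge \Hau^n(M)$. The paper first perturbs the simplices to be smooth embeddings (Lemmas~\ref{lem:00}--\ref{lem:01}) before running the degree argument, precisely to avoid the subtleties of pointwise degree for merely Lipschitz maps; your direct version works but you should say a word about why the signed count is defined and equals the topological degree $\Hau^n$-a.e.

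The upper bound is where the approaches diverge, and yours has a real gap. The paper does \emph{not} construct a good cycle. Instead it starts from a near-minimal representative $f$, perturbs it so that most simplices are bi-Lipschitz embeddings, and then runs a \emph{cancellation argument} (Sublemma~\ref{sublem:sum}): wherever an orientation-reversing simplex overlaps an orientation-preserving one, replacing the pair by a single positively oriented copy produces a homologous chain of strictly smaller mass. Near-minimality of $f$ forces the total mass of orientation-reversing simplices to be $<\epsilon$. With those removed, the degree-$1$ condition shows the remaining positive simplices \emph{tile} $M$ outside a set of small measure, so $\mass(f)\le \Hau^n(M)+O(\epsilon)$.

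Your constructive route---pull back Euclidean triangulations through Otsu--Shioya charts and cone off the mismatches---does not come with the mass control you claim. The mass of a cone-filling of an $(n-1)$-chain $c$ inside a conical set $U$ is of order $\mathrm{diam}(U)\cdot \mass(c)$, not $\Hau^n(U)$. The boundaries you must fill live on chart overlaps throughout $M_\delta$, not only near the singular set, and their total $(n-1)$-mass is a priori of order $\Hau^{n-1}(\text{overlap skeleton})$, which does not go to zero as you refine $\mathcal U$. Making the cone-fillings small would require coherent triangulations across charts---essentially a triangulation of $M$, which is not known to exist for Alexandrov spaces. The paper's cancellation argument sidesteps this entirely by never building a cycle from scratch; that is the idea you are missing.
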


At the end of this section, we give conjectures: 
\begin{conjecture} \upshape
$[\,\cdot\,] : C_\bullet^\Lip(X) \to \I_\bullet^\cpt(X)$ is an actually $1$-Lipschitz map for an arbitrary metric space $X$. 
Further, when $X$ is locally Lipschitz contractible, the induced map $[\,\cdot\,] : H_\ast^\Lip(X;\mathbb Z) \to H_\ast(\I_\bullet^\cpt(X))$ is isometric. 
\end{conjecture}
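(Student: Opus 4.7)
The plan is to establish the pointwise identity $\M([f]) = \mass(f)$ for each singular Lipschitz simplex $f : \triangle^k \to X$ and then derive the full conjecture from it. Since $\jump{\chi_{\triangle^k}}$ is a rectifiable $k$-current in $\mathbb R^k$ of mass $\Hau^k(\triangle^k)$, the Ambrosio--Kirchheim area formula for push-forwards of rectifiable currents (\cite{AK}) identifies the mass measure of $[f] = f_\sharp \jump{\chi_{\triangle^k}}$ as $f_\sharp(\mathscr J(\mathrm{md}(f,\cdot))\,\chi_{\triangle^k} \Hau^k)$, whose total mass equals $\int_{\triangle^k}\mathscr J(\mathrm{md}(f,x))\,d\Hau^k = \mass(f)$. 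The factor $\sqrt{k}^k$ in Lemma \ref{lem:mass-Lip} was produced by bounding $\det D(g\circ f)$ through coordinate-wise $1$-Lipschitz scalar functions $g_{ij}$, which only assemble into a $\sqrt k$-Lipschitz vector map; the area-formula route bypasses Proposition \ref{prop:mass} entirely, so no such loss arises. Having $\M([f])=\mass(f)$ on simplices, the triangle inequality for $\M$ gives
\[
\M\bigl([\textstyle\sum_i a_i f_i]\bigr) \le \sum_i |a_i|\,\M([f_i]) = \sum_i |a_i|\,\mass(f_i) = \mass\bigl(\textstyle\sum_i a_i f_i\bigr),
\]
which is the $1$-Lipschitz assertion of the conjecture.

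For the induced map on homology, the inequality $\M([\alpha]) \le \mass(\alpha)$ follows immediately from the chain-level bound by passing to the infima over representatives. The reverse inequality is the delicate one: given any normal cycle $T \in \N_k^{\mathrm{cpt}}(X)$ with $[T]$ equal to the image of $\alpha$ under $[\,\cdot\,]_\ast$, and given $\varepsilon>0$, one must produce a singular Lipschitz cycle $z \in \alpha$ together with a normal $(k+1)$-chain bounding $[z]-T$ inside $\N_{k+1}^{\mathrm{cpt}}(X)$, such that $\mass(z) \le \M(T)+\varepsilon$. The strategy is a Lipschitz polyhedral approximation of $T$: use the local Lipschitz contractibility to choose a good covering $\{U_i\}$ of a neighborhood of $\supp T$ by uniformly Lipschitz-contractible open sets (invoking Theorem \ref{thm:good} in the Alexandrov case, or the hypothesis in general), slice $T$ by a Lipschitz partition-of-unity-type subdivision adapted to this covering, and replace each piece by its ``cone'' built from the Lipschitz contractions of the $U_i$. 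The coning construction produces both the approximating Lipschitz chain and the explicit normal $(k+1)$-chain realizing the homology between $T$ and $[z]$, while the slicing inequalities of Ambrosio--Kirchheim control the mass loss by $\varepsilon$.

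The hard part will be this last approximation step. A Federer--Fleming style deformation theorem producing Lipschitz polyhedral approximations of normal currents with simultaneously controlled mass and controlled homology class is classical in Euclidean space, but in a general locally Lipschitz contractible metric target it is not available as a black box; one needs careful combinatorial bookkeeping of the simplicial masses along the coning homotopy and a uniform choice of contraction scale. I would first treat the case when $X$ is compact, where a uniform local Lipschitz contraction radius exists and the slicing can be performed once and for all, and then attempt to localize via a compact exhaustion together with a stopping-time argument on the mass measure $\|T\|$. Verifying that the resulting mass estimate is sharp in the limit $\varepsilon \to 0$, independent of the combinatorial complexity of the subdivision, is what I expect to require genuinely new input beyond the arguments already present in the paper.
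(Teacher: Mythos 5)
You should first be aware that this statement is labelled a \emph{Conjecture} in the paper: the author proves only the weaker $\sqrt{k}^{\,k}$-Lipschitz bound (Lemma \ref{lem:mass-Lip}) and supplies no proof of the sharp constant or of the isometry on homology, so there is no proof in the paper to compare against and your argument must stand on its own. It does not. The pointwise identity $\M([f])=\mass(f)$ on which everything rests is not what the Ambrosio--Kirchheim representation of mass gives. For $T=f_\sharp\jump{\chi_{\triangle^k}}$ with $f$ injective, \cite{AK} yields $\|T\|(B)=\int_{B\cap f(\triangle^k)}\lambda\,d\Hau^k$, where $\lambda(y)$ is the \emph{area factor} of the approximate tangent norm of $f(\triangle^k)$ at $y$; this factor lies in $[k^{-k/2},\,2^k/\omega_k]$ and equals $1$ only when the tangent norm is Euclidean, whereas $\mass(f)=\Hau^k(f(\triangle^k))$ by Kirchheim's area formula. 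The two quantities therefore differ in general, and in the direction that kills the $1$-Lipschitz claim. Concretely, take $X=(\mathbb R^2,\ell^\infty)$ and $f$ the identity onto the standard $2$-simplex $\triangle$. The coordinate functions are $1$-Lipschitz for $\ell^\infty$, so Proposition \ref{prop:mass} gives $\M([f])\ge\int_{\triangle}\det(I)\,dx=\mathcal L^2(\triangle)$, while $\mathscr J(\|\cdot\|_\infty)=2\pi\bigl(\int_{\mathbb S^1}\max(|u_1|,|u_2|)^{-2}\,d\Hau^1\bigr)^{-1}=\pi/4$, so $\mass(f)=\tfrac{\pi}{4}\mathcal L^2(\triangle)<\M([f])$. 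Thus your claimed identity fails, the area-formula route does not remove the dimensional constant (the constant it does give is $2^k/\omega_k$, strictly between $1$ and $\sqrt{k}^{\,k}$ for $k\ge 2$), and this example in fact appears to contradict the first assertion of the conjecture in the stated generality of arbitrary metric spaces, so no correct argument can deliver what you claim by any route.

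For the homology statement you correctly isolate the hard direction: given a normal cycle $T$ representing the image of $\alpha$ and $\varepsilon>0$, one must produce a Lipschitz cycle $z\in\alpha$ with $[z]-T$ a boundary in $\N_\bullet^\cpt(X)$ and $\mass(z)\le\M(T)+\varepsilon$. But what you offer there is a program, not a proof. The Federer--Fleming-type deformation theorem you would need --- Lipschitz polyhedral approximation of normal currents in a locally Lipschitz contractible metric space with simultaneous control of the mass and of the homology class --- is precisely the missing ingredient, and you acknowledge as much. Coning against local Lipschitz contractions controls supports and homology classes, but the Lipschitz constants of the contractions enter multiplicatively at every stage of the subdivision, so the assertion that ``the slicing inequalities control the mass loss by $\varepsilon$'' is unsubstantiated; no estimate is given that makes the approximation mass-efficient. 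As it stands, neither half of the conjecture is proved, and the first half is proved incorrectly by an argument whose conclusion admits an apparent counterexample.
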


\begin{conjecture} \upshape
For a singular Lipschitz $k$-simplex $f :\triangle^k \to X$ in a metric space $X$, Yamaguchi (\cite{Y}) gave the mass of it as follows.
\[
\widetilde{\mass}(f) := \sup_{T} \sum_{\tau \in T} \Hau^k(f(\tau)),
\]
where the supremum runs over all triangulation $T$ of $\triangle^k$. 
Then, the two masses in the sense of Yamaguchi and ourselves coincide.
\end{conjecture}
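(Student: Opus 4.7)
The plan is to prove the equality $\mass(f) = \widetilde{\mass}(f)$ by establishing the two inequalities separately. The forward direction $\widetilde{\mass}(f) \le \mass(f)$ is the routine half. For any triangulation $T$ of $\triangle^k$ and any $k$-simplex $\tau \in T$, Kirchheim's area formula gives
\[
\Hau^k(f(\tau)) \;\le\; \int_{f(\tau)} N(f|_\tau, y)\, d\Hau^k(y) \;=\; \int_\tau \mathscr{J}(\mathrm{md}(f,x))\, d\Hau^k(x) \;=\; \mass(f;\tau),
\]
where $N(f|_\tau, \cdot)$ is the multiplicity function, and the first inequality uses $N \ge 1$ on $f(\tau)$. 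Summing over $\tau \in T$, which partitions $\triangle^k$ up to a null set, gives $\sum_\tau \Hau^k(f(\tau)) \le \mass(f)$, and passing to the supremum establishes the inequality.

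For the reverse inequality $\mass(f) \le \widetilde{\mass}(f)$, I would invoke Kirchheim's structure theorem for Lipschitz maps into metric spaces: there is a Borel decomposition $\triangle^k = N \cup \bigsqcup_{i=1}^\infty A_i$ with $\Hau^k(N)=0$ such that each restriction $f|_{A_i}$ is bi-Lipschitz onto its image and, after a further refinement, the images $\{f(A_i)\}$ are pairwise essentially $\Hau^k$-disjoint in $X$. On each $A_i$ the multiplicity is $1$, so the area formula collapses to $\mass(f; A_i) = \Hau^k(f(A_i))$, whence $\mass(f) = \sum_i \Hau^k(f(A_i))$.

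The task is then to realize this measure-theoretic decomposition by an actual triangulation up to arbitrarily small error. Given $\varepsilon > 0$, truncate to finitely many pieces $A_1,\dots,A_M$ capturing all but $\varepsilon$ of $\mass(f)$, and use inner regularity to replace each $A_j$ by a compact subset. Then fix $\eta > 0$ and choose a sufficiently fine iterated barycentric subdivision $T$ so that, for each $j$, the union of simplices $\tau \in T$ that are $(1-\eta)$-concentrated in $A_j$ (in the sense $\Hau^k(\tau \cap A_j) \ge (1-\eta)\Hau^k(\tau)$) exhausts $A_j$ up to measure $\eta$; this is a consequence of Lebesgue's density theorem applied to the indicator functions $\chi_{A_j}$. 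For a simplex $\tau$ concentrated in $A_j$, the bi-Lipschitz property yields $\Hau^k(f(\tau)) \ge \Hau^k(f(\tau \cap A_j)) = \mass(f; \tau \cap A_j)$. Summing over such $\tau$ and over $j$, and using the essential disjointness of the $f(A_j)$ to rule out over-counting, recovers $\sum_\tau \Hau^k(f(\tau)) \ge \mass(f) - C(\varepsilon + \Lip(f)^k \eta)$. Letting $\eta, \varepsilon \to 0$ gives $\widetilde{\mass}(f) \ge \mass(f)$.

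The main obstacle is the combinatorial-measure-theoretic matching of the triangulation to the Kirchheim pieces: simplices straddling the boundaries between distinct $A_j$'s must be controlled, and one must verify that contributions from the images of overlapping pieces do not cause $\sum_\tau \Hau^k(f(\tau))$ to exceed $\widetilde{\mass}(f)$ once essential disjointness is invoked. A careful selection argument using Vitali or Besicovitch covering of the $A_j$'s by small simplicial cells from the subdivision, combined with uniform density estimates, should suffice; the compactness of $\triangle^k$ and the finiteness of the truncation $M$ make the required uniformity attainable.
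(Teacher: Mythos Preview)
The paper does not prove this statement: it is explicitly labelled a \textbf{Conjecture}, and the only remark offered is that $\mass(f)=\widetilde{\mass}(f)$ holds when $f$ is injective, by the coarea formula \eqref{eq:coarea}. So there is no ``paper's own proof'' to compare against; you are attempting to settle an open problem in the manuscript.

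That said, two comments on your sketch. Your inequality $\widetilde{\mass}(f)\le\mass(f)$ via Kirchheim's area formula is clean and correct. In the reverse direction, however, the assertion that the images $f(A_i)$ can be refined to be pairwise essentially $\Hau^k$-disjoint is false in general (take the tent map $t\mapsto |t-\tfrac12|$ on $[0,1]$: any bi-Lipschitz decomposition of the domain will produce two pieces with coincident image), and, more to the point, it is not needed anywhere in your argument. The identity $\mass(f)=\sum_i \Hau^k(f(A_i))$ follows already from injectivity of each $f|_{A_i}$, and the ``no over-counting'' step is governed by disjointness of the $A_j$ in the \emph{domain} (a simplex that is $(1-\eta)$-concentrated in $A_j$ with $\eta<\tfrac12$ cannot be so concentrated in any other $A_{j'}$), not in the target. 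Dropping the disjoint-images claim actually streamlines the argument.

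The genuine gap is the passage from Lebesgue density to an honest \emph{triangulation} realizing the decomposition. The density theorem controls averages over balls, and transferring this to simplices of an iterated barycentric subdivision requires uniform bounded eccentricity of those simplices; this is not automatic and should be checked or replaced by a subdivision scheme (e.g.\ Freudenthal--Kuhn) for which it is known. Alternatively, a Vitali covering by small simplices followed by a completion to a triangulation, in the spirit of the argument in Appendix~\ref{sec:mass} (see the proof of Lemma~\ref{lem:01}), would close this. Until that step is made rigorous, the reverse inequality remains a plausible outline rather than a proof.
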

Remark that, by \eqref{eq:coarea}, $\mass(f)=\widetilde{\mass}(f)$ holds when $f$ is injective.

\subsection{Filling radius inequality} \label{subsec:fill rad}

The filling radius was originally defined by Gromov (\cite{Gr}) for Riemannian manifolds. 
Ambrosio and Katz modified it for metric currents as follows. 
\begin{definition}[\cite{AKt}] \upshape \label{def:fill rad}
Let $X$ be a compact metric space and $T$ an integral $n$-current in $X$ with $\partial T=0$.
Then, the infimum of $r >0$ such that 
for any pair $(B,\iota)$ where $B$ is a Banach space and $\iota : X \to B$ is an isometric embedding, there exists $S \in \I_{n+1}^\cpt(B)$ satisfying that $\partial S = \iota_\sharp T$ and $\supp(S)$ is contained in $B_r(\supp(\iota_\sharp T))$, 
is called the {\it filling radius} of $T$ in $X$ and is denoted by $\fillrad{T;X}$.
\end{definition}
Recall that any metric space admits an isometric embedding in a Banach space, for instance, the Kuratowski embedding. 

Ambrosio and Katz proved the following filling radius inequality in terms of metric currents: 
\begin{theorem}[\cite{AKt}] \label{thm:fill rad}
There is an explicit constant $C(n)$ depending only on $n$ such that 
\[
\fillrad{T;X} \le C(n) \M(T)^{1/n}
\]
holds, for any $T \in \I_n(X)$ and any compact metric space $X$. 
\end{theorem}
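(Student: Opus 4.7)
The plan is to reduce the inequality to the Federer--Fleming-type isoperimetric inequality for integral currents in a Banach space, together with a Gromov-style slicing-and-iteration argument that pushes a given filling close to $\iota(X)$.

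First I would fix any isometric embedding $\iota : X \to B$ into a Banach space and set $T_B := \iota_\sharp T$; since $\partial T = 0$ and push-forward commutes with $\partial$, $T_B$ is still a cycle. Next I would invoke the isoperimetric inequality for integer rectifiable currents in a Banach space (available in the Ambrosio--Kirchheim framework): there exists $S_0 \in \I_{n+1}^\cpt(B)$ with $\partial S_0 = T_B$ and
\[
\M(S_0) \le \gamma_n \, \M(T)^{(n+1)/n}
\]
for a dimensional constant $\gamma_n$. By the definition of $\fillrad{T;X}$, it is enough to deform $S_0$ into a filling whose support lies in the $r$-neighborhood of $\iota(X)$ for every $r > C(n)\M(T)^{1/n}$, uniformly in $(B,\iota)$.

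To do this, I would use the $1$-Lipschitz function $f(y) := d(y,\iota(X))$ and slice $S_0$ by its level sets. The coarea/slicing inequality
\[
\int_0^\infty \M(\langle S_0, f, t\rangle)\, dt \le \M(S_0)
\]
lets me choose, for any $R>0$, a good level $t_0 \in (0,R)$ with $\M(\langle S_0, f, t_0\rangle) \le \M(S_0)/R$. The restriction $S_0 \lfloor \{f < t_0\}$ has support in the $t_0$-neighborhood of $\iota(X)$ and boundary $T_B - \langle S_0, f, t_0\rangle$, so the problem reduces to filling the much smaller cycle $\langle S_0, f, t_0\rangle$ near $\iota(X)$. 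Iterating this step---apply the isoperimetric inequality to the slice, slice the new filling at a radius $R_1 < R$, and continue---produces a telescoping sequence of inner pieces whose sum should give the desired filling, with support in the $\left(\sum_k t_k\right)$-neighborhood of $\iota(X)$.

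The main obstacle is the quantitative bookkeeping of the iteration. One must choose the radii $R_k$ so that (a) the telescoping sum $\sum_k t_k$ converges to something of order $\M(T)^{1/n}$, (b) the masses of the auxiliary fillings shrink geometrically and remain summable, and (c) the formal series of inner pieces converges in an appropriate current norm to a legitimate element of $\I_{n+1}^\cpt(B)$ whose boundary is $T_B$. Point (c) is handled by the closure/compactness theorem for integral currents in Banach spaces, and points (a)--(b) reduce to a standard geometric-series calibration that pins down the final constant $C(n)$ in terms of $\gamma_n$ alone.
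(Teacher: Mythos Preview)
The paper does not give its own proof of this theorem; it is stated as a quoted result from Ambrosio--Katz \cite{AKt} and then used as a black box in the proof of Theorem~\ref{thm:fill rad Alex}. So there is no in-paper argument to compare your proposal against.

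That said, your outline is a correct proof strategy and is essentially Gromov's original iteration scheme transported to the metric-current setting. The isoperimetric inequality you invoke is indeed available for integral cycles in compact convex subsets of separable Banach spaces, slicing by the $1$-Lipschitz distance function $f(\cdot)=d(\cdot,\iota(X))$ is legitimate in the Ambrosio--Kirchheim theory, and with $R_k$ chosen as a fixed multiple of $\M(T_k)^{1/n}$ the masses of the successive slices decay geometrically, so the series of inner pieces converges in mass (with summable boundary masses), yielding an integral filling supported in the $R_0$-neighborhood of $\iota(X)$. The only point worth tightening is your remark that convergence is handled by ``the closure/compactness theorem'': here you do not need compactness, only that a mass-convergent series of integral currents with summable boundary masses has an integral limit, which is straightforward.

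For what it is worth, the route actually taken in \cite{AKt} is different: after producing an isoperimetric filling, they apply Ekeland's variational principle on the space of fillings to obtain an almost mass-minimizing $S$, and then a monotonicity-type density lower bound for $\|S\|$ at points away from $\supp(T)$ forces $\supp(S)$ to lie in the required tube. That argument avoids the iterated slicing entirely but uses a less elementary tool; your approach trades Ekeland's principle for explicit bookkeeping of a geometric series. Both yield an explicit $C(n)$ depending only on the isoperimetric constant.
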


Let us recall trivial facts. 
Let $X$ be a compact metric space and $\iota : X \to B$ an isometric embedding in a Banach space $B$. 
Taking the cone of $\iota_\sharp T$ (\cite{AK}), we have a trivial upper bound 
\[
\fillrad{T;X} \le 2\, \mathrm{diam}(\mathrm{supp}(T)) \le 2\, \mathrm{diam}(X).
\]

From now on, we consider the situation of the assumption of Theorem \ref{thm:fill rad Alex}. 
Let $M$ be a closed orientable Alexandrov $n$-space. 
Then, we have a fundamental class $[M]_\Lip \in H_n^\Lip(M;\mathbb Z)$ in the singular Lipschitz homology, due to Theorems \ref{thm:main thm} and \ref{thm:int}. 
Passing the isomorphism $[\,\cdot\,] : H_n^\Lip(M;\mathbb Z) \to H_n(\I_\bullet(M))$ given in Theorem \ref{thm:int}, we obtain a non-trivial $n$-current 
\[
\jump{M} := [[M]_\Lip] \in \{ T \in \I_n(M) \mid \partial T = 0 \}.
\]
Here, remark that $\I_{n+1}(M)$ vanishes due to \cite{AK}. 
Then, we set 
\[
\fillrad{M} := \fillrad{\jump{M}; M}
\]
which is called the filling radius of $M$ considered in Theorem \ref{thm:fill rad Alex}.

\begin{proof}[Proof of Theorem \ref{thm:fill rad Alex}]
Let $M$ be as above. 
Applying Theorem \ref{thm:fill rad} to $\jump{M}$, we have 
\[
\fillrad{M} \le C(n) \M(\jump{M})^{1/n}. 
\]
On the other hands, from Lemma \ref{lem:mass-Lip}, we obtain 
\[
\M(\jump{M}) \le \sqrt{n}^{\,n}\, \mass([M]_\Lip).
\]
By the above inequalities and Theorem \ref{thm:mass}, the conclusion of Theorem \ref{thm:fill rad Alex} holds.
\end{proof}

\appendix
\section{The mass of a fundamental class} \label{sec:mass}
In this section, we prove Theorem \ref{thm:mass}, by using a similar argument to the proof of \cite[Theorem 0.1]{Y}. 

For a $k$-simplex $\triangle^k$ and $\alpha \in [0,1]$, we set
\[
\alpha \triangle^k = \{ \alpha(x-x_0) +x_0 \mid x \in \triangle^k \}
\]
where $x_0$ is the barycenter of $\triangle^k$.

\begin{lemma} \label{lem:00}
Let $f : \triangle^k \to \mathbb R^n$ be a Lipschitz $k$-simplex in the Euclidean $n$-space. 
For any $\epsilon > 0$, there is a Lipschitz $k$-simplex $g : \triangle^n \to \mathbb R^n$ such that 
\begin{itemize}
\item $g$ is smooth on $(1/2) \triangle^k$; 
\item $g$ is homotopic to $f$ relative $\partial \triangle^k$; and 
\item $|\mass(g) - \mass(f)| < \epsilon$.
\end{itemize}
\end{lemma}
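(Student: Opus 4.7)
The plan is to construct $g$ by replacing $f$ with a smooth mollification on the interior portion $(1/2)\triangle^k$ and interpolating back to $f$ via a cutoff function, so that $g$ remains equal to $f$ near $\partial \triangle^k$.

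Concretely, first extend $f$ to a Lipschitz map $\tilde f : \mathbb R^k \to \mathbb R^n$ with $\Lip(\tilde f) = L := \Lip(f)$ by Kirszbraun's theorem. For $\delta > 0$, let $\tilde f_\delta := \tilde f * \rho_\delta$ be the convolution with a standard smooth mollifier; then $\tilde f_\delta \in C^\infty(\mathbb R^k, \mathbb R^n)$ with $\Lip(\tilde f_\delta) \le L$ and $\|\tilde f_\delta - \tilde f\|_\infty \le L\delta$. Next, for a small parameter $\eta > 0$ to be chosen, pick a smooth cutoff $\phi : \mathbb R^k \to [0,1]$ with $\phi \equiv 1$ on $(1/2)\triangle^k$, $\phi \equiv 0$ off $(1/2+\eta)\triangle^k$, and $\Lip(\phi) \le C_0/\eta$. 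Define
$$g(x) := \phi(x)\, \tilde f_\delta(x) + (1 - \phi(x))\, f(x), \qquad x \in \triangle^k.$$
Then $g$ is Lipschitz, smooth on $(1/2)\triangle^k$ (where it coincides with $\tilde f_\delta$), and equals $f$ on $\triangle^k \setminus (1/2+\eta)\triangle^k$, which contains a neighborhood of $\partial \triangle^k$. The straight-line homotopy $H_t := (1-t)f + tg$ fixes $\partial \triangle^k$ pointwise, so $g$ is homotopic to $f$ rel $\partial \triangle^k$.

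For the mass comparison, decompose $\triangle^k = A_1 \sqcup A_2 \sqcup A_3$ with $A_1 = (1/2)\triangle^k$, $A_2 = (1/2+\eta)\triangle^k \setminus (1/2)\triangle^k$, and $A_3 = \triangle^k \setminus (1/2+\eta)\triangle^k$. The integrals over $A_3$ cancel since $g = f$ there. On $A_1$, $g = \tilde f_\delta$; by Rademacher's theorem $\tilde f$ is differentiable almost everywhere, and at every Lebesgue point of $D\tilde f$ one has $D\tilde f_\delta(x) = (D\tilde f * \rho_\delta)(x) \to D\tilde f(x)$ as $\delta \to 0$. Hence $\mathscr J(\mathrm{md}(\tilde f_\delta, \cdot)) \to \mathscr J(\mathrm{md}(f, \cdot))$ pointwise a.e.\ on $A_1$, and both sides are uniformly bounded by a constant depending only on $L$ and $k$, so dominated convergence drives the $A_1$-contribution to $|\mass(g)-\mass(f)|$ to $0$ as $\delta \to 0$. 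On $A_2$, the identity $\nabla g = \phi\, \nabla \tilde f_\delta + (1-\phi)\, \nabla f + (\tilde f_\delta - f)\, \nabla \phi$ yields $|\nabla g| \le L + CL\delta/\eta$, so $\mathscr J(\mathrm{md}(g,\cdot))$ is bounded by a constant depending only on $L$ and $k$ as soon as $\delta \le \eta$; therefore the $A_2$-contribution is bounded by $C'(L,k)\, \Hau^k(A_2)$, which vanishes as $\eta \to 0$.

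The proof is then completed by first choosing $\eta$ small enough that $C'(L,k)\, \Hau^k(A_2) < \epsilon/2$, and then choosing $\delta \le \eta$ small enough that the $A_1$-contribution is below $\epsilon/2$. The main technical obstacle is coordinating the two parameters: the cutoff $\phi$ has Lipschitz constant of order $1/\eta$, so the mollification scale $\delta$ must shrink faster than $\eta$ to keep $|\nabla g|$ uniformly bounded on the transition annulus $A_2$; only under this hierarchy $\delta \ll \eta \ll 1$ can both error contributions be driven to zero simultaneously.
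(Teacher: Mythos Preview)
Your proof is correct. The route differs from the paper's in a genuine way. The paper extends and mollifies $f$ just as you do, but then builds $g$ by \emph{rescaling}: on $(1/2)\triangle^k$ it sets $g(tx)=f_\delta(2tx)$, so that the full mass of $f_\delta$ on $\triangle^k$ is carried by the inner half; on the outer annulus $\triangle^k\setminus(1/2)\triangle^k$ (of fixed width) it radially interpolates between $f_\delta$ and $f$ and argues that this contribution is small because $\|f_\delta-f\|_\infty$ is small. Thus only a single small parameter $\delta$ is used.

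Your construction instead leaves $\tilde f_\delta$ unrescaled on $(1/2)\triangle^k$ and blends with $f$ via a cutoff on a \emph{thin} annulus of width $\eta$. You pay for this with the two-parameter hierarchy $\delta\le\eta$, needed to keep $|\nabla g|$ uniformly bounded on the transition region; the smallness then comes from $\Hau^k(A_2)\to 0$ rather than from a small Lipschitz constant. This is the more standard mollifier-plus-cutoff argument and is perhaps cleaner: the gradient bound on $A_2$ is transparent, and you avoid the paper's somewhat delicate Lipschitz estimate on the fixed-width annulus (where only the radial derivative is genuinely small). The paper's rescaling trick, on the other hand, has the virtue of matching $\mass(g;(1/2)\triangle^k)$ exactly with $\mass(f_\delta;\triangle^k)$ and using a single scale.
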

\begin{proof}
Let us extend $f$ to a Lipschitz map $f : \mathbb R^k \to \mathbb R^n$, where we use the same symbol $f$ denoting the extension of $f$. 
By a standard smoothing argument, we have a family $\{f_\delta\}_{\delta > 0}$ of smooth maps associated with $f$ as follows. 
Let $\rho : \mathbb R^k \to \mathbb R$ be a nonnegative smooth function such that $\int_{\mathbb R^k} \rho\, d \Hau^k = 1$ with compact support.
Then, for $\delta > 0$, we define $f_\delta$ by 
\[
f_\delta(x) := \int_{\mathbb R^k} \delta^{-k} \rho((x-y)/\delta) f(y) \, d \Hau^k(y)
\]
where the integration is $\mathbb R^n$-valued.
Then, we have 
\begin{itemize}
\item $f_\delta$ is $C^\infty$ on $\mathbb R^k$ for all $\delta > 0$;
\item $f_\delta$ converges to $f$ as $\delta \to 0$ uniformly; 
\item $\Lip(f_\delta) \le \Lip(f)$ for all $\delta > 0$; 
\item $D f_\delta(x)$ converges to $Df(x)$ as $\delta \to 0$ for almost every $x \in \mathbb R^k$.
\end{itemize}
All things are verified by direct calculations. 
For a proof of the last assertion, we refer \cite{Honda} for instance. 
By the third and last assertions, we have 
\[
\mass(f_\delta; \triangle^k) \to \mass(f;\triangle^k)
\]
as $\delta \to 0$.
Since $\|f_\delta - f\|_\infty \to 0$, there is $\delta > 0$ such that $|f_\delta(x)-f(x)|<\epsilon$ for all $x \in \mathbb R^k$.
We consider a Lipschitz map $g : \triangle^k \to \mathbb R^n$ defined by, for $x \in \partial \triangle^k$ and $t \in [0,1]$, 
\[
g(tx) := 
\left\{
\begin{aligned}
&f_\delta(2 tx) &&\text{if } t \le 1/2 \\
&(2-2t) f_\delta(x) + (2t-1) f(x) &&\text{if } t \ge 1/2.
\end{aligned}
\right.
\]
Here, the barycenter of $\triangle^k$ is translated to the origin. 
Then, we have 
\[
\Lip(g) \le C(k) \Lip(f) \epsilon
\]
on $\triangle^k - (\frac{1}{2}) \triangle^k$,
where $C(k)$ is a constant depending only on $k$.
Therefore, we obtain 
\begin{align*}
\mass(g; \triangle^k) 
&= \mass(g;(1/2)\triangle^k) + \mass(g; \triangle^k - (1/2) \triangle^k) \\
&\le \mass(f_\delta; \triangle^k) + C(k) \Lip(f)^k \epsilon^k \\
&< \mass(f;\triangle^k) + \{C(k) \Lip(f)^k +1 \} \epsilon^k.
\end{align*}
We also have 
\[
\mass(g;\triangle^k) \ge \mass(f_\delta; \triangle^k) > \mass(f;\triangle^k)-\epsilon.
\]
So, the map $g$ satisfies the conclusion of the lemma. 
Indeed, a homotopy between $f$ and $g$ relative $\partial \triangle^k$ is easily constructed from the concrete form of $g$. 
This completes the proof.
\end{proof}

When the dimensions of the domain and the target of a Lipschitz simplex coincide, we have the following: 
\begin{lemma} \label{lem:01}
Let $f : \triangle^n \to \mathbb R^n$ be a Lipschitz $n$-simplex. 
Then, for any $\epsilon > 0$, there is a Lipschitz chain $g = \sum_{i=1}^N g_i$ such that 
\begin{itemize}
\item each $g_i$ is a Lipschitz $n$-simplex; 
\item $|\mass(f)-\mass(g)| < \epsilon$; 
\item there is $N_1 \le N$ such that $\mass(\sum_{i=N_1+1}^N g_i) < \epsilon$; 
\item $g_i$ are smooth embeddings from $\triangle^n$ into $\mathbb R^n$ for all $i \le N_1$; 
\item $g$ is homologous to $f$.
\end{itemize}
\end{lemma}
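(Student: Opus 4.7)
The plan is to first reduce to the smooth case via Lemma~\ref{lem:00}, then cut the domain into pieces on which $\tilde f$ is an embedding, using the inverse function theorem on the regular set. Applying Lemma~\ref{lem:00} with parameter $\epsilon/4$, we obtain a Lipschitz $n$-simplex $\tilde f : \triangle^n \to \mathbb R^n$ which is $C^\infty$ on $(1/2)\triangle^n$, is homotopic to $f$ relative $\partial\triangle^n$, and satisfies $|\mass(\tilde f)-\mass(f)|<\epsilon/4$. Moreover, the explicit construction in Lemma~\ref{lem:00} shows the Lipschitz constant of $\tilde f$ on the annulus $\triangle^n\setminus(1/2)\triangle^n$ is at most $C(n)\Lip(f)\,\delta$ for the smoothing parameter $\delta$, and so choosing $\delta$ sufficiently small we may also arrange $\mass(\tilde f;\,\triangle^n\setminus(1/2)\triangle^n)<\epsilon/4$.

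On the smooth region, set $R:=\{x\in\mathrm{int}\,(1/2)\triangle^n : \det D\tilde f(x)\ne 0\}$. Since the metric differential of $\tilde f$ at an almost-every $x$ equals $D\tilde f(x)$ regarded as a Euclidean linear map, we have $\mathscr J(\mathrm{md}(\tilde f,x))=|\det D\tilde f(x)|$ a.e., so the critical set contributes zero to $\mass(\tilde f)$. By inner regularity of $|\det D\tilde f|\,d\Hau^n$, choose a compact set $K\subset R$ with $\int_{(1/2)\triangle^n\setminus K}|\det D\tilde f|\,d\Hau^n<\epsilon/4$. Near each $x\in K$ the inverse function theorem furnishes an open convex neighborhood $U_x\subset R$ on which $\tilde f$ is a smooth embedding; extract a finite subcover $\{U_{x_1},\dots,U_{x_m}\}$ of $K$ by compactness.

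Next I would take an affine triangulation $\mathcal T$ of $\triangle^n$ subordinate to the decomposition $\triangle^n=(1/2)\triangle^n\cup(\triangle^n\setminus(1/2)\triangle^n)$ and, via the Lebesgue number lemma, so fine that every $n$-simplex $\sigma\in\mathcal T$ meeting $K$ lies entirely in some $U_{x_k}$. Label the top-dimensional simplices of $\mathcal T$ as $\sigma_1,\dots,\sigma_N$ so that $\sigma_1,\dots,\sigma_{N_1}$ are exactly those meeting $K$, and let $\phi_i:\triangle^n\to\sigma_i$ be an affine isomorphism. Define $g_i:=\tilde f\circ\phi_i$. For $i\le N_1$ the map $g_i$ is a smooth embedding because $\sigma_i$ is convex and contained in some $U_{x_k}$; for $i>N_1$ we have
\[
\mass\Bigl(\sum_{i>N_1} g_i\Bigr)\le\mass\bigl(\tilde f;\,(1/2)\triangle^n\setminus K\bigr)+\mass\bigl(\tilde f;\,\triangle^n\setminus(1/2)\triangle^n\bigr)<\epsilon/2<\epsilon,
\]
and by additivity $|\mass(g)-\mass(\tilde f)|=0$, hence $|\mass(g)-\mass(f)|<\epsilon/4<\epsilon$. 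Finally, $g=\sum_i g_i$ is a simplicial subdivision of $\tilde f$, hence homologous to $\tilde f$ via the standard subdivision chain homotopy (all simplices are Lipschitz), and $\tilde f$ is homologous to $f$ by the relative homotopy from Lemma~\ref{lem:00}.

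The main obstacle is the simultaneous coordination of (i) choosing $K$ so that the critical/near-critical waste has small mass, (ii) refining $\mathcal T$ so that each simplex meeting $K$ lies in a single $U_{x_k}$ (so $\tilde f$ embeds it), and (iii) making sure the annular piece $\triangle^n\setminus(1/2)\triangle^n$ is absorbed into the waste part with mass already controlled by Lemma~\ref{lem:00}. Each individual ingredient is standard, but they must be arranged in the correct order: first fix the smoothing to control the annular mass, then choose $K$, then choose the inverse-function-theorem neighborhoods, and finally refine the triangulation.
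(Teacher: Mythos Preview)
Your proof is correct and follows essentially the same strategy as the paper: apply Lemma~\ref{lem:00} to smooth on $(1/2)\triangle^n$, restrict attention to the regular set where the map is locally an embedding, triangulate so that the ``good'' simplices lie in embedding charts, and bound the mass of the remainder. The only technical difference is that the paper produces the embedding pieces via Vitali's covering theorem (a disjoint family of small simplices in the regular set, then extended to a full triangulation), whereas you use inner regularity to pass to a compact $K$, cover it by finitely many inverse-function-theorem neighborhoods, and take a fine affine triangulation via the Lebesgue number; your route is arguably cleaner since it avoids the step of extending a disjoint simplex family to a triangulation.
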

\begin{proof}
Due to Lemma \ref{lem:00}, there is a Lipschitz $n$-simplex $g : \triangle^n \to \mathbb R^n$ such that 
$g$ is homotopic to $f$ relative $\partial \triangle^n$, $|\mass(g)-\mass(f)| < \epsilon$ and $g |_{\frac{1}{2} \triangle^n}$ is smooth.
Let us set $S = \{x \in \frac{1}{2} \triangle^n \mid Dg(x) \text{ has rank less than }n \}$.
Then, $S$ is a closed set in $\frac{1}{2} \triangle^n$ and $g(S)$ has zero measure with respect to $\Hau^n$ due to Sard's theorem.
By Vitali's covering theorem, we obtain a countable family $\{\tau_i\}_{i=1}^\infty$ of $n$-simplices contained in $(1/2)\triangle^n - S$ such that $g|_{\tau_i}$ is a smooth embedding for every $i \in \mathbb N$ so that $\{\tau_i\}$ is disjoint and $\Hau^n((1/2) \triangle^n - S) = \sum_{i=1}^\infty \Hau^n(\tau_i)$.
Let us choose a large natural number $N_0$ so that 
\[
\Hau^n \left(\bigcup_{i=N_0+1}^\infty \tau_i \right) < \epsilon.
\]
Let us extend the disjoint family $T_0 = \{\tau_i\}_{i=1}^{N_0}$ to a triangulation $T_1$ of $(1/2)\triangle^n$ and extend $T_1$ to a triangulation $T$ of $\triangle^n$. 
Further, we set $g_\tau := g|_{\tau}$ for all $\tau \in T$.
Then, we have 
\[
\mass \left( \sum_{\tau \in T_1 \setminus T_0} g_\tau \right) = \sum_{i=N_0+1}^\infty \mass(g;\tau_i) \le C(k) (\Lip(f)+1)^k \epsilon
\]
and 
\[
\mass \left( \sum_{\tau \in T \setminus T_1} g_\tau \right) = \mass(g; \triangle^n - (1/2) \triangle^n) \le C'(k) \Lip(f)^k \epsilon^k.
\]
From the construction, the sum $\sum_{\tau \in T} g_\tau$ satisfies the conclusion of the lemma. 
\end{proof}

\begin{proof}[Proof of Theorem \ref{thm:mass}]
We prove $\mass([M]) \le \Hau^n(M)$. 
For every $\epsilon > 0$, we take an $n$-cycle $f = \sum_{i=1}^N a_i f_i \in C_n^\Lip(M;\mathbb Z)$ representing to $[M]$ and 
\[
\mass(f) < \mass([M]) + \epsilon.
\]
By reversing orientations if necessary, we may assume that $a_i \ge 1$ for all $i$.
Further, since $a_i$ are integers, we may assume that $a_i =1$ for all $i$, by allowing overlap of $f_i$'s.
Let $\delta > 0$ such that for any $x \in M_\delta$, there exist a ball $B$ centered at $x$ and a bi-Lipschitz embedding $\varphi : B \to \mathbb R^n$ such that $\Lip(\varphi)$ and $\Lip(\varphi^{-1})$ are at most $1+\epsilon$.
Let us fix $i$. 
Recall that $S_\delta$ is zero measure in $\Hau^n$ (\cite{BGP}, \cite{OS}). 
By Kirchheim's coarea formula (\cite[Theorem 7]{Kir}), we have 
\begin{align*}
\mass(f_i; f_i^{-1}(S_\delta)) \le \int_{S_\delta} \sharp (f_i^{-1}(y)) \, d \Hau^n(y) = 0.
\end{align*}
Hence, 
\[
\mass(f_i) = \mass(f_i; \triangle^n \setminus f_i^{-1}(S_\delta)). 
\]
Since $\triangle^n \setminus f_i^{-1}(S_\delta)$ is open, by using Vitali's covering theorem, 
we can take a triangulation $T^i$ of $\triangle^n$ and a subfamily $T_0^i \subset T^i$ such that 
\begin{itemize}
\item $\left| \mass \left(\sum_{\tau \in T_0^i} f_i |_{\tau}\right) - \mass(f_i) \right| < \epsilon$; 
\item $f_i (\tau)$ is contained in $M_\delta$ for every $\tau \in T_0^i$; 
\item further, for every $\tau \in T_0^i$, $f_i(\tau)$ is contained in a local coordinate neighborhood $B$ in $M_\delta$ with a local chart $\varphi$ so that $\max\{ \Lip(\varphi), \Lip(\varphi^{-1})\} \le 1 +\epsilon$.
\end{itemize}
Let us consider a singular Lipschitz chain 
\[
g := \sum_{i=1}^N \sum_{\tau \in T^i} f_i |_\tau \in C_n^\Lip(M;\mathbb Z). 
\]
By subdividing $T^i$'s if necessary, we may assume that $\partial g = 0$. 
From the construction, $g$ also represents $[M]$ and $\mass(g) = \mass(f)$.

For each $i$ and $\tau \in T_0^i$, by using Lemma \ref{lem:01}, we have an $n$-chain $\tilde f_{i \tau} \in C_n^\Lip(\mathbb R^n;\mathbb Z)$ such that 
\begin{itemize}
\item $\tilde f_{i \tau}$ is homologous to $\varphi \circ f_i |_\tau$; 
\item $|\mass(\tilde f_{i \tau}) - \mass(\varphi \circ f_i |_\tau)| < \epsilon / (\# T_0^i)$; 
\item $\tilde f_{i \tau}$ forms the sum 
\[
\tilde f_{i\tau} = \sum_{\alpha \in S^{i \tau}} \tilde f_{i \tau \alpha}, 
\]
where $S_{i \tau}$ is a finite index set; 
\item there is a subfamily $S_0^{i \tau}$ of $S^{i \tau}$ such that $\tilde f_{i \tau \alpha}$ is a smooth embedding to $\mathbb R^n$ for every $\alpha \in S_0^{i \tau}$ and that 
\[
\sum_{\alpha \in S^{i \tau} \setminus S_0^{i \tau}} \mass(\tilde f_{i \tau \alpha}) < \frac{\epsilon}{ \# T_0^i}.
\]
\end{itemize}

Now, to simplify indicies, using the above construction, we may assume that there is an $n$-cycle $f = \sum_{i=1}^N f_i \in C_n^\Lip(M;\mathbb Z)$ representing $[M]$ such that 
\begin{itemize}
\item there is $N_0 \le N$ such that $f_i$ is a bi-Lipschitz embedding into $M_\delta$ for every $i \le N_0$; 
\item $\mass(f) < \mass([M]) + \epsilon$; 
\item $\sum_{i>N_0} \mass(f_i) < \epsilon/2$.
\end{itemize}
Let us set 
\[
V := S_\delta \cup \bigcup_{i = 1}^{N_0} f_i(\partial \triangle^n) \cup \bigcup_{i>N_0} f_i(\triangle^n). 
\]
Then, $V$ is closed in $M$ and 
\[
\Hau^n(V) \le \sum_{i>N_0} \Hau^n(f_i(\triangle^n)) \le \sum_{i>N_0} \mass(f_i) < \epsilon/2.
\]
For $x \in M \setminus V$ and $i \le N_0$, we set 
\[
\epsilon_i(x) := \left\{ 
\begin{aligned}
&0 && \text{if } f_i^{-1}(x) = \emptyset \\
&1 && \text{if } f_i \text{ preserves orientation} \\ 
&-1 &&\text{if } f_i \text{ reverses orientation}.
\end{aligned}
\right.
\]
Since $f$ represents the fundamental class, we obtain 
\begin{equation} \label{eq:sum}
\sum_{i=1}^{N_0} \epsilon_i(x) = 1 \text{ on } M \setminus V.
\end{equation}
Let us denote by $J_+$ the set of all $i \in \{1, \dots, N_0\}$ such that $f_i$ preserves orientation and set $J_-$ its complement.
\begin{sublemma}[cf.\! \cite{Y}] \label{sublem:sum}
$\sum_{i \in J_-} \mass(f_i) < \epsilon/2$. 
\end{sublemma}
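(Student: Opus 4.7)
The plan is to translate the cycle condition into a pointwise coverage inequality for the images $f_i(\triangle^n)$, and then to extract the bound on the orientation-reversing contribution from a simple mass identity. Since each $f_i$ with $i \le N_0$ is a bi-Lipschitz embedding into the regular set $M_\delta$, Kirchheim's area formula gives $\mass(f_i) = \Hau^n(f_i(\triangle^n))$, and setting
$g^{\pm}(x) := \#\{\,i \in J_\pm \mid x \in f_i(\triangle^n)\,\}$ one has
\[
  \sum_{i \le N_0} \mass(f_i) \;=\; \int_M (g^+ + g^-)\, d\Hau^n.
\]

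First I would use \eqref{eq:sum} together with the sign conventions defining $J_\pm$ to deduce that $g^+(x) - g^-(x) = 1$, and in particular $g^+(x) \ge 1$, for every $x \in M \setminus V$. Integrating this pointwise inequality yields $\sum_{i \in J_+} \mass(f_i) \ge \Hau^n(M \setminus V)$. Recalling that $\Hau^n(S_\delta) = 0$ (\cite{BGP}, \cite{OS}), that each boundary $f_i(\partial \triangle^n)$ with $i \le N_0$ is $\Hau^n$-null, and that $\sum_{i > N_0} \mass(f_i) < \epsilon/2$ by construction, we obtain $\Hau^n(V) < \epsilon/2$ and hence $\sum_{i \in J_+} \mass(f_i) \ge \Hau^n(M) - \epsilon/2$.

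Combining this lower bound with $\sum_{i \le N_0} \mass(f_i) \le \mass(f) < \mass([M]) + \epsilon$ and the upper bound $\mass([M]) \le \Hau^n(M)$ (established separately---see below) gives
\[
  \sum_{i \in J_-} \mass(f_i)
  \;=\; \sum_{i \le N_0} \mass(f_i) - \sum_{i \in J_+} \mass(f_i)
  \;<\; \Hau^n(M) + \epsilon - \bigl(\Hau^n(M) - \tfrac{\epsilon}{2}\bigr)
  \;=\; \tfrac{3}{2}\epsilon.
\]
Rerunning the whole construction with $\epsilon$ replaced by $\epsilon/3$ throughout then yields the stated bound $< \epsilon/2$ on the nose.

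The main obstacle is the apparent circularity of invoking $\mass([M]) \le \Hau^n(M)$, which is itself half of the statement of Theorem \ref{thm:mass}. I would resolve this by first producing the upper bound through a separate explicit construction: combining a good covering in the sense of Theorem \ref{thm:good} with the bi-Lipschitz local charts on $M_\delta$ from \cite{OS}, one builds a Lipschitz triangulation of $M$ off the $\Hau^n$-null singular set whose total mass is arbitrarily close to $\Hau^n(M)$, and this yields a representative of $[M]$ witnessing $\mass([M]) \le \Hau^n(M)$. A more self-contained alternative would work directly with the exact identity $2\int_{M \setminus V} g^-\, d\Hau^n = \sum_{i \le N_0}\mass(f_i) - \Hau^n(M \setminus V) - \int_V(g^++g^-)\, d\Hau^n$; however this route requires controlling the $V$-integral of the multiplicity $g^+ + g^-$, which is not a priori uniformly bounded, so a further refinement of the good covering would likely be needed.
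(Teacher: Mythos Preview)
Your argument hinges on the inequality $\mass([M]) \le \Hau^n(M)$, and you are right to flag this as circular: in the paper the sublemma is precisely the tool used to establish that inequality. Your proposed fix---building an explicit Lipschitz cycle representing $[M]$ with mass close to $\Hau^n(M)$ via good coverings and bi-Lipschitz charts---is not carried out, and doing so near the singular set is essentially as hard as the theorem itself. So as it stands there is a genuine gap.

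The paper avoids the circularity by a different mechanism that uses only the \emph{definition} of $\mass([M])$ as an infimum over representatives. On the set $E=\{x\in M\setminus V:\ I_-(x)\neq\emptyset\}$ one chooses (via Vitali) disjoint positively oriented bi-Lipschitz simplices $\tau_1,\dots,\tau_{N_1}$, each contained in $\bigcap_{i\in I_+(x_j)\cup I_-(x_j)} f_i(\triangle^n)$. One then \emph{constructs a new cycle} $g$ homologous to $f$ by replacing, over each $\tau_j$, all the overlapping pieces $f_i|_{f_i^{-1}(\tau_j)}$ by the single simplex $\tau_j$. Since on $\tau_j$ the identity $|I_+(x_j)|-|I_-(x_j)|=1$ from \eqref{eq:sum} holds, this substitution deletes exactly $2\,|I_-(x_j)|$ copies of mass $\Hau^n(\tau_j)$, giving the exact mass identity
\[
\mass(f)=\mass(g)+2\sum_{i\in J_-}\mass(\tau^{(i)}),\qquad \tau^{(i)}=\sum_{\tau_j\subset f_i(\triangle^n)}\tau_j.
\]
Now the only fact needed is $\mass(g)\ge \mass([M])$, which is immediate from the infimum definition, and combined with $\mass(f)<\mass([M])+\epsilon$ yields $\sum_{i\in J_-}\mass(\tau^{(i)})<\epsilon/2$. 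Letting $N_1$ exhaust $E$ gives the sublemma. The key conceptual point you are missing is that one should \emph{trade} the orientation-reversing excess for a competitor cycle of strictly smaller mass, rather than compare masses to $\Hau^n(M)$.
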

\begin{proof}[Proof of Sublemma \ref{sublem:sum}]
For a subset $\tau \subset M \setminus V$ satisfying that for any $i =1, \dots, N_0$, $\tau \subset f_i(\triangle^n)$ or $\tau \cap f_i(\triangle^n) = \emptyset$, we set 
\begin{align*}
&I_+(\tau) := \{i \le N_0 \mid \epsilon_i(\tau) = 1\}, &I_-(\tau) := \{i \le N_0 \mid \epsilon_i(\tau) = -1\}.
\end{align*}
Further, we set 
\[
E := \{x \in M \setminus V \mid I_-(x) \neq \emptyset\}.
\]

By Vitali's covering theorem, there exist countable families $\{x_j\}$ of points in $E$ and $\{\tau_j\}$ of bi-Lipschitz $n$-simplices with positive orientation in $M \setminus V$ such that 
\begin{itemize}
\item $x_j$ is contained in the interior of $\tau_j$; 
\item $\{\tau_j\}$ is disjoint; 
\item $\Hau^n(E \setminus \bigcup_{j} \tau_j) = 0$; 
\item $\tau_j$ is contained in $\bigcap_{i \in I_+(x_j) \cup I_-(x_j)} f_i(\triangle^n)$.
\end{itemize}
Let $N_1 \gg \sharp J_-$ be a large number. 
We set $\tau = \tau_1 + \dots + \tau_{N_1}$. 
For each $i \le N_0$, $\{f_i^{-1}(\tau_j)\}_{j \le N_1}$ is a disjoint (possibly empty) family of curved $n$-simplicies in $\triangle^n$. 
Let $K_i$ be a triangulation of $\triangle^n \setminus \bigcup_{j \le N_1} f_i^{-1}(\tau_j)$ such that $K_i \cup \{f_i^{-1}(\tau_j)\}_{j \le N_1}$ is a triangulation of $\triangle^n$. 
For $i \le N_0$, we set 
\begin{align*}
\tau^{(i)} := \sum_{\tau_j \subset f_i(\triangle^n)} \tau_j, \text{ and } f_i|_{|K_i|} := \sum_{\sigma \in K_i} f_i|_\sigma. 
\end{align*}
Further, we set
\[
g := \sum_{i=1}^{N_0} f_i|_{|K_i|} + \tau + \sum_{i>N_0} f_i 
\]
which is homologous to $f$ and 
\[
\mass(f) = \mass(g) + 2 \sum_{i \in J_-} \mass(\tau^{(i)}).
\]
In particular, 
\begin{equation*} 
\sum_{i \in J_-} \mass(\tau^{(i)}) < \epsilon /2.
\end{equation*}
Due to the above inequality and $N_1 \gg \sharp J_-$, we have 
\[
\sum_{i \in J_-} \mass(f_i) < \epsilon /2.
\]
This completes the proof. 
\end{proof}
Let us continue the proof of Theorem \ref{thm:mass}. 
\begin{align*}
\Hau^n(M) 
&\ge \Hau^n(M \setminus (V \cup E)) = \int_{M\setminus (V \cup E)} \sum_{i \le N_0} \epsilon_i(x) \, d \Hau^n(x) \\
&= \int_{M\setminus (V \cup E)} \sum_{i \in J_+} \epsilon_i(x) \, d \Hau^n(x) = \sum_{j \in J_+} \int_{M - (V \cup E)} \epsilon_i(x)\, d \Hau^n(x) \\
&= \sum_{i \in J_+} \mass(f_i).
\end{align*}
On the other hands, by Sublemma \ref{sublem:sum}, we obtain 
\begin{align*}
\mass(f) \le \sum_{i \in J_+} \mass(f_i) + \epsilon.
\end{align*}
Therefore, we have the desired inequality 
\[
\mass([M]) \le \Hau^n(M).
\]

Due to the proof of Sublemma \ref{sublem:sum}, we know that $\Hau^n(E) < \epsilon/2$. 
Using this, we obtain 
\[
\Hau^n(M) < \Hau^n(M-(V \cup E)) + \epsilon = \sum_{j \in J_+} \mass(f_i) + \epsilon \le \mass(f) + 2 \epsilon. 
\]
This implies $\Hau^n(M) \le \mass([M])$. 
This completes the proof of Theorem \ref{thm:mass}.
\end{proof}

\section{Geometric interpretation of orientability of Alexandrov spaces} \label{sec:C1-ori}

Let $M$ denote an $n$-dimensional Alexandrov domain which does not meet the boundary. 

Otsu and Shioya defined the notion of $C^1$-atlases on a pair of topological spaces and gave a canonical $C^1$-structure on $M_0 \subset M$ (\cite{OS}). 
Let us recall the definition of $C^1$-atlases on $M_0 \subset M$. 
For an open subset $U \subset M$ and a homeormophism $\varphi : U \to \mathbb R^n$ into an open subset of $\mathbb R^n$, we call $\varphi$ (or a pair $(U,\varphi)$) a local chart of $M$. 
\begin{definition}[\cite{OS}] \upshape 
A family $\Phi = \{(U_\varphi,\varphi)\}_{\varphi}$ of local charts in $M$ is called a $C^1$-{\it atlas on $M_0 \subset M$} if 
\begin{itemize}
\item $\bigcup_{\varphi \in \Phi} U_\varphi \supset M_0$; and 
\item the coordinate transformation $\varphi \circ \psi^{-1}$ is $C^1$ on $\psi(U_\varphi \cap U_\psi \cap M_0)$ for every $\varphi, \psi \in \Phi$. 
\end{itemize}
\end{definition}

Let us recall how to obtain the canonical $C^1$-structure on $M_0 \subset M$. 
For $p \in M_0$ and $\delta > 0$, there exists an $(n,\delta)$-strainer $\{(a_i,b_i)\}_{i=1}^n$ at $p$ (See \cite{BBI}, \cite{BGP} for the definition of strainer). 
We set 
\begin{equation} \label{eq:tilde d}
\tilde d_{a_i}(x) := \frac{1}{\Hau^n(B_{\epsilon}(a_i))} \int_{B_{\epsilon}(a_i)} d(q,x) \, d\Hau^n(q).
\end{equation}
When $\delta \ll 1/n$, $\epsilon, \ell \ll \min_{1 \le i \le n} \{ d(a_i,p), d(b_i,p)\}$, the map $\varphi = (\tilde d_{a_i})_{i=1}^n$ is a local chart with a domain $B_\ell(p)$. 
Otsu and Shioya (\cite{OS}) proved that a family of such local charts is a $C^1$-atlas on $M_0 \subset M$.
A maximal $C^1$-atlas on $M_0 \subset M$ containing this $C^1$-atlas on $M_0 \subset M$ is called the {\it canonical $C^1$-structure on $M_0 \subset M$} and is denoted by $\Phi_M$.

Kuwae, Machigashira and Shioya (\cite{KMS}) implicitely used the following reasonable concept of orientablity, in thier study of $DC^1$-manifolds. 
\begin{definition} \upshape \label{def:C1-ori}
We say that an $n$-dimensional Alexandrov domain $M$ is {\it $C^1$-orientable} if there is a $C^1$-atlas $\Phi$ on $M_0 \subset M$ contained in the canonical $C^1$-structure of $M_0 \subset M$ such that the coordinate transformation $\varphi \circ \psi^{-1}$ has positive Jacobian on $\psi(U_\varphi \cap U_\psi \cap M_0)$ for every $\varphi, \psi \in \Phi$.
Such an atlas $\Phi$ is said to be {\it oriented}. 
\end{definition}

For a smooth manifold, the $C^1$-orientability is equivalent to the usual orientability.
For Alexandrov spaces, we also obtain: 

\begin{theorem} \label{thm:C1-ori}
Let $M$ be an Alexandrov domain which does not meet the boundary. 
Then, $M$ is $C^1$-orientable if and only if it is orientable.
\end{theorem}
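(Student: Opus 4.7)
The plan is to bridge $C^1$-orientability and orientability of $M$ through the intermediate notion of topological orientability of $M_0$, viewed as a $C^1$-manifold via the restriction of the Otsu--Shioya canonical $C^1$-structure, and then to invoke Theorem \ref{thm:main thm}~(A)(a), which identifies orientability of $M$ with topological orientability of $M_\top$.

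First I would prove that $M$ is $C^1$-orientable if and only if $M_0$ is orientable as a $C^1$-manifold. Restriction of an oriented $C^1$-atlas on $M_0 \subset M$ to $M_0$ yields an oriented $C^1$-atlas on $M_0$. Conversely, the explicit charts $\varphi = (\tilde d_{a_i})_{i=1}^n$ of the canonical structure have natural domains $B_\ell(p) \subset M$, and composing with reflections of $\mathbb R^n$ when necessary produces an oriented atlas on $M_0 \subset M$ from an orientation of the $C^1$-manifold $M_0$. The classical equivalence between $C^1$- and topological orientability of a $C^1$-manifold---the sign of the Jacobian of a $C^1$-diffeomorphism between connected open subsets of $\mathbb R^n$ is constant and agrees with the topological orientation of the chart---then reduces the problem to showing that $M_0$ is topologically orientable if and only if $M_\top$ is.

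The second step is this topological comparison. Since $M_0$ is open in $M_\top$, orientability of $M_\top$ implies that of $M_0$. For the converse, given an orientation of $M_0$, I would use that $M_0$ is open and dense in $M_\top$ with complement $M_\top \setminus M_0$ of topological codimension at least two; this implies that the inclusion $M_0 \hookrightarrow M_\top$ is $\pi_1$-surjective on each component, so the orientation character $\pi_1(M_\top) \to \mathbb Z_2$ factors through the trivial character of $\pi_1(M_0)$ and hence vanishes.

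The main obstacle is this codimension estimate for $M_\top \setminus M_0$, the set of topological manifold points that are metrically non-regular. The Burago--Gromov--Perelman estimate gives only $\dim_H(M \setminus M_0) \le n-1$, so a finer argument is needed. At each $x \in M_\top \setminus M_0$ the space of directions $\Sigma_x$ is topologically $S^{n-1}$ (because $x \in M_\top$) but metrically non-standard; applying Perelman's stability theorem (Theorem \ref{thm:stab}) and an induction on dimension to $\Sigma_x$, together with the conical local structure $(U,x) \cong (c(\Sigma_x),o)$, one controls the topology of $M_0$ in a neighbourhood of $x$. As an alternative bypassing the $\pi_1$-argument, one could propagate the orientation by extending a generating section of the generalized orientation bundle $\Theta_M$ over $M_0$ directly to $M_\top$, using the local conical structure to identify the fibre at each non-regular point with the fibres at nearby regular points.
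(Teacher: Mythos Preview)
Your approach runs into a real obstacle, precisely the one you flag: the codimension of $M_\top \setminus M_0$. There is a more basic issue as well --- $M_0$ need not be open in $M$ (one can build a polyhedral Alexandrov surface with cone points of summable angle defect accumulating at a flat point), so restricting the Otsu--Shioya charts to $M_0$ does not produce a $C^1$-manifold in the usual sense, and your first reduction step is already problematic. Even granting openness, no codimension-two estimate for $M_\top \setminus M_0$ is available, and your proposed repair via stability and induction on $\Sigma_x$ is speculative: points of $M_\top \setminus M_0$ have spaces of directions that are homeomorphic to spheres but metrically far from round, and there is no clear inductive handle on that set.

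The paper sidesteps both difficulties by working with $M_\delta$ for small $\delta > 0$ instead of $M_0$. The set $M_\delta$ \emph{is} open, lies in $M_\top$, and satisfies $\dim(M \setminus M_\delta) \le n-2$ when $\partial M = \emptyset$ (\cite{BGP}, \cite{OS}); this is exactly the codimension input needed to run the cohomological argument of Lemma~\ref{lem:HS} with $M_\delta$ in the role of $X_\top$. The bridge from $C^1$-orientability to topological orientability of $M_\delta$ is the Kuwae--Machigashira--Shioya smoothing theorem (Theorem~\ref{thm:KMS}): every compact subset of $M_\delta$ has an open neighbourhood carrying an honest smooth atlas contained in the canonical $C^1$-structure $\Phi_M$, and this smooth atlas inherits an orientation from an oriented $C^1$-atlas. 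Exhausting $M_\delta$ by such neighbourhoods proves the forward direction (Lemma~\ref{lem:C1 to ori}); the converse runs the same machinery in reverse. The ingredient missing from your plan is thus: replace $M_0$ by $M_\delta$ and invoke Theorem~\ref{thm:KMS} to pass between the $C^1$ and topological pictures.
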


To prove this, we recall Kuwae-Machigashira-Shioya's smooth approximation theorem: 
\begin{theorem}[\cite{KMS}] \label{thm:KMS}
Let $\delta > 0$ be small enough with respect to a constant depending on $n$. 
Let $K$ be a compact set contained in $M_\delta$. 
Then, there exists an open neighborhood $U$ of $K$ in $M_\delta$ which admits a smooth atlas in the usual sense which contained in the canonical $C^1$-structure on $M_0 \subset M$.

Further, if $M$ is $C^1$-orientable, then $U$ obtained as above is orientable as a manifold.
\end{theorem}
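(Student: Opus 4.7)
The plan is to cover $K$ by finitely many strained charts of the canonical $C^1$-structure $\Phi_M$, verify that their transition maps are honestly $C^1$ on the whole overlap (not merely over $M_0$), and then invoke Whitney's classical smoothing theorem for $C^1$-atlases.

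First, since $K \subset M_\delta$ is compact and every point of $M_\delta$ admits an $(n,\delta)$-strainer (for $\delta$ small with respect to $1/n$), I would choose finitely many strained charts $(U_1, \varphi_1), \ldots, (U_N, \varphi_N) \in \Phi_M$ whose domains cover $K$, with each $\varphi_j = (\tilde d_{a_{ji}})_{i=1}^n$ built from averaged distance functions as in \eqref{eq:tilde d} and with each $U_j \subset M_\delta$; set $U := \bigcup_j U_j$. The key step is to upgrade the $C^1$-compatibility of the transitions $\varphi_i \circ \varphi_j^{-1}$ from $\varphi_j(U_i \cap U_j \cap M_0)$ to the full image $\varphi_j(U_i \cap U_j)$. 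This amounts to showing that each $\tilde d_a$ is $C^1$ in any strained chart throughout $M_\delta$; the gradient should exist and vary continuously because the space of directions at each point of $M_\delta$ is close to $\mathbb S^{n-1}$, so the averaging over a small ball around $a$ in \eqref{eq:tilde d} smooths out the directional singularities that could appear at non-regular points of $M_\delta$. Once this is in place, $\{(U_j, \varphi_j)\}$ is a genuine $C^1$-atlas on $U$, and Whitney's classical theorem that every $C^1$-manifold admits a compatible smooth refinement yields a smooth atlas on $U$ whose transitions to each $\varphi_j$ are $C^1$; hence it sits inside $\Phi_M$.

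For the orientability assertion, I would pick the covering charts from a given oriented $C^1$-atlas of $M$. Then, by the orientability hypothesis, $\det D(\varphi_i \circ \varphi_j^{-1}) > 0$ on $\varphi_j(U_i \cap U_j \cap M_0)$; by the $C^1$-regularity on all of $\varphi_j(U_i \cap U_j)$ established above together with the density of $M_0$ in $M_\delta$, this positivity extends continuously to the full overlap. Whitney's smoothing can then be performed in an orientation-preserving manner, so $U$ inherits the structure of an oriented smooth manifold. The main obstacle is the first technical step---proving honest $C^1$-regularity of transitions on all of $M_\delta$ rather than only on $M_0$---which is the analytic core of the original KMS argument and requires quantitative differentiability estimates for averaged distance functions at $\delta$-regular but possibly non-regular points, controlled by second-variation comparisons.
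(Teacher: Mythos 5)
Your plan hinges on one step that does not hold: you propose to upgrade the $C^1$-compatibility of the Otsu--Shioya transitions $\varphi_i\circ\varphi_j^{-1}$ from $\varphi_j(U_i\cap U_j\cap M_0)$ to all of $\varphi_j(U_i\cap U_j)$, i.e.\ to show that the averaged distance coordinates are honestly $C^1$ throughout $M_\delta$, and then to apply Whitney smoothing. This is not true in general. The singular set $M\setminus M_0$ can be \emph{dense} in $M$ (for instance, on the boundary of the convex hull of a dense countable subset of a sphere, the non-regular points are dense), and at a non-regular point $p\in M_\delta$ the directional derivative of $\tilde d_a$ is a function on the tangent cone at $p$, which is not isometric to $\mathbb R^n$; the averaging over $B_\epsilon(a_i)$ in \eqref{eq:tilde d} cures the singularities coming from the geodesics to $a_i$, but not the singularity of the space at $p$ itself. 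So the transition maps genuinely fail to be differentiable on a possibly dense subset of the overlap, there is no honest $C^1$-atlas on $U$ to feed into Whitney's theorem, and the continuity-plus-density argument you use for the sign of the Jacobian has nothing to extend. Note also that the theorem asks for less than what you try to prove: ``contained in the canonical $C^1$-structure on $M_0\subset M$'' only requires the new smooth charts to be $C^1$-compatible with $\Phi_M$ \emph{over $M_0$}, in the weak sense of the paper's definition of a $C^1$-atlas on $M_0\subset M$.

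The actual route is different. The existence of the smooth atlas is taken from \cite{KMS}, whose construction does not smooth the given transition functions but builds new charts: on a compact $K\subset M_\delta$ one has a uniform lower curvature bound and $(n,\delta')$-strainers of uniform length $\ge\ell$; one takes a maximal $0.3r$-discrete set $\{p_i\}_{i=1}^N$ in $K$ with $r\ll\ell,\delta$ and sets $U=\bigcup_i U(p_i,0.4r)$, and the smooth structure on $U$ is produced by the KMS smoothing procedure, with compatibility with $\Phi_M$ checked only on $M_0$. The paper's own contribution is solely the final sentence, justified by observing that this construction, applied starting from an \emph{oriented} $C^1$-atlas, preserves orientation; your idea of propagating the positivity of Jacobians from the dense connected set $M_0\cap U_\alpha$ is the right mechanism for that last step once the smooth atlas is in hand, but it cannot substitute for the construction of the atlas itself.
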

The last assertion of the theorem was not stated in \cite{KMS}. 
However, the construction of a smooth atlas on $U$ associated with a $C^1$-atlas on $U_0 \subset U_\delta$ given there is easily verified to preserve an orientation. 
Hence, the last statement holds. 
Here, we recall only how to construct $U$ from $K$. 
Since $K$ is compact, it has a uniform lower curvature bound $\kappa$.
Further, the compactness of $K$ implies that, there is a positive number $\ell$ such that each $p \in K$ has an $(n,\delta')$-strainer of length $\ge \ell$, where $\delta'$ is a positive number depending on $\delta$ so that $\lim_{\delta \to 0} \delta' = 0$ and the strainers are associated with $\kappa$-comparison angle.
Let us take $r \ll \ell, \delta$ and a maximal $0.3r$-discrete set $\{p_i\}_{i=1}^N$ of $K$.
Then, $U = \bigcup_{i=1}^N U(p_i, 0.4 r)$ satisfies the desired conclusion.
In particular, if $K$ is connected, then so is $U$.

\begin{lemma}
\label{lem:connected}
Let $V$ be an Alexandrov domain and $\delta \ge 0$. 
Then, $V_\delta$ is connected. 
\end{lemma}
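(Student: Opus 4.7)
The plan is to reduce the claim to path-connectedness of the regular set $V_0 := \bigcap_{\delta' > 0} V_{\delta'}$, which will be the main step.

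First I will dispose of large $\delta$. If $\delta \ge 1$, the Bishop-type inequality gives $\Hau^{n-1}(\Sigma_x)/\Hau^{n-1}(\mathbb{S}^{n-1}) \in (0,1]$ for every $x \in V$, so the defining condition $> 1-\delta$ is automatic and $V_\delta = V$, which is connected by hypothesis. For $0 < \delta < 1$, the set $V_\delta$ is open in $V$ by lower semicontinuity of $x \mapsto \Hau^{n-1}(\Sigma_x)$, a standard fact from Alexandrov geometry, and is therefore locally path-connected. Since $V_0 \subset V_\delta$ and $V_0$ is dense in $V$ (the non-regular set has $\Hau^n$-measure zero), hence dense in the open set $V_\delta$, any two points of $V_\delta$ can be joined through small paths inside $V_\delta$ to two points of $V_0$; so the entire lemma reduces to showing that $V_0$ itself is path-connected, which also covers the remaining case $\delta = 0$.

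To prove $V_0$ is path-connected, I will first pass to the interior $V^\circ := V \setminus \partial V$. Since every boundary point has ratio at most $1/2$, we have $V_0 \subset V^\circ$; and $V^\circ$ is connected, which I intend to derive from the collar neighborhood of $\partial V$ provided by Corollary \ref{cor:collar} (equivalently, from the local half-ball structure at each boundary point), by perturbing any path in $V$ that meets $\partial V$ slightly off the boundary inside that collar. Inside the boundaryless Alexandrov domain $V^\circ$, the decisive claim is that $V^\circ \setminus V_0$ has Hausdorff codimension at least two; granting this, a standard general position argument in the locally contractible space $V^\circ$ perturbs any continuous path off the codimension-$\ge 2$ closed subset $V^\circ \setminus V_0$, establishing path-connectedness of $V_0$.

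The main obstacle will be this codimension-two bound for $V^\circ \setminus V_0$, which sharpens the standard BGP codimension-one bound and relies essentially on the absence of boundary in $V^\circ$: a codimension-one stratum of non-regular points in an Alexandrov space must carry a reflection-type local geometry and would therefore feed into $\partial V$, which has been removed. I would attempt this by induction on $n$ using Perelman's stability theorem, which identifies a neighborhood of any point $x \in V^\circ$ with a cone $c(\Sigma_x)$, thereby reducing the local codimension estimate at $x$ to the analogous, inductively granted estimate on the $(n-1)$-dimensional Alexandrov space of directions $\Sigma_x$.
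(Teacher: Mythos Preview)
Your route is genuinely different from the paper's, and the gap is exactly at the point you call ``standard.'' The paper does not reduce to $\delta=0$ at all: writing $V_\delta = V\cap X_\delta$ for the ambient Alexandrov space $X$, it invokes Petrunin's theorem that $X_\delta$ is \emph{convex} (minimal geodesics between points of $X_\delta$ stay in $X_\delta$), together with the existence of arbitrarily small convex open neighborhoods in $X$. Given $p,q\in V_\delta$, a path in $V$ from $p$ to $q$ is covered by a finite chain $W_0,\dots,W_N$ of convex open sets contained in $V$; picking $r_i\in W_{i-1}\cap W_i\cap X_\delta$ by density, the broken geodesic $p\,r_1\cdots r_N\,q$ lies in each $W_i$ (so in $V$) and in $X_\delta$ (by convexity), hence in $V_\delta$. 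This is a three-line constructive argument once the two cited facts are granted.

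In your proposal, the entire content is pushed into the ``standard general position argument'' that a codimension-two subset of $V^\circ$ does not disconnect it, and this step is not justified. Two concrete problems: first, $V^\circ$ is not a manifold, so the transversality/Alexander-duality heuristics behind general position do not apply without further work; second, $V^\circ\setminus V_0$ is only $F_\sigma$, not closed, so even inside the manifold part $V^\circ_\top$ you would need a Baire-type argument that simultaneously avoids countably many closed sets of Hausdorff dimension $\le n-2$. This is not impossible, but it is neither standard nor easier than what is being proved, and you have not supplied it. (Incidentally, the codimension-two bound you say you would prove by induction is already in the literature---BGP and Otsu--Shioya---so the inductive sketch is unnecessary; but knowing the bound does not by itself yield the connectivity.) The convexity approach avoids all of this.
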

\begin{proof}
Let $n = \dim V$. 
Let $X$ be an Alexandrov $n$-space containing $V$. 
Note that $V_\delta = V \cap X_\delta$. 
Due to Petrunin (\cite{Pet}), we know that $X_\delta$ is convex in the sense that any minimal geodesic joining two points in $X_\delta$ is contained in $X_\delta$. 
In particular, $X_\delta$ itself is connected. 
Let us fix $p, q \in V \cap X_\delta$. 
Since $V$ is connected, there exists a continuous curve $\gamma$ connecting $p$ and $q$ and contained in $V$.
Let us recall that for every point $x$ in $X$, there exists an open neighborhood $W_x$ in $X$ which is convex (\cite{MY:stab}, cf. \cite{Per:Morse}).
Further, such a convex neighborhood can be taken to be arbitrary small. 
Hence, we obtain a covering $\{W_i\}_{i=0}^N$ of $\gamma$ such that each $W_i$ is a convex open set contained in $V$, $W_0 \ni p$, $W_N \ni q$ and $W_{i-1} \cap W_i \neq \emptyset$ for $1 \le i \le N$.
Since $X_\delta$ is dense in $X$, for every $i$, there exists a point $r_i \in W_{i-1} \cap W_i \cap X_\delta$. 
Because $W_i$ are convex, we have a broken geodesic $p r_1 r_2 \cdots r_N q$ contained in $V_\delta$.
This completes the proof.
\end{proof}

\begin{lemma} \label{lem:C1 to ori}
Let $M$ be a $C^1$-orientable Alexandrov $n$-domain. Then, $M_\delta$ is orientable as a manifold for small $\delta > 0$.
\end{lemma}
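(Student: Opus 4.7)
The plan is to orient $M_\delta$ by selecting, around each point, a smooth oriented neighborhood of the kind produced by Theorem \ref{thm:KMS}, and to verify that these local orientations patch together by comparing them on the dense regular set $M_0$.

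First I record that $M_\delta$ is a connected topological $n$-manifold: it is open in $M$ since the assignment $x\mapsto \Hau^{n-1}(\Sigma_x)$ is lower semicontinuous on Alexandrov spaces; Theorem \ref{thm:KMS} shows that every point of $M_\delta$ has a Euclidean neighborhood; and Lemma \ref{lem:connected} supplies the connectedness. I also note that $M_0$ is dense in $M_\delta$, because $M\setminus M_0$ has $\Hau^n$-measure zero and $M_\delta$ is open in $M$.

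Next, for each $p\in M_\delta$ I would apply Theorem \ref{thm:KMS} to a small compact neighborhood of $p$ to produce an open $U_p\subset M_\delta$ containing $p$ together with a smooth atlas $\mathcal A_p\subset\Phi_M$. By the last assertion of Theorem \ref{thm:KMS}, the $C^1$-orientability hypothesis guarantees that $U_p$ is orientable as a topological manifold; moreover, from the construction it is clear that one of its two orientations, say $\omega_p$, is compatible with the oriented $C^1$-atlas $\Phi$ at every point of $U_p\cap M_0$. The crux is then to show $\omega_p=\omega_q$ on $U_p\cap U_q$. On $U_p\cap U_q\cap M_0$ both $\omega_p$ and $\omega_q$ agree with the orientation coming from $\Phi$, hence with each other. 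Since $M_0$ is dense in $U_p\cap U_q$, it meets every connected component of that intersection; because two orientations of a topological manifold that coincide at a single point of a connected component coincide throughout that component (being sections of a double cover), we conclude $\omega_p=\omega_q$ on all of $U_p\cap U_q$. The family $\{\omega_p\}_p$ therefore glues to a global orientation of $M_\delta$.

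The main obstacle I anticipate is verifying the compatibility of $\omega_p$ with the $\Phi$-orientation on $U_p\cap M_0$, namely that the smooth atlas produced by Theorem \ref{thm:KMS} is positively oriented in the same sense as $\Phi$. This amounts to unpacking the explicit construction of the smooth charts from averaged distance functions of the form \eqref{eq:tilde d} and checking that the sign of the Jacobian of their transitions with $\Phi$-charts is positive wherever it is defined; this compatibility is essentially stated in the paragraph following Theorem \ref{thm:KMS}, where the construction is noted to preserve orientation. Once this local compatibility is in hand, the density-and-covering argument sketched above assembles the local orientations into a global one.
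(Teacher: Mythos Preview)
Your proposal is correct and follows essentially the same strategy as the paper: invoke Theorem~\ref{thm:KMS} to produce, over pieces of $M_\delta$, smooth atlases lying inside the canonical $C^1$-structure and oriented compatibly with the given oriented $C^1$-atlas $\Phi$, then assemble these into a global orientation of $M_\delta$. The paper packages this slightly differently, exhausting $M_\delta$ by an increasing sequence of connected compacta $K_\alpha$ and applying Theorem~\ref{thm:KMS} to obtain a \emph{nested} sequence $U_1\subset U_2\subset\cdots$ with smooth atlases $\Phi_\alpha$ all contained in the maximal oriented $C^1$-atlas $\Phi_+$, so that compatibility of the induced orientations is immediate from the common inclusion $\Phi_\alpha\subset\Phi_+$ rather than argued via your density-of-$M_0$ gluing on overlaps; both routes rest on the same orientation-preserving property of the KMS construction that you identify as the crux.
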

\begin{proof}
Let $\Phi_+$ be a maximal oriented $C^1$-atlas on $M_0 \subset M$ contained in $\Phi_M$, where $\Phi_M$ is the canonical $C^1$-structure on $M_0 \subset M$.
Since $M$ is second-countable, so is $M_\delta$. 
In particular, $M_\delta$ is $\sigma$-compact, that is, there is a sequence of compact sets $K_\alpha$ in $M_\delta$ so that $\bigcup_{\alpha=1}^\infty K_\alpha = M_\delta$.
Using Lemma \ref{lem:connected}, we may assume that each $K_\alpha$ is path-connected. 
Applying Theorem \ref{thm:KMS} to each $K_\alpha$, we obtain a sequence $\{U_\alpha\}$ of open sets in $M_\delta$ satisfying the following: 
\begin{itemize}
\item $U_\alpha \subset U_{\alpha+1}$ for every $\alpha$ and $\bigcup_{\alpha=1}^\infty U_\alpha = M_\delta$; 
\item each $U_\alpha$ is connected. 
\item there is a smooth atlas $\Phi_\alpha$ on $U_\alpha$ such that $\Phi_\alpha \subset \Phi_+$ for every $\alpha$. 
\end{itemize}
From the last property, $U_\alpha$ is oriented by $\Phi_\alpha$ as a topological manifold. 
Since the orientations of $\Phi_\alpha$ are compatible, $M_\delta$ is orientable. 
\end{proof}

\begin{proof}[Proof of Theorem \ref{thm:C1-ori}]
Suppose that $M$ is $C^1$-orientable. 
By Lemma \ref{lem:C1 to ori}, for small $\delta > 0$, $M_\delta$ is orientable. 
Recall that the proof of Lemma \ref{lem:HS} relies on the condition $\dim (X \setminus X_\top) \le n-2$, where $X$ is as in the lemma. 
On the other hands, since $\delta$ is small and $M$ has no boundary, we have $\dim (M \setminus M_\delta) \le n-2$ (\cite{BGP}, \cite{OS}).
Hence, the proof of Lemma \ref{lem:HS} works for $M$ and $M_\delta$ replaced with $X$ and $X_\top$, respectively. 
This implies that $M$ is orientable. 

Conversely, we suppose that $M$ is orientable and fix an orientation on $M$. 
Then, its open subset $M_\delta$ is oriented. 
Let $\{U_\alpha\}$ be a family of connected open subsets in $M_\delta$ used in the proof of Lemma \ref{lem:C1 to ori}. 
By Theorem \ref{thm:KMS}, we have a smooth oriented atlas $\Phi_\alpha$ on $U_\alpha$ such that $\Phi_\alpha \subset \Phi_M$. 
Then, $\bigcup_{\alpha=1}^\infty \Phi_\alpha$ becomes an oriented $C^1$-atlas on $M_0 \subset M$.
This completes the proof of Theorem \ref{thm:C1-ori}.
\end{proof}

\begin{theorem} \label{thm:n-form}
Let $M$ be an $n$-dimensional Alexandrov domain which does not meet the boundary. 
Then it is orientable if and only if there is a continuous $n$-form $\omega$ on $M_0$ such that $\omega$ is non-zero at each point in $M_0$. 
\end{theorem}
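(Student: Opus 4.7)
My plan is to use Theorem \ref{thm:C1-ori} to reduce the statement to an equivalence between $C^1$-orientability and the existence of a continuous nowhere-vanishing $n$-form on $M_0$ with respect to the canonical $C^1$-structure $\Phi_M$. Here a continuous $n$-form on $M_0$ is defined, in the usual way, as an assignment of a continuous section $\omega_\varphi = f_\varphi\, dx_1 \wedge \cdots \wedge dx_n$ on $\varphi(U_\varphi \cap M_0)$ to each chart $(U_\varphi, \varphi) \in \Phi_M$, which transforms correctly under $C^1$-coordinate changes on overlaps.

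For the forward direction, suppose $M$ is orientable, hence $C^1$-orientable by Theorem \ref{thm:C1-ori}. Let $\Phi_+ = \{(U_\varphi, \varphi)\}$ be an oriented $C^1$-atlas on $M_0 \subset M$. On each $U_\varphi \cap M_0$, set $\omega_\varphi := \varphi^*(dx_1 \wedge \cdots \wedge dx_n)$; this is a continuous nowhere-vanishing local $n$-form. Since $M$ is separable metrizable, so is the open set $\bigcup_\varphi U_\varphi \supset M_0$, and we may choose a continuous partition of unity $\{\rho_\varphi\}$ on $\bigcup U_\varphi$ subordinate to $\{U_\varphi\}$. Define $\omega := \sum_\varphi \rho_\varphi\, \omega_\varphi$ on $M_0$. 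On $U_\psi \cap M_0$ we have $\omega_\varphi = J(\varphi \circ \psi^{-1}) \cdot \omega_\psi$ where $J$ denotes the Jacobian determinant, which is \emph{positive} by the definition of an oriented atlas. Hence the coefficient of $\omega_\psi$ in the expansion of $\omega$ is a strictly positive continuous function, and $\omega$ is nowhere zero on $M_0$.

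For the converse, suppose a continuous nowhere-vanishing $n$-form $\omega$ exists on $M_0$. For every chart $(U, \varphi) \in \Phi_M$, write $\omega = f_\varphi \cdot \varphi^*(dx_1 \wedge \cdots \wedge dx_n)$ on $U \cap M_0$, where $f_\varphi : U \cap M_0 \to \mathbb{R} \setminus \{0\}$ is continuous. By restricting each $U$ to a connected subset meeting $M_0$ (the open set $M_\delta$ is connected for small $\delta$ by Lemma \ref{lem:connected}, and one can locally intersect with $M_\delta$), and composing $\varphi$ with the reflection $(x_1, \ldots, x_n) \mapsto (-x_1, x_2, \ldots, x_n)$ whenever $f_\varphi < 0$, we obtain a sub-atlas $\Phi_+ \subset \Phi_M$ in which $f_\varphi > 0$ everywhere. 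On any overlap $U_\varphi \cap U_\psi \cap M_0$, the identity $f_\varphi \cdot \varphi^*(dx_1 \wedge \cdots \wedge dx_n) = f_\psi \cdot \psi^*(dx_1 \wedge \cdots \wedge dx_n)$ forces $J(\varphi \circ \psi^{-1}) = f_\psi / f_\varphi > 0$. Thus $\Phi_+$ is oriented, and $M$ is $C^1$-orientable, hence orientable by Theorem \ref{thm:C1-ori}.

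The main obstacle is purely bookkeeping around the subtlety that $M_0$ is not literally open in $M$ and the $C^1$-structure lives on the pair $M_0 \subset M$ rather than on $M_0$ intrinsically. This is navigated by working inside the open dense regular regions $M_\delta$ (which are connected by Lemma \ref{lem:connected}) and invoking Theorem \ref{thm:KMS} to produce, around any compact subset of $M_0$, an open neighborhood in $M_\delta$ carrying a genuine smooth atlas contained in $\Phi_M$; on such neighborhoods the standard differential-topology gluing and sign-adjustment arguments sketched above apply verbatim, and the partition-of-unity construction extends continuously to all of $M_0$ by the same paracompactness argument that appears in the proof of Theorem \ref{thm:C1-ori}.
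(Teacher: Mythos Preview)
Your argument is correct, and in the converse direction it is essentially identical to the paper's: select those connected charts $\varphi \in \Phi_M$ for which the ratio $\omega / \varphi^\ast(dx_1 \wedge \cdots \wedge dx_n)$ is positive on $U_\varphi \cap M_0$, the constancy of sign being guaranteed by Lemma~\ref{lem:connected} applied to the Alexandrov domain $U_\varphi$ (so that $(U_\varphi)_0 = U_\varphi \cap M_0$ is connected).

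The forward direction, however, takes a genuinely different route. The paper does not glue by a partition of unity; instead it invokes the canonical $C^0$-Riemannian metric $g$ of Otsu--Shioya and sets, in each chart of the oriented atlas $\Phi$,
\[
\omega^\varphi = \sqrt{\det g^\varphi \circ \varphi^{-1}}\, dx_\varphi^1 \wedge \cdots \wedge dx_\varphi^n.
\]
The transformation law \eqref{eq:C0-metric} for $g$ together with the positivity of the Jacobians in $\Phi$ forces $\varphi^\ast \omega^\varphi = \psi^\ast \omega^\psi$ on overlaps, so these local forms already agree and patch to a global $\omega$ without any averaging. What this buys is a canonical, geometrically meaningful choice of $\omega$ (the Riemannian volume form), at the cost of importing the Otsu--Shioya metric. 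Your partition-of-unity construction is more elementary in that it needs only paracompactness and the oriented $C^1$-atlas, but it produces a non-canonical $\omega$ depending on the chosen partition. Both approaches are standard in differential topology; the paper's is the ``volume form'' proof, yours is the ``convex combination'' proof.
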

Here, we briefly recall the notion of {\it continuous $n$-forms on $M_0$}. 
The union $\bigsqcup_{p \in M_0} T_p M$ of tangent cones at each regular point becomes a topological vector bundle on $M_0$ of rank $n$, by identifying each $T_p M$ with the Euclidean $n$-space. 
We denote it by $TM$. 
Therefore, we obtain its dual $T^\ast M$ and the exterior products $\bigwedge^k T^\ast M$, which are also topological vector bundles over $M_0$. 
So, the notion of continuity of sections $\omega : M_0 \to \bigwedge^n T^\ast M$ is well-defined. 
\begin{proof}[Proof of Theorem \ref{thm:n-form}]
Suppose that $M$ is orientable. 
By Theorem \ref{thm:C1-ori}, there is an oriented $C^1$-atlas $\Phi$ on $M_0 \subset M$ contained in $\Phi_M$.
Due to \cite{OS}, we obtain a canonical $C^0$-Riemannian metric $g$ associated with $\Phi_M$.
That is, $g$ is a family $g = \{g^\varphi\}_{\varphi \in \Phi_M}$ of matrix-valued functions satisfying that, for $\varphi, \psi \in \Phi_M$, 
\begin{itemize}
\item the domain of $g^\varphi$ is $U_\varphi \cap M_0$, where $U_\varphi$ is the domain of $\varphi$; 
\item $g^\varphi(p)$ is a positive definite symmetric $(n \times n)$-matrix for each $p \in U_\varphi \cap M_0$; 
\item $g^\varphi$ is continuous on $U_\varphi \cap M_0$; 
\item for each $p \in U_\varphi \cap U_\psi \cap M_0$, 
\begin{equation} \label{eq:C0-metric}
g_{ji}^\varphi(p) \frac{\partial x_\varphi^i}{\partial x_\psi^k}(\psi(p)) = g_{ji}^\psi(p) \frac{\partial x_\psi^i}{\partial x_\varphi^k}(\varphi(p))
\end{equation} 
where $x_\chi = (x_\chi^i)$ denotes the coordiante system of $\chi(U_\chi) \subset \mathbb R^n$ for $\chi \in \Phi_M$.
\end{itemize}
Further, Otsu and Shioya proved the compatibility of $g$ and the distance function on $M$.
However, such a property is not needed in this proof.
For $\varphi \in \Phi_M$, we obtain a continuous $n$-form 
\begin{equation} \label{eq:omega-varphi}
\omega^\varphi = \sqrt {\det g^\varphi \circ \varphi^{-1}}\, d x_\varphi^1 \wedge d x_\varphi^2 \wedge \cdots \wedge d x_\varphi^n
\end{equation}
on $\varphi(U_\varphi \cap M_0)$, where $d x_\varphi^i : \varphi(U_\varphi) \to T^\ast \varphi(U_\varphi)$ are the usual exterior derivaties of the coordinate functions $x_\varphi^i$ on $\varphi(U_\varphi) \subset \mathbb R^n$. 
Let us consider the restriction $\{\omega^\varphi\}_{\varphi \in \Phi}$ of the family $\{\omega^\varphi\}_{\varphi \in \Phi_M}$ of local $n$-forms to $\Phi$. 
Since $\Phi$ is oriented, by \eqref{eq:C0-metric} and \eqref{eq:omega-varphi}, it satisfies 
\[
\varphi^\ast \omega^\varphi = \psi^\ast \omega^\psi 
\]
as $n$-forms defined on $U_\varphi \cap U_\psi \cap M_0$ for every $\varphi, \psi \in \Phi$. 
Therefore, we obtain a section $\omega : M_0 \to \bigwedge^n T^\ast M$ defined by $\omega = \varphi^\ast \omega^\varphi$ on $U_\varphi \cap M_0$ for each $\varphi \in \Phi$. 
Then, $\omega$ satisfies the desired condition of the conclusion. 

We prove the converse direction. 
Let us assume that there is a continuous $n$-form $\omega$ on $M_0$ which is pointwisely non-zero.
For each $\varphi \in \Phi_M$, we also obtain a local continuous $n$-form $\omega^\varphi$ defiend on $\varphi(U_\varphi \cap M_0)$ as above. 
By regarding $\bigwedge^n T_p^\ast M$ as $\mathbb R$ for each $p \in M_0$, from the assumption that $\omega(p) \neq 0$, the ``ratio'' 
\[
\omega(p) / \omega^\varphi(\varphi(p))
\]
is regarded as a real number, which varies continuously in $p$.
By Lemma \ref{lem:connected}, 
the sign of the ratio $\omega / \omega^\varphi$ is constant on $M_0 \cap U_\varphi$, whenever $U_\varphi$ is connected. 
Now, we set 
\[
\Phi_+ := \left\{ \varphi \in \Phi_M \left|\,
\begin{aligned} 
&\omega / \omega^\varphi \text{ has a positive sign on } M_0 \cap U_\varphi \\
&\text{and } U_\varphi \text{ is connected} 
\end{aligned} 
\right.
\right\}.
\]
Then, $\Phi_+$ becomes an oriented $C^1$-atlas on $M_0 \subset M$. 
This completes the proof.
\end{proof}


\section{NB-spaces with boundary} \label{sec:NB with boundary} 
We are going to introduce a class $\mathcal N_b$ of ``NB-spaces with boundary'', which is a restricted class to the class of non-branching MCS-spaces with boundary given by Harvey and Searle (\cite{HS}). 
Let us recall the notion of general non-branching MCS-spaces that are defined by a similar way to define general MCS-space (Definition \ref{def:MCS}). 
Spaces consisting of one or two points are called non-branching MCS-spaces of dimension zero. 
An $n$-dimensional MCS-space $X$ is said to be {\it non-branching} if a generator of any point can be taken to be a compact non-branching MCS-space $\Sigma$, where $\Sigma$ is imposed to be connected if $n \ge 2$.
From the definition, one-dimensional non-branching MCS-spaces are topological one-manifolds possibly with boundary.
Following \cite{HS}, we say that $X$ is {\it with boundary} if repeatedly taking generators, finally it becomes a space of single point. 

For a general non-branching MCS-space $X$ of dimension $n$, with $n \ge 1$, we set 
\[
\partial X := \{x \in X \mid H_n(X|x;\mathbb Z_2) \not\cong \mathbb Z_2\}.
\]
The author does not check that our $\partial X$ coincides with the boundary of $X$ 
intended in \cite{HS}, 
because we will soon deal with the restricted class $\mathcal N_b$
and the notion of the boundary of $X$ 
was not given explicitly in \cite{HS}.
However, in this paper, we call $\partial X$ the {\it boundary} of $X$ and $X \setminus \partial X$ the {\it interior} of $X$. 

The reasons why we introduce the class $\mathcal N_b$ is that the class of general non-branching MCS-spaces is too large to describe the underlying topologies of Alexandrov spaces (with boundary).
In general, the boundary of non-branching MCS-space is not an NB-space. 
For instance, if $X = c(S^1 \times [0,1])$, then $\partial X$ is homeomorphic to the cone over the union of two disjoint circles.
Such an $X$ does not admit a metric of Alexandrov spaces. 

Taking into account Theorem \ref{thm:positive boundary}, let us give the definition of the class $\mathcal N_b$: 
\begin{definition} \label{def:NB with boundary} \upshape 
We denote by $\mathcal S^n$ the class of all compact connected NB-spaces, for $n \ge 1$, and by $\mathcal S^0$ the class of discrete spaces consisting of only two points. 
Let $\mathcal S_b^0$ denote the class of all spaces consisting of only one point. 
Let us prepare two classes $\mathcal S_b^n$ and $\mathcal N_b^n$ for $n \ge 1$, inductively. 
\begin{itemize}
\item $\Sigma \in \mathcal S_b^n$ if there is a space $\Lambda \in \mathcal S^{n-1}$ such that $\Sigma$ is homeomorphic to the closed cone $\bar c(\Lambda)$ over $\Lambda$; 
\item $X \in \mathcal N_b^n$ if $X$ is an MCS-space of dimension $n$ such that a generator at each point can be a space belonging to $\mathcal S^{n-1} \cup \mathcal S_b^{n-1}$.
\end{itemize}
Let $\mathcal N_b = \bigcup_{n=1}^\infty \mathcal N_b^n$. 
A space $X$ in $\mathcal N_b$ is called an {\it NB-space with boundary}.
A space which is an NB-space or an NB-space with boundary is called an {\it NB-space possibly with boundary}.
To emphasize, NB-spaces are called an NB-spaces {\it without boundary}.
\end{definition}
Note that if $\Sigma$ is in $\mathcal S^n_b$ for $n \ge 1$, then it is automatically compact and connected. 
Hence, an NB-space with boundary is a non-branching MCS-space. 

From now on, let us denote by $S^0$ and $S^0_+$ are discrete spaces consisting of only two points and of only one point, respectively, that is, $S^0 \in \mathcal S^0$ and $S_+^0 \in \mathcal S^0_b$.
\begin{example} \upshape
If an NB-space with boundary has dimension $n \le 3$, then it is a topological $n$-manifold with boundary.
In four-dimensional case, $c(S^0_+ \ast \mathbb RP^2)$ is an NB-space with boundary, but is not a manifold with boundary.

Our class $\mathcal N_b$ is strictly smaller than the class of general non-branching MCS-spaces with boundary. 
Actually, $c(S^1 \times [0,1])$ is not in $\mathcal N_b^3$ (See also Proposition \ref{prop:bdry is NB}).
\end{example}

The class $\mathcal N_b$ satisfies the following three reasonable statements.
\begin{proposition} \label{prop:interior}
Let $X \in \mathcal N_b$. 
Then, the interior $X \setminus \partial X$ of $X$ is an NB-space.
\end{proposition}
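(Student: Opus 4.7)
The plan is to reduce the statement to a dichotomy on generators and then run a short induction on $n = \dim X$. For $n = 1$, members of $\mathcal N_b$ are $1$-manifolds with boundary, whose topological interior is a $1$-manifold without boundary and hence an NB-space; this handles the base case.

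For $n \geq 2$, I would first characterize $\partial X$ by the homotopy type of the generator. Given $x \in X$ with a conical neighborhood $U \cong c(\Sigma)$, excision together with the cone structure gives $H_n(X|x;\mathbb Z_2) \cong \bar H_{n-1}(\Sigma;\mathbb Z_2)$. If $\Sigma \in \mathcal S^{n-1}$, Lemma \ref{lem:codim}(1) combined with universal coefficients (equivalently, the automatic $\mathbb Z_2$-orientability noted in Remark \ref{rem:R-ori}) yields $\bar H_{n-1}(\Sigma;\mathbb Z_2) \cong \mathbb Z_2$, whereas if $\Sigma = \bar c(\Lambda) \in \mathcal S_b^{n-1}$ then $\Sigma$ is contractible and $\bar H_{n-1}(\Sigma;\mathbb Z_2) = 0$. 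Because Kwun's uniqueness theorem forces any two generators at the same point to be homotopy equivalent, the equivalence ``$x \in \partial X$ iff the generator at $x$ lies in $\mathcal S_b^{n-1}$'' is well defined.

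Next I would show that for $x \in X \setminus \partial X$, any conical neighborhood $U \cong c(\Sigma)$ with $\Sigma \in \mathcal S^{n-1}$ is in fact contained in $X \setminus \partial X$. The apex is handled by assumption, and for a non-apex point $y$ corresponding to $(t, \xi)$ with $t > 0$, I would choose a conical neighborhood of $\xi$ in the NB-space $\Sigma$ with generator $\Sigma_\xi \in \mathcal S^{n-2}$ (taking $\Sigma_\xi = S^0$ in the edge case $n = 2$). A neighborhood of $y$ in $X$ is then homeomorphic to $\mathbb R \times c(\Sigma_\xi)$, which is itself homeomorphic to the open cone $c(S^0 \ast \Sigma_\xi)$. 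Since $S^0 \ast \Sigma_\xi$ is a compact connected NB-space of dimension $n-1$ by Example \ref{ex:NB}, the generator at $y$ lies in $\mathcal S^{n-1}$ and the previous criterion forces $y \notin \partial X$. Consequently $X \setminus \partial X$ is open in $X$ and each of its points has a conical neighborhood with generator in $\mathcal S^{n-1}$, which is exactly the definition of an $n$-dimensional NB-space. The only step requiring real care is the explicit homeomorphism $\mathbb R \times c(A) \cong c(S^0 \ast A)$ that converts the ``slab times cone'' local picture near a non-apex point of $c(\Sigma)$ into a conical neighborhood whose generator is a suspension; everything else is bookkeeping with the type dichotomy and Kwun's uniqueness.
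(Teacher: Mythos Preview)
Your proposal is correct and follows the same approach as the paper: establish the dichotomy ``$x\notin\partial X$ if and only if a generator at $x$ can be taken in $\mathcal S^{n-1}$'', and then conclude that $X\setminus\partial X$ is an NB-space. The paper's own proof is just these two sentences, leaving the verification that the conical neighborhood $c(\Sigma)$ (with $\Sigma\in\mathcal S^{n-1}$) stays inside $X\setminus\partial X$ implicit.

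Your treatment of that implicit step is more elaborate than necessary. Once $\Sigma\in\mathcal S^{n-1}$, Example~\ref{ex:NB} already asserts that $c(\Sigma)$ is an $n$-dimensional NB-space; by the very definition of NB-space, every point of $c(\Sigma)$ then has a conical neighborhood with generator in $\mathcal S^{n-1}$, and your dichotomy immediately gives $U\subset X\setminus\partial X$. Your explicit suspension construction $\mathbb R\times c(\Sigma_\xi)\approx c(S^0\ast\Sigma_\xi)$ at non-apex points is really a re-derivation of that fact from Example~\ref{ex:NB}, so it is redundant but not wrong.
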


\begin{proposition} \label{prop:collar}
Let $X \in \mathcal N_b$. 
Then, $\partial X$ has a collar neighborhood, that is, there is an open neghborhood $W$ together with a homeomorphism $f : W \to \partial X \times [0,1)$ so that 
$f(x) = (x,0)$ for all $x \in \partial X$.
\end{proposition}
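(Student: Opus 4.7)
The strategy is to first produce a canonical local collar at each boundary point by explicitly straightening the conical neighborhood, and then to patch these local collars into a global one by a Brown-type inductive gluing.

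For the local step, let $x \in \partial X$. The definition of $\partial X$ via $H_n(X|x; \mathbb Z_2) \neq \mathbb Z_2$ forces the generator at $x$ to belong to $\mathcal S^{n-1}_b$ rather than $\mathcal S^{n-1}$, so $x$ admits a conical neighborhood $(U_x, x) \cong (c(\bar c(\Lambda)), o)$ for some $\Lambda \in \mathcal S^{n-2}$. I would then write down the straightening homeomorphism
\[ \Phi \colon c(\bar c(\Lambda)) \to c(\Lambda) \times [0, \infty), \qquad [[y,t], s] \mapsto \bigl([y, ts],\, s(1-t)\bigr), \]
with $y \in \Lambda$, $t \in [0,1]$, $s \in [0, \infty)$, whose inverse sends $([y, r], u)$ to $[[y, r/(r+u)], r+u]$ for $r+u > 0$ and to the apex otherwise. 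Elementary checks confirm that $\Phi$ is well-defined with respect to the cone identifications, is continuous with continuous inverse, and sends the boundary of $c(\bar c(\Lambda))$ bijectively onto $c(\Lambda) \times \{0\}$. After reparametrizing the $[0, \infty)$-factor to $[0,1)$, this yields an open neighborhood $U_x$ of $x$ in $X$ together with a homeomorphism $h_x \colon U_x \to V_x \times [0, 1)$, where $V_x := U_x \cap \partial X$ is open in $\partial X$, such that $h_x(y) = (y, 0)$ for every $y \in V_x$. As a byproduct, every point of $\partial X$ has a neighborhood in $\partial X$ homeomorphic to the open cone $c(\Lambda)$, confirming in the same stroke that $\partial X$ is an NB-space.

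For the global step, I would choose a locally finite countable open cover $\{V_i\}_{i \in \mathbb N}$ of $\partial X$ refining $\{V_x\}$, with corresponding local collars $h_i \colon V_i \times [0, 1) \to U_i \subset X$, and a partition of unity $\{\rho_i\}$ on $\partial X$ subordinate to $\{V_i\}$. Following Brown's classical inductive construction, build a sequence of partial collars $f_k \colon \partial X \times [0, \varepsilon_k) \to X$ by extending $f_{k-1}$ across $V_k$ via the squeezing trick: push each point along the $[0,1)$-fiber of $h_k$ by an amount modulated by $\rho_k$, producing a self-homeomorphism of a neighborhood of $\partial X$ that agrees with $f_{k-1}$ outside $V_k$. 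A direct limit under a suitable choice of $\varepsilon_k$ yields the desired global collar $f \colon \partial X \times [0, 1) \hookrightarrow X$.

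The main obstacle is the overlap compatibility of the local collars needed to run Brown's squeezing step. For topological manifolds this is immediate because each local collar is a half-space and the overlap structure of $\mathbb R^{n-1} \times [0, \infty)$ is standard; in our setting the cross-sections $V_x$ can have singular strata, so one must verify that on $V_i \cap V_j$ the collars $h_i$ and $h_j$ differ on a thin tubular region by a fiber-preserving homeomorphism isotopic through such homeomorphisms to the identity. I expect this to follow from a refinement of Kwun's uniqueness of conical neighborhoods (already invoked in the paper), applied to the closed conical wedges carved out by two local collar charts. Once this compatibility is in hand, the rest of the argument is a routine transfer of the manifold case.
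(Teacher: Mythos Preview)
Your local step is exactly the paper's: the paper observes $\bar c(\Lambda) \approx S^0_+ \ast \Lambda$ and hence $c(\bar c(\Lambda)) \approx [0,\infty) \times c(\Lambda)$, with $\partial U$ corresponding to $\{0\}\times c(\Lambda)$; your formula for $\Phi$ makes this explicit and checks out.

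Where you diverge is only by over-complicating the global step. The paper simply says ``this observation implies that general $X$ is locally collared; due to Brown we obtain the conclusion'' and stops. Your worry about ``overlap compatibility'' of the local collars is misplaced: Brown's squeezing argument never compares two local collars. The inductive step takes the partial collar $f_{k-1}$ already built and a \emph{single} new local collar $h_k$; one constructs a self-homeomorphism of $X$ supported in $h_k(V_k\times[0,1))$ by squeezing along the $[0,1)$-fibers of $h_k$ alone, and this pushes the image of $f_{k-1}$ so that a partial collar $f_k$ with larger domain can be defined. At no point does one need $h_i$ and $h_j$ to differ by a fiber-preserving isotopy on their overlap. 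Consequently Brown's theorem applies verbatim to any locally collared closed subset of a paracompact Hausdorff space, with no manifold hypothesis on $X$ and no appeal to Kwun-type uniqueness. Delete your last paragraph, cite Brown directly, and your proof is the paper's proof with the pleasant bonus of an explicit $\Phi$.
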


\begin{proposition} \label{prop:bdry is NB}
If $X \in \mathcal N_b$, then $\partial X$ is an NB-space of dimension $\dim X -1$, when $\dim X > 1$.
\end{proposition}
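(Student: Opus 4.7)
The plan is to argue by induction on $n = \dim X$. For the base case $n = 2$, the classes $\mathcal S^1$ and $\mathcal S^1_b$ contain only $S^1$ and $[0,1]$ respectively (up to homeomorphism), so every $X \in \mathcal N_b^2$ is a topological $2$-manifold with boundary and $\partial X$ is a topological $1$-manifold without boundary, which is a one-dimensional NB-space by definition.

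Assume $n \geq 3$. Fix $x \in \partial X$ and a conical neighborhood $U \cong c(\Sigma)$ of $x$ with $\Sigma \in \mathcal S^{n-1} \cup \mathcal S^{n-1}_b$. I would first observe that $\Sigma$ cannot lie in $\mathcal S^{n-1}$: in that case $c(\Sigma)$ would be an NB-space without boundary, which is always $\mathbb Z_2$-orientable (Remark \ref{rem:R-ori}), and Theorem \ref{thm:main thm}\,(A) would give $H_n(U|x; \mathbb Z_2) \cong \mathbb Z_2$, contradicting $x \in \partial X$. Hence $\Sigma = \bar c(\Lambda)$ for some $\Lambda \in \mathcal S^{n-2}$, and since $\partial X$ is defined by a local homology condition, $\partial X \cap U = \partial U$, so it suffices to compute $\partial U$ inside $U \cong c(\bar c(\Lambda))$.

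The heart of the proof is to show that $\partial U$ is the subset corresponding to $\Lambda \times \{1\} \times [0, \infty)$, which is homeomorphic to $c(\Lambda)$, by inspecting generators via the identity $c(A) \times c(B) \cong c(A \ast B)$ (applied with $c(\mathrm{pt}) \cong [0, \infty)$ and $c(S^0) \cong \mathbb R$). Parametrizing points as $[\lambda, s, r]$ with $\lambda \in \Lambda$, $s \in [0,1]$, $r \in [0, \infty)$ modulo the natural collapses, the apex ($r = 0$) has generator $\bar c(\Lambda)$, which is contractible, so the apex belongs to $\partial U$. For $r_0 > 0$ and $s_0 < 1$, a neighborhood is the product of a conical neighborhood inside the interior of $\bar c(\Lambda)$ with $\mathbb R$; the identity $c(A) \times \mathbb R \cong c(\Sigma A)$ combined with the suspension assertion in Example \ref{ex:NB} identifies the generator as an element of $\mathcal S^{n-1}$, so the point is interior. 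For $r_0 > 0$ and $s_0 = 1$, a neighborhood takes the form $c(\Gamma) \times [0, \epsilon) \times \mathbb R$ where $\Gamma \in \mathcal S^{n-3}$ is a generator of $\Lambda$ at $\lambda_0$; applying the cone–product identity twice together with the join-associativity $\bar c(\Gamma) \ast S^0 = (\mathrm{pt} \ast \Gamma) \ast S^0 \cong \mathrm{pt} \ast (\Gamma \ast S^0) = \bar c(\Sigma \Gamma)$ shows the generator is a closed cone $\bar c(\Sigma \Gamma) \in \mathcal S^{n-1}_b$ (using $\Sigma \Gamma \in \mathcal S^{n-2}$), so the point belongs to $\partial U$.

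Combining the three cases yields $\partial U \cong c(\Lambda)$, furnishing a conical neighborhood of $x$ in $\partial X$ with generator $\Lambda \in \mathcal S^{n-2}$, which completes the inductive step. The main obstacle will be the third case above: verifying that the local model at a boundary-of-boundary point is conical with a \emph{closed-cone} generator (rather than merely a cone) requires carefully chaining several cone-and-join identities and exploiting the associativity of the join in the form $\Sigma \bar c(\Gamma) \cong \bar c(\Sigma \Gamma)$ — a step that would be a triviality for smooth manifolds but in our singular setting demands that one check the identifications at the successive apex collapses are compatible.
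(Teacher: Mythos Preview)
Your argument is correct and shares the same underlying idea as the paper's, namely the cone--join identity $c(A \ast B) \approx c(A) \times c(B)$. The paper, however, applies this identity once globally rather than point-by-point: since $\bar c(\Lambda) \approx S^0_+ \ast \Lambda$, one has
\[
U \;=\; c(\bar c(\Lambda)) \;\approx\; c(S^0_+) \times c(\Lambda) \;\approx\; [0,\infty) \times c(\Lambda),
\]
and from this product description $\partial U = \{0\} \times c(\Lambda)$ is read off in one line (the factor $c(\Lambda)$ is an NB-space without boundary because $\Lambda \in \mathcal S^{n-2}$, so boundary points are exactly those where the $[0,\infty)$-coordinate vanishes). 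Your three-case analysis recovers precisely this, but by computing the generator at each point type separately; in particular, the ``main obstacle'' you flag in Case~3 (the iterated join manipulation $\bar c(\Gamma) \ast S^0 \approx \bar c(\Sigma\Gamma)$) disappears entirely in the paper's framing, since after the global change of coordinates those points simply sit on $\{0\} \times (c(\Lambda)\setminus\{o\})$. Both routes are valid; the paper's is shorter, while yours makes the local generators at every point of $U$ explicit, which is not without interest.
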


\begin{proof}[Proof of Propositions \ref{prop:interior}--\ref{prop:bdry is NB}]
Let $X \in \mathcal N_b^n$. 
We may assume $n \ge 3$. 
From the defintion, $x \in X \setminus \partial X$ if and only if a generator at $x$ is taken to belong to $\mathcal S^{n-1}$. 
Hence, $X \setminus \partial X$ is an NB-space. 
That is, Proposition \ref{prop:interior} has been proved. 

For $\Lambda \in \mathcal S^{n-2}$, we consider $\Sigma = \bar c(\Lambda)$ which is in $\mathcal S^{n-1}_b$.
Note that $\bar c(\Lambda) \approx S^0_+ \ast \Lambda$, where $A \approx B$ means that $A$ and $B$ are homeomorphic. 
Let $U$ be the cone over $\Sigma$. 
Then, we have 
\[
U \approx [0, \infty) \times c(\Lambda).
\]
By this identification, $\partial U$ corresponds to $\{0\} \times c(\Lambda)$. 
Therefore, $\partial U$ has a collar neighborhood corresponding to $[0,1) \times c(\Lambda)$. 
This observation implies that general $X$ is locally collared. 
Due to Brown (\cite{Bro}), we obtain the conclusion of Proposition \ref{prop:collar}.

Let $\Lambda$, $\Sigma$ and $U = c(\Sigma)$ are the same as above. 
From the definition, we have $(\Sigma, \partial \Sigma) \approx (\bar c(\Lambda), \Lambda \times \{1\})$, where $\bar c(A)$ is defined as $A \times [0,1] / A \times \{0\}$.
Then, we obtain 
\[
(\partial U, o) \approx (c(\partial \Sigma), o),
\]
where $o$ denotes the apex of the cones.
Since $\Lambda \approx \partial \Sigma \in \mathcal S^{n-2}$, $\partial X$ is an $(n-1)$-dimensional NB-space. 
This completes the proof.
\end{proof}

By Proposition \ref{prop:collar} and Theorem \ref{thm:main thm}, we immediately obtain the following, which is a generalization of Corollary \ref{cor:bdry}. 
\begin{corollary} \label{cor:bdry1}
Let $X$ be an $n$-dimensional NB-space with boundary. Then, $H_n(X;G)=0$ for any abelian group. 
\end{corollary}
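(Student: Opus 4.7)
The plan is to reduce Corollary \ref{cor:bdry1} to Theorem \ref{thm:main thm} (C) by using the collar neighborhood from Proposition \ref{prop:collar} to identify $H_n(X;G)$ with $H_n(X \setminus \partial X;G)$, then applying the vanishing theorem to each component of $X \setminus \partial X$, which is an NB-space without boundary by Proposition \ref{prop:interior}.

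Passing to connected components, I would assume $X$ is connected with $\partial X \neq \emptyset$ (the corollary being a direct generalization of Corollary \ref{cor:bdry}, where this hypothesis is present). Set $Y := X \setminus \partial X$, and let $W \cong \partial X \times [0,1)$ be a collar supplied by Proposition \ref{prop:collar}. Since $\partial X$ is closed in $X$ and contained in the open set $W$, excision yields
\[
H_k(X, Y; G) \cong H_k(W, W \setminus \partial X; G) \cong H_k(\partial X \times [0,1),\, \partial X \times (0,1);\, G)
\]
for all $k$. The inclusion $\partial X \times (0,1) \hookrightarrow \partial X \times [0,1)$ is a homotopy equivalence, since it is compatible with the two projections onto $\partial X$, which are themselves homotopy equivalences. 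Hence the relative groups above vanish in every degree, and the long exact sequence of the pair $(X, Y)$ gives an isomorphism $H_n(X;G) \cong H_n(Y;G)$.

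Finally, I would check that each connected component $Y_\alpha$ of $Y$ is non-compact: $Y_\alpha$ is open in $X$ (as a component of the open subset $Y$), so if it were compact it would also be closed in the Hausdorff space $X$, hence clopen, contradicting the connectedness of $X$ together with $Y_\alpha \subsetneq X$ (which uses $\partial X \neq \emptyset$). Theorem \ref{thm:main thm} (C) applied to the non-compact connected NB-space $Y_\alpha$ then gives $H_n(Y_\alpha;G) = 0$, so summing over components $H_n(Y;G) = 0$ and therefore $H_n(X;G) = 0$. The only substantive step is the vanishing $H_\ast(W, W \setminus \partial X; G) = 0$ coming from the collar model, which is routine once the homeomorphism $W \cong \partial X \times [0,1)$ is in hand, so no real obstacle arises.
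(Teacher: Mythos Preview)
Your proposal is correct and follows exactly the route the paper indicates: the paper's one-line proof simply says ``By Proposition \ref{prop:collar} and Theorem \ref{thm:main thm}, we immediately obtain the following,'' and you have supplied precisely those details---using the collar to identify $H_n(X;G)$ with $H_n(X\setminus\partial X;G)$ and then invoking (C) on the non-compact components of the interior.
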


Let us state several fundamental properties of NB-spaces with boundary.

From the definition, if $X \in \mathcal N_b^n$, then 
\[
\partial X = \{x \in X \mid H_n(X|x; \mathbb Z_2) = 0 \}.
\]

The classes $\mathcal N_b$ and $\mathcal S_b$ are stable under certain geometric constructions, where $\mathcal S_b = \bigcup_{n \ge 0} \mathcal S_b^n$. 
Let us set $\mathcal S = \bigcup_{n \ge 0} \mathcal S^n$ and take $\Sigma \in \mathcal S_b$ and $\Lambda \in \mathcal S \cup \mathcal S_b$. 
Then, $\Sigma \ast \Lambda \in \mathcal S_b$ and $c(\Sigma) \in \mathcal N_b$.
This follows from an observation that $S^0_+ \ast S_+^0 \approx [0,1] \approx S^0 \ast S^0_+$. 
Obviously, if $X \in \mathcal N_b$ and $Y \in \mathcal N_b \cup \mathcal N$, then $X \times Y \in \mathcal N_b$, where $\mathcal N$ denotes the class of all NB-spaces.
For above $\Sigma, \Lambda, X$ and $Y$, we have 
\[
\partial (\Sigma \ast \Lambda) = [(\partial \Sigma) \ast \Lambda] \cup [\Sigma \ast (\partial \Lambda)] \text{ and } 
\partial (X \times Y) = [(\partial X) \times Y] \cup [X \times (\partial Y)]. 
\]
Here, we regard $\Sigma$ and $\Lambda$ as subsets of $\Sigma \ast \Lambda$ in a canonical way, and $\Sigma \ast \emptyset$ as the subset $\Sigma$ in $\Sigma \ast \Lambda$ if $\partial \Lambda = \emptyset$, and use the convention as $\partial S^0 = \emptyset$ and $\partial S^0_+ = S^0_+$.

If $X$ and $Y$ are in $\mathcal N_b$ such that $\partial X$ and $\partial Y$ are homeomorphic by a map $h$, then the space $X \cup_h Y$ obtained by gluing $X$ and $Y$ via $h$ is an NB-space.
Obviously, component-wise gluing also works.
The result of gluing component-wise is an NB-space possibly with boundary.
When $X, Y \in \mathcal S_b$, we have $X \cup_h Y \in \mathcal S$.

When $X \in \mathcal N_b$, the identity $1_{\partial X}$ on $\partial X$ gives an NB-space $X \cup_{1_{\partial X}} X$ as above, which is called the {\it double} of $X$ and is denoted by $D(X)$.

\subsection{NB-spaces with boundary and their orientability}
Due to Proposition \ref{prop:interior}, we define the notion of orientability for NB-spaces with boundary as the following usual way. 
\begin{definition} \upshape
An NB-space with boundary is {\it orientable} if its interior is an orientable NB-space. 
\end{definition}

From now on, $X$ denotes an NB-space with boundary of dimension $n$ and the (co)homologies are taken with coefficients in a principal ideal domain $R$ whenever they are not indicated explicitly. 

\begin{lemma} \label{lem:local ori}
If $X$ is $R$-orientable, then $\partial X$ is locally $R$-orientable.
\end{lemma}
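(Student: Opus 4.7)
The plan is to work point-by-point on $\partial X$. Fix $y \in \partial X$. Since $\partial X$ is an $(n-1)$-dimensional NB-space by Proposition \ref{prop:bdry is NB}, one may choose a conical neighborhood $V$ of $y$ in $\partial X$ with generator $\Lambda \in \mathcal{S}^{n-2}$, so that $(V, y) \approx (c(\Lambda), o)$. Applying Proposition \ref{prop:collar} I would then obtain an open embedding $V \times [0, \epsilon) \hookrightarrow X$ sending $(y, 0)$ to $y$, so that the ``cylinder slice'' $V \times (0, \epsilon) \approx c(\Lambda) \times (0, \epsilon)$ sits as an open subset of the interior $X \setminus \partial X$.

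Next I would compute $H^n(X \setminus \partial X \mid z; R)$ for the distinguished point $z := (o, \epsilon/2)$ in two different ways. On the one hand, $X \setminus \partial X$ is an $R$-orientable NB-space by hypothesis, so by the $R$-analog of Theorem \ref{thm:main thm}(A) (see Remark \ref{rem:R-ori}) it is locally $R$-orientable at $z$; in particular
\[
H^n(X \setminus \partial X \mid z; R) \cong R.
\]
On the other hand, by excision this group is isomorphic to $H^n(c(\Lambda) \times (0, \epsilon) \mid z; R)$. A small conical neighborhood of $(o, \epsilon/2)$ in the product $c(\Lambda) \times (0, \epsilon)$ has as generator the join $\Lambda \ast S^0 = S\Lambda$, the (topological) suspension of $\Lambda$, since the homeomorphism $c(S^0) \approx \mathbb{R}$ identifies $c(\Lambda) \times \mathbb{R}$ with $c(\Lambda) \times c(S^0) \approx c(\Lambda \ast S^0) = c(S\Lambda)$ carrying $(o, 0)$ to the apex. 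Thus the local cohomology at $z$ is $\bar H^{n-1}(S\Lambda; R) \cong \bar H^{n-2}(\Lambda; R)$ by the suspension isomorphism.

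Comparing the two descriptions yields $\bar H^{n-2}(\Lambda; R) \cong R$, and since $\Lambda$ is a generator of $\partial X$ at $y$ this translates to $H^{n-1}(\partial X \mid y; R) \cong \bar H^{n-2}(\Lambda; R) \cong R$. Hence $\partial X$ is locally $R$-orientable at $y$, and as $y$ was arbitrary the lemma follows. The main obstacle is to articulate cleanly the link/suspension identification above; beyond the slick homeomorphism $c(\Lambda) \times \mathbb{R} \approx c(S\Lambda)$, one may alternatively invoke the K\"unneth formula for the relative cohomology $H^*(X \times Y, X \times Y \setminus \{(x,y)\})$ over the PID $R$, observing that the interval factor contributes free $R$ concentrated in a single degree so that no Tor terms obstruct the identification.
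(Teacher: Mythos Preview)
Your proof is correct and follows essentially the same route as the paper: both localize at a point $y\in\partial X$, identify a neighborhood in $X$ as $c(\Lambda)\times[0,1)$ (the paper reads this off directly from Definition~\ref{def:NB with boundary}, you via Propositions~\ref{prop:bdry is NB} and~\ref{prop:collar}), and use the inherited $R$-orientability of the open slice $c(\Lambda)\times(0,1)$ to deduce $\bar H^{n-2}(\Lambda;R)\cong R$. The only cosmetic difference is that where you compute the local cohomology at $z$ directly via the identification $c(\Lambda)\times\mathbb R\approx c(S\Lambda)$ and the suspension isomorphism, the paper invokes Corollary~\ref{cor:ori product} to conclude that $\Lambda$ itself is $R$-orientable.
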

\begin{proof}
We may assume that $n = \dim X \ge 3$.
Let $x \in \partial X$. Then, there exist a neighborhood $U$ of $x$ in $X$ and $\Lambda \in \mathcal S^{n-2}$ such that 
\[
(U, x) \approx (c(\Lambda) \times [0, 1), (o,0)).
\]
Automatically, $\partial U$ corresponds to $c(\Lambda) \times \{0\}$ and it is a conical neighborhood of $x$ in $\partial X$.
From the definition, $U \setminus \partial U$ is $R$-orientable. 
By Corollary \ref{cor:ori product}, $\Lambda$ is $R$-orientable. 
Hence, we conclude that $\partial X$ is locally $R$-orientable.
\end{proof}

\begin{proposition} \label{prop:ori double}
$X$ is $R$-orientable if and only if $D(X)$ is $R$-orientable.
\end{proposition}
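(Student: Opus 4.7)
Proof plan. The reverse implication follows by restriction: any homological $R$-orientation on $D(X)$ restricts to an orientation on the open NB-subspace $X \setminus \partial X \subset D(X)$, identified with the interior of one of the two copies of $X$ inside $D(X)$; hence $X$ is $R$-orientable by definition.

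For the forward implication, suppose $X \setminus \partial X$ carries a homological $R$-orientation $\{o_y\}$. Write $D(X) = X_1 \cup_{\partial X} X_2$, where $X_1, X_2$ are the two copies of $X$. The first task is to verify local $R$-orientability of $D(X)$ at each $x \in \partial X$ (at interior points of $X_1, X_2$ this is inherited from $X \setminus \partial X$). Following the proof of Propositions \ref{prop:interior}--\ref{prop:bdry is NB}, a conical neighborhood of $x$ in $X$ has the form $c(\bar c(\Lambda)) \approx c(\Lambda) \times [0,\infty)$ with $\Lambda \in \mathcal{S}^{n-2}$, so its double in $D(X)$ is homeomorphic to $c(\Lambda) \times \mathbb{R} \approx c(\Lambda) \times c(S^0) \approx c(S^0 \ast \Lambda)$, using $c(A) \times c(B) \approx c(A \ast B)$ and $\mathbb{R} \approx c(S^0)$. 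Thus the generator of $D(X)$ at $x$ is the suspension $S^0 \ast \Lambda$, a compact connected NB-space of dimension $n-1$ (Example \ref{ex:NB}). By the argument in the proof of Lemma \ref{lem:local ori}, $\Lambda$ is $R$-orientable; Corollary \ref{cor:ori product}(2) upgrades this to $R$-orientability of $S^0 \ast \Lambda$, yielding $H_n(D(X)|x;R) \cong R$.

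The second task is to assemble a globally compatible family $\{\tilde o_y\}_{y \in D(X)}$ of local orientations. On $X_1 \setminus \partial X$, set $\tilde o_y := o_y$. For each $x \in \partial X$, apply Lemma \ref{lem:cone01} to a conical neighborhood $V$ of $x$ in $D(X)$ (whose generator $S^0 \ast \Lambda$ is $R$-orientable by the previous step): there exists an open neighborhood of $x$ inside $V$ on which the restriction maps $H_n(D(X)|V;R) \to H_n(D(X)|z;R)$ are isomorphisms for every $z \in V$, so a single choice of generator of $H_n(D(X)|V;R)$ determines compatible local orientations throughout $V$. Using the collar provided by Proposition \ref{prop:collar}, $V \cap (X_1 \setminus \partial X)$ is nonempty, and I normalize the generator of $H_n(D(X)|V;R)$ by requiring agreement with $o_{y_0}$ at a chosen point $y_0 \in V \cap (X_1 \setminus \partial X)$. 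This determines $\tilde o_y$ at $y = x$ and at all $y \in V \cap (X_2 \setminus \partial X)$; since $X_2 \setminus \partial X \approx X \setminus \partial X$ is an $R$-orientable NB-space (Theorem \ref{thm:main thm}(A)), these collar-induced values extend to a global homological orientation on $X_2 \setminus \partial X$.

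The main obstacle is establishing global consistency of this construction: values $\tilde o_z$ at a common point $z \in X_2 \setminus \partial X$ obtained via conical neighborhoods of different boundary points $x, x' \in \partial X$ must agree, and the resulting collar data must patch to a single well-defined orientation on each connected component of $X_2 \setminus \partial X$. I plan to handle this via the uniqueness up to sign of a homological $R$-orientation on each connected NB-space (Theorem \ref{thm:main thm}(A)): every normalization above is ultimately anchored to the fixed orientation $\{o_y\}$ on $X_1 \setminus \partial X$ through the cone isomorphisms of Lemma \ref{lem:cone01}; since the collar of $\partial X$ reaches into every connected component of $X_2 \setminus \partial X$ (transported from the collar in $X$), the sign on each such component is pinned down uniquely and the $\tilde o_y$ glue consistently to a homological $R$-orientation of $D(X)$.
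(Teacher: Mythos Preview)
Your outline is sound and the ingredients are the right ones (local model $c(\Lambda)\times(-1,1)$, Lemma \ref{lem:cone01}, Corollary \ref{cor:ori product}), but you have made the forward direction harder than necessary by \emph{not} using the canonical involution $\sigma$ that swaps the two copies of $X$ in $D(X)$. The paper's proof defines the orientation on the second interior $Y' = X_2\setminus\partial X$ in one stroke by transporting along $\sigma$: set $o_{\sigma(y)} := \sigma_\ast(o_y)$ for $y\in Y$. This is globally well-defined on $Y'$ \emph{a priori}, so the only remaining task is a purely local compatibility check at each $x\in\partial X$ on the doubled conical neighbourhood $D(U)\approx c(\Lambda)\times(-1,1)$, where the symmetry is explicit.

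By contrast, your scheme defines the orientation on $Y'$ only through the conical neighbourhoods $V_x$ at boundary points, and must then argue that different boundary points induce the same sign on each component of $Y'$. The sketch you give (``every normalization is anchored to $\{o_y\}$ on $X_1$'') is morally correct but is exactly where the real work hides: if two components of $\partial X$ abut the same component of $Y'$, you must show the propagated signs agree. The cleanest way to close this is precisely the paper's trick---observe that on the collar $(\partial X)_{\mathrm{top}}\times(-1,1)$ the reflection $\sigma$ is $(p,t)\mapsto(p,-t)$, so the continuous extension of $o_y$ across $t=0$ coincides (up to a uniform sign) with $\sigma_\ast o_y$, and this identification is \emph{independent of the boundary component}. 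In other words, your consistency obstacle dissolves once you invoke $\sigma$, and what you buy by introducing it is a global definition on $Y'$ that renders the patching argument unnecessary.
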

\begin{proof}
Since the restriction of an orientation to an open subset is an orientation on the subspace, it suffices to prove that an orientation on $X \setminus \partial X$ induces an orientation on $D(X)$. 
Let us take a copy $X'$ of $X$ and regard $D(X)$ as the union $X \cup X'$ in a natural way. 
In this convention, we have $\partial X = X \cap X' = \partial X'$. 
Let us set $Y = X \setminus \partial X$ and $Y' = X' \setminus \partial X'$. 
From the assumption, there is an orientation $\{o_y\}_{y \in Y}$ on $Y$. 
We define an orientation on $Y'$ by 
\[
o_{\sigma(y)} = \sigma_\ast (o_y) \in H_n(Y'|\sigma(y))
\]
for each $y \in Y$.
For $x \in \partial X$, a conical neighborhood $U \approx c(\Lambda) \times [0,1)$ of $x$ is taken as in the proof of Lemam \ref{lem:local ori}.
Note that $\sigma$ is regarded as an involution on $D(U)$.
By Lemma \ref{lem:local ori}, $D(U) \approx c(\Lambda) \times (-1,1)$ is $R$-orientable. 
We obtain a family $\mathcal O_x$ of local orientations on $D(U)$ induced from the orientation on $c(\Lambda) \times (-1,1)$. 
Then, $\mathcal O_x$ is compatible to the family $\{o_y, o_{\sigma(y)}\}_{y \in Y}$ of local orientations on $Y \cup Y'$. 
Therefore, we obtain an orientation $\bigcup_{x \in \partial X} \mathcal O_x \cup \{o_y, o_{\sigma(y)}\}_{y \in Y}$ on $D(X)$.
This completes the proof.
\end{proof}

\begin{proposition} \label{prop:ori bdry}
If $X$ is $R$-orientable, then $\partial X$ is $R$-orientable. 
\end{proposition}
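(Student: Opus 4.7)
The plan is to combine the collar neighborhood from Proposition \ref{prop:collar} with the product criterion for orientability in Corollary \ref{cor:ori product}~(4). Assume $n = \dim X \geq 2$ (the case $n=1$ is vacuous or trivial since $\partial X$ is then $0$-dimensional). By Proposition \ref{prop:collar}, there is an open neighborhood $W$ of $\partial X$ in $X$ together with a homeomorphism $f : W \to \partial X \times [0,1)$ which is the identity on $\partial X$. Restricting $f$ yields a homeomorphism $W \setminus \partial X \approx \partial X \times (0,1)$.

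Next I would argue that the open subset $W \setminus \partial X$ of $X \setminus \partial X$ is $R$-orientable. By hypothesis and the definition of $R$-orientability for NB-spaces with boundary, $X \setminus \partial X$ is an $R$-orientable NB-space. Orientability is inherited by open subsets: for any open $U \subset X \setminus \partial X$ the manifold-part $U_{\top}$ equals $(X \setminus \partial X)_{\top} \cap U$, which is open in the oriented manifold $(X \setminus \partial X)_{\top}$, so condition (a) of Theorem \ref{thm:main thm} restricts to $U$. (For principal ideal domains $R$ other than $\mathbb{Z}$ the same argument works via Remark \ref{rem:R-ori}.) Consequently $\partial X \times (0,1)$ is an $R$-orientable NB-space.

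Finally, I would apply Corollary \ref{cor:ori product}~(4) to extract $R$-orientability of $\partial X$. Since $\partial X$ may be disconnected, the argument is applied component-wise: each component of $\partial X \times (0,1)$ has the form $C \times (0,1)$ for some component $C$ of $\partial X$ (note that $C$ is a connected NB-space by Proposition \ref{prop:bdry is NB}), and this component is $R$-orientable as an open subset of the $R$-orientable space $\partial X \times (0,1)$. Corollary \ref{cor:ori product}~(4) then yields that both factors $C$ and $(0,1)$ are $R$-orientable, and $(0,1)$ is trivially so. Hence every component of $\partial X$ is $R$-orientable, so $\partial X$ is $R$-orientable by Definition \ref{def:ori}.

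The only point that requires a moment's thought is the disconnectedness of $\partial X$, handled by the component-wise application above; all remaining steps are essentially formal, relying only on results already established.
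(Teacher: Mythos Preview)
Your proof is correct and takes a genuinely different route from the paper's.

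The paper first passes to the double: by Proposition~\ref{prop:ori double} it obtains an orientation $\{o_x\}_{x\in D(X)}$ on $D(X)$, and then for each $x\in\partial X$ it uses the local product structure $(U,\partial U,x)\approx(c(\Lambda)\times[0,1),\,c(\Lambda)\times\{0\},\,(o,0))$ to build an explicit isomorphism $H_n(D(X)|x)\to H_{n-1}(\partial X|x)$, through which it pushes $o_x$ to a local orientation on $\partial X$; the continuity condition is then checked directly. Your argument bypasses the double entirely: you use the collar from Proposition~\ref{prop:collar} to exhibit $\partial X\times(0,1)$ as an open subset of the $R$-orientable interior, and then invoke the product criterion Corollary~\ref{cor:ori product}~(4) component-wise. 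This is shorter and more conceptual, and the ingredients (restriction of orientability to open subsets, the product criterion) are already available. What the paper's approach buys in exchange is that it produces a \emph{specific} induced orientation on $\partial X$ coming from the given orientation on $X$, which is the natural object if one later wants compatibility statements (e.g., of Lefschetz or Stokes type); your argument only yields existence of \emph{some} orientation on $\partial X$, which is all that the proposition asserts.
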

\begin{proof}
Let $A$ denote a component of $\partial X$. 
By Proposition \ref{prop:ori double}, there is an orientation $\mathcal O = \{o_x\}_{x \in D(X)}$ on $D(X)$. 
For each $x \in A$, let $\Lambda$ and $U$ denote the same thngs as in the proof of Lemma \ref{lem:local ori}.
By using a homeomorphism $(U,\partial U, x) \approx (c(\Lambda) \times [0,1), c(\Lambda) \times \{0\}, (o,0))$, we obtain an isomorphism 
\begin{equation*} \label{eq:double to bdry}
H_n(D(X)|x) \to H_{n-1}(\partial X|x).
\end{equation*}
Thus, we obtain a family of local orientations 
\[
\{o_x \in H_{n-1}(\partial X|x) \}_{x \in \partial X}
\]
on $\partial X$. 
From the construction, this family satisfies the continuity to become an orientation. 
This completes the proof.
\end{proof}

We prove the following Lefschetz-type duality theorem. 
\begin{theorem}
Let $M$ be a compact Alexandrov space with boundary of dimension $n$, where $n \ge 2$.  
If $M$ is orientable, then there is a natual duality map 
\[
D_M : H^k(M, \partial M;\mathbb Q) \to H_{n-k}(M;\mathbb Q)
\]
such that it is isomorphic when $k=n, n-1$.
\end{theorem}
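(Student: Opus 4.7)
The plan is to reduce the assertion to the open-subset duality of Theorem \ref{thm:PD1} by passing to the double. Set $\mathrm{Int}(M) := M \setminus \partial M$. By Theorem \ref{thm:double} and the hypothesis on $M$, the double $D(M)$ is a closed orientable Alexandrov $n$-space without boundary, and $\mathrm{Int}(M)$ is a non-empty connected open subset of $D(M)$ containing no boundary points of $D(M)$. Theorem \ref{thm:PD1} applied to $\mathrm{Int}(M)$ therefore supplies a natural morphism
\[
D_{\mathrm{Int}(M)} : H^k_\cpt(\mathrm{Int}(M); \mathbb{Q}) \to H_{n-k}(\mathrm{Int}(M); \mathbb{Q})
\]
which is an isomorphism for $k = n$ and $k = n-1$.

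I then translate each side into an invariant of the pair $(M, \partial M)$ via a topological collar $c : \partial M \times [0, 1) \hookrightarrow M$ as provided by Corollary \ref{cor:collar}, setting $K_t := M \setminus c(\partial M \times [0, t))$ for $t \in (0, 1)$. The collar retraction onto $K_{1/2}$ shows that the inclusion $\mathrm{Int}(M) \hookrightarrow M$ is a homotopy equivalence, giving $H_{n-k}(\mathrm{Int}(M); \mathbb{Q}) \cong H_{n-k}(M; \mathbb{Q})$. For the cohomological side, $A_t := c(\partial M \times [0, t))$ is open in $M$ and contains $\partial M$ in its interior, so excising $\partial M$ yields
\[
H^k(M, A_t; \mathbb{Q}) \cong H^k(\mathrm{Int}(M), \mathrm{Int}(M) \setminus K_t; \mathbb{Q}),
\]
while the collar deformation $\partial M \hookrightarrow A_t$ identifies the left-hand side with $H^k(M, \partial M; \mathbb{Q})$, compatibly as $t$ varies. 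Passing to the direct limit as $t \to 0^+$ over the cofinal family $\{K_t\}$ produces $H^k_\cpt(\mathrm{Int}(M); \mathbb{Q}) \cong H^k(M, \partial M; \mathbb{Q})$. I then define $D_M$ as the composition of these identifications with $D_{\mathrm{Int}(M)}$, which is therefore an isomorphism for $k = n, n-1$.

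The main obstacle I foresee is establishing the cohomological identification $H^k_\cpt(\mathrm{Int}(M); \mathbb{Q}) \cong H^k(M, \partial M; \mathbb{Q})$ with care, since Corollary \ref{cor:collar} supplies only a topological collar and one cannot call on smooth or PL bicollaring. Fortunately the topological collar already suffices to carry out the excision and deformation arguments above at the level of singular cochains, by the same template as in the manifold case, so the identification and its compatibility with the cap-product duality map go through. Naturality of the resulting $D_M$ follows from the naturality clause in Theorem \ref{thm:PD1} together with the naturality of the collar-based identifications.
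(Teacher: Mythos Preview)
Your proof is correct and follows essentially the same route as the paper's: use a collar of $\partial M$ to identify $H^k(M,\partial M;\mathbb Q)\cong H^k_\cpt(M\setminus\partial M;\mathbb Q)$ and $H_{n-k}(M;\mathbb Q)\cong H_{n-k}(M\setminus\partial M;\mathbb Q)$, then apply Theorem~\ref{thm:PD1} to the interior. Your only deviation is the detour through the double $D(M)$ via Theorem~\ref{thm:double}; this is harmless but unnecessary, since Theorem~\ref{thm:PD1} already applies to any open subset of an Alexandrov $n$-space that avoids the boundary, and $M\setminus\partial M$ is such a subset of $M$ itself (indeed the paper simply invokes Proposition~\ref{prop:collar} for the collar and Theorem~\ref{thm:PD1} for the interior, without mentioning the double).
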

\begin{proof}
	By using Proposition \ref{prop:collar}, we have $H^\ast(M,\partial M) \cong H^\ast_\cpt(M \setminus \partial M)$ and $H_\ast(M) \cong H_\ast(M \setminus \partial M)$.
	Applying Theorem \ref{thm:PD1} to the interior $M \setminus \partial M$, we obtain the conclusion.
\end{proof}

Finally, we give a naive problem.
\begin{problem} \upshape
Find a compact connected NB-space satisfying the conclusion of Theorem \ref{thm:Alex top} which does not admit a metric of an Alexandrov space.
\end{problem}

\end{document}